\newcommand\restr[2]{{
  \left.\kern-\nulldelimiterspace 
  #1 
  \vphantom{\big|} 
  \right|_{#2} 
  }}
\newcounter{dummy} \numberwithin{dummy}{section}
\newtheorem{remark}[dummy]{Remark}
\newtheorem{lemma}[dummy]{Lemma}
\newtheorem{definition}[dummy]{Definition}
\newtheorem{theorem}[dummy]{Theorem}
\newtheorem{corollary}[dummy]{Corollary}
\newtheorem{proposition}[dummy]{Proposition}
\newcommand{\divg}{\textup{div}  }
\title{A dynamic Laplacian for identifying Lagrangian coherent structures on weighted Riemannian manifolds}
\author{Gary Froyland and Eric Kwok}
\affil{School of Mathematics and Statistics\\ University of New South Wales \\ Sydney NSW 2052, Australia}
\date{\today}
\begin{document}

\maketitle
\numberwithin{equation}{section}
\begin{abstract}
Transport and mixing in dynamical systems are important properties for many physical, chemical, biological, and engineering processes. The detection of transport barriers for dynamics with general time dependence is a difficult, but important problem, because such barriers control how rapidly different parts of phase space (which might correspond to different chemical or biological agents) interact. The key factor is the growth of interfaces that partition phase space into separate regions. The paper \cite{froyland14} introduced the notion of \textit{dynamic isoperimetry}:  the study of sets with persistently small boundary size (the interface) relative to enclosed volume, when evolved by the dynamics. Sets with this minimal boundary size to volume ratio were identified as level sets of dominant eigenfunctions of a \textit{dynamic Laplace operator}.

In this present work we extend the results of \cite{froyland14} to the situation where the dynamics (i) is not necessarily volume-preserving, (ii) acts on initial agent concentrations different from uniform concentrations, and (iii) occurs on a possibly curved phase space. Our main results include generalised versions of the dynamic isoperimetric problem, the dynamic Laplacian, Cheeger's inequality, and the Federer-Fleming theorem.
We illustrate the computational approach with some simple numerical examples.
\end{abstract}
\numberwithin{equation}{section}

\section{Introduction}

The mathematics of transport in nonlinear dynamical systems has received considerable attention for more than two decades, driven in part by applications in fluid dynamics, atmospheric and ocean dynamics, molecular dynamics, granular flow and other areas. We refer the reader to \cite{ottino89,wiggins90,meiss92,aref02,wiggins05} for reviews of transport and transport-related phenomena. Early attempts to characterise transport barriers in fluid dynamics include time-dependent invariant manifolds (such as lobe-dynamics \cite{wiggins90}) and finite-time Lyapunov exponents \cite{pierrehumbert91, pierrehumbert93, doerner99, haller02, shadden05}.
More recently, in two-dimensional area-preserving flows, \cite{blackhole} proposed finding closed curves whose time-averaged length is stationary under small perturbations;  this aim is closest in spirit\footnote{The extension \cite{haller3d} of \cite{blackhole} to three dimensions is less aligned with \cite{froyland14}, as \cite{haller3d} asks for uniform expansion in all directions in the two-dimensional tangent space to potential LCS surfaces, whereas the approach of \cite{froyland14} in three-dimensions is simply concerned with surface growth without a uniform expansion restriction.} to the predecessor work of this paper \cite{froyland14}, though the latter theory applies in arbitrary finite dimensions and the curves need not be closed.
In parallel to these efforts, the notion of almost-invariant sets \cite{dellnitz99} in autonomous systems spurred the development of probabilistic methods to transport based around the transfer operator. In relation to transport barriers, numerical observations \cite{froyland09} indicated connections between the boundaries of almost-invariant sets and invariant manifolds of low-period points. Transfer operator techniques were later extended to dynamical systems with general time dependence, with the introduction of coherent sets as the time-dependent analogues of almost-invariant sets. \cite{froyland10, froyland13}. Topological approaches to phase space mixing have also been developed \cite{gouillart06}, including connections with almost-invariant sets \cite{grover12}.

In \cite{froyland14}, Froyland introduced the notion of a dynamic isoperimetric problem, namely searching for subsets of a manifold whose boundary size to enclosed volume is minimised in a time-averaged sense under general time-dependent nonlinear dynamics. Solutions to this problem were constructed from eigenvectors of a dynamic Laplace operator, a time-average of pullbacks of Laplace operators under the dynamics. It was shown in \cite{froyland14} that the dynamic Laplace operator arises as a zero-diffusion limit of the transfer operator constructions for finite-time coherent sets in \cite{froyland13}. This result demonstrated that finite-time coherent sets (those sets that maximally resist mixing over a finite time interval), also had the persistently small boundary length to enclosed volume ratio property; intuitively this is reasonable because diffusive mixing between sets can only occur through their boundaries. Thus, finite-time coherent sets have dual minimising properties:  slow mixing (probabilistic) and low boundary growth (geometric). The theory in \cite{froyland14} was restricted to the situation where the advective dynamics was volume-preserving, and to tracking the transport of a uniformly distributed tracer in Euclidean space. \emph{In the present work, we extend the results of \cite{froyland14} in three ways:  (i) to dynamics that is not volume preserving, (ii) to tracking the transport of nonuniformly distributed tracers, and (iii) to dynamics operating on curved manifolds.}

We now begin to be more specific about the results of the present paper. Let $M$ denote a connected $r$-dimensional compact $C^\infty$ Riemannian manifold and $\Gamma$ denote a $C^\infty$ hypersurface disconnecting $M$ into submanifolds $M_1, M_2$;  that is $\{M_1,M_2,\Gamma\}$ is a partition of $M$. For example, $M$ could be the unit square $[0,1]^2\subset \mathbb{R}^2$ and $\Gamma$ either a curve from a boundary point to another boundary point or a closed curve. On $M$ we place a Riemannian metric $m$ and a probability measure $\mu_r$.
The size of a set $M_1\subset M$ is given by $\mu_r(M_1)$ and by a process of inducing explained in the next section, we develop a measure $\mu_{r-1}$ to determine the size of ($r-1$)-dimensional objects such as $\Gamma$. To continue our trivial example, if $\mu_r=\mu_2$ is 2-dimensional Lebesgue measure, then $\mu_{r-1}=\mu_1$ is 1-dimensional Lebesgue measure, which can be used to measure curve length in $\mathbb{R}^2$. In order to track the transport of nonuniformly distributed passive tracers (e.g.\ chemical concentrations in fluids, air mass in the atmosphere, salt in the ocean), we require a general probability measure $\mu_r$ that represents the initial distribution to be tracked.
Similarly, in order to estimate the amount of material that can be ejected through the boundary at any given time, we require the measure $\mu_{r-1}$ to compute boundary size.

Let us suppose that the dynamics over a finite time duration is given by $T:M\to N$, where $T$ can be a single transformation, the concatenation of several maps over several discrete time steps, or the flow map for a time-dependent vector field over some duration $\tau$. The following brief discussion checks the boundary at the initial and final times, but in the case of continuous time, one may continuously check the boundary size as described in Section \ref{sec:mts}. The manifold $N$ is equipped with a Riemannian metric $n$ (which need not be the pushforward of $m$), and a probability measure $\nu_r:=\mu_r\circ T^{-1}$ (which must be the pushforward of $\mu_r$). Conservation of mass enforces the definition $\nu_r:=\mu_r\circ T^{-1}$, but in many applications we may not want $m$ and $n$ to be related by $T$. Continuing our example, if $M=N=[0,1]^2$ and $T$ is not area-preserving, then $\mu_2$ (2-dimensional Lebesgue measure) will be transformed by $T$ to a probability measure $\nu_2$ with a non-constant density. Furthermore, since $M=N=[0,1]^2\subset \mathbb{R}^2$, we have in this example that $m=n$ is the standard Euclidean metric.

Given a disconnecting hypersurface $\Gamma$, we compute the dynamic Cheeger constant
\begin{equation}
\label{cheeger0}
\mathbf{H}^D(\Gamma):=\frac{\mu_{r-1}(\Gamma)+\nu_{r-1}(T\Gamma)}{2\min\{\mu_r(M_1),\mu_r(M_2)\}},
\end{equation}
and wish to minimise $\mathbf{H}^D(\Gamma)$ over all smooth $\Gamma$ disconnecting $M$. The numerator of \eqref{cheeger0} quantifies the boundary size of $\Gamma$ \emph{and} its image $T\Gamma$;  thus, minimising over all smooth $\Gamma$ finds the interface $\Gamma$ which has minimal combined length, both before and after evolution (and in the later continuous versions, throughout evolution). The denominator of (\ref{cheeger0}) is a standard normalisation condition in (static) isoperimetric problems to avoid trivial solutions and ensure that both $M_1$ and $M_2$ are of macroscopic size.
Equation (\ref{cheeger0}) is a natural generalisation of equation (1) \cite{froyland14} for non-volume-preserving dynamics.

Beyond the generalised dynamic isoperimetric problem described above, our main contributions are firstly the formulation of a dynamic Sobolev constant (a functional version of the dynamic Cheeger constant) in our general setting and a corresponding proof of a dynamic version of the celebrated Federer-Fleming theorem (see e.g p.131 \cite{chavel01} for the classical static statement and Theorem 3.1 \cite{froyland14} for the dynamic statement in the volume-preserving, uniform density, flat manifold setting), which equates the geometric Cheeger constant with the functional Sobolev constant. Secondly, we define a generalised version of the dynamic Laplace operator constructed in \cite{froyland14}. In our general setting (see Section \ref{sec:dfop} for details), the operator is
\begin{equation}
\label{dynlap0}
\triangle^D:=\frac{1}{2}\left(\triangle_\mu+\mathcal{L}^*\triangle_\nu\mathcal{L}\right),
\end{equation}
where $\triangle_\mu, \triangle_\nu$ are weighted Laplace-Beltrami operators, weighted by $\mu_r, \nu_r$ respectively.
The operator $\mathcal{L}:L^2(M,m,\mu_r)\to L^2(N,n,\nu_r)$ is simply $\mathcal{L}f=f\circ T^{-1}$ and $\mathcal{L}^*f=f\circ T$.
See Section \ref{sec:lts} for continuous time versions of $\triangle^D$.
The specialisation of (\ref{dynlap0}) to the volume-preserving, unweighted setting may be found in equation (15) \cite{froyland14}.
A related construction is considered in \cite{karrasch16} from the point of view of heat flow, where they search for a single metric for a Laplace-Beltrami operator, rather than solving an isoperimetric-type problem, and follow ideas of \cite{thiffeault} to consider flow in Lagrangian coordinates and make connections to almost-invariant sets subjected to time-dependent diffusion.

We prove a dynamic version of the well-known Cheeger inequality in our generalised setting (see \cite{cheeger69} for the classic (static) Cheeger inequality and Theorem 3.2 \cite{froyland14} for the dynamic Cheeger inequality in the volume-preserving, uniform density, flat manifold setting), which bounds the Cheeger constant above in terms of the dominant nontrivial eigenvalue of $\triangle^Df=\lambda f$ (with natural Neumann-like boundary conditions). Finally, we prove that
\begin{equation}
\label{cvgce0}
\lim_{\epsilon\to 0} \frac{(\mathcal{L}_\epsilon^*\mathcal{L}-\mbox{Id})f}{\epsilon^2}=c\cdot \triangle^Df,
\end{equation}
in a sense made precise in Section \ref{sec:3}, where Id is the identity, $\mathcal{L}_\epsilon$ is an $\epsilon$-mollified version of $\mathcal{L}$, used to compute finite-time coherent sets in \cite{froyland13} and $c$ is an explicit constant.
Because singular vectors of $\mathcal{L}_\epsilon$ (eigenvectors of $\mathcal{L}^*_\epsilon\mathcal{L}_\epsilon$) are used in \cite{froyland13}, and eigenvectors of $\triangle^D$ are used in the present work, this result shows that in the small perturbation limit, the purely probabilistic constructions of \cite{froyland13} coincide with the purely geometric constructions of the present paper.
The limit (\ref{cvgce0}) generalises Theorem 5.1 \cite{froyland14} to the setting of dynamics that need not be volume preserving, to the tracking of weighted tracers, and to curved domains.

The paper is arranged as follows.
In Section \ref{sec:2} we provide relevant background material from differential geometry.
Section \ref{sec:dcc} describes the dynamic isoperimetric problem on weighted manifolds and states the dynamic Federer-Fleming theorem.
Section \ref{sec:di} details the dynamic Laplace operator on weighted manifolds and states the dynamic Cheeger inequality.
In section \ref{sec:3}, we state the convergence result \eqref{cvgce0}.
Section \ref{sec:num} contains illustrative numerical experiments and most of the proofs are contained in the appendices.
In comparison to \cite{froyland14}, Theorems \ref{thm:dff}, \ref{thm:spec}, \ref{thm:wci}, and \ref{thm:3.1} in this work generalise respectively Theorems 3.1, 4.1, 3.2, and 5.1 in \cite{froyland14}.

\section{Primer on differential geometry}\label{sec:2}

Let $M$ be a compact, connected $r$-dimensional $C^\infty$ Riemannian manifold. We denote the boundary of $M$ by $\partial M$. If $\partial M$ is non-empty, then we assume that $\partial M$ is $C^\infty$. We are interested in tracking the masses of the $r$ and $r-1$ dimensional subsets of $M$ as this manifold is transformed by a general smooth dynamical system. We now give a brief introduction of the key tools in differential geometry for performing the above task; additional details are provided in Section \ref{sec:dfg} of the appendix.

Recall that to compute the $r$-dimensional volume of the objects in $M$, one considers a metric tensor on the tangent space $\mathcal{T}_xM$ at the point $x\in M$. In particular, the Riemannian metric $m$ on $M$ associates each point $x\in M$ with a symmetric bilinear form $m( . , . )_x:\mathcal{T}_xM\times \mathcal{T}_xM\to \mathbb{R}$, yielding a volume form $\omega_m^r$ on $M$ (see Appendix \ref{sec:df} for more details). The differential $r$-form $\omega_m^r$ defines an $r$-dimensional volume measure on any measurable subset $U\subset M$ by $V_m(U):=\int_U \omega_m^r$. To describe the mass distribution of the objects in $M$, we consider a weighted Riemannian manifold $(M, m, \mu_r)$, where $\mu_r$ is an absolutely continuous probability measure with respect to $V_m$; that is, there exist $h_\mu\in L^1(M, V_m)$ such that
\begin{equation*}
 \mu_r(U)=\int_U d\mu_r=\int_U h_\mu\cdot\omega_m^r,
\end{equation*}
for all measurable $U\subset M$, and $\mu_r(M)=1$. Since any subset of $M$ with $\mu_r$ measure zero has no physical impact, without loss of generality we assume that the density $h_\mu$ is uniformly bounded away from zero.


Let $(N, n, \nu_r)$ be another weighted Riemannian manifold, where $N$ is a compact, connected $r$-dimensional $C^\infty$ Riemannian manifold, $n$ the Riemannian metric on $N$, and $\nu_r$ an absolutely continuous probability measure with respect to $V_n$. As before, we shall assume that the density $h_\nu$ of $\nu_r$ is uniformly bounded away from zero. Consider a general dynamical system $T:M\to N$ that acts as a $C^\infty$-diffeomorphism from $M$ onto $N$. For the purpose of modeling physical processes, we assume that no mass is lost under transport; that is, the measure $\mu_r$ on $M$ is transformed under the action of $T$ to $\nu_r:=\mu_r\circ T^{-1}$. Because the densities $h_\mu$, $h_\nu$ are uniformly bounded away from zero, $\nu_r=\mu_r\circ T^{-1}$, and $T$ is a diffeomorphism, the nondegeneracy of the metrics $n, m$ implies that the Jacobian associated with $T$ must be uniformly bounded above and away from zero (see Appendix \ref{sec:B3}). We emphasise that $n$ is not necessarily the push-forward of $m$, and that $T$ is not an isometry from $(M, m)$ to $(N, n)$ in general. 

Let $\mathcal{T}M$ denote the tangent bundle of $M$; that is, $\mathcal{T}M:=\cup_{x\in M}\{x\}\times \mathcal{T}_xM$. A vector field $\mathcal{V}$ on $M$ is a section of the bundle $\mathcal{T}M$; that is the image of $x\in M$ under $\mathcal{V}$ is the tangent vector $\mathcal{V}_x\in \mathcal{T}_xM$. For $k\geq 1$, we denote the space of $k$-times continuously differentiable vector fields on $M$ by $\mathcal{F}^k(M)$. For a pair $\mathcal{V},\mathcal{W}\in \mathcal{F}^k(M)$, one can view $m(\mathcal{V}, \mathcal{W}):M\to \mathbb{R}$ as a $C^k$ function on $M$ given by $m(\mathcal{V}, \mathcal{W})(x)=m(\mathcal{V}_{x}, \mathcal{W}_{x})_x$ for all $x\in M$. Denote by $\mathcal{T}^*M$ the dual bundle of $\mathcal{T}M$; that is the cotangent bundle $\mathcal{T}^*M:\cup_{x\in M}\{x\}\times\mathcal{T}^*_xM$, where $\mathcal{T}_x^*M$ is the vector dual of $\mathcal{T}_xM$. The covector fields on $N$ are sections of the bundle $\mathcal{T}^*M$.

 It is practical to associate the diffeomorphism $T:M\to N$ with the linear tangent map $T_*$ that takes vector fields on $M$ to vector fields on $N$, which we now define. Let $\gamma_x:(-\epsilon, \epsilon) \to M$ be a family of parameterised curves in $M$, with $\gamma_x(0)=x\in M$. Suppose for each $x\in M$ that $\mathcal{V}_x\in \mathcal{T}_xM$ is tangent to the curve $\gamma_x$ at $x$. The action of $\mathcal{V}_x$ on a differentiable function $f$ at each point $x\in M$ is defined to be the number
\begin{equation}\label{eq:mathcalV}
 \restr{\mathcal{V}_xf}{x}:=\restr{\frac{\partial (f\circ \gamma_x)}{\partial t}}{t=0};
\end{equation}
that is $\restr{\mathcal{V}_xf}{x}$ measures the initial rate of change of $f$ along a curve with tangent $\mathcal{V}_x$ at the point $x$. The local push-forward map $(T_*)_{x_0}:\mathcal{T}_{x_0}M\to \mathcal{T}_{Tx_0}N$ is defined at a fixed point $x_0\in M$ as
\begin{equation*}
 \restr{[(T_*)_{x_0}\mathcal{V}_{x_0}]g}{Tx_0}:=\restr{\mathcal{V}_{x_0}(g\circ T)}{x_0},
\end{equation*}
for all $g\in C^k(N, \mathbb{R})$. The collection of local push-forward maps define a linear tangent map $T_*:\mathcal{F}^k(M)\to \mathcal{F}^{k}(N)$ via
\begin{equation}\label{eq:T_*}
[(T_*\mathcal{V})g](Tx):=\restr{[(T_*)_{x}\mathcal{V}_{x}]g}{Tx},
\end{equation}
for all $x\in M$, and $g\in C^k(N, \mathbb{R})$.

Next, we define the linear cotangent map $T^*$ that takes covector fields on $N$ to covector fields on $M$ as follows. Given a vector field $\mathcal{V}$ on $M$, the action of $\mathcal{V}$ on a differentiable function $f$ on $M$ is a function $\mathcal{V}f:M\to \mathbb{R}$ given by $\mathcal{V}f(x):=\restr{\mathcal{V}_xf}{x}$. By the duality of the tangent and cotangent spaces, the cotangent vector fields are differential $1$-forms $df$ that map vector fields on $M$ to functions on $M$ via $df(\mathcal{V}):=\mathcal{V}f$. The cotangent mapping on differential $1$-forms is defined by
\begin{equation}\label{eq:T^*}
 [T^*(dg)]\mathcal{V}:=dg(T_*\mathcal{V})=\mathcal{V}(g\circ T)=(T_*\mathcal{V})g,
\end{equation}
for all $\mathcal{V}\in \mathcal{F}^k(M)$ and $g\in C^k(N, \mathbb{R})$. One can associate the cotangent mapping $T^*$ with an exterior product of $p$-forms, $1\leq p\leq r$ (see Appendix \ref{sec:df}). In particular, since the metric tensor $n$ is a symmetric $2$-form on $\mathcal{F}^k(N)$, one defines the \emph{pullback metric} of $n$ by
\begin{equation}\label{eq:pbm}
 T^*n(\mathcal{V}_1, \mathcal{V}_2)(x):=n(T_*\mathcal{V}_1, T_*\mathcal{V}_2)(Tx),
\end{equation}
for all vector fields $\mathcal{V}_1, \mathcal{V}_2$ on $M$, and each point $x\in M$; that is, the pullback metric $T^*n$ is defined in such a way that $T$ is an isometry from $(M, T^*n)$ to $(N, n)$.


To compute the co-dimension $1$ volume of $(r-1)$-dimensional subsets of $(M, m, \mu_r)$ and $(N, n, \nu_r)$, one uses the induced Riemannian metric. Suppose $\Gamma$ is a compact $C^\infty$ co-dimension $1$ subset of $M$. The embedding $\Phi:\Gamma \hookrightarrow M$ induces a Riemannian metric on $\Gamma$ via the pullback metric associated with $\Phi$; that is $\Phi^*m$ is the induced metric on $\Gamma$. Let $\omega_m^{r-1}$ denote the $(r-1)$-dimensional volume form corresponding to the induced metric $\Phi^*m$ (i.e\ $\omega_m^{r-1}=\omega^r_{\Phi^*m}$). To describe the distribution of mass on $\Gamma$, we define the $(r-1)$-dimensional measure $\mu_{r-1}$ on $M$ by
\begin{equation}\label{eq:omega_m^{r-1}}
 \mu_{r-1}(\Gamma):=\int_{\Gamma}h_\mu\cdot\omega^{r-1}_m,
\end{equation}
where $h_\mu$ is the density of $\mu_r$; the measure $\mu_{r-1}$ captures the mass distribution on $\Gamma$ via $h_\mu$. Similarly, the co-dimension $1$ mass distribution on a $C^\infty$, compact subset of $N$ is captured by the $(r-1)$-dimensional measure $\nu_{r-1}$ via the density $h_\nu$ of $\nu_r$. 
We now provide an example to demonstrate that the $\mu_{r-1}$ measure on certain hypersurfaces can be significantly increased under the action of a transformation $T$.

\subsection{Shear on a two-dimensional cylinder}\label{example1}

 Let $M=[0, 4)/\sim\times[0,1]$ be a $2$-dimensional cylinder in $\mathbb{R}^2$, where $\sim$ is identification at interval endpoints; that is, $M$ is periodic in the first coordinate with period $4$. The Riemannian metric  $e$ on $M$ is given by the Kronecker delta $\delta_{ij}$, so that the volume form $\omega_e^2$ on $M$ is $\omega_m^2=dx_1dx_2$. To form a weighted Riemannian manifold $(M, e, \mu_2)$, we set the density $h_\mu$ of $\mu_2$ to be a positive and periodic function $h_\mu(x_1, x_2)=\frac{1}{8}(\sin(\pi x_1)+2)$.

 Consider the hypersurface $\Gamma=\{x\in M: x_1=1.5,3.5\}$;  we choose this surface because it is the solution of the classical ``static'' isoperimetric problem defined by minimising (\ref{cheeger0}) without the second term in the numerator.
 The curve $\Gamma$ is two vertical lines on $M$ that pass over regions with minimal density $h_\mu$ as shown in Figure \ref{fig:static1a}. One can compute $\mu_1(\Gamma)$ analytically by noting that the induced Riemannian metric on $\Gamma$ is given by $dx_2$; thus
\begin{equation*}
 \mu_1(\Gamma)=\int_{0}^1 h_\mu(1.5, x_2)\,dx_2+\int_{0}^1 h_\mu(3.5, x_2)\,dx_2=0.25.
\end{equation*}
 \begin{figure}[h]
         \begin{subfigure}[t]{0.5\textwidth}
                 \includegraphics[width=\textwidth]{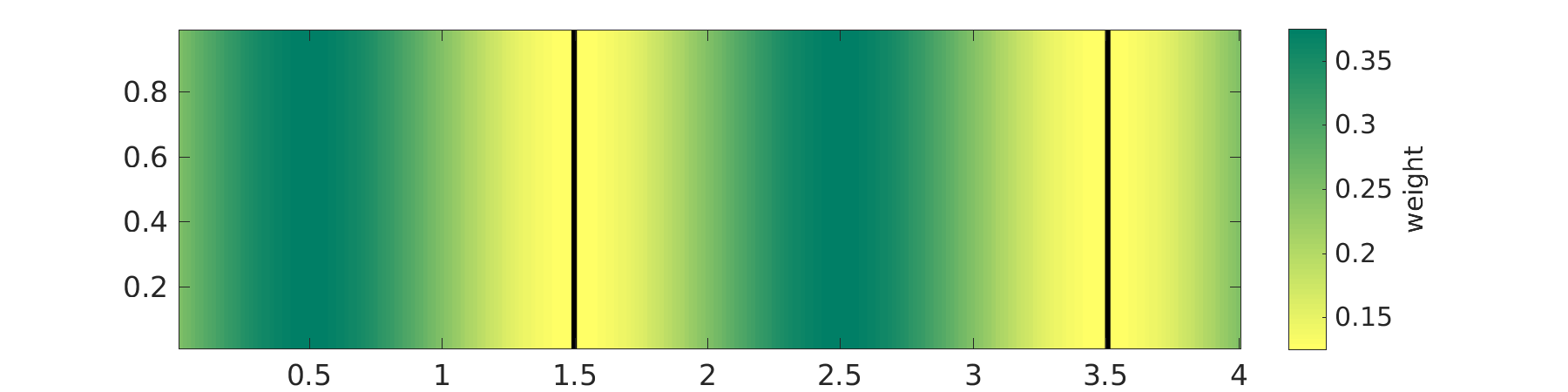}
                 \caption{}\label{fig:static1a}  
         \end{subfigure}
         \begin{subfigure}[t]{0.5\textwidth}
                 \includegraphics[width=\textwidth]{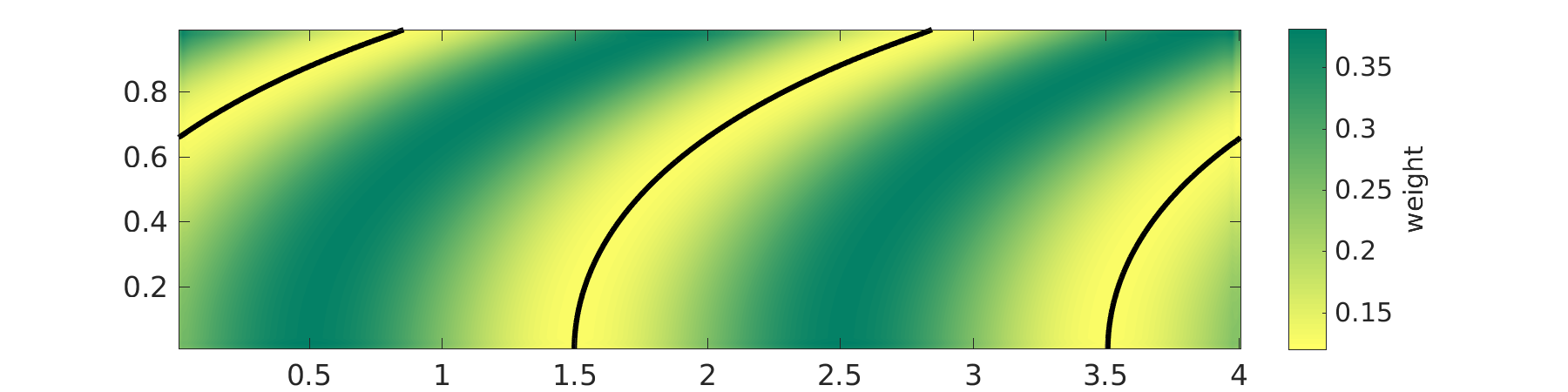}
                 \caption{}\label{fig:static2a}  
         \end{subfigure}
         \caption{Deformation of $2$-dimensional cylinder under nonlinear shear $T$. $(a)$ Colours are values of $h_\mu$, and black lines are the hypersurface $\Gamma$. $(b)$ Values of $h_\mu\circ T^{-1}$, and $T\Gamma$.} \label{fig:static1}
\end{figure}

Let us now apply the following transformation to $M$,
\begin{equation*}
 T(x_1, x_2)=\left(x_1+\frac{\cosh\left(2 x_2\right)-1}{2}, x_2\right),
\end{equation*}
where the first coordinate is computed modulo $4$. The map $T$ is a nonlinear horizontal shear. The hypersurface $\Gamma$ is transformed to $T\Gamma$ under the action of $T$ as shown in Figure \ref{fig:static2a}. The shearing magnitude $(\cosh(2x_2)-1)/2$ is chosen to simplify the analytical computation of $\nu_1(T\Gamma)$. It is easy to verify that $T$ is area-preserving. Since $T$ is area-preserving and $\nu_2=\mu_2\circ T^{-1}$, one has
\begin{equation*}
 \int_{TM} h_\nu\,dx_1dx_2=\int_{M} h_\mu\,dx_1dx_2=\int_{TM} h_\mu\circ T\,dx_1dx_2,
\end{equation*}
which implies $h_\nu=h_\mu\circ T$ in this example.

To compute the $\nu_1$ measure on $T\Gamma$, we parametrise the curve $T\Gamma$ by $T\Gamma=(\sigma_c(t), t)$ for $t\in [0,1]$, where $\sigma_c(t)=c+\frac{\cosh{(2t)-1}}{2}$, for $c=1.5, 3.5$. Furthermore, by using the fact that $h_\nu=h_\mu\circ T$, one has
\begin{equation*}
h_\nu(\sigma_c(t), t)=h_\mu(c, t)=\restr{\frac{\sin(\pi c)+2}{8}}{c=1.5,3.5}=\frac{1}{8},
\end{equation*}
 for all $t\in [0, 1]$. Therefore
\begin{align*}
 \nu_{1}(T\Gamma)
 &= \int_0^1 \sqrt{1+\left|\frac{\partial \sigma_{1.5}}{\partial t}(t)\right|^2} \cdot h_\nu(\sigma_{1.5}(t), t) \,dt+\int_0^1 \sqrt{1+\left|\frac{\partial \sigma_{3.5}}{\partial t}(t)\right|^2}\cdot h_\nu(\sigma_{3.5}(t), t) \,dt\\
  &=\frac{2}{8}\int_0^1 \sqrt{1+\left|\frac{\partial \sigma_{1.5}}{\partial t}\right|^2}\,dt\\
  &=\frac{2}{8}\int_0^1 \sqrt{1+\sinh^2(2 t)} \,dt=0.4534.
\end{align*}
Thus the $\nu_1$ measure of $T\Gamma$ is almost double that of the $\mu_1$ measure of $\Gamma$.
Correspondingly, the numerator in (\ref{cheeger0}) will be undesirably large.
In Section \ref{sect631} we show how to use our new machinery to find an improved choice for $\Gamma$ that takes into account both the weight $h_\mu$ \textit{and} the dynamics of $T$.

\section{The dynamic isoperimetric problem on weighted manifolds}\label{sec:dcc}

Our goal is to detect Lagrangian coherent structures on the weighted Riemannian manifold $(M, m, \mu_r)$; i.e.\ subsets of $M$ that resist mixing with the surrounding phase space by having persistently small boundary size to internal size.
Following \cite{froyland14}, we introduce a version of the \emph{dynamic isoperimetric problem}, generalised to the situation where the dynamics need not be volume preserving, and occurs on a possibly weighted, possibly curved manifold.

Let $\Gamma$ be a compact $C^\infty$-hypersurface in $M$ that disconnects $M$ into two disjoint open subsets $M_{1}$ and $M_{2}$ with $M_{1}\cup\Gamma\cup M_{2}=M$.
To begin with, we model the dynamics as a single iterate of $T$.
 The subsets $M_1$ and $M_2$ are transformed into $N_1:=TM_1$ and $N_2:=TM_2$, with $T\Gamma$ the disconnecting surface separating $N_1$ and $N_2$ in $N$. Consider the following optimisation problem:
\begin{definition}\label{def:cc}
 Define the \emph{dynamic Cheeger ratio} $\mathbf{H}^D$ by
 \begin{equation}\label{eq:cr}
 \mathbf{H}^D(\Gamma)=\frac{\mu_{r-1}(\Gamma)+\nu_{r-1}(T\Gamma)}{2\min\{\mu_r(M_1),\mu_r(M_2)\}}.
 \end{equation}
 The \emph{dynamic isoperimetric problem} is defined by the optimisation problem
  \begin{equation}\label{eq:cc}
 \mathbf{h}^D=\inf_\Gamma \{\mathbf{H}^D(\Gamma)\},
 \end{equation}
 where $\Gamma$ varies over all $C^\infty$-hypersurfaces in $M$ that partition $M$ into $M=M_1\cup \Gamma \cup M_2$. The number $\mathbf{h}^D$ is called the \emph{dynamic Cheeger constant}.
\end{definition}
Note that by the definition of $\nu_r$, one has $\mu_r(M_1)=\nu_r(N_1)$ and $\mu_r(M_2)=\nu_r(N_2)$.
Importantly, one does \emph{not} have $\mu_{r-1}(\Gamma)=\nu_{r-1}(T\Gamma)$ in general, because $n$ is not necessary the push-forward of $m$ (see also the direct computation in Section \ref{example1}). Thus, one could rewrite \eqref{eq:cr} as
\begin{equation}
\label{eq:cc2}
 \mathbf{H}^D(\Gamma)=\frac{\mu_{r-1}(\Gamma)}{2\min\{\mu_r(M_1),\mu_r(M_2)\}}+\frac{\nu_{r-1}(T\Gamma)}{2\min\{\nu_r(TM_1),\nu_r(TM_2)\}}.
\end{equation}
 By searching over all $C^\infty$-hypersurfaces $\Gamma$ in $M$ to minimise $\mathbf{H}^D(\Gamma)$, the first ratio term of \eqref{eq:cc2} attempts to minimise mixing between the subsets $M_1$ and $M_2$ across the boundary $\Gamma$, through the mechanism of small co-dimensional $1$ mass $\mu_{r-1}(\Gamma)$ at the initial time, and small  co-dimensional $1$ mass $\nu_{r-1}(T\Gamma)$ at the final time.
 Having a persistently small boundary is consistent with slow mixing in the presence of small magnitude diffusion, and is also consistent with measures of mixing adapted to purely advective dynamics such as the mix-norm \cite{MathewMezicPetzold} and negative index Sobolev space norms \cite{Thiffeault_review_nonlinearity2012}.
 The reason for the constraint $\min\{\mu_r(M_1), \mu_r(M_2)\}$ is to ensure that  $M_1$ and $M_2$ found, \emph{both} have macroscopic $r$-dimensional mass to avoid trivial solutions.
 Thus, the optimal solution for \eqref{eq:cc} is a $C^\infty$-hypersurface that represents an excellent candidate for a Lagrangian coherent structure, in the sense that the corresponding subsets $M_1$ and $M_2$ are able to retain their resistance to mixing in the presence of the prescribed dynamics $T$.

  To see why this problem is a truly dynamic problem, consider the $2$-dimensional flat cylinder $[0, 4)/\sim\times[0,1]$ described in Section \ref{example1}.
  The hypersurface $\Gamma=\{x\in M: x_1=1.5, 3.5\}$ partitions $M$ into two disjoint subsets $M_1=(1.5, 3.5)\times [0,1]$ and $M_2=[0, 1.5)\times [0,1]\cup(3.5, 4)\times [0,1]$, forming the partition $M=M_1\cup \Gamma \cup M_2$. It is straightforward to calculate $\mu_2(M_1)=\mu_2(M_2)=0.5$. We note that $\Gamma$ is optimally minimising for the first ratio term of \eqref{eq:cc2}; thus mixing is minimised between $M_1$ and $M_2$. However, under the action of $T$, the $\nu_1$ measure on $T\Gamma$ is almost doubled (from 0.25 to 0.4534). Thus, the sets $M_1$ and $M_2$ are not able to maintain their resistance to mixing, and therefore are poor candidates for LCSs.

\subsection{Multiple discrete time steps and continuous time}\label{sec:mts}

The ``single iterate'' problem described above can easily be extended to multiple discrete time steps or continuous time.
Let $\{(M^i, m^i, \mu_{r, i})\}_{i=1}^k$ be $k$, $r$-dimensional weighted Riemannian manifolds, where each $M^1, M^2, \ldots, M^k$ is $C^\infty$, compact, and connected. For each $1\leq i\leq k$, define co-dimension $1$ measures $\mu_{r-1, i}$ on $M^i$ via the densities $h_i$ of each $\mu_{r, i}$ analogous to \eqref{eq:omega_m^{r-1}}. Let us now consider a composition of several maps $T_1, T_2, \ldots, T_{k-1}$, such that $T_i(M^i)=M^{i+1}$, $T_0$ the identity and $\mu_{r,i}=\mu_{r, i+1}\circ T_i$, for $i=1, 2, \ldots k-1$. Denoting $T^{(i)}=T_i\circ \cdots\circ T_2\circ T_1$, $i=1,\ldots, k-1$. These maps might arise, for example, as time-$\tau$ maps of a time-dependent flow. If we wish to track the evolution of a coherent set under these maps, penalising the boundary of the evolved set $T^{(i)}(\Gamma)$ after the application of each $T_i$, then we can define
\begin{equation}\label{eq:Hdisc}
 \mathbf{H}_k^D(\Gamma):=\frac{\frac{1}{k}\sum_{i=0}^{k-1} \mu_{r-1, i+1}(T^{(i)}\Gamma)}{\min\{\mu_{r, 1}(M_1), \mu_{r, 1}(M_2)\}},
\end{equation}
and consider the time-discrete dynamic optimisation problem
\begin{equation}\label{eq:hdisc}
 \mathbf{h}_k^D:=\inf_{\Gamma} \mathbf{H}^D_k(\Gamma),
\end{equation}
as a natural generalisation of $\mathbf{h}^D$.

 In continuous time, we consider an evolving Riemannian manifold $M(t)$, $t\in [0, \tau]$ under a (possibly time-dependent) ODE $\dot{x}=F(x, t)$, where $F(x, t)$ is $C^\infty$ at each $x\in M(t)$; i.e the initial manifold $M(0)$ is transformed under the smooth flow maps $T^{(t)}:M(0)\to M(t)$ arising from $F$ for each $t\in [0, \tau]$. We denote the Riemannian metric on $M(t)$ by $m^t$, and define absolutely continuous probability measures $\mu_{r, t}$ on $M(t)$ for each $t\in [0, \tau]$; one has an evolving weighted Riemannian manifold $(M(t), m^t, \mu_{r, t})$. Note that the metrics $m^t$ need not be related for different $t$. For all $t\in [0, \tau]$, we assume $\mu_{r, 0}=\mu_{r, t}\circ T^{(t)}$ on $M(t)$. Define
\begin{equation}\label{eq:Hcont}
 \mathbf{H}_{[0,\tau]}^D(\Gamma):=\frac{\frac{1}{\tau}\int_0^\tau \mu_{r-1, t}(T^{(t)}\Gamma)\,dt}{\min\{\mu_{r, 0}(M_1(t)), \mu_{r, 0}(M_2(t))\}},
\end{equation}
and
\begin{equation}\label{eq:hcont}
 \mathbf{h}_{[0,\tau]}^D:=\inf_{\Gamma} \mathbf{H}^D_{[0, \tau]}(\Gamma),
\end{equation}
as a time-continuous generalisation of $\mathbf{h}^D$.
See Section 3.3 \cite{froyland14} for analogous constructions in the unweighted, volume-preserving situation.

\subsection{A dynamic Federer-Fleming theorem on weighted manifolds}\label{sec:dff}
 Our first result on dynamic isoperimetry is the dynamic version of the Federer-Fleming theorem, which links $\mathbf{h}^D$ with a function-based optimisation problem.
The \emph{gradient} of $f\in C^1(M, \mathbb{R})$ denoted by $\nabla_m f$ is a vector field satisfying
\begin{equation}\label{eq:grad}
 m( \nabla_m{f}, \mathcal{V})=\mathcal{V}f,
\end{equation}
for all $\mathcal{V}\in \mathcal{F}^k(M)$.
Following \cite{froyland14}, we define the \emph{dynamic Sobolev constant}:
\begin{definition}
 Define the \textup{dynamic Sobolev constant} $\mathbf{s}^D$ by
 \begin{equation}\label{eq:sd}
  \mathbf{s}^D =\inf_{f} \frac{\int_M |\nabla_m f|_m\, d\mu_r+\int_{N} |\nabla_n (f\circ T^{-1}) |_n\, d\nu_r}{2\inf_{\alpha\in \mathbb{R}} \int_M |f-\alpha|_m\, d\mu_r}
 \end{equation}
 where $f:M\to \mathbb{R}$ varies over all $C^\infty$ functions on $M$.
\end{definition}

The dynamic Sobolev constant $\mathbf{s}^D$ defined above admits the following geometric interpretation: consider the numerator of $\mathbf{s}^D$, one can show (by Lemma \ref{thm:ca} in the appendix) that
\begin{equation*}\label{eq:sdt}
 \int_M |\nabla_m f|_m \,d\mu_r=\int_{-\infty}^\infty \mu_{r-1}(\{f=t\})\,dt,
\end{equation*}
and,
\begin{align*}
 \int_N |\nabla_n \mathcal{L}f|_n \,d\nu_r
 &=\int_{-\infty}^\infty \nu_{r-1}(\{\mathcal{L}f=t\})\,dt.\\
 &=\int_{-\infty}^\infty \nu_{r-1}(T\{f=t\})\,dt,
\end{align*}
where the final equality is due to Proposition \ref{thm:Lf} in the appendix. Furthermore, there is a deep connection between $\mathbf{s}^D$ and the dynamic Cheeger constant $\mathbf{h}^D$. One has

\begin{theorem}[Dynamic Federer-Fleming theorem]\label{thm:dff}
 Let $(M, m, \mu_r)$ and $(N, n, \nu_r)$ be weighted Riemannian manifolds, where $M$ and $N$ are $C^\infty$, compact and connected. Let $T:M\to N$ be a $C^\infty$ diffeomorphism, with $\nu_r=\mu_r\circ T^{-1}$. Assume the density of $\mu_r$ is $C^\infty$ and uniformly bounded away from zero.  Define $\mathbf{h}^D$ and $\mathbf{s}^D$ by \eqref{eq:cc} and \eqref{eq:sd} respectively. Then
 \begin{equation}
 \label{eq:4.11}
 \mathbf{s}^D=\mathbf{h}^D.
 \end{equation}
\end{theorem}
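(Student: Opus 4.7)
The strategy is the standard two-inequality proof of a Federer--Fleming type theorem, adapted to the dynamic, weighted, curved setting. The essential tools are already flagged in the excerpt: the weighted coarea formula (Lemma \ref{thm:ca}) that writes $\int_M |\nabla_m f|_m \,d\mu_r$ and $\int_N |\nabla_n \mathcal{L}f|_n \,d\nu_r$ as integrals of level-set masses $\mu_{r-1}(\{f=t\})$ and $\nu_{r-1}(T\{f=t\})$ respectively, and Proposition \ref{thm:Lf} identifying $\{\mathcal{L}f=t\}$ with $T\{f=t\}$. Combined with Sard's theorem (so almost every $t$ is a regular value and the level sets are admissible $C^\infty$ hypersurfaces), these two ingredients will allow each side of the inequality $\mathbf{s}^D = \mathbf{h}^D$ to be reduced to a comparison between pointwise ratios and their weighted averages.

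For the inequality $\mathbf{h}^D \leq \mathbf{s}^D$, I would fix a trial $f\in C^\infty(M)$ and choose a $\mu_r$-median $m_f$ of $f$, so that $\inf_{\alpha}\int_M |f-\alpha|\,d\mu_r = \int_M |f-m_f|\,d\mu_r$. Splitting $f - m_f = f_+ - f_-$ into its positive and negative parts, each superlevel set $\{f_\pm > t\}$ has $\mu_r$-mass at most $1/2$ by the median property, so it is an admissible $M_1$ in the definition of $\mathbf{h}^D$. Applying $\mathbf{h}^D \leq \mathbf{H}^D(\{f_\pm = t\})$ for almost every $t>0$ gives
\begin{equation*}
2\mathbf{h}^D\,\mu_r(\{f_\pm > t\}) \;\leq\; \mu_{r-1}(\{f_\pm=t\}) + \nu_{r-1}(T\{f_\pm=t\}).
\end{equation*}
Integrating $t$ over $(0,\infty)$, using Cavalieri on the left and coarea (together with Proposition \ref{thm:Lf}) on the right, then summing the $+$ and $-$ contributions yields $2\mathbf{h}^D \int_M |f-m_f|\,d\mu_r \leq \int_M |\nabla_m f|_m\,d\mu_r + \int_N |\nabla_n \mathcal{L}f|_n\,d\nu_r$. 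Taking the infimum over $f$ gives the claim.

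For the reverse $\mathbf{s}^D \leq \mathbf{h}^D$, I would take any disconnecting $C^\infty$ hypersurface $\Gamma$ with, say, $\mu_r(M_1) \leq \mu_r(M_2)$, and construct a sequence of smooth plateau functions $f_n$ approximating $\chi_{M_1}$. Concretely, using a $C^\infty$ mollification of the signed distance to $\Gamma$ in a collar neighbourhood (on which the Riemannian exponential is a diffeomorphism), $f_n$ can be taken equal to $1$ on $M_1$ outside a $1/n$-collar and supported inside $M_1$ together with a shrinking neighborhood of $\Gamma$ on the $M_2$ side, with $|\nabla_m f_n|_m$ essentially the mollified indicator of $\Gamma$. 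The weighted coarea formula then gives $\int_M |\nabla_m f_n|_m\,d\mu_r \to \mu_{r-1}(\Gamma)$. Since $T$ is a $C^\infty$ diffeomorphism between compact manifolds, the pushed-forward function $\mathcal{L} f_n = f_n \circ T^{-1}$ is smooth, approximates $\chi_{TM_1}$, and its gradient in $n$ concentrates on $T\Gamma$; coarea in $(N,n,\nu_r)$ gives $\int_N |\nabla_n \mathcal{L}f_n|_n\,d\nu_r \to \nu_{r-1}(T\Gamma)$. The denominator $\inf_\alpha \int_M |f_n - \alpha|\,d\mu_r$ is realised at a median of $f_n$, which tends to $0$, producing $\min\{\mu_r(M_1),\mu_r(M_2)\}$ in the limit.

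The main obstacle I anticipate is the constructive smoothing step in the second direction: on a general curved manifold with a non-flat metric $m$ and a non-trivial density $h_\mu$, I need smooth $f_n$ whose weighted gradient masses converge to the induced $(r-1)$-measures $\mu_{r-1}(\Gamma)$ and $\nu_{r-1}(T\Gamma)$ \emph{simultaneously}. The cleanest way is probably to build $f_n$ from a tubular neighbourhood of $\Gamma$ in the metric $m$, then control the corresponding tube around $T\Gamma$ via the bi-Lipschitz bounds on $T$ guaranteed by the uniform Jacobian estimate from Appendix \ref{sec:B3}, and finally invoke coarea and dominated convergence using Sard regularity of the level sets. The remaining identifications (median passage to $\min\{\mu_r(M_1),\mu_r(M_2)\}$, and regular-value selection in the coarea step of the first direction) are standard once the weighted coarea statement Lemma \ref{thm:ca} is in hand.
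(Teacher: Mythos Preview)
Your proposal is correct and follows essentially the same two-inequality strategy as the paper: the direction $\mathbf{h}^D\leq\mathbf{s}^D$ via median splitting, coarea (Lemma~\ref{thm:ca}), Proposition~\ref{thm:Lf} and Cavalieri is exactly what the paper cites from \cite{froyland14}; and for $\mathbf{s}^D\leq\mathbf{h}^D$ the paper likewise builds test functions from the signed distance to $\Gamma$ in a shrinking collar and passes to the limit. The only cosmetic differences are that the paper uses the $[-1,1]$-valued profile $f_\epsilon$ of \eqref{eq:fe} (and then mollifies via Lemma~\ref{thm:molli}) rather than a $[0,1]$-valued plateau approximating $\chi_{M_1}$, and it makes the $N$-side limit explicit through the continuity lemma for level-set measure (Lemma~\ref{thm:contmu}) and the tube-volume lemma (Lemma~\ref{thm:alw}), which is precisely the mechanism underlying the ``coarea $+$ dominated convergence'' step you sketch.
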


\begin{proof}
The inequality $\mathbf{s}^D\geq\mathbf{h}^D$ is a straightforward modification of the corresponding result in \cite{froyland14} (Theorem 3.1). The other direction is deferred to the appendix.
\end{proof}

Furthermore, in the notation of Section \ref{sec:mts} one can define the continuous time-step dynamic Sobolev constant for continuous-time dynamics by
\begin{equation*}
\mathbf{s}^D_{[0, \tau]}=\inf_f  \frac{\frac{1}{\tau}\int_0^\tau\left(\int_{M(t)}|\nabla_{m^t} (f\circ T^{(-t)})|_{m^t}\,d\mu_{r, t}\right)\,dt}{\inf_\alpha \int_{M(0)} |f-\alpha|\,d\mu_{r, 0}}.
\end{equation*}
Again by the linearity of our construction, it is straightforward to obtain a dynamic Federer-Fleming theorem for continuous-time dynamics; that is $$\mathbf{s}^D_{[0, \tau]}=\mathbf{h}^D_{[0, \tau]}.$$
The proof is obtained by a straightforward modification of the proof of Theorem \ref{thm:dff} analogous to the continuous-time modification in the proof of Corollary 3.3 in \cite{froyland14}.

\section{The dynamic Laplace operator on weighted manifolds}\label{sec:di}

In this section, we further develop the theory of dynamic isoperimetry established for $\mathbb{R}^r$ in \cite{froyland14}, to obtain results that hold on weighted, non-flat Riemannian manifolds $(M, m, \mu_r)$ for non-volume-preserving dynamics. More precisely, we define the dynamic Laplace operator and state and prove dynamic versions of Cheeger's inequality.
The dynamic Laplace operator will be the key object in the computation of solutions of the dynamic isoperimetry problem.

\subsection{The dynamic Laplace-Beltrami operator}\label{sec:dfop}
Classical isoperimetric theory has deep connections with the Laplace-Beltrami operator (see \cite{buser82,chavel84,kawohl03,milman09}). It is well known that one can recover certain geometrical information about a manifold $M$ from the spectrum of Laplace-Beltrami operator \cite{reuter06,rustamov07}. In this work, our domain of interest is a weighted Riemannian manifold $(M, m, \mu_r)$. The dynamics $T$ maps $M$ onto $N=T(M)$.  The geometric properties of $N$ can be drastically different to $M$, and we are motivated to construct an operator on $(M, m, \mu_r)$ whose spectrum reveals important geometric structures on both $M$ and $N$.

For an unweighted Riemannian manifold $M$, the standard Laplace-Beltrami operator is defined as the composition of the divergence with the gradient \cite{chavel84}, with $\nabla_m$ defined by (\ref{eq:grad}).
Let $U\subseteq M$ be open, with $C^\infty$ boundary $\partial U$ and unit normal bundle $\mathbf{n}$ along $\partial U$; i.e.\ for $\mathcal{W}\in \mathcal{F}^k(\partial U)$, $m(\mathcal{W}, \mathbf{n})(x)=0$ for all $x\in \partial U$.
The \emph{divergence} of $\mathcal{V}\in \mathcal{F}^1(M)$, denoted by $\divg_m \mathcal{V}$ is a function satisfying
\begin{equation}\label{eq:div}
\int_U \divg_m \mathcal{\mathcal{V}}\cdot \omega_m^r:=\int_{\partial U}m(\mathcal{V}, \mathbf{n})\cdot \omega_m^{r-1},
 \end{equation}
for all open $U\subseteq M$. The \emph{Laplace-Beltrami} operator acting on a function $f\in C^2(M, \mathbb{R})$ is defined by $\triangle_mf:=\divg_m (\nabla_m f)$.

Recall that in the setting of a weighted Riemannian manifold $(M, m, \mu_r)$, if $h_\mu$ is the density of $\mu_r$, then when computing weighted volumes, the volume form $\omega_m^r$ is scaled by $h_\mu$ at each point in $M$. According to the definition \eqref{eq:grad}, the gradient does not depend on the weight $h_\mu$, because the metric $m$ is independent of $h_\mu$. However, the divergence given by \eqref{eq:div} does depend on $h_\mu$ because it is defined in terms of $\omega_m^r$. We define the \emph{weighted divergence} $\divg_\mu$ of a $\mathcal{V}\in \mathcal{F}^1(M)$ for $(M, m, \mu_r)$ by
 \begin{equation}\label{eq:wdiv}
  \divg_\mu \mathcal{V}:=\frac{1}{h_\mu}\divg_m (h_\mu\mathcal{V}),
 \end{equation}
where the density $h_\mu$ of $\mu_r$ is assumed to be $C^1(M, \mathbb{R})$.
Note that by \eqref{eq:div}
 \begin{equation}\label{eq:wdivth}
  \int_U(\divg_{\mu} \mathcal{V})\cdot h_\mu\omega_m^r= \int_U \divg_m{(h_\mu \mathcal{V})}\cdot\omega_m^r=\int_{\partial U} m( h_\mu \mathcal{V}, \mathbf{n}) \cdot \omega_m^{r-1}=\int_{\partial U} m(\mathcal{V}, \mathbf{n})\cdot h_\mu\omega_m^{r-1}.
 \end{equation}
Hence, the definition \eqref{eq:wdiv} for weighted divergence is analogous to the unweighted version \eqref{eq:div}.

Now as a consequence of \eqref{eq:wdiv} and the well-known fact that $\divg_m(h_\mu\mathcal{V})=\divg_m(\mathcal{V})+m(\nabla_m h_\mu, \mathcal{V})$ (see e.g equation (13) p.3 in \cite{chavel84}), one has the following definition for the \emph{weighted Laplacian} on a weighted Riemannian manifold $(M, m, \mu_r)$:
\begin{equation}\label{def:wlp}
\triangle_\mu f:=\divg_\mu {(\nabla_m f)}=\frac{1}{h_\mu} \divg_m{(h_\mu \nabla_m f)}=\triangle_m f + \frac{m( \nabla_m h_\mu, \nabla_m f)}{h_\mu},
\end{equation}
for all $f\in C^2(M, \mathbb{R})$. Analogous to \eqref{def:wlp}, one forms the weighted Laplacian $\triangle_\nu$ on $N$ with respect to the metric $n$ and density $h_\nu$ for the weighted Riemannian manifold $(N, n, \nu_r)$.

We now describe the construction of the \emph{dynamic} version of $\triangle_\mu$. The crux of the construction is the push-forward and pullback of functions between $L^2(M, m, \mu_r)$ and $L^2(N, n, \nu_r)$. To track the transformation of a function in $L^1(M, V_m)$ under $T$, the standard tool in dynamical systems is the Perron-Frobenius operator $\mathcal{P}:L^{1}(M, V_m)\to L^1(N, V_n)$ given by
\begin{equation}\label{eq:P-F}
\int_U \mathcal{P}h\cdot\omega_n^r=\int_{T^{-1}U} h\cdot\omega_m^r,
\end{equation}
for all measurable $U\subset N$. For a point-wise definition of $\mathcal{P}$, see \eqref{eq:P-F2} in the appendix. Recalling $h_\nu \in L^1(N, V_n)$ is the density of $\nu_r$ with respect to $\omega_m^r$, and the fact that $\nu_r=\mu_r\circ T^{-1}$, one has
\begin{equation}\label{eq:CalP}
\int_U \mathcal{P}h_\mu\cdot\omega_n^r=\int_{T^{-1}U}h_\mu\cdot\omega_m^r=\mu_r(T^{-1}U)=\nu_r(U)=\int_U h_\nu\cdot \omega_n^r,
\end{equation}
for all measurable $U$ in $N$. Therefore, $h_\nu=\mathcal{P}h_\mu$. We define the push-forward operator $\mathcal{L}:L^2(M, m, \mu_r)\to L^2(N, n, \nu_r)$ (from \cite{froyland13}) by
\begin{equation}\label{eq:pfw}
 \mathcal{L}f:=\frac{\mathcal{P}(f\cdot h_\mu)}{h_\nu}.
\end{equation}
\begin{lemma}
\label{lem:welldefL_init}
The operator $\mathcal{L}:L^2(M, m, \mu_r)\to L^2(N, n, \nu_r)$ is well defined, may be expressed as $\mathcal{L}f=f\circ T^{-1}$, and has adjoint $\mathcal{L}^*g=g\circ T$.
\end{lemma}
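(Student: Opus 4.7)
The plan is to first identify the pointwise formula $\mathcal{L}f=f\circ T^{-1}$, then use this formula to verify boundedness and compute the adjoint by change of variables.

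First I would unpack the Perron-Frobenius operator pointwise. Since $T$ is a $C^\infty$-diffeomorphism, a standard change-of-variables argument applied to \eqref{eq:P-F} gives the pointwise formula $\mathcal{P}h = (h\circ T^{-1})\cdot J_{T^{-1}}$, where $J_{T^{-1}}$ is the Jacobian determinant of $T^{-1}$ with respect to the volume forms $\omega_m^r$ and $\omega_n^r$ (this is essentially \eqref{eq:P-F2}, invoked in the excerpt). Applying this with $h=f\cdot h_\mu$ yields $\mathcal{P}(f\cdot h_\mu)=(f\circ T^{-1})\cdot(h_\mu\circ T^{-1})\cdot J_{T^{-1}}=(f\circ T^{-1})\cdot\mathcal{P}h_\mu=(f\circ T^{-1})\cdot h_\nu$, using $h_\nu=\mathcal{P}h_\mu$ from \eqref{eq:CalP}. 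Dividing by $h_\nu$, which is licit since the density $h_\nu$ is assumed uniformly bounded away from zero, gives $\mathcal{L}f=f\circ T^{-1}$.

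Next I would check that $\mathcal{L}$ is well-defined (in fact an isometry) as a map $L^2(M,m,\mu_r)\to L^2(N,n,\nu_r)$. Using the pointwise formula together with the mass-conservation identity $\nu_r=\mu_r\circ T^{-1}$ and the ordinary change-of-variables formula for pushforward measures, one has
\begin{equation*}
\|\mathcal{L}f\|_{L^2(N,n,\nu_r)}^2=\int_N|f\circ T^{-1}|^2\,d\nu_r=\int_M|f|^2\,d\mu_r=\|f\|_{L^2(M,m,\mu_r)}^2,
\end{equation*}
so $\mathcal{L}$ is a well-defined bounded linear operator of norm one.

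Finally, I would identify the adjoint by the same change of variables. For $f\in L^2(M,m,\mu_r)$ and $g\in L^2(N,n,\nu_r)$,
\begin{equation*}
\langle \mathcal{L}f,g\rangle_{L^2(N,n,\nu_r)}=\int_N(f\circ T^{-1})\cdot g\,d\nu_r=\int_M f\cdot(g\circ T)\,d\mu_r=\langle f,g\circ T\rangle_{L^2(M,m,\mu_r)},
\end{equation*}
which shows $\mathcal{L}^*g=g\circ T$; an analogous $L^2$-norm computation confirms that $g\mapsto g\circ T$ maps $L^2(N,n,\nu_r)\to L^2(M,m,\mu_r)$. The only technical point worth guarding against is the implicit appeal to the Perron-Frobenius change of variables for the non-Euclidean volume forms $\omega_m^r$ and $\omega_n^r$; this is standard but I would flag the uniform positivity and boundedness of the Jacobian (noted earlier in the paper via the densities being bounded away from zero) to justify the pointwise manipulations and divisions. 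Beyond this, there is no serious obstacle: the lemma reduces to carefully bookkeeping one change of variables.
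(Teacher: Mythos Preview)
Your proof is correct and uses the same essential ingredients as the paper's (the pointwise Perron--Frobenius formula \eqref{eq:P-F2} and the change of variables via $\nu_r=\mu_r\circ T^{-1}$), but your ordering and your adjoint computation are cleaner. The paper first establishes well-definedness \emph{before} identifying the pointwise formula, via a factorisation trick $|\mathcal{P}(f h_\mu)|^2=\mathcal{P}(f^2 h_\mu)\cdot h_\nu$; you instead derive $\mathcal{L}f=f\circ T^{-1}$ first and then read off the isometry property directly, which is more economical. For the adjoint, the paper argues indirectly through characteristic functions (computing $\mathcal{L}(\mathbf{1}_U)$ and then testing against $\mathbf{1}_V$), whereas your single-line inner-product computation $\int_N (f\circ T^{-1})\,g\,d\nu_r=\int_M f\,(g\circ T)\,d\mu_r$ is both shorter and more transparent. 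Neither approach requires anything the other doesn't; yours just avoids a couple of detours.
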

The proof of this result is given in the appendix (see Lemma \ref{lem:welldefL}).
\begin{definition}\label{def:dwc}
Assume the density of $\mu_r$ is $C^1(M, \mathbb{R})$. Define the \emph{dynamic Laplacian} $\triangle^D:C^2(M, \mathbb{R})\to C^0(M, \mathbb{R})$ by
\begin{equation}\label{eq:dwc}
 \triangle^D:=\frac{1}{2}(\triangle_\mu+\mathcal{L}^*\triangle_\nu \mathcal{L}),
\end{equation}
where the weighted Laplacians $\triangle_\mu, \triangle_\nu$ are given by \eqref{def:wlp}, and $\mathcal{L}$, $\mathcal{L}^*$ are defined above.
\end{definition}

The first term in the RHS of \eqref{eq:dwc} is the weighted Laplacian $\triangle_\mu$ on $f\in C^2(M, \mathbb{R})$. The second term pushes $f$ forward by $\mathcal{L}$ to the function $\mathcal{L}f$. This is then followed by the application of the weighted Laplacian $\triangle_\nu$ to the function $\mathcal{L}f$. The weighted Laplacian $\triangle_\nu$ provides geometric information on the weighted manifold $(N, n, \nu_r)$. The result is finally pulled back to a continuous function on $M$ via $\mathcal{L}^*$. For example, consider the familiar setting of $(M, e, \ell)$, where $M$ is an open subset of $\mathbb{R}^r$, with $\ell$ the Lebesgue measure on $M$ and $e$ the standard Euclidean metric (i.e\ on each point in $M$, $e_{ij}=\delta_{ij}$ for all $1\leq i, j, k\leq r$). If $T:M\to N$ is volume preserving, then in the standard Euclidean coordinates $\{x_i\}_{i=1}^r$ for $M$, and $\{y_i\}_{i=1}^r$ for $N$, one has
\begin{equation*}
 \triangle^Df=\frac{1}{2}\sum_{i=1}^r \left( \frac{\partial^2 f}{\partial x_i^2} +  \frac{\partial^2 (f\circ T^{-1})}{\partial y_i^2}\circ T \right),
\end{equation*}
for all $f\in C^2(M, \mathbb{R})$, where $\triangle^D$ is precisely the definition of the dynamic Laplacian in \cite{froyland14} (where it is denoted by $\hat{\triangle}$).

 Corollary \ref{thm:dwlp2} in the appendix provides an alternate representation of $\triangle^D$:
 \begin{equation}\label{eq:dwlp2}
 \triangle^Df=\frac{1}{2}\left(\triangle_m +\mathcal{L}^*\triangle_n \mathcal{L}  \right)f+\frac{1}{2}\left(\frac{m(\nabla_m h_\mu, \nabla_m f)}{h_\mu}+\frac{n(\nabla_n h_\nu, \nabla_n \mathcal{L}f)\circ T}{h_\nu\circ T}\right).
\end{equation}
The effect of the densities $h_\mu$, $h_\nu$ is completely captured by the terms in the second parentheses of \eqref{eq:dwlp2}.
Finally and importantly,  we have
\begin{proposition}
\label{PropLB}
The operator $\triangle^D$ may be represented as
 \begin{equation}\label{eq:laplace3}
  \triangle^D=\frac{1}{2}(\triangle_\mu+\triangle_{\tilde{\mu}})f,
 \end{equation}
 where $\triangle_{\tilde{\mu}}$ is the weighted Laplace-Beltrami operator on $M$ defined by \eqref{def:wlp} with respect to the metric $T^*n$ and density $\mathcal{L}^*h_\nu=h_\nu\circ T$.
 \end{proposition}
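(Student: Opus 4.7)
The plan is to show that $\mathcal{L}^{*}\triangle_{\nu}\mathcal{L} = \triangle_{\tilde\mu}$, from which \eqref{eq:laplace3} follows immediately by Definition~\ref{def:dwc}. The key observation, which drives everything, is that by the defining property \eqref{eq:pbm} of the pullback metric, $T:(M,T^{*}n)\to(N,n)$ is an isometry, and $\tilde\mu$ is precisely the measure on $(M,T^{*}n)$ whose density with respect to $\omega^{r}_{T^{*}n}$ is $h_\nu\circ T = \mathcal{L}^{*}h_\nu$. So morally the claim is that the weighted Laplace--Beltrami operator is natural under isometries of weighted manifolds; the work lies in checking this carefully with the particular definitions used here.

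First I would verify, at the level of unweighted quantities, three transformation rules. (i) \emph{Volume forms pull back:} $T^{*}\omega_{n}^{r}=\omega_{T^{*}n}^{r}$, and similarly on hypersurfaces $T^{*}\omega_{n}^{r-1}=\omega_{T^{*}n}^{r-1}$. (ii) \emph{Gradients pull back:} for $f\in C^{1}(M,\mathbb{R})$, writing $f=(\mathcal{L}f)\circ T=\mathcal{L}^{*}\mathcal{L}f$, the definitions \eqref{eq:grad}, \eqref{eq:T_*}, \eqref{eq:pbm} give, for any $\mathcal{V}\in\mathcal{F}^{k}(M)$,
\begin{equation*}
T^{*}n\bigl(\nabla_{T^{*}n}f,\mathcal{V}\bigr)(x)=\mathcal{V}f(x)=(T_{*}\mathcal{V})(\mathcal{L}f)(Tx)=n\bigl(\nabla_{n}\mathcal{L}f,T_{*}\mathcal{V}\bigr)(Tx),
\end{equation*}
which forces $T_{*}\nabla_{T^{*}n}f=\nabla_{n}\mathcal{L}f$ as vector fields on $N$. (iii) \emph{Divergences pull back:} for $\mathcal{V}\in\mathcal{F}^{1}(M)$ and $U\subset M$ open with smooth boundary, applying \eqref{eq:div} on $(N,n)$ to $T_{*}\mathcal{V}$ over $TU$, and changing variables by $T$ on both the bulk and boundary integrals using (i), yields
\begin{equation*}
\divg_{T^{*}n}\mathcal{V}=(\divg_{n}T_{*}\mathcal{V})\circ T=\mathcal{L}^{*}\divg_{n}T_{*}\mathcal{V}.
\end{equation*}
Unit normals along $\partial U$ in the $T^{*}n$-metric push forward to unit normals along $T\partial U$ in $n$ because $T$ is an isometry, so the surface integral matches up correctly; this is the only genuinely delicate point.

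Once (i)--(iii) are in hand, the weighted version follows mechanically. Using the defining formula \eqref{eq:wdiv} for $\divg_{\tilde\mu}$ with density $\mathcal{L}^{*}h_\nu$ on $(M,T^{*}n)$, together with (iii), and the identity $T_{*}((\mathcal{L}^{*}h_\nu)\mathcal{V})=h_\nu\,T_{*}\mathcal{V}$ (which is immediate from \eqref{eq:T_*} because $\mathcal{L}^{*}h_\nu$ is a function on $M$ and pushforward of a scalar times a vector field scalars out as its $T^{-1}$-composition),
\begin{equation*}
\divg_{\tilde\mu}\mathcal{V}=\frac{1}{\mathcal{L}^{*}h_\nu}\divg_{T^{*}n}\bigl((\mathcal{L}^{*}h_\nu)\mathcal{V}\bigr)=\frac{1}{\mathcal{L}^{*}h_\nu}\mathcal{L}^{*}\divg_{n}(h_\nu T_{*}\mathcal{V})=\mathcal{L}^{*}\divg_{\nu}(T_{*}\mathcal{V}).
\end{equation*}
Specialising to $\mathcal{V}=\nabla_{T^{*}n}f$ and combining with (ii),
\begin{equation*}
\triangle_{\tilde\mu}f=\divg_{\tilde\mu}(\nabla_{T^{*}n}f)=\mathcal{L}^{*}\divg_{\nu}(T_{*}\nabla_{T^{*}n}f)=\mathcal{L}^{*}\divg_{\nu}(\nabla_{n}\mathcal{L}f)=\mathcal{L}^{*}\triangle_{\nu}\mathcal{L}f,
\end{equation*}
giving the claim.

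The main obstacle I expect is the careful verification of (iii), specifically handling the boundary integrals in \eqref{eq:div} under an isometric change of coordinates, i.e.\ that $T_{*}$ sends the $T^{*}n$-unit outward normal on $\partial U$ to the $n$-unit outward normal on $T\partial U$ and that the induced $(r-1)$-volume forms correspond. This can be done either by noting that $T:(M,T^{*}n)\to(N,n)$ is a global isometry (hence preserves all metric-defined objects including unit normals and induced volume forms on any hypersurface) or, if a more hands-on verification is preferred, by working in local charts where $T$ is written out explicitly and comparing Jacobian factors. An equivalent and arguably slicker route would be to characterise both $\mathcal{L}^{*}\triangle_{\nu}\mathcal{L}$ and $\triangle_{\tilde\mu}$ via the Dirichlet form $\int_{M}T^{*}n(\nabla_{T^{*}n}f,\nabla_{T^{*}n}\phi)\,d\tilde\mu_{r}$ using Green's identity and the change of variables $T$, but the direct route above keeps the argument closer to the pointwise formulas already set up in the paper.
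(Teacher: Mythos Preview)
Your proposal is correct and rests on the same three transformation rules that the paper proves as Lemma~\ref{thm:app3.3}: $\nabla_n=T_*\nabla_{T^*n}\mathcal{L}^*$, $\mathcal{L}^*\divg_n T_*=\divg_{T^*n}$, and $\mathcal{L}^*\triangle_n\mathcal{L}=\triangle_{T^*n}$. The difference is in the final assembly. The paper (Corollary~\ref{thm:dwlp3}) goes via the intermediate expansion \eqref{eq:dwc2}, splitting $\triangle_\nu$ into $\triangle_n$ plus the drift term $n(\nabla_n h_\nu,\nabla_n\cdot)/h_\nu$, and then shows separately that $\mathcal{L}^*\triangle_n\mathcal{L}=\triangle_{T^*n}$ and that the drift term pulls back to $T^*n(\nabla_{T^*n}(h_\nu\circ T),\nabla_{T^*n}f)/(h_\nu\circ T)$, so the two pieces reassemble into $\triangle_{\tilde\mu}$ via \eqref{def:wlp}. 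You instead work directly at the level of weighted divergence, establishing $\divg_{\tilde\mu}\mathcal{V}=\mathcal{L}^*\divg_\nu(T_*\mathcal{V})$ in one stroke and then specialising to $\mathcal{V}=\nabla_{T^*n}f$; this is slightly more streamlined and avoids the detour through Corollary~\ref{thm:dwlp2}. On your concern about verifying (iii) via the boundary-integral definition \eqref{eq:div}: the paper sidesteps this entirely by using the differential-form characterisation \eqref{eq:div2}, $\divg_m\mathcal{V}\cdot\omega_m^r=d[i(\mathcal{V})\omega_m^r]$, together with $T^*\omega_n^r=\omega_{T^*n}^r$ (Corollary~\ref{thm:app3.2}), which makes the pullback of divergence a short computation with no normals or induced boundary forms to track.
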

 For the proof, see Corollary \ref{thm:dwlp3} in the appendix.
 We briefly discuss some special cases of Proposition \ref{PropLB}.
 If $(M, m)=(N,n)$, then $\triangle_{\tilde\mu}$ in (\ref{eq:laplace3}) is the weighted Laplace-Beltrami operator on $M$ with respect to the metric $T^*m$ and density $\mathcal{L}^*h_\nu=\frac{h_\mu}{|\det{J_T}|}$, where $J_T$ is the Jacobian matrix associated with $T$ (see \eqref{eq:pdiffT}).
If $N=T(M)\subset \mathbb{R}^d$, $m=n=e$, and $T$ is volume preserving, then $\triangle_{\tilde\mu}$ is the Laplace-Beltrami operator on $M$ with respect to the metric $T^*e$ and density $\mathcal{L}^*h_\nu={h_\mu}$.
Finally, if $h_\mu\equiv 1$ (uniform density) and $T$ is volume preserving, one is in the setting of \cite{froyland14}, and $\triangle_{\tilde\mu}$ in (\ref{eq:laplace3}) is the unweighted Laplace-Beltrami operator with respect to the metric $T^*e$.


\subsection{Continuous time}\label{sec:lts}
We now describe a time-continuous version of (\ref{eq:dwc}).
For the time-continuous case, let $(M(t), m^t, \mu_{r, t})$ be an evolving weighted Riemannian manifold as in Section \ref{sec:mts}, with flow maps $T^{(t)}:M(0)\to M(t)$ arising from a (possible time-dependent) ODE $\dot{x}=F(x, t)$, where $F(x, t)$ is $C^\infty$ at each $x\in M(t)$. We define a time-continuous Perron-Frobenius operator $\mathcal{P}_t:L^1(M(0), \mu_{r, 0})\to L^1(M(t), \mu_{r, t})$ by $\int_{M(t)}\mathcal{P}_t f\cdot \omega_{m^t}^r=\int_{M(0)} f\cdot \omega_{m^0}^r$ for all $t\in [0, \tau]$. One now has the time-continuous push-forward operator $\mathcal{L}_t:L^2(M(0), m^0, \mu_{r, 0})\to L^2(M(t), m^t, \mu_{r, t})$ given by
\begin{equation}
\mathcal{L}_t f:=\frac{\mathcal{P}_t(f\cdot h)}{\mathcal{P}_t h},
\end{equation}
for all $t\in [0, \tau]$, where $h$ is the density of the initial measure $\mu_{r, 0}$.

Define the time-continuous generalisation of $\triangle^D$ as
\begin{equation}\label{eq:lts2}
\triangle^D_{[0, \tau]}f:=\frac{1}{\tau}\int_{0}^\tau \mathcal{L}_t^*\triangle_{\mu, t}\mathcal{L}_tf\,dt,
\end{equation}
where $ \triangle_{\mu, t}$ is the weighted Laplacian given by \eqref{def:wlp}, with respect to the metric $m^t$ and weight $\mathcal{P}_t h$ for each $t\in [0, \tau]$. Furthermore, by a straightforward modification of Corollary \ref{thm:dwlp3} in the appendix, one has
\begin{equation*}
\mathcal{L}^*_t\triangle_{\mu, t}\mathcal{L}_t =\triangle_{\tilde{\mu}, t},
\end{equation*}
for each $t\in [0, \tau]$, where $\triangle_{\tilde{\mu}, t}$ is a weighted Laplacian on $M$ defined by \eqref{def:wlp} with respect to the metric $(T^{(t)})^*(m^t)$ and density $\mathcal{P}_t h\circ T^{(t)}$. Hence, one may express \eqref{eq:lts2} as
\begin{equation}
\label{eq:lapt}
 \triangle_{[0, \tau]}^Df=\frac{1}{\tau}\int_0^\tau \triangle_{\tilde{\mu}, t}f\, dt.
\end{equation}


\subsection{Spectral theory and a dynamic Cheeger inequality on weighted manifolds}\label{sec:di1}

In standard isoperimetric theory for a compact, connected Riemannian manifold $M$, one may use the spectrum of the Laplace-Beltrami operator $\triangle_m$ to reveal geometric information about $M$. Variational properties characterise the spectrum of $\triangle_m$ (see e.g.\ p.13 in \cite{chavel84} or p.210 in \cite{mcowen96}). Extensions of these variational properties, which carry dynamic information, can be developed for dynamic Laplacian on a compact subset of $\mathbb{R}^r$, under volume-preserving dynamics as in Theorem 3.2 in \cite{froyland14}. Here, we generalise Theorem 3.2 in \cite{froyland14} to weighted, non-flat Riemannian manifolds, subjected to non-volume-preserving dynamics. 

 \begin{theorem}\label{thm:spec}
Let $(M, m,\mu_r)$ and $(N, n, \nu_r)$ be weighted Riemannian manifolds, where $M$ and $N$ are $C^\infty$, compact and connected. Let $T:M\to N$ be a $C^\infty$-diffeomorphism such that $\nu_r=\mu_r\circ T^{-1}$. Define $\triangle^D$ and $T^*n$ by \eqref{eq:dwc} and \eqref{eq:pbm} respectively. Consider the eigenvalue problem
\begin{equation}\label{eq:spec1}
\triangle^D\phi=\lambda \phi,
\end{equation}
with initial Neumann-type boundary condition
  \begin{equation}\label{eq:bc}
  m([\nabla_m +\nabla_{T^*n}]\phi, \mathbf{n})(x)=0, \quad\forall x\in \partial M,
  \end{equation}
  where $\mathbf{n}$ is the normal bundle along $\partial M$. Assume the density of $\mu_r$ is $C^\infty$ and uniformly bounded away from zero.
  \begin{enumerate}
 \item{The eigenvalues of $\triangle^D$ form a decreasing sequence $0= \lambda_1> \lambda_2> \lambda_3>\dots$ with $\lambda_k\to -\infty$, as $k\to \infty$.}
 \item{The corresponding eigenfunctions $\phi_1, \phi_2,\ldots$ are  in $C^\infty(M, \mathbb{R})$, the eigenfunction $\phi_1$ is constant,  and eigenfunctions corresponding to distinct eigenvalues are pairwise orthogonal in $L^2(M, m, \mu_r)$}. 
 \item{Let $\langle \cdot,\cdot\rangle_\mu$ denote the inner-product on $L^2(M, m, \mu_r)$, and $|\,\cdot\,|_m=\sqrt{m(\cdot,\cdot)}$ the norm on tangent spaces induced by the metric tensor $m$. Define $S_0=L^2(M, m, \mu_r)$ and $S_k:=\{f\in L^2(M,m,\mu_r): \langle f, \phi_i\rangle_\mu=0\textup{ for } i = 1,\ldots, k\}$, for $k=1, 2, \ldots$, then
   \begin{align}\label{eq:3.7}
     \lambda_k
      &=-\inf_{f\in S_{k-1}}\frac{\int_M |\nabla_m f|_m^2 \, d\mu_r+\int_{N} |\nabla_n\mathcal{L}f|_n^2 \,d\nu_r}{2\int_M f^2\, d\mu_r}\\
      &=-\inf_{f\in S_{k-1}}\frac{\int_M \left(|\nabla_m f|_m^2+|\nabla_{T^*n} f|_{T^*n}^2\right) \,d\mu_r}{2\int_M f^2\, d\mu_r},
   \end{align}
   where $\mathcal{L}$ is given by $\eqref{eq:pfw}$. Moreover, the infimum of \eqref{eq:3.7} is attained by $f=\phi_k$.
 }
 \end{enumerate}
 \end{theorem}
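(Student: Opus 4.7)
My plan is to reduce the statement to standard spectral theory for an elliptic self-adjoint operator on a compact Riemannian manifold, via Proposition \ref{PropLB}. Writing $\triangle^D=\tfrac{1}{2}(\triangle_\mu+\triangle_{\tilde\mu})$, a preliminary observation is that the measure $\tilde\mu_r$ attached to $\triangle_{\tilde\mu}$ (density $\mathcal{L}^*h_\nu$ against $\omega^r_{T^*n}$) coincides with $\mu_r$ as a set function on $M$: since $T:(M,T^*n)\to(N,n)$ is an isometry and $\nu_r=\mu_r\circ T^{-1}$, a change of variables yields $\tilde\mu_r(A)=\int_A(h_\nu\circ T)\,\omega^r_{T^*n}=\int_{TA}h_\nu\,\omega^r_n=\nu_r(TA)=\mu_r(A)$ for every measurable $A\subseteq M$. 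Hence both summands act as symmetric operators on the common Hilbert space $L^2(M,m,\mu_r)$, and their sum is uniformly elliptic since its principal symbol is a sum of two positive-definite quadratic forms on $T^*M$.

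The next step is to construct the associated symmetric Dirichlet form
\begin{equation*}
\mathcal{E}(f,g)=\tfrac{1}{2}\int_M\bigl[m(\nabla_m f,\nabla_m g)+T^*n(\nabla_{T^*n}f,\nabla_{T^*n}g)\bigr]\,d\mu_r,
\end{equation*}
and apply the weighted integration-by-parts identity \eqref{eq:wdivth} separately to each summand of $\triangle^D g$, obtaining $-\langle f,\triangle^D g\rangle_\mu=\mathcal{E}(f,g)+\mathrm{BT}$ with $\mathrm{BT}$ a sum of boundary contributions that I would check is annihilated by the Neumann-type condition \eqref{eq:bc} for every admissible test function $f$. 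The equivalence of the two Rayleigh forms in item 3 then follows from the same isometry: since $T_*\nabla_{T^*n}f=\nabla_n\mathcal{L}f$ and $\mu_r\circ T^{-1}=\nu_r$, we obtain $\int_M|\nabla_{T^*n}f|_{T^*n}^2\,d\mu_r=\int_N|\nabla_n\mathcal{L}f|_n^2\,d\nu_r$.

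With $\mathcal{E}$ closed and symmetric on $H^1(M,m,\mu_r)$, the Friedrichs extension of $-\triangle^D$ is self-adjoint with compact resolvent on the compact manifold $M$; hence discreteness of the nonnegative spectrum with accumulation only at $+\infty$, pairwise $L^2(M,\mu_r)$-orthogonality of distinct eigenspaces, and $C^\infty$-smoothness of each eigenfunction (by elliptic bootstrap) are immediate. Constants lie in the kernel of both $\triangle_\mu$ and $\triangle_{\tilde\mu}$ and trivially satisfy \eqref{eq:bc}, so $\lambda_1=0$ is an eigenvalue; connectedness of $M$ together with the strong maximum principle for the elliptic operator $-\triangle^D$ forces simplicity and hence the strict descent $0=\lambda_1>\lambda_2>\cdots$. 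The variational characterisation in item 3 is then the Courant--Fischer min-max principle, with $S_{k-1}$ the orthogonal complement of the first $k-1$ eigenvectors and the infimum realised by $\phi_k$.

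I expect the main technical obstacle to be the boundary-term accounting in the second step. The two summands of $\triangle^D$ naturally produce boundary integrals against different induced volume forms ($\omega^{r-1}_m$ versus $\omega^{r-1}_{T^*n}$) and different unit conormals (those of $m$ and of $T^*n$), whereas \eqref{eq:bc} is expressed in the single metric $m$ against the $m$-unit normal $\mathbf{n}$. Showing that this single scalar identity really does suffice to annihilate the combined boundary integral for every admissible test function will likely require explicit coordinate expressions relating the two conormals and the Jacobian between the induced boundary forms, and this is the most delicate computation of the proof.
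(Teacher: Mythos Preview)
Your plan is sound and mirrors the paper's proof in its essential architecture: weak formulation via integration by parts, uniform ellipticity of $\triangle^D$, variational characterisation of eigenvalues, and elliptic regularity for smoothness. The packaging, however, differs in two respects worth noting.

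First, rather than invoking the Friedrichs extension and abstract compact-resolvent spectral theory, the paper works directly with the constrained minimisation of the Rayleigh quotient. It builds the weighted Sobolev space $W^{1,2}(M,m,\mu_r)$ from scratch (Appendix~\ref{sec:ws}), proving completeness, density of smooth functions, Rellich compactness (Theorem~\ref{thm:rcom}) and a Poincar\'e inequality under the hypothesis that $h_\mu$ is an $A_2$ weight. Lemma~\ref{thm:Fmin} then shows the functional $F(f)=\tfrac12(\|\nabla_m f\|_{2,m,\mu}^2+\|\nabla_n\mathcal{L}f\|_{2,n,\nu}^2)$ attains its infimum on the $L^2$-unit sphere via a weakly-convergent-subsequence argument, and the eigenpairs are constructed inductively by Lagrange multipliers on successive orthogonal complements. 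Your Friedrichs route is more concise but implicitly relies on exactly the same weighted Rellich compactness to obtain compact resolvent; you should either cite it or supply it.

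Second, for the boundary term you correctly flag as delicate, the paper sidesteps the coordinate comparison you anticipate. Instead of integrating $\triangle_{\tilde\mu}$ by parts on $(M,T^*n)$ and then reconciling two induced boundary forms and conormals, it keeps the second summand on $N$, integrates by parts there to produce $\int_{\partial N}\mathcal{L}g\cdot n(\nabla_n\mathcal{L}f,\hat{\mathbf n})\,h_\nu\omega_n^{r-1}$, and uses a co-area argument (Proposition~\ref{thm:wbc}) to transport this back to $\partial M$. Writing $\partial M=\{\psi=0\}$ and $\partial N=\{\mathcal{L}\psi=0\}$, applying the co-area formula on both manifolds, and using the identity $n(\nabla_n\mathcal{L}f,\nabla_n\mathcal{L}\psi)\circ T=m(\nabla_{T^*n}f,\nabla_m\psi)$ (a consequence of Lemma~\ref{thm:app3.3}) shows the $N$-boundary integral equals $\int_{\partial M}g\cdot m(\nabla_{T^*n}f,\mathbf n)\,h_\mu\omega_m^{r-1}$. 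The combined boundary term is then precisely $\int_{\partial M}g\cdot m([\nabla_m+\nabla_{T^*n}]f,\mathbf n)\,h_\mu\omega_m^{r-1}$, annihilated by \eqref{eq:bc} for every test $g$. This device avoids entirely the Jacobian comparison between $\omega_m^{r-1}$ and $\omega_{T^*n}^{r-1}$ that your outline anticipates.
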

 \begin{proof}
  See appendix.
 \end{proof}


Equation \eqref{eq:3.7} shows that the eigenvalues of $\triangle^D$ take on larger negative values when $|\nabla_m f|_m$ is large with respect to $\mu_r$ and $|\nabla_n\mathcal{L}f|_n$ is large with respect to $\nu_r$.
To obtain $\lambda_k$ close to zero, one needs $f$ and $\mathcal{L}f$ to have low gradient, and particularly in regions of high $\mu_r$ and $\nu_r$ mass, respectively.
Compare this to \eqref{eq:sd} and the display equations below \eqref{eq:sd}, which make connections with level sets of $f$ and the push-forward $\mathcal{L}f$.
Another way to state that $\lambda_k$ is close to zero is to say that one needs the level sets of $f$ and $\mathcal{L}f$ to be not large with respect to $\mu_{r-1}$ and $\nu_{r-1}$, respectively.
This probably means a combination of not being large according to $\omega^{r-1}_m$ and $\omega^{r-1}_n$ (e.g.\ if $M$ is two-dimensional, the level sets are generally a \emph{small number} of \emph{short} curves produced by an $f$ which is not very oscillatory), and avoiding high density areas of $(M,m,\mu_r)$ and $(N,n,\nu_r)$.

The following theorem provides an upper bound on how bad (how large) the average size of an evolving boundary $\Gamma$ can be;  it bounds above the geometric quantity $\mathbf{h}^D$ in terms of $\lambda_2$, the first nontrivial eigenvalue of $\triangle^D$.
The classical ``static'' version of this result, due to Cheeger \cite{cheeger69}, can be intuitively described in terms of heat flow.
Consider heat flow (generated by the Laplace operator) on a solid dumbbell in two dimensions with a narrow neck.
By initialising ``positive heat'' on one side of the dumbbell and ``negative heat'' on the other side, the rate at which the heat flow equilibriates will be slow because of the narrow neck.
The eigenvalue $\lambda_2$ will be close to zero because of this slow equilibriation.
Of course, the narrow neck means that it is possible to very cheaply partition the dumbbell $M$ into two pieces $M_1$, $M_2$, with $\Gamma$ cutting across the neck.
Cheeger  showed that a small $\lambda_2$ \emph{implied} a small $\mathbf{h}^D$ (a cheap way of disconnecting $M$).
Theorem \ref{thm:wci} injects general nonlinear dynamics into these ideas, and extends Theorem 3.2 \cite{froyland14} to weighted manifolds and non-volume-preserving dynamics.
In terms of heat flow, we are effectively averaging the heat flow geometry across the time duration over which our dynamics acts;  see also \cite{karrasch16} for a treatment of metastability using heat flow in Lagrangian coordinates.

\begin{theorem}[Dynamic Cheeger inequality]\label{thm:wci}
Let $(M, m, \mu_r)$ and $(N,n, \nu_r)$ be weighted Riemannian manifolds, where $M$ and $N$ are $C^\infty$, compact and connected. Let $\triangle^D$ and $\mathbf{H}^D$ be defined by \eqref{eq:dwc} and \eqref{eq:cr} respectively. Assume the density of $\mu_r$ is $C^\infty$ and uniformly bounded away from zero. If $\lambda_2$ is the smallest magnitude nonzero eigenvalue of the eigenproblem \eqref{eq:spec1}-\eqref{eq:bc} with eigenfunction $\phi_2$, then
\begin{equation}\label{eq:wci}
\mathbf{h}^D\leq \inf_{t\in (-\infty, \infty)}\mathbf{H}^D(\{\phi_2=t\})\leq 2\sqrt{-\lambda_2}.
\end{equation}
\end{theorem}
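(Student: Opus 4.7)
The plan is to mimic the classical Cheeger-inequality proof, adapted to the dynamic, weighted setting. The first inequality $\mathbf{h}^D \leq \inf_t \mathbf{H}^D(\{\phi_2=t\})$ is essentially by definition: by Sard's theorem, for almost every regular value $t$ the level set $\{\phi_2=t\}$ is a smooth $(r-1)$-dimensional hypersurface disconnecting $M$ into $\{\phi_2<t\}$ and $\{\phi_2>t\}$; since $\mathbf{h}^D$ is the infimum of $\mathbf{H}^D$ over all such hypersurfaces, each regular level set is an admissible competitor, yielding the claim. So the work lies in the second inequality.

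For the second inequality, let $c$ be a median of $\phi_2$ with respect to $\mu_r$, and write $\phi=\phi_2-c=\phi_+ +\phi_-$ with $\phi_+=\max(\phi,0)$ and $\phi_-=\min(\phi,0)$. Writing $h^*:=\inf_t \mathbf{H}^D(\{\phi_2=t\})$, for a.e.\ $s>0$ the regular level set $\{\phi_+=s\}=\{\phi_2=c+s\}$ satisfies $\mu_r(\{\phi_+>s\})\leq 1/2$ by the median property, so
\begin{equation*}
\mu_{r-1}(\{\phi_+=s\})+\nu_{r-1}(T\{\phi_+=s\})\geq 2h^*\,\mu_r(\{\phi_+>s\}).
\end{equation*}
Multiply by $2s$, integrate over $s\in(0,\infty)$, and apply the weighted coarea formula (Lemma \ref{thm:ca}) on $M$ and $N$ (using $T\{\phi_+=s\}=\{\mathcal{L}\phi_+=s\}$ via Proposition \ref{thm:Lf}). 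The left side becomes $\int_M|\nabla_m\phi_+^2|_m\,d\mu_r+\int_N|\nabla_n(\mathcal{L}\phi_+)^2|_n\,d\nu_r$, and the right side is $2h^*\int_M\phi_+^2\,d\mu_r$ by layer-cake. Expand $|\nabla\phi_+^2|=2\phi_+|\nabla\phi_+|$, apply Cauchy--Schwarz on each of the two integrals, and use the $L^2$-isometry $\int_N(\mathcal{L}\phi_+)^2\,d\nu_r=\int_M\phi_+^2\,d\mu_r$ (since $\nu_r=\mu_r\circ T^{-1}$). After dividing out $\|\phi_+\|_{L^2(\mu_r)}$ and applying the elementary inequality $\sqrt{a}+\sqrt{b}\leq\sqrt{2(a+b)}$, this yields
\begin{equation*}
(h^*)^2\int_M\phi_+^2\,d\mu_r\leq 2\left(\int_M|\nabla_m\phi_+|_m^2\,d\mu_r+\int_N|\nabla_n\mathcal{L}\phi_+|_n^2\,d\nu_r\right),
\end{equation*}
and an identical argument for $\phi_-$ gives the analogous bound.

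To close the argument, note that $\phi_+$ and $\phi_-$ have disjoint supports, and the same holds for $\mathcal{L}\phi_\pm$ since $\mathcal{L}$ is composition with $T^{-1}$. Hence the $L^2$-norms and gradient-squared integrals split cleanly across $\phi=\phi_++\phi_-$. Applying the elementary fact that $\min(a/b,c/d)\leq(a+c)/(b+d)$ for positive denominators, at least one of $\phi_+,\phi_-$ satisfies $(A_\pm+B_\pm)/(2C_\pm)\leq R(\phi)$, where $R(\phi)$ is the Rayleigh quotient $(\int|\nabla_m\phi|_m^2\,d\mu_r+\int|\nabla_n\mathcal{L}\phi|_n^2\,d\nu_r)/(2\int\phi^2\,d\mu_r)$. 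Since $\phi$ and $\phi_2$ have identical gradients (and identical pushforward gradients), while $\int\phi^2\,d\mu_r=\int\phi_2^2\,d\mu_r+c^2\geq\int\phi_2^2\,d\mu_r$, one obtains $R(\phi)\leq -\lambda_2$ by Theorem \ref{thm:spec}. Combining with the displayed inequality above for whichever of $\phi_+,\phi_-$ satisfies the Rayleigh bound gives $(h^*)^2\leq -4\lambda_2$, i.e.\ $h^*\leq 2\sqrt{-\lambda_2}$.

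The main obstacles will be bookkeeping: verifying the coarea identity simultaneously on $(M,m,\mu_r)$ and $(N,n,\nu_r)$ through the pushforward $T$, and ensuring that the a.e.\ regular level sets used in the integral over $s$ form a set of full measure (Sard), so that the dynamic Cheeger ratio bound can be applied inside the integral. Once these technicalities are handled by Lemma \ref{thm:ca} and Proposition \ref{thm:Lf}, the rest is essentially a weighted repackaging of the classical argument.
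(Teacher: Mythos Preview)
Your proposal is correct and follows essentially the same route as the paper's proof: median shift of $\phi_2$, split into $\phi_\pm$, apply the weighted coarea formula on $M$ and $N$ to $\phi_\pm^2$ together with Proposition~\ref{thm:Lf}, bound via Cauchy--Schwarz and the $L^2$-isometry of $\mathcal{L}$, and finish with the Rayleigh-quotient characterisation of $\lambda_2$ from Theorem~\ref{thm:spec}. The only cosmetic difference is that the paper adds the $\phi_+$ and $\phi_-$ estimates and then bounds the denominator $\|\phi_2-\sigma\|_2^2$ by $\|\phi_2-\alpha(\phi_2)\|_2^2$, whereas you keep them separate and use the mediant inequality $\min(a/b,c/d)\le (a+c)/(b+d)$ to select one of $\phi_\pm$; both are standard and equivalent here.
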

\begin{proof}
See appendix.
\end{proof}

By the linearity of our construction with respect to time, it is straightforward to use variational properties to characterise the spectrum of $\triangle^D_{[0, \tau]}$ (see (\ref{eq:lapt})) as in Theorem \ref{thm:spec}. Moreover, by a modification (see Appendix \ref{sec:multip} for details), one can obtain a continuous-time dynamic Cheeger inequality $$\mathbf{h}_{[0, \tau]}^D\leq 2\sqrt{-\lambda_{2, \tau}},$$ where $\lambda_{2,\tau}$ is the second eigenvalue of $\triangle^D_{[0, \tau]}$ defined in (\ref{eq:lapt}). 

To motivate our strategy for obtaining a good feasible solution $\Gamma$ to the minimisation (\ref{eq:cc}) we first note the equivalence of $\mathbf{h}^D$ and $\mathbf{s}^D$ given by Theorem \ref{thm:dff};  indeed in the proof of Theorem \ref{thm:dff} one selects $\Gamma$ from level sets of functions $f$ used in the RHS of (\ref{eq:sd}).
Second, part 3 of Theorem \ref{thm:spec} shows that $\lambda_2$ solves an $L^2$ version of the $L^1$ minimisation in the definition of the Sobolev constant in (\ref{eq:sd}).
In fact, as $L^1$ optimisation is often more difficult than $L^2$ optimisation, one chief reason to introduce the $L^2$ minimisation is to obtain the simple eigenvalue characterisation (\ref{eq:3.7}).
While we cannot easily solve the $L^1$ optimisation of (\ref{eq:sd}) we can solve its $L^2$ version via (\ref{eq:3.7}) to obtain $\lambda_2$ and $\phi_2$.
Defining $\Gamma_t=\{x\in M: \phi_2(x)=t\}$ as the level set corresponding to the value $t$, we search over all level sets of $\phi_2$, selecting the one that gives the lowest value of $\mathbf{H}^D(\Gamma^t)$ as our approximate solution to (\ref{eq:cc}).
Note that for each $t$ we are equivalently inserting $f=\mathbf{1}_{M_{1,t}}$ into (\ref{eq:sd}), where $M_{1,t}=\{x\in M: \phi_2(x)<t\}$, and we are thus \emph{evaluating the best level set according to the $\mathbf{H}^D$ or $L^1$ objective, rather than then $L^2$ objective.
}%
%
This is the content of Algorithm 1 below, which is relatively standard in manifold learning and graph partitioning, and has also been used in \cite{froyland14,FJ15}.
\RestyleAlgo{boxruled}
\LinesNumbered
\begin{algorithm}[ht]
  \caption{Dynamic spectral partitioning\label{alg}}
  Solve the eigenvalue problem $\triangle^D \phi_2=\lambda_2\phi_2$, where $\lambda_2$ is the first non-trivial eigenvalue of $\triangle^D$, with corresponding $C^\infty(M)$ eigenfunction $\phi_2$.

  For each $t\in [\min \phi_2, \max \phi_2]$, partition $M$ into $M=M_{1, t}\cup\Gamma_t\cup M_{2, t}$ via $M_{1,t}=\{x\in M: \phi_2(x)<t\}$, $M_{2, t}=\{x\in M: \phi_2(x)>t\}$, and the $C^\infty$ hypersurface $\Gamma_t=\{x\in M: \phi_2(x)=t\}$.

  Compute $\mathbf{H}^D(\Gamma_t)$ for each $t\in [\min \phi_2, \max \phi_2]$ and extract the optimal $t_0$; the hypersurface $\Gamma_{t_0}$ is an approximate solution to the dynamic isoperimetric problem \eqref{eq:cc}.
\end{algorithm}
\begin{remark}
Algorithm \ref{alg} can be extended to multi-element partitions if one is searching for multiple coherent objects.
Early transfer operator based methods (e.g.\ \cite{dellnitz99,deuflhard00}) proposed the use of a numerical spectral gap as a heuristic for determining the number of almost-invariant sets;  that is, a gap between $\lambda_k$ and $\lambda_{k+1}$ indicates that $k$ is a natural number\footnote{In settings where there is a good functional analytic setup for the transfer operator $\mathcal{P}$, one defines the number of almost-invariant (resp.\ coherent) sets as the number of eigenvalues (resp.\ Lyapunov exponents) outside the essential spectrum \cite{DFS00} (resp.\ \cite{FLQ00}).} of almost-invariant sets to search for.
This idea is commonly used in the transfer operator community and is equally applicable to finite-time coherent sets \cite{froyland13} (where one would look for a gap in the singular value spectrum) and to the dynamic Laplace operator \cite{FJ15}.
Such a heuristic has also been used for eigenvalues of (static) Laplace-Beltrami operators and their discrete graph-based counterparts in manifold learning (see e.g.\ the review \cite{vonluxburg07}), where it is called the eigengap heuristic.
Once an estimate of a natural number $k\ge 1$ of coherent objects has been determined in this way, one embeds the eigenfunctions $\phi_{2},\ldots,\phi_{k+1}$ in ($k-1$)-dimensional Euclidean space, as per e.g.\ \cite{shimalik00}).
One can then employ standard clustering methods to identify $k$ distinct coherent objects $M_1,\ldots,M_k$.
In the case of weighted manifolds, the balancing of the $\mu_r$ measures of the sets $M_1,\ldots,M_k$ is important.
This could be achieved by, for example, weighted fuzzy clustering, analogous to the algorithm in \cite{froyland05}.
\end{remark}

\section{Geometry and probability:  linking finite-time coherent sets with dynamic isoperimetry} \label{sec:3}

We demonstrate that the probabilistic approach for identifying coherent structures in \cite{froyland13} is tightly connected to the dynamic Laplacian given by \eqref{eq:dwc}, extending Theorem 5.1 \cite{froyland14} to the non-volume-preserving, weighted manifold setting.
Let $(M, e, \mu_r)$ and $(N, e, \nu_r)$ be weighted Riemannian manifolds, where $\mu_r, \nu_r$ are absolutely continuous probability measures with respect to the Lebesgue measure $\ell$, $e$ the Euclidean metric, and $M$ a compact, $r$-dimensional subset of $\mathbb{R}^r$.
In this setting  it was shown in \cite{froyland13} that one could apply localised smoothing operators before and after the application of the transfer operator, followed by a normalisation, to obtain an operator $\mathcal{L}_\epsilon$ (see (\ref{eq:3.2}) below), and that the leading sub-dominant singular vectors of the operator $\mathcal{L}_\epsilon$ corresponded to finite-time coherent sets.

Theorem 5.1 in \cite{froyland14} states that if $T$ is volume preserving and $\ell=\mu_r=\nu_r$, then
\begin{equation}\label{eq:pwL_epL_ep}
\lim_{\epsilon\to 0} \frac{(\mathcal{L}_\epsilon^*\mathcal{L}_\epsilon-I)f}{\epsilon^2}(x)=\frac{1}{2}(\triangle_e+\mathcal{L}^*\triangle_e\mathcal{L})f(x),
\end{equation}
for all $x\in M\subset \mathbb{R}^r$, where $\mathcal{L}$ and $\mathcal{L}^*$ are composition with $T^{-1}$ and $T$, respectively, and
$\mathcal{L}_\epsilon^*$ is the adjoint of $\mathcal{L}_\epsilon$ with respect to a weighted inner-product (see \eqref{eq:3.3} below).

In the following, we first generalise the above constructions to a weighted Riemannian manifold setting. We then improve the point-wise convergence \eqref{eq:pwL_epL_ep} to a uniform convergence over all $f\in C^3(M, \mathbb{R})$. Define $Q:\mathbb{R}^+\to\mathbb{R}$ with support in the open interval $(0, 1)$, such that for any vector $x=(x_1, x_2, \ldots, x_r)\in \mathbb{R}^r$
\begin{equation}
\int_{E_1(0)} x_ix_jQ(|x|)d\ell(x)=
\left\{
	\begin{array}{ll}
		0  & \mbox{if } i\neq j \\
		c  & \mbox{if } i=j
		\end{array}\label{eq:diffusion4},
	\right.
\end{equation}
for some fixed constant $c$. For $\epsilon>0$, let $Q_{m, \epsilon}(x, z):=\epsilon^{-r}Q(\textup{dist}_m(x, z)/\epsilon)$ be a family of functions, where $\textup{dist}_m$ is the Riemannian distance function on $M$ with respect to the metric $m$. For open subsets $X\subset X_\epsilon\subseteq M$ and each $\epsilon>0$, define the \emph{diffusion operator} $\mathcal{D}_{X, \epsilon}:L^1(X, V_m)\to L^1(X_\epsilon, V_m)$ by
\begin{equation}\label{eq:diffusion1}
 \mathcal{D}_{X, \epsilon}f(x)=\int_X Q_{m, \epsilon}(x, y)f(y)\cdot \omega_m^r(y),
\end{equation}
for all $x\in M$. If necessary we rescale $\mathcal{D}_{X, \epsilon}$ so that $\mathcal{D}_{X, \epsilon}\mathbf{1}_X=\mathbf{1}_{X_\epsilon}$, where $\mathbf{1}$ is the characteristic function; i.e.\ we assume $\int_0^\epsilon Q(x/\epsilon)dx=\epsilon^r$ for all $\epsilon>0$. One can interpret $\mathcal{D}_{X, \epsilon}$ as a mollifier on $f$, that averages $f$ at the point $x\in X$ over the $\epsilon$-neighbourhood of $x$ according to the distribution $Q$. Similarly for $Y'_\epsilon\subset Y_\epsilon \subseteq N$ we define a local diffusion operator $\mathcal{D}_{Y'_\epsilon,\epsilon}:L^{1}(Y'_\epsilon, V_n)\to L^{1}(Y_\epsilon, V_n)$ by $\mathcal{D}_{Y'_\epsilon, \epsilon}f(x):=\int_{Y'_\epsilon} Q_{n, \epsilon}(x, y)f(y)\cdot \omega_n^r(y)$.

Recall the definition of the Perron-Frobenius operator $\mathcal{P}$ given by \eqref{eq:P-F}. Set $Y'_\epsilon=TX_\epsilon$, one has an advection-diffusion process between $L^1(X, V_m)$ and $L^1(Y_\epsilon, V_n)$, given by the following diagram:
\begin{equation}\label{eq:map}
 L^{1}(X, V_m)\overset{\mathcal{D}_{X,\epsilon}}{\longrightarrow} L^{1}(X_\epsilon, V_m)\overset{\mathcal{P}}{\longrightarrow}
 L^{1}(Y'_\epsilon, V_n)\overset{\mathcal{D}_{Y'_\epsilon,\epsilon}}{\longrightarrow}L^{1}(Y_\epsilon,V_n).
\end{equation}

We form $\mathcal{P}_\epsilon:L^1(X, V_m)\to L^1(Y_\epsilon, V_n)$ according to \eqref{eq:map} via the composition $\mathcal{P}_\epsilon f:=\mathcal{D}_{Y'_\epsilon,\epsilon}\circ\mathcal{P}\circ\mathcal{D}_{X,\epsilon}f$. Normalising $\mathcal{P}_\epsilon$ yields the operator
 \begin{equation}\label{eq:3.2}
 \mathcal{L}_\epsilon f(y):=\restr{\frac{\mathcal{P}_\epsilon(f\cdot h_\mu)}{\mathcal{P}_\epsilon h_\mu}}{y}=\int_X k_\epsilon(x, y)f(x)\,d\mu_r(x),
\end{equation}
where
\begin{equation*}
k_\epsilon(x, y):=\frac{\int_{X_\epsilon}Q_{n, \epsilon}(y, Tz) Q_{m, \epsilon}(z, x)\cdot \omega_m^r(z)}{\int_X\left(\int_{X_\epsilon} Q_{n, \epsilon}(y, Tz) Q_{m, \epsilon}(z, x)\cdot\omega_m^r(z)\right) \, d\mu_r(x)}.
\end{equation*}
Let $h_{\nu_\epsilon}=\mathcal{P}_\epsilon h_\mu$, and define $\nu_{r,\epsilon}:=dh_{\nu_\epsilon}/dV_n$. If $k_\epsilon(x, y)\in L^2(X\times Y_\epsilon, \mu_r\times \nu_{r, \epsilon})$ then $\mathcal{L}_\epsilon:L^2(X, \mu_r)\to L^2(Y_\epsilon, \nu_{r, \epsilon})$ is compact (by Lemma 1 in \cite{froyland13}).

By obvious modification of the arguments in \cite{froyland13}, one can verify that the adjoint operator $\mathcal{L}^*_\epsilon:L^2(Y_\epsilon, \nu_{\epsilon, r})\to L^2(X, \mu_r)$ is given by the composition
\begin{equation}\label{eq:3.3}
 \mathcal{L}^*_\epsilon g  =\mathcal{D}^*_{X,\epsilon}\circ\mathcal{L}^*\circ\mathcal{D}^*_{Y'_\epsilon,\epsilon}g.
\end{equation}
Let $\mathbf{1}$ denote the characteristic function. Note that $\mathcal{L}_\epsilon \mathbf{1}_X=\mathbf{1}_{Y_\epsilon}$ and $\mathcal{L}_\epsilon^* \mathbf{1}_{Y_\epsilon}=\mathbf{1}_X$, hence the leading singular values $\mathcal{L}_\epsilon$ is always $1$, with corresponding left and right singular vectors $\mathbf{1}_X$ and $\mathbf{1}_{Y_\epsilon}$ (by Proposition 2 in \cite{froyland13}).

By construction, with a suitable choice for $Q$ the leading singular value of $\mathcal{L}_\epsilon$ is always $1$, and the second leading singular vector of $\mathcal{L}_\epsilon$ is used to partition $X\subset M$ into finite-time coherent sets in \cite{froyland13}. The operator $\mathcal{L}_\epsilon$ applies local diffusion on $X\subset M$, before and after $X$ is transformed into $Y_\epsilon$ under the action $T$. Similarly, the operator $\mathcal{L}_\epsilon^*$ applies local diffusion on $Y_\epsilon$, before and after $Y_\epsilon\subset N$ is pulled-back to $X\subset M$ under $T$. Therefore, if $X$ contains finite-time coherent sets (and $Y_\epsilon$ contains their images), then there will be a tendency for the boundaries of these coherent sets to be small both before and after advection in order to minimise diffusive mixing through their boundaries.
The reason for adding diffusion is to give compactness of $\mathcal{L}_\epsilon$ acting on $L^2$, ensuring the singular values of $\mathcal{L}_\epsilon$ close to $1$ are isolated, and to detect subsets of $X$ and $Y_\epsilon$ that have small boundary both before and after the application of $T$; see Section $4$ in \cite{froyland13} for details.

An interesting question is ``what happens in the limit $\epsilon\to 0$?''
The composition $\mathcal{L}_\epsilon^*\mathcal{L}_\epsilon$ is approximately the identity for small $\epsilon$, which appears to provide no dynamical information.
However, by subtracting the identify and rescaling by $\epsilon^2$, one can extract the next term in an $\epsilon$ expansion of $\mathcal{L}_\epsilon^*\mathcal{L}_\epsilon$.
The following result generalises Theorem 5.1 in \cite{froyland14} for $\mathbb{R}^r$ to the case of non-flat weighted Riemannian manifolds; subjected to non-volume-preserving dynamics.

\begin{theorem}
\label{thm:3.1}
  Let $(M, m, \mu_r)$ and $(N, n, \nu_r)$ be weighted Riemannian manifolds, where $M$ and $N$ are $C^\infty$, compact and connected. Let $T:M\to N$ be a $C^\infty$ diffeomorphism. Assume $\nu_r=\mu_r\circ T^{-1}$, and the density of $\mu_r$ is $C^3$. Define $\triangle^D$ by \eqref{eq:dwc}, and $\mathcal{L}_\epsilon$ and its adjoint $\mathcal{L}^*_\epsilon$ by \eqref{eq:3.2} and \eqref{eq:3.3} respectively. There exists a constant $c$ such that
  \begin{equation}\label{eq:3.4}
   \lim_{\epsilon\to 0}\left(\sup_{\|f\|_{C^3(M, \mathbb{R})}\leq 1}\left \|\frac{(\mathcal{L}_\epsilon^* \mathcal{L}_\epsilon - I)f}{\epsilon^2}-c\cdot \triangle^D f \right \|_{C^0(M, \mathbb{R})}\right)=0,
 \end{equation}
 where the constant $c$ is as in \eqref{eq:diffusion4}.
\end{theorem}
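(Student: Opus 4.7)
The plan is to expand $\mathcal{L}_\epsilon^*\mathcal{L}_\epsilon$ in powers of $\epsilon$ and identify the coefficient of $\epsilon^2$ as $c\cdot\triangle^D$. The core building block is the following mollifier expansion: for $g\in C^3(M,\mathbb{R})$ and $x$ in the interior of $M$,
\begin{equation*}
\mathcal{D}_{X,\epsilon}g(x)\;=\;g(x)+\tfrac{c\epsilon^2}{2}\triangle_m g(x)+R_\epsilon[g](x),\qquad \|R_\epsilon[g]\|_{C^0}\le C\epsilon^3\|g\|_{C^3},
\end{equation*}
with the analogous statement on $N$ for $\mathcal{D}_{Y'_\epsilon,\epsilon}$. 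I would prove this by pulling back to Riemannian normal coordinates at $x$: in those coordinates the geodesic distance equals $|v|$ to third order and $\omega_m^r=(1+O(|v|^2))\,d\ell$, so after Taylor expanding $g(\exp_x v)$ to second order the linear term vanishes by the radial symmetry of $Q$, the quadratic term becomes $\tfrac{c\epsilon^2}{2}\,\mathrm{tr}(\nabla^2 g)(x)=\tfrac{c\epsilon^2}{2}\triangle_m g(x)$ via \eqref{eq:diffusion4}, and the volume-form correction together with the third-order Taylor remainder fit into $R_\epsilon$.

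Next, I would compose these expansions along $\mathcal{P}_\epsilon=\mathcal{D}_{Y'_\epsilon,\epsilon}\mathcal{P}\mathcal{D}_{X,\epsilon}$ to obtain
\begin{equation*}
\mathcal{P}_\epsilon\phi \;=\; \mathcal{P}\phi+\tfrac{c\epsilon^2}{2}\bigl[\triangle_n\mathcal{P}\phi+\mathcal{P}\triangle_m\phi\bigr]+O(\epsilon^3\|\phi\|_{C^3}),
\end{equation*}
apply it separately to $\phi=fh_\mu$ and $\phi=h_\mu$, and divide, using $\mathcal{P}(fh_\mu)=h_\nu\mathcal{L}f$, $\mathcal{P}(h_\mu)=h_\nu$, the product rule $\mathcal{P}(f\psi)=(\mathcal{L}f)\mathcal{P}(\psi)$, and a Taylor expansion of the reciprocal, to arrive at
\begin{equation*}
\mathcal{L}_\epsilon f \;=\; \mathcal{L}f+\epsilon^2 A f+O(\epsilon^3\|f\|_{C^3}).
\end{equation*}
The raw expression for $A$ contains $\triangle_n(h_\nu\mathcal{L}f)$, $\mathcal{P}\triangle_m(fh_\mu)$, $\triangle_n h_\nu$ and $\mathcal{P}\triangle_m h_\mu$; I would simplify it using Leibniz and the definitions $\triangle_\mu=\triangle_m+m(\nabla_m h_\mu,\nabla_m\cdot)/h_\mu$ and $\triangle_\nu=\triangle_n+n(\nabla_n h_\nu,\nabla_n\cdot)/h_\nu$ into a form involving $\triangle_\mu f$, $\triangle_\nu\mathcal{L}f$ and gradient cross terms. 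The entirely analogous computation gives $\mathcal{L}_\epsilon^* g=\mathcal{L}^* g+\epsilon^2 B g+O(\epsilon^3\|g\|_{C^3})$.

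To assemble, I use $\mathcal{L}^*\mathcal{L}=I$ and write
\begin{equation*}
(\mathcal{L}_\epsilon^*\mathcal{L}_\epsilon-I)f \;=\; \mathcal{L}^*(\mathcal{L}_\epsilon-\mathcal{L})f+(\mathcal{L}_\epsilon^*-\mathcal{L}^*)\mathcal{L}_\epsilon f,
\end{equation*}
so that dividing by $\epsilon^2$ yields $\mathcal{L}^* A f + B\mathcal{L}f + O(\epsilon)$, with the error uniform over $\|f\|_{C^3}\le 1$ because $\mathcal{L}_\epsilon^*$ and $\mathcal{L}^*$ are composition operators non-expansive on $C^0$ and every earlier remainder is controlled by $\|f\|_{C^3}$. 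The final identification $\mathcal{L}^*Af+B\mathcal{L}f = c\cdot\triangle^D f = \tfrac{c}{2}(\triangle_\mu f+\mathcal{L}^*\triangle_\nu\mathcal{L}f)$ then falls out of the explicit formulas for $A$ and $B$, with the stray gradient cross terms from $A$ cancelling those from $B$. The main obstacle is precisely this bookkeeping: the normalisation $\mathcal{L}_\epsilon=\mathcal{P}_\epsilon(\,\cdot\,h_\mu)/\mathcal{P}_\epsilon h_\mu$ generates a proliferation of terms in $\triangle_m,\triangle_n,\nabla_m h_\mu,\nabla_n h_\nu$ that must be recombined into the two weighted Laplacians; simultaneously, every Euclidean step of Theorem~5.1 in \cite{froyland14} has to be lifted to a Riemannian one (normal coordinates, Jacobian of $T$ between two distinct metrics, pullback volume forms), and the $O(\epsilon^3)$ remainders must be tracked through each composition so as to remain uniform in $\|f\|_{C^3}\le 1$. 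Boundary effects are neutralised by the interior thickening $X\subset X_\epsilon\subset M$ already built into the definition of $\mathcal{D}_{X,\epsilon}$.
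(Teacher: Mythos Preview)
Your plan is essentially the same as the paper's proof: both rest on the normal-coordinate mollifier expansion $\mathcal{D}_{X,\epsilon}g=g+\tfrac{c\epsilon^2}{2}\triangle_m g+O(\epsilon^3)$ (the paper's Lemma~\ref{thm:app2.1}), composed through $\mathcal{P}_\epsilon=\mathcal{D}_{Y'_\epsilon,\epsilon}\mathcal{P}\mathcal{D}_{X,\epsilon}$, followed by division to form $\mathcal{L}_\epsilon$ and Leibniz bookkeeping to recognise $\triangle_\mu$ and $\triangle_\nu$. The only organisational difference is that the paper applies $\mathcal{D}^*_{Y'_\epsilon,\epsilon}$, $\mathcal{L}^*$, $\mathcal{D}^*_{X_\epsilon,\epsilon}$ sequentially to the expanded $\mathcal{L}_\epsilon f$, whereas you expand $\mathcal{L}_\epsilon^*$ separately and telescope; both routes unwind to the same term-by-term computation.

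One point to correct in your narrative: the ``analogous'' expansion of $\mathcal{L}_\epsilon^*$ is structurally \emph{simpler}, not symmetric. By \eqref{eq:3.3}, $\mathcal{L}_\epsilon^*=\mathcal{D}^*_{X,\epsilon}\circ\mathcal{L}^*\circ\mathcal{D}^*_{Y'_\epsilon,\epsilon}$ carries \emph{no} density normalisation, so $Bg=\tfrac{c}{2}\bigl[\mathcal{L}^*\triangle_n g+\triangle_m\mathcal{L}^*g\bigr]$ involves only the unweighted Laplace--Beltrami operators and contains no gradient-of-density terms. Hence there is no cancellation of ``stray gradient cross terms from $B$'': all the weight corrections $\tfrac{m(\nabla_m h_\mu,\nabla_m f)}{h_\mu}$ and $\tfrac{n(\nabla_n h_\nu,\nabla_n\mathcal{L}f)}{h_\nu}$ come from $A$ alone (through the quotient $\mathcal{P}_\epsilon(fh_\mu)/\mathcal{P}_\epsilon h_\mu$), and they \emph{combine} with the bare $\triangle_m,\triangle_n$ contributions from $B$ to assemble $\triangle_\mu$ and $\triangle_\nu$. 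Once you adjust this expectation the algebra closes exactly as in the paper.
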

\begin{proof}
 See appendix.
\end{proof}

As in the analogous result for small magnitude diffusion presented in Theorem 5.1 \cite{froyland14}, one now has a geometric interpretation of finite-time coherent sets considered in \cite{froyland13}.
 Due to Theorem \ref{thm:3.1}, given $\epsilon$ sufficiently small, the action of the operator $\mathcal{L}_\epsilon^*\mathcal{L}_\epsilon-I$ is approximated by the action of the dynamic Laplacian $\triangle^D$.
 Thus, one has a dual interpretation of finite-time coherent sets as defined probabilistically in \cite{froyland13} to minimise global mixing (including now in the weighted, non-volume-preserving situation), and as defined geometrically in \cite{froyland14} and the present paper using the notion of dynamical isoperimetry to force small boundary size under nonlinear dynamics.

\section{Numerical experiments}\label{sec:num}

In this section we use Theorems \ref{thm:spec} and \ref{thm:wci} to compute solutions to the dynamic isoperimetric problem \eqref{eq:cc}. Our examples will showcase Lagrangian coherent structures on weighted domains with non-volume-preserving dynamics. To keep the numerics simple, we do not explicitly model curvature in the examples. We consider $2$-dimensional weighted, flat Riemannian manifolds $(M, e, \mu_2)$ and $(N, e, \nu_2)$, where $M$ and $N$ are $2$-dimensional compact subsets of $\mathbb{R}^2$, and $e$ is the Euclidean metric. We consider measures $\mu_2$ with smooth densities $h_\mu$ that are uniformly bounded away from zero, and nonlinear dynamics $T:M\to N$ such that $\nu_2=\mu_2\circ T^{-1}$.  Before we give the specific details on the $2$-dimensional weighted Riemannian manifolds $(M, e, \mu_2)$, $(N, e, \nu_2)$ and the transformations $T$, we outline the numerical discretisation of the weighted Laplacian $\triangle^D$ defined by \eqref{eq:dwc} and the operator $\mathcal{L}$.
We have employed a very simple low-order method, but in principle any standard operator approximation method can be used instead.
We note that Froyland and Junge \cite{FrJu17} have recently developed higher-order methods with low data requirements to accurately compute the spectrum and eigenfunctions of the dynamic Laplacian and extract the dominant LCSs.


\subsection{Numerical approximation for $\mathcal{L}$ and $\mathcal{L}^*$}\label{sec:nCalL}
To obtain a numerical approximation for $\mathcal{L}$, we start with tracking the time evolution of the density $h_\mu$ under $T$. To achieve this, we numerically estimate the Perron-Frobenius operator $\mathcal{P}$ using Ulam's method \cite{ulam60}. We follow the construction of \cite{froyland10}: partition $M$ and $N$ into the collections of small boxes $\{B_1, \ldots, B_I\}$ and $\{C_1,\ldots, C_J\}$ respectively, and let $P$ be the transition matrix of volume transport between the boxes in $M$ and boxes in $N$ under the action of $T$. We numerically estimate the entries of $P$ by computing
\begin{equation}\label{eq:matrixP}
P_{ij}=\frac{\#\{z_{i,q}\in B_i:T(z_{i, q})\in C_j\}}{\#\{z_{i, q}\in B_i\}},
\end{equation}
where $z_{i, q}$, $q=1,\ldots, Q$ are $Q$ uniformly distributed test points in the box $B_i$. The matrix $P$ is a row-stochastic matrix, where the $(i, j)^{th}$ entry estimates the conditional probability of a randomly chosen point in $B_i$ entering $C_j$ under the application of $T$. The connection between the matrix $P$ and the operator $\mathcal{P}$ is as follows. Denote by $\pi_I:L^1(M, e, V_m)\to \textup{sp} \{\mathbf{1}_{B_1},\ldots,\mathbf{1}_{B_I}\}$ and $\theta_J:L^1(N, e, \nu_r)\to \textup{sp} \{\mathbf{1}_{C_1},\ldots,\mathbf{1}_{C_J}\}$ the orthogonal Ulam projections formed by taking expectations on partition elements. Define $\mathcal{P}_{I,J}:=\theta_J\circ \mathcal{P}$. One has $\mathcal{P}_{I,J}:\textup{sp} \{\mathbf{1}_{B_1},\ldots,\mathbf{1}_{B_I}\}\to \textup{sp} \{\mathbf{1}_{C_1},\ldots,\mathbf{1}_{C_J}\}$, so that $P$ is the matrix representation of $\mathcal{P}_{I,J}$ under left multiplication.

We discretise the density $h_\mu$ of $\mu_r$ to a column vector $\mathbf{u}$ of length $I$, by setting $u_i=\mu_r(B_i)$. If some sets $B_i$ have zero reference measure, then we remove them from our collection as there is no mass to be transported. We therefore assume that $u_i>0$ for all $i=1, \ldots I$. To approximate the density $h_\nu$ of $\nu_r$, we use the fact that $h_\nu=\mathcal{P}h_\mu$ (by \eqref{eq:CalP}). Thus $\mathbf{v}=P^\top \mathbf{u}$ is the numerical approximation of $h_\nu$. We assume $v_j>0$ (if $v_j=0$, then we remove the corresponding sets $C_j$ because they represent $\nu_r(C_j)=0$).

To numerically estimate $\mathcal{L}$ given by \eqref{eq:pfw}, we use the matrix $P$ and the vectors $\mathbf{u}$ and $\mathbf{v}$. In particular, the components of $\theta_J (\mathcal{L}f)$ are approximated by
\begin{equation}\label{eq:dL}
 [\mathcal{L}f]_j\approx\sum_{i=1}^I \frac{P_{ji}(f_i  u_i)}{v_j},
\end{equation}
where $f_i$ are the components of the vector $\mathbf{f}:=\pi_I f$. Define the $I\times J$ matrix $\tilde{P}$ by
\begin{equation}\label{eq:matrixP2}
\tilde{P}_{ij}:=P_{ij}u_i/v_j.
\end{equation}
Then \eqref{eq:dL} is equivalent to $\theta_J (\mathcal{L}f)\approx \tilde{P}^\top \mathbf{f}$; that is the matrix $\tilde{P}$ under left multiplication is the numerical approximation of $\mathcal{L}$. To numerically estimate $\mathcal{L}^*$ from $\mathcal{L}$, we note by definition $\langle  \mathcal{L}f,g\rangle_\mu=\langle f, \mathcal{L}^* g\rangle_\nu$, for all $f\in L^2(M, m, \mu_r)$ and $g\in L^2(N, n, \nu_r)$. Hence,
\begin{equation*}
\sum_{i=1}^I f_i\cdot [\mathcal{L}^*g]_i\cdot u_i
\approx\langle f, \mathcal{L}^*g\rangle_\mu
=\langle \mathcal{L}f, g\rangle_\nu
\approx\sum_{i=1}^I\sum_{j=1}^J \tilde{P}_{ij} f_i\cdot g_j\cdot v_j=\sum_{i=1}^I f_i \cdot \sum_{j=1}^J P_{ij}g_j \cdot u_i,
\end{equation*}
where $[\mathcal{L}^*g]_i$ and $g_j$ are the components of the vectors $\pi_I(\mathcal{L}f)$ and $\theta_J g$ respectively. Therefore, we have
\begin{equation}\label{eq:dL*}
[\mathcal{L}^*g]_i\approx \sum_{j=1}^J P_{ij} g_j.
\end{equation}
The operator $\mathcal{L}^*$ is numerically estimated by the matrix $P$ under right multiplication.

\subsection{Finite-difference estimate for $\triangle^D$}\label{sec:nlp}
To numerically solve the eigenvalue problem $\triangle_\mu f=\lambda f$ on $(M, e, \mu_r)$, we discretise $\triangle_\mu$ using the second equality of \eqref{def:wlp}; that is
\begin{equation}\label{eq:wlpe1}
\triangle_\mu f=\frac{1}{h_\mu}\divg_e(h_\mu \nabla_e f).
\end{equation}
In preparation for the numerical approximations for our $2$-dimensional examples, which will be a rectangle, cylinder or torus, we construct a $K$ by $L$ grid system for $M$. Let $(x_1, x_2)$ be Euclidean coordinates on $M$. We cover $M$ with $I$ grid boxes $\{B_i\}_{i=1}^I$ of uniform size $b_{x_1}\times b_{x_2}$ (one can easily consider the more general case of nonuniform box sizes), and re-index the boxes $\{B_i\}_{i=1}^I$ to $\{B_{k, l}\}_{1\leq k\leq K, 1\leq l\leq L}$, indexing the $x_1$-direction with $k$, and the $x_2$-direction with $l$; clearly $K\times L=I$. Let $f_{k, l}$ and $\mu_{k,l}$ denote the components of discrete functions $\mathbf{f}$ and $\mathbf{u}$ respectively.

We employ standard finite-difference schemes to obtain numerical approximations for the RHS of \eqref{eq:wlpe1}. Starting with the approximation of $h_\mu\nabla_e f$, one has in Euclidean coordinates $(x_1, x_2)$, the vector $h_\mu\nabla_e f=h_\mu(\partial f/\partial x_1, \partial f/\partial x_2)$. To compute the derivatives $\partial f/\partial x_1$ and $\partial f/\partial x_2$ numerically, we apply the standard central-difference technique to obtain on the grid box $B_{k,l}$,
\begin{equation*}
 \frac{\partial f}{\partial x_1}\approx \frac{f_{k+1, l}-f_{k-1, l}}{2b_{x_1}}\quad\mbox{and}\quad  \frac{\partial f}{\partial x_2}\approx\frac{f_{k, l+1}-f_{k, l-1}}{2b_{x_2}},
\end{equation*}
thus on the grid box $B_{k,l}$
\begin{equation}\label{eq:appgrad}
 h_\mu\nabla_ef\approx  \left( u_{k,l}\frac{f_{k+1, l}-f_{k-1, l}}{2b_{x_1}}, u_{k,l}\frac{f_{k, l+1}-f_{k, l-1}}{2b_{x_2}}  \right).
\end{equation}

Next, we numerically solve the divergence $\divg_e$ applied to the RHS of \eqref{eq:appgrad}. By central-difference approximations, one has on the grid box $B_{k, l}$
\begin{align}\label{eq:appdivgrad}
  \triangle_\mu f=\frac{1}{h_\mu}(\divg_e (h_\mu\nabla_ef)) \approx \frac{1}{u_{k,l}}\left[ u_{k+1,l}\frac{f_{k+2, l}-f_{k, l}}{4b^2_{x_1}}-u_{k-1,l}\frac{f_{k-2, l}-f_{k, l}}{4b^2_{x_1}}\right.\notag\\
  \left.+u_{k,l+1}\frac{f_{k, l+2}-f_{k, l}}{4b_{x_2}^2}-u_{k,l-1}\frac{f_{k, l}-f_{k, l-2}}{4b_{x_2}^2}\right].
\end{align}
Denote the resulting finite-difference approximation of $\triangle_\mu$ by the $I\times I$ matrix $\mathbf{L}_\mu$. Rearranging \eqref{eq:appdivgrad}, then $\mathbf{L}_\mu$ applied to the vector
\begin{equation}\label{eq:approxf}
\mathbf{f}:=(f_{1, 1}, f_{2, 1}, \ldots, f_{K, 1}, f_{1, 2}, f_{2, 2}\ldots, f_{1, L}, f_{2, L}\ldots, f_{K, L}),
\end{equation}
is a vector of length $I$ with components
\begin{align}\label{eq:dlp}
 [\mathbf{L}_\mu \mathbf{f}]_{k+K(l-1)} = \frac{1}{4b_{x_1}^2}\frac{u_{k+1, l}}{u_{k,l}}f_{k+2, l}+\frac{1}{4b_{x_1}^2}\frac{u_{k-1, l}}{u_{k,l}}f_{k-2, l}+\frac{1}{4b_{x_2}^2}\frac{u_{k, l+1}}{u_{k, l}}f_{k, l+2}
 +\frac{1}{4b_{x_2}^2}\frac{u_{k, l-1}}{u_{k, l}}f_{k, l-2}\notag\\
 -\bigg(\frac{1}{4b_{x_1}^2}\frac{u_{k+1, l}+u_{k-1, l}}{u_{k, l}}+ \frac{1}{4b_{x_2}^2}\frac{u_{k, l+1}+u_{k, l-1}}{u_{k, l}} \bigg)f_{k, l},
\end{align}
for $1\leq k\leq K$, $1\leq l\leq L$. Note that if $u_{k,l}$ is constant for all $1\leq k\leq K$ and $1\leq l\leq L$, then the expression \eqref{eq:dlp} becomes the standard $5$-point stencil Laplace matrix.

To treat the numerical approximation of $\triangle_\mu$ at the boundary of $M$, we apply the usual Neumann boundary condition $e(\nabla_e \varphi, \mathbf{n})_x=0$ for all $x\in \partial M$ (where $\mathbf{n}$ is unit normal to $\partial M$). This Neumann boundary condition is imposed by symmetric reflection \cite{strikwerda04} in the above modified finite-difference scheme as follows: consider the grid boxes $B_{1, l}$ for $1\leq l\leq L$; one has a boundary on the left side edge of each of these grid boxes. By construction, the unit normal $\mathbf{n}$ along the left side edge of the grid boxes $\{B_{1, l}\}_{l=1}^{L}$ is given by $(-1,0)$. Therefore, the boundary condition $e(\nabla_e \varphi, \mathbf{n})_x=0$ is satisfied by reflecting the artificial $f_{0, l}=f_{2,l}$, $f_{-1, l}=f_{1, l}$ and $u_{0, l}=u_{2, l}$ for all $1\leq l\leq L$. One applies similar symmetric reflections to all $B_{k,l}$ at the boundary of $M$.

By definition \eqref{eq:dwc}, and the numerical approximations we obtained for $\triangle_\mu$, $\triangle_\nu$, $\mathcal{L}$ and $\mathcal{L}^*$, one has the finite-difference approximation for the weighted dynamic Laplacian $\mathbf{L}^D$ given by
\begin{equation}\label{eq:discretelaplace}
\mathbf{L}^D=\mathbf{L}_\mu+P\mathbf{L}_\nu\tilde{P}^\top,
\end{equation}
where the matrices $P$ and $\tilde{P}$ are given by \eqref{eq:matrixP} and \eqref{eq:matrixP2} respectively, and $\mathbf{L}_\mu$, $\mathbf{L}_\nu$ by \eqref{eq:dlp}. We note that the matrices $\mathbf{L}_\mu, \mathbf{L}_\nu, P$ and $\tilde{P}$ are sparse and consequently $\mathbf{L}^D$ is sparse. One can numerically solve the finite dimensional eigenvalue problem $\mathbf{L}^D\mathbf{f}=\lambda \mathbf{f}$ for small eigenvalues $\lambda$, and in particular $\lambda_2$ and corresponding eigenfunction $\phi_2$. To find a good solution $\Gamma$ to the dynamic isoperimetric problem \eqref{eq:cc}, one can use the level sets of $\phi_2$ as candidates for $\Gamma$ as in Algorithm \ref{alg}.

\subsection{Case study 1: dynamics on a cylinder}\label{sec:nwc}
We now demonstrate our technique on a weighted $2$-dimensional cylinder $(M, e, \mu_2)$, where $M=[0, 4)/\sim\times[0,1]$ and $h_\mu(x_1, x_2)=\frac{1}{8}(\sin(\pi x_1)+2)$ as in Section \ref{example1}. We set our computational resolution for $M$ to be $K\times L=256\times 64$ square grid boxes $B_{k,l}$ of side length $b=1/64$, and select the number of test points in each grid box to be $Q=400$\footnote{One can also use far fewer points per box and still obtain good results.}. We consider two different types of nonlinear transformations $T_1$ and $T_2$ acting on $M$:
\begin{align}
 T_1(x_1, x_2)&=\left(x_1+\frac{\cosh\left(2 x_2\right)-1}{2}, x_2\right),\label{eq:T_1}\\
 T_2(x_1, x_2)&=\left(x_1+x_2, x_2+0.1x_2\sin(2\pi x_2)\right)\label{eq:T_2},
\end{align}
where the first coordinate is computed modulo $4$ in both cases. The map $T_1$ is the area-preserving, non-linear horizontal shear from the example considered in Section \ref{example1}. The map $T_2$ is a linear horizontal shear, composed with vertical area-distortion; i.e.\
\begin{equation*}
T_2(x_1, x_2)=\hat{T}_2(x_1+x_2, x_2),
\end{equation*}
where
\begin{equation*}
\hat{T}_2(x_1, x_2)=(x_1, x_2+0.1x_2\sin(2\pi x_2)),
\end{equation*}
compresses the mass distribution of $M$ in towards the horizontal line $x_2=0.5$.
\subsubsection{The transformation $T_1$ on $M$}
\label{sect631}
We optimally partition $M$ using Algorithm \ref{alg} to find a good solution $\Gamma$ to the dynamic isoperimetric problem \eqref{eq:cc}. First, we consider the dynamic Laplacian for $T_1$ acting on $M$. In step $1$ of Algorithm \ref{alg}, we construct the matrix $\mathbf{L}^D$ given by \eqref{eq:discretelaplace} as the numerical approximation of $\triangle^D$ via the finite-difference scheme outlined in Section \ref{sec:nlp}, and numerically solve the finite-dimensional eigenproblem $\mathbf{L}^D\phi=\lambda \phi$. The leading numerical eigenvalues $\lambda_1$, $\lambda_2,\ldots \lambda_7$ of $\mathbf{L}^D$ are $0$, $-0.6046$, $-1.3739$, $-2.3221$, $-3.2886$, $-3.4091$, $-3.7056\ldots$.  The components of the numerical eigenvector $\phi_2$ corresponding to $\lambda_2$ takes on at most $256\times 64$ unique values; at most one value on each of the grid box. Step $2$ of Algorithm \ref{alg}, generates partitions of $M=M_{1, t}\cup \Gamma_t\cup M_{2, t}$ from level sets of $\phi_2$. Finally, one computes $\mathbf{H}^D(\Gamma_t)$ for each $t$, and finds the optimal $\Gamma_{t_0}$ as a solution to the dynamic optimisation problem \eqref{eq:cc}; the results are shown in Figure \ref{fig:dyn1}.

\begin{figure}[h]
         \begin{subfigure}[h]{0.5\textwidth}
                 \includegraphics[width=\textwidth]{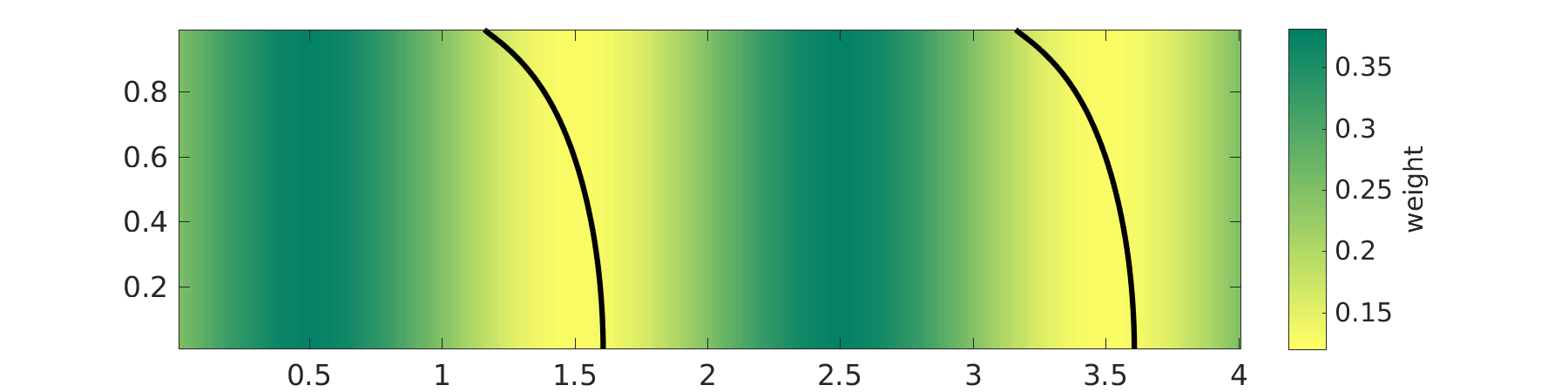}
                 \caption{}\label{fig:dyn1a}
         \end{subfigure}
         \begin{subfigure}[h]{0.5\textwidth}
                 \includegraphics[width=\textwidth]{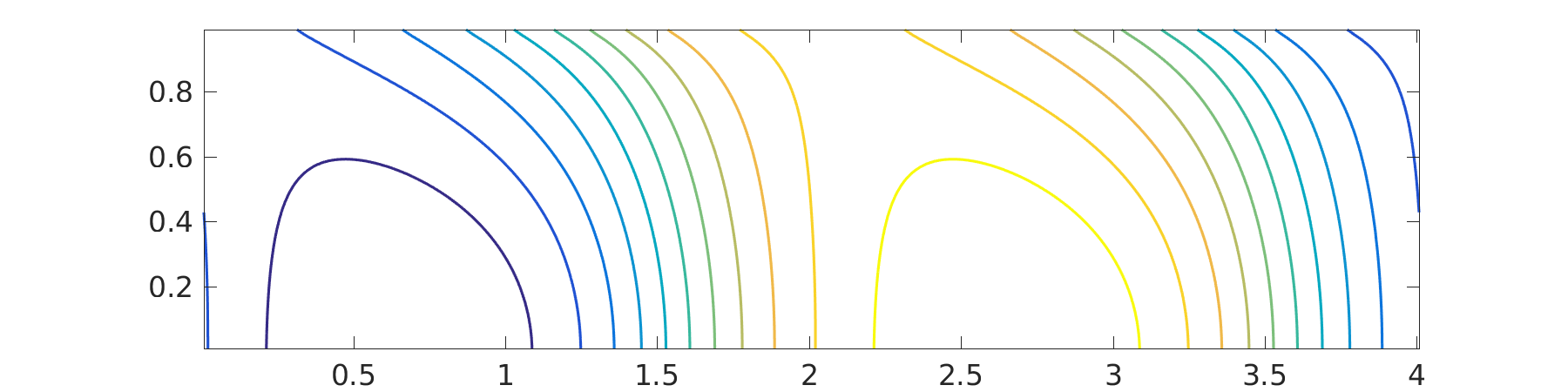}
                 \caption{}\label{fig:dyn1b}
         \end{subfigure}
         \begin{subfigure}[h]{0.5\textwidth}
                 \includegraphics[width=\textwidth]{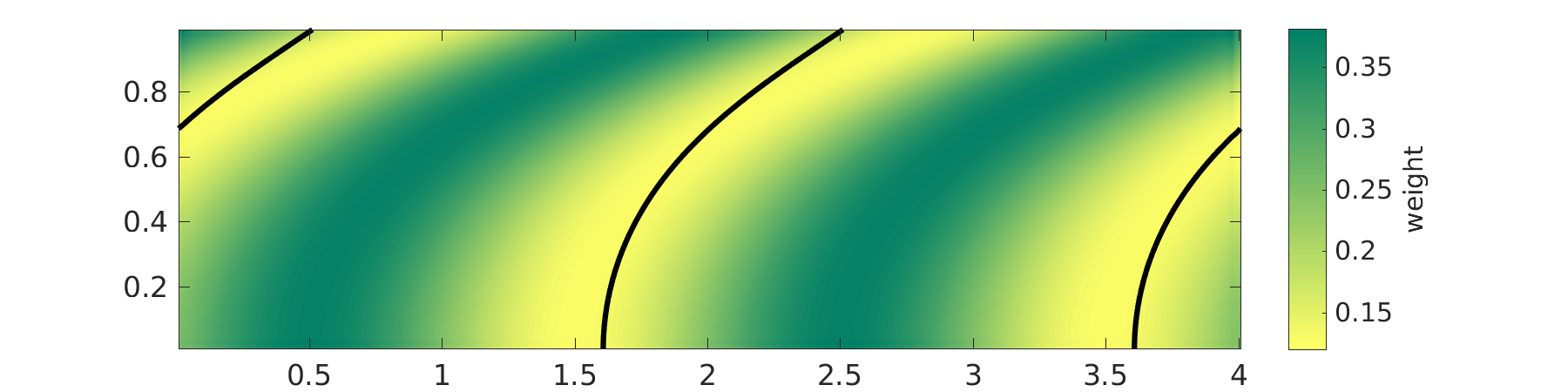}
                 \caption{}\label{fig:dyn2a}
         \end{subfigure}
         \begin{subfigure}[h]{0.5\textwidth}	
                 \includegraphics[width=\textwidth]{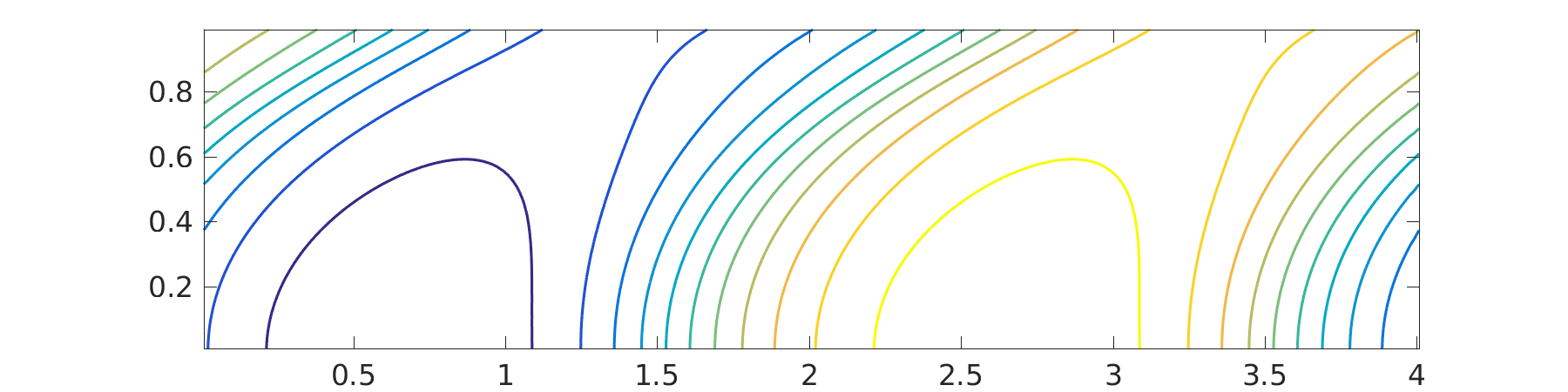}
                 \caption{}\label{fig:dyn2b}
         \end{subfigure}
         \caption{Partition of $M$ using the eigenvector $\phi_2$ of $\triangle^D$ under non-linear shear $T_1$ given by \eqref{eq:T_1}. (a) Colours are values of $h_\mu$, and black lines are the level surface $\Gamma_{t_0}=\{\phi_2=-1.211\times 10^{-5}\}$. (b) The level surfaces of $\phi_2$. (c) Colours are the values of $h_\nu$, and black lines are the level surface $T_1\Gamma_{t_0}$. (d) The level surfaces of $\mathcal{L}\phi_2$.}\label{fig:dyn1}
\end{figure}

It was found that the hypersurface $\Gamma_{t_0}$ is $\mathbf{H}^D$ \emph{minimising} for $t_0=-1.211\times 10^{-5}$; see figures \ref{fig:dyn1a} and \ref{fig:dyn2a}. Note that the densities $h_\mu$ form a region of low $\mu_1$-mass in $M$ about the lines $\{x\in M: x_1=1.5, 3.5\}$. Thus, to minimise the $\mu_1$-mass of the hypersurface $\Gamma_{t_0}$ in $M$, it is advantageous to have $\Gamma_{t_0}$ as short curves in close proximity to the vertical lines $\{x\in M: x_1=1.5, 3.5\}$. Moreover, to effectively counter the shearing imposed by $T_1$ so that the size of $\nu_1$-mass of $T_1\Gamma_{t_0}$ stays persistently small in $TM$, the curve $\Gamma_{t_0}$ bends horizontally towards the left progressively more as $x_2$ approaches $1$ from $0$.  The $\mu_1$-mass of $\Gamma_{t_0}$ and its image under $T_1$ are $\mu_1(\Gamma_{t_0})=0.3088$ and $\nu_1(T_1\Gamma_{t_0})=0.3815$; the level surface $\Gamma_{t_0}$ experiences significantly reduced deformation under the action of $T_1$, compared to the results of Section \ref{example1} shown by Figure \ref{fig:static2a}. Moreover, the partition $M=M_{1,t_0}\cup \Gamma_{t_0}\cup M_{2, t_0}$ has a perfectly balanced $\mu_2$-mass distribution between $M_{1, t_0}$ and $M_{2,t_0}$.  One has $\mathbf{H}^D(\Gamma_{t_0})=0.6903$, and this solution is a suitable candidate for LCSs on $(M, e, \mu_2)$.

\subsubsection{The transformation $T_2$ on $M$}
We repeat the above numerical experiment on the $2$-cylinder, replacing the transformation $T_1$ with $T_2$, and setting the initial mass density $h_\mu$ to be uniformly distributed on $M$. The leading numerical eigenvalues $\lambda_1$, $\lambda_2,\ldots \lambda_6$ of $\mathbf{L}^D$ are $0, -0.7747\pm 0.0092i, -3.0900\pm 0.0199i, 3.8702, -4.5674\pm 0.0250i$.
In this example, although $\mathbf{L}_\mu$ and $\mathbf{L}_\nu$ have real eigenvalues to numerical precision, when combined to form $\mathbf{L}^D$, one obtains small imaginary parts.
The eigenvalues $\lambda_2, \lambda_3$ should be real and equal (i.e.\ $\lambda_2$ has multiplicity 2), because of the symmetry obtained by translating all objects in the $x$-coordinate direction.
As before, we apply Algorithm \ref{alg} to partition $M$ using the level surfaces of the second eigenfunction $\phi_2$ corresponding to $\lambda_2=-0.7747$; the results are shown in Figure \ref{fig:dyn4}.
\begin{figure}[h]
         \begin{subfigure}[h]{0.5\textwidth}
                 \includegraphics[width=\textwidth]{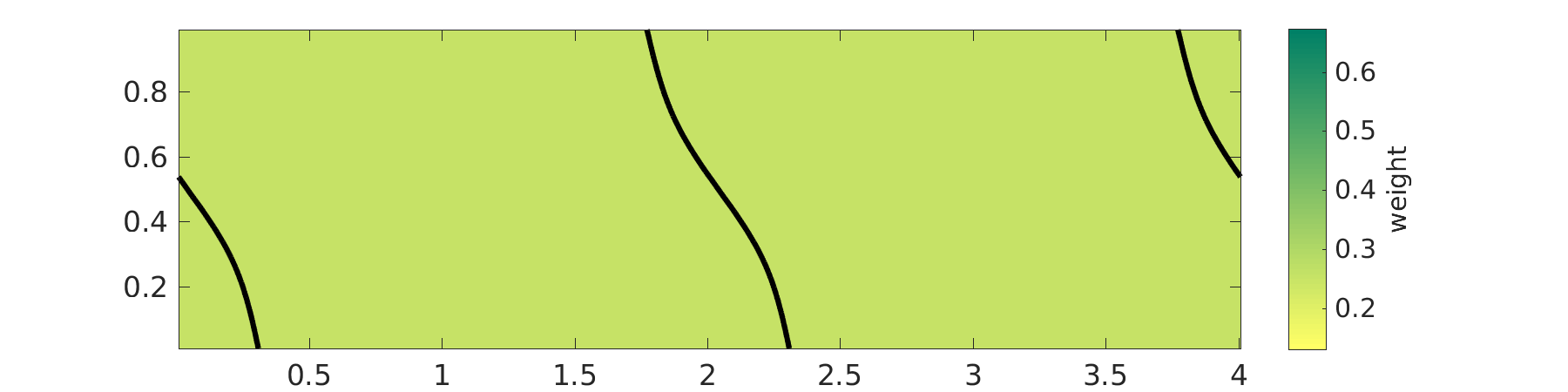}
                 \caption{}\label{fig:dyn3a}
         \end{subfigure}
         \begin{subfigure}[h]{0.5\textwidth}
                 \includegraphics[width=\textwidth]{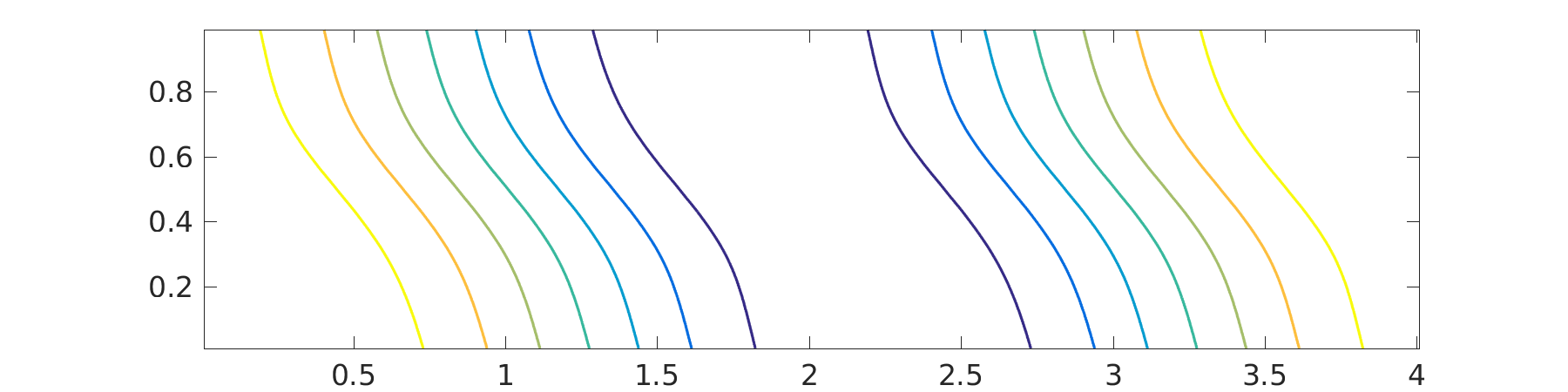}
                 \caption{}\label{fig:dyn3b}
         \end{subfigure}
         \begin{subfigure}[h]{0.5\textwidth}
                 \includegraphics[width=\textwidth]{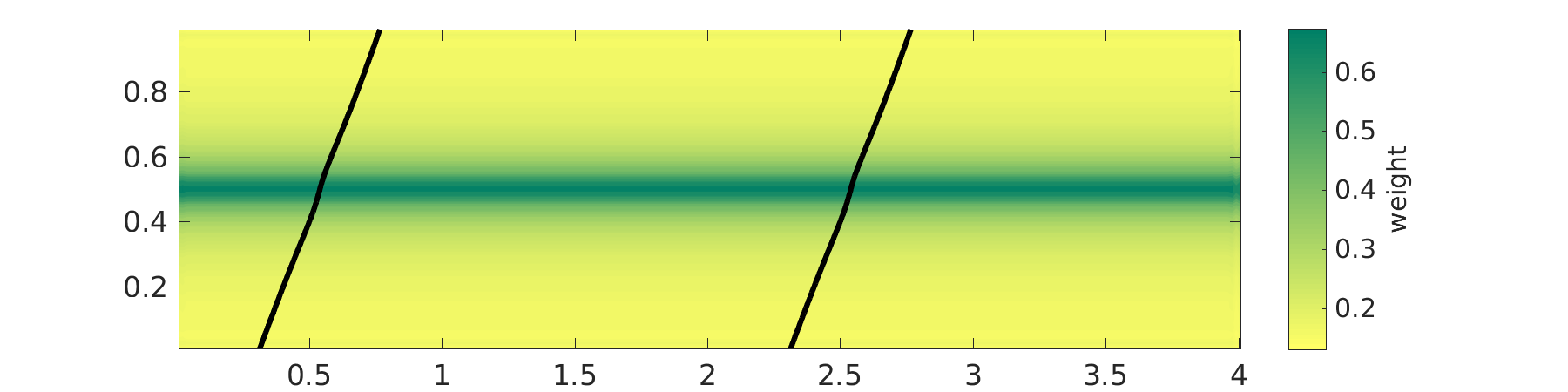}
                 \caption{}\label{fig:dyn4a}
         \end{subfigure}
         \begin{subfigure}[h]{0.5\textwidth}	
                 \includegraphics[width=\textwidth]{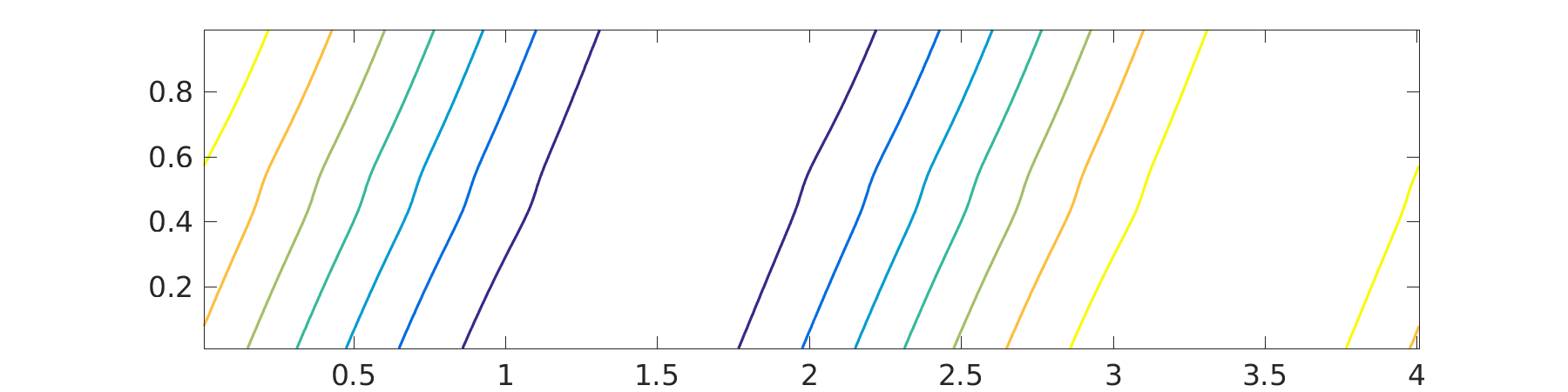}
                 \caption{}\label{fig:dyn4b}
         \end{subfigure}
       \caption{Partition of $M$ using the eigenvector $\phi_2$ of $\triangle^D$ for the nonlinear shear $T_2$ given by \eqref{eq:T_2}. (a) Colours are values of $h_\mu$, and black lines are the level surface $\Gamma_{t_0}=\{\phi_2=-3.4232\times 10^{-5}\}$. (b) The level surfaces of $\phi_2$. (c) Colours are the values of $h_\nu$, and black lines are the level surface $T_2\Gamma_{t_0}$. (d) The level surfaces of $\mathcal{L}\phi_2$.}\label{fig:dyn4}
\end{figure}

It was found that the hypersurface $\Gamma_{t_0}$ is $\mathbf{H}^D$ \emph{minimising} for $t_0=-3.4232\times 10^{-5}$; see figures \ref{fig:dyn3a} and \ref{fig:dyn4a}. The co-dimension $1$ mass of $\Gamma_{t_0}$ and its image under $T_2$ are $\mu_1(\Gamma_{t_0})=0.5682$ and $\nu_1(T_2\Gamma_{t_0})=0.5435$, respectively. Recall that the action of $T_2$ on $M$ has the effect of compressing the mass distribution towards the horizontal line $x_2=0.5$. To avoid a large $\nu_1$-mass of $T_2\Gamma_{t_0}$ on $T_2M$, one makes appropriate compromises on the $\mu_1$-mass of $\Gamma_{t_0}$ in $M$. For example, in Figure \ref{fig:dyn4a} as the black curves approach the dark green, high density region, they become more vertical so as to traverse this high density region using a shorter curve length and reducing their $\nu_1$-mass. This necessitates $\Gamma_{t_0}$ being slightly curved and having slightly greater $\mu_1$-mass. Once again the partition $M=M_{1,t_0}\cup \Gamma_{t_0}\cup M_{2, t_0}$ has a perfectly balanced $\mu_2$-mass distribution of $M_{1, t_0}$ and $M_{2,t_0}$.  One has $\mathbf{H}^D(\Gamma_{t_0})=1.1117$.

\subsection{Case study 2: dynamics on a torus}\label{sec:nwc2}

A dynamic isoperimetric partition of the classical so-called standard map $(x,y)\mapsto(x+y,y+8\sin(x+y))\pmod{2\pi}$ on the 2-torus was calculated in \cite{froyland14}.
Despite the strong nonlinearity, the lines of constant $x+y$ value are clearly mapped to lines of constant $x$ value, and this fact was automatically exploited by the eigenfunctions of the dynamic Laplace operator to obtain the optimal solution to the dynamic isoperimetric problem (see Figures 7 and 8 \cite{froyland14}).

To demonstrate the effects of lack of volume-preservation and nonuniform weights, we now modify the standard map to a version that is not volume-preserving, and replace the uniform weighting on the torus with a nonuniform weighting.
We consider a weighted $2$-dimensional torus $(\mathbb{T}^2, e, \mu_2)$, where $\mathbb{T}^2=2\pi(\mathbb{R}/\mathbb{Z})\times 2\pi(\mathbb{R}/\mathbb{Z})$ and $h_\mu(x_1, x_2)=\frac{1}{8\pi^2}(\sin(x_2-\pi/2)+2)$.
We consider the transformation $T:=T_4\circ T_3$ acting on $M$, where
\begin{align}
 T_3(x_1, x_2)&=\left(x_1+0.3x_1\cos(2x_1), x_2\right),\label{eq:T_3}\\
 T_4(x_1, x_2)&=\left(x_1+x_2, x_2+8\sin(x_1+x_2)\right)\label{eq:T_4},
\end{align}
computed modulo $2\pi$. The map $T_3$ distorts the area of $\mathbb{T}^2$ in the horizontal direction, and $T_4$ is the ``standard map''.

We set our computational resolution for $M$ to be $K\times L=128\times 128$ square grid boxes $B_{k,l}$ of side length $b=1/64$, and select the number of test points in each grid box to be $Q=400$.
We optimally partition $M$ using Algorithm \ref{alg}. The leading numerical eigenvalues $\lambda_1$, $\lambda_2,\ldots \lambda_7$ of $\mathbf{L}^D$ are $0$, $-0.3584$, $-0.3751$, $-1.0750$, $-1.1349$, $-1.4358$, $-1.4966$. We generate partitions of $M$ using the level surfaces of the second eigenfunction $\phi_2$ corresponding to $\lambda_2=-0.3584$; the results are shown in Figure \ref{fig:dyn5}.

\begin{figure}[h]
\centering
         \begin{subfigure}[h]{0.24\textwidth}
                 \includegraphics[width=\textwidth]{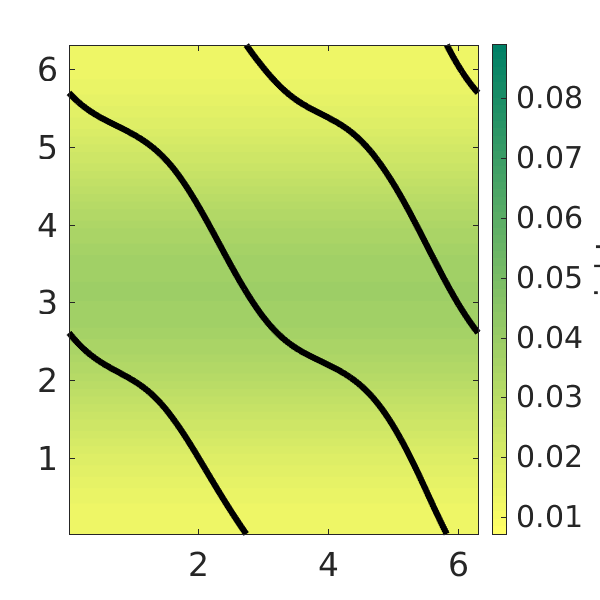}
                 \caption{}\label{fig:dyn5a}
         \end{subfigure}
         \begin{subfigure}[h]{0.24\textwidth}
                 \includegraphics[width=\textwidth]{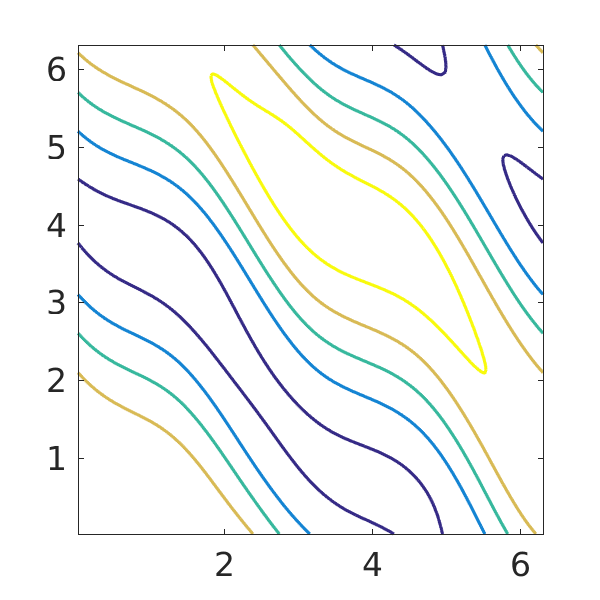}
                 \caption{}\label{fig:dyn5b}
         \end{subfigure}
         \begin{subfigure}[h]{0.24\textwidth}
                 \includegraphics[width=\textwidth]{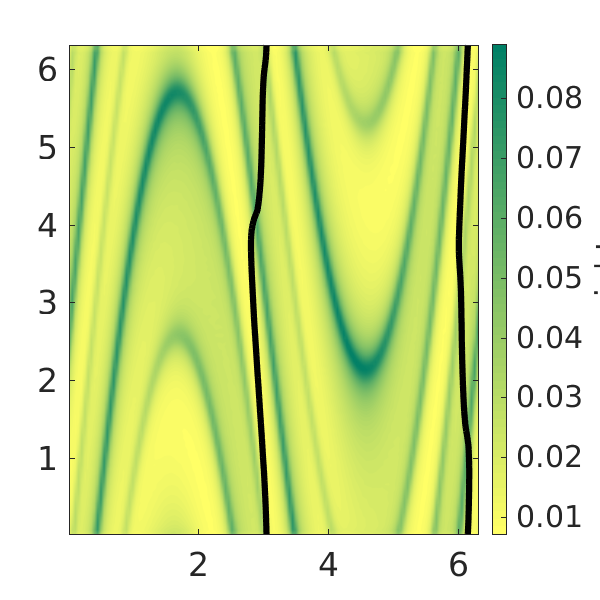}
                 \caption{}\label{fig:dyn6a}
         \end{subfigure}
         \begin{subfigure}[h]{0.24\textwidth}	
                 \includegraphics[width=\textwidth]{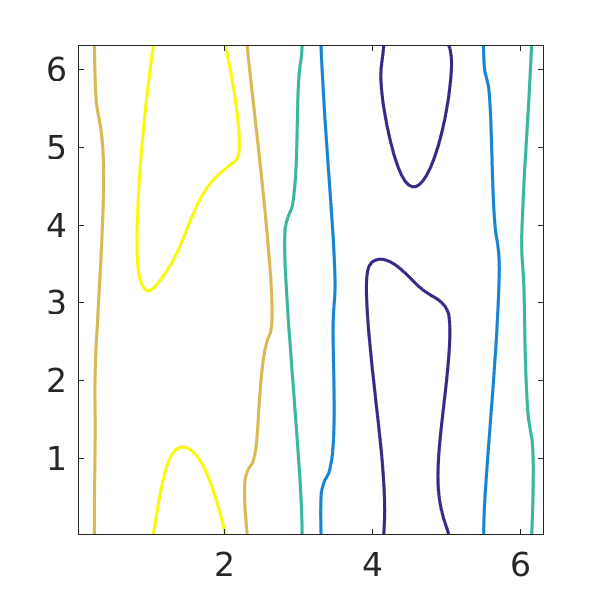}
                 \caption{}\label{fig:dyn6b}
         \end{subfigure}
       \caption{Partition of $\mathbb{T}^2$ using the eigenvector $\phi_2$ of $\triangle^D$ under $T=T_4\circ T_3$ given by \eqref{eq:T_3} and \eqref{eq:T_4}. (a) Colours are values of $h_\mu$, and black lines are the level surface $\Gamma_{t_0}=\{\phi_2=-4.5492\times 10^{-5}\}$. (b) The level surfaces of $\phi_2$. (c) Colours are the values of $h_\nu$, and black lines are the level surface $T\Gamma_{t_0}$. (d) The level surfaces of $\mathcal{L}\phi_2$.}\label{fig:dyn5}
\end{figure}

It was found that the hypersurface $\Gamma_{t_0}$ is $\mathbf{H}^D$ \emph{minimising} for $t_0=-4.5492\times 10^{-5}$; see figures \ref{fig:dyn5a} and \ref{fig:dyn6a}.
The black curves in these figures approximately follow lines of constant $x+y$ value and constant $x$ value, respectively, (as was the case in Figure 7 \cite{froyland14}).
However, here the curves are additionally optimised to take into account the nonuniform $\mu_2$ and non-volume preserving nature of the dynamics, which we explain below.

The $\mu_1$-mass on $\Gamma_{t_0}$ and the $\nu_1$-mass of its image under $T$ are $\mu_1(\Gamma_{t_0})=0.4584$ and $\nu_1(T\Gamma_{t_0})=0.2375$, respectively. Similar to the results of the previous case study of $T_2$ acting on $M$, one makes appropriate compromises on the $\mu_1$-mass of $\Gamma_{t_0}$ in $\mathbb{T}^2$, to ensure that the $\nu_1$-mass of $T\Gamma_{t_0}$ in $\mathbb{T}^2$ remains small. For example, in Figure \ref{fig:dyn6a} the black (almost straight) curves attempt to follow the yellow, low density regions, but when they have to cross the dark green, high density regions, the curves briefly turn to cross these high density regions at a sharper angle. While slightly increasing the curve length, this behaviour reduces the $\nu_1$-mass of the curves.

The partition $\mathbb{T}^2=M_{1,t_0}\cup \Gamma_{t_0}\cup M_{2, t_0}$ is almost perfectly balanced, with $\mu_2(M_1)=0.49994$ and $\mu_2(M_2)=0.5006$.  One has $\mathbf{H}^D(\Gamma_{t_0})=0.6968$. Despite the highly nonlinear nature of $T$, evident in the distribution of $h_\nu$, shown in Figure \ref{fig:dyn6a}, one can find curves that are rather short according to $\nu_1$, both before and after the application of $T$. Therefore this solution is a suitable candidate for LCSs on $(\mathbb{T}^2, e, \mu_2)$, for the finite-time (single) application of $T$.


\section{Conclusions}

The dynamic isoperimetric theory initiated in \cite{froyland14} was concerned with identifying subsets of $\mathbb{R}^r$ with persistently least boundary size to volume ratio under general nonlinear volume-preserving dynamics.
The motivation for this theory was that the boundaries of such sets have optimality properties desired in Lagrangian coherent structures.
In the present work we have extended the constructions and theoretical results of \cite{froyland14} to weighted, non-flat Riemannian manifolds and to possibly non-volume preserving dynamics.
This entailed developing a nontrivial generalisation of the dynamic isoperimetric problem to weighted manifolds and allowing for non-volume preserving dynamics.
We proved a new dynamic version of the classical Federer-Fleming theorem in this setting, which very tightly links the (geometric) dynamic isoperimetric problem with a (functional) minimisation of a new dynamic Sobolev constant.

We then constructed a weighted dynamic Laplacian, and showed that under a natural Neumann-type boundary condition, the spectrum of this weighted dynamic Laplacian can be completely characterised using variational principles tied to the finite-time dynamics of $T$ and the geometry of the manifold. We additionally proved that a dynamic Cheeger inequality holds on weighted Riemannian manifolds, extending a result from \cite{froyland14} for flat, unweighted manifolds, and volume-preserving dynamics.
We demonstrated numerically that the eigenfunctions of the weighted dynamic Laplacian are able to identify sets with small boundaries that remain small when transformed by general dynamics. Such persistently minimal surfaces are excellent candidates for Lagrangian coherent structures (LCSs) as diffusion across their short boundaries is minimised over a finite-time duration.

Finally, we further developed the connection between two very different methods for detecting transport barriers in dynamical systems, namely the relationship between finite-time coherent sets and LCSs as defined using isoperimetic notions. The connection between these sets, explored in \cite{froyland14} in the flat manifold, volume-preserving dynamics setting, is that in the limit of small diffusion, regions in phase space that minimally mix (a purely probabilistic notion) are linked with sets that have persistently least boundary size (a purely geometric notion).
We further enhanced this link by extending and strengthening the result of \cite{froyland14} to the more general setting of weighted, curved Riemannian manifolds with possibly non-volume preserving dynamics.

Very recently, fast and robust numerical methods have been developed \cite{FrJu17} to accurately compute the spectrum and eigenfunctions of the dynamic Laplacian in rather general situations to extract the dominant LCSs
These include settings where the dynamical system is not known \textit{a priori} and only a limited amount of (possibly corrupted) data is available.


%

\section*{Appendix}
\appendix

Let $(M, m)$ and $(N, n)$ be compact, connected $r$-dimensional $C^\infty$ Riemannian manifolds, where $m, n$ are the Riemannian metric tensors on $M$ and $N$ respectively. Let $T$ be a $C^\infty$-diffeomorphism of $M$ onto $N$. Let $(M, m, \mu_r)$ and $(N, n, \nu_r)$ be weighted Riemannian manifolds, where $\mu_r$ is absolutely continuous probability measures with respect to $V_m$, and $\nu_r=\mu_r\circ T^{-1}$. Denote by $h_\mu$ and $h_\nu$ the densities of the measures $\mu_r$ and $\nu_r$ respectively.

In comparison to the predecessor work \cite{froyland14}, Appendix A is new because we need to treat the weights $h_\mu$ which were simply the constant function in the unweighted setting of \cite{froyland14}.
Much of Appendix B is new, in order to handle a general Riemannian metric, rather than the locally flat metrics in \cite{froyland14}.
Appendix C is required to treat \emph{weighted} Sobolev spaces (the weight was uniform in \cite{froyland14}).
Appendices D--F broadly follow the overall strategy of the corresponding sections in Appendices A, C, B, D, respectively, in \cite{froyland14}, however in the present work there are additional technical issues arising from the weights and the Riemannian metrics.
Moreover, because we need to handle general Riemannian metrics, wherever possible we use a coordinate-free framework, as opposed to the explicit coordinates employed in the simpler locally Euclidean setting of \cite{froyland14}.

\section{Muckenhoupt weights $A_p$ }\label{sec:ap}

For a measurable function $f$ on $M$, the \emph{essential supremum} of $f$ is the number
\begin{equation*}
 \textup{ess sup} f:=\{a\in \mathbb{R}: V_m(f^{-1}(a, \infty))=0\}.
\end{equation*}
Recall from Section \ref{sec:2} that the volume form $\omega_m^r$ on $M$ is given in terms of the volume measure $V_m$ on $M$ via $V_m(U)=\int_U \omega_m^r$ for any measurable subset $U\subset M$. Define the class of $A_p$ weights \cite{turesson00}:

\begin{definition}\label{def:apw}
Let $B_\rho(x)\subset M$ denote the metric ball centered at $x\in M$ with radius $\rho>0$. The density $h_\mu$ of a measure $\mu_r$ is said to be an $A_p$ weight of $(M, m, \mu_r)$, if there exists a constant $C_\mu$ such that for every $x$ and $\rho$, $h_\mu$ satisfies
\begin{equation}\label{eq:apw}
\left(\frac{1}{V_m(B_\rho(x))}\int_{B_\rho(x)} h_\mu\cdot\omega_m^r\right)\left(\frac{1}{V_m(B_\rho(x))}\int_{B_\rho(x)} h_\mu^{-\frac{1}{p-1}}\cdot\omega_m^r \right)^{p-1}\leq C_\mu,
\end{equation}
for $1<p<\infty$, or
\begin{equation}\label{eq:apw2}
\left(\frac{1}{V_m(B_\rho(x))}\int_{B_\rho(x)} h_\mu\cdot\omega_m^r\right)\left(\underset{z\in B_\rho(x)}{\textup{ess sup}}\frac{1}{h_\mu(z)} \right)\leq C_\mu,
\end{equation}
for $p=1$. We call $C_\mu$ the $A_p$ constant of $h_\mu$.
\end{definition}

\begin{proposition}
\label{thm:smoothap}
 Suppose the density $h_\mu$ of the measure $\mu_r$ is Lipschitz and uniformly bounded away from zero. Then $h_\mu$ is an $A_p$ weight for all $1\leq p<\infty$.
\end{proposition}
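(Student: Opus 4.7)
The plan is to exploit the compactness of $M$ together with the Lipschitz hypothesis to obtain a uniform upper bound on $h_\mu$, which when combined with the assumed uniform lower bound gives two-sided bounds $0 < m_\mu \leq h_\mu \leq M_\mu < \infty$. Once such two-sided pointwise bounds are in hand, the $A_p$ conditions \eqref{eq:apw} and \eqref{eq:apw2} collapse to elementary pointwise inequalities independent of $x$ and $\rho$.

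More concretely, first I would observe that since $M$ is compact and $h_\mu$ is Lipschitz (hence continuous), $h_\mu$ attains a finite maximum $M_\mu$ on $M$; combined with the assumption that $h_\mu \geq m_\mu > 0$, this gives uniform two-sided bounds. For the case $1 < p < \infty$, I would then estimate the first average in \eqref{eq:apw} by $M_\mu$, using $h_\mu \leq M_\mu$ pointwise, and estimate the second average by noting that $h_\mu^{-1/(p-1)} \leq m_\mu^{-1/(p-1)}$, so that
\begin{equation*}
\left(\frac{1}{V_m(B_\rho(x))}\int_{B_\rho(x)} h_\mu^{-1/(p-1)}\cdot\omega_m^r\right)^{p-1} \leq m_\mu^{-1}.
\end{equation*}
The product of these bounds yields $C_\mu = M_\mu/m_\mu$, independent of $x, \rho, p$.

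For $p = 1$, the argument is even simpler: the first factor in \eqref{eq:apw2} is again bounded by $M_\mu$, and $\mathrm{ess\,sup}_{B_\rho(x)}(1/h_\mu) \leq 1/m_\mu$, giving the same constant $M_\mu/m_\mu$. I do not anticipate any real obstacle here; the Lipschitz hypothesis is used only through continuity (to obtain the upper bound via compactness), and in fact the conclusion would already follow from $h_\mu$ being bounded above and uniformly bounded away from zero. The single point worth stating clearly is that the $A_p$ constant $C_\mu = M_\mu/m_\mu$ can be chosen uniformly in $p$, which is what the phrasing ``for all $1 \leq p < \infty$'' requires.
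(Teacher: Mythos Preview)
Your proposal is correct and follows essentially the same approach as the paper: both arguments combine an upper bound on $h_\mu$ (obtained from compactness of $M$) with the assumed uniform lower bound to estimate the two factors in the $A_p$ condition separately. Your version is in fact slightly cleaner---you use Lipschitz only via continuity to get the upper bound $M_\mu$, whereas the paper explicitly invokes the Lipschitz inequality $h_\mu(z)\leq h_\mu(x)+K\,\textup{dist}_m(x,z)$ before appealing to compactness; your route also yields a $p$-independent constant $C_\mu=M_\mu/m_\mu$, which the paper's argument does not.
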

\begin{proof}
Let $B_\rho(x)\subset M$, and denote by $\textup{dist}_m$ be the Riemannian distance function with respect to the metric $m$. Since $h_\mu$ is Lipschitz and nonnegative, for every $x, z\in M$ one has $h_\mu(z)\leq h_\mu(x)+K\textup{dist}_m(x, z)$ for some $K<\infty$. Hence, for every $x\in M$ and $\rho>0$
\begin{align}\label{eq:smoothap1}
 \frac{1}{V_m(B_\rho(x))}\int_{B_\rho(x)} h_\mu(z)\cdot \omega_m^r(z)
 &\leq \frac{h_\mu(x)+K\rho}{V_m(B_\rho(x))}\int_{B_\rho(x)} \omega_m^r(z)\notag\\
 &=h_\mu(x)+K\rho.
\end{align}
Since $M$ is compact and $B_\rho(x)\subset M$, one has $\rho<\infty$. Also, since $h_\mu$ is Lipschitz, it is bounded on $M$. Hence, the RHS of \eqref{eq:smoothap1} is bounded above by $\sup_{x\in M} (h_\mu(x)+K\rho)$.

In addition, since $h_\mu$ is uniformly bounded away from zero, $h_\mu^{-1}$ and $h_\mu^{-1/(p-1)}$ are bounded for $1< p<\infty$. Hence, there exist constants $\gamma_p$ and $\gamma_1$ such that
\begin{equation}\label{eq:smoothap2}
\frac{1}{V_m(B_\rho(x))}\int_{B_\rho(x)} h_\mu^{-\frac{1}{p-1}}\cdot\omega_m^r\leq \gamma_p,
\end{equation}
$1<p<\infty$, and
\begin{equation}\label{eq:smoothap3}
 \left(\underset{z\in B_\rho(x)}{\textup{ess sup}}\frac{1}{h_\mu(z)} \right) \leq \gamma_1,
\end{equation}

Hence, by \eqref{eq:smoothap1}, \eqref{eq:smoothap2} there are constants $C_\mu=\gamma_p^{1/(p-1)}\cdot \sup_{x\in M}(h_\mu(x)+K \rho)$, such that \eqref{eq:apw} is satisfied for $1<p<\infty$. Similarly, there is a constant $C_\mu=\gamma_1\cdot \sup_{x\in M}(K \rho+h_\mu(x))$, such that \eqref{eq:apw2} is satisfied for $p=1$. Hence, $h_\mu$ is an $A_p$ weight.
\end{proof}

Let $L^1_{\textup{loc}}(M, V_m)$ denote the space of locally integrable functions; that is, if $f\in L^1_{\textup{loc}}(M, V_m)$ then $\int_{B_\rho(x)} f\cdot\omega_m^r<\infty$ for every $x\in M$ and $\rho\in \mathbb{R}^+$.  Given a weighted Riemannian manifold $(M, m, \mu_r)$, we wish to determine the condition on the density $h_\mu$ so that $L^p(M, \mu_r)\subset L_{\textup{loc}}^1(M, V_m)$.

\begin{proposition}\label{thm:loc}
 Let $B_\rho(x)\subset M$ denote the metric ball centered at $x\in M$ with radius $\rho>0$. If $h_\mu^{-1/(p-1)}$ is in $L_{\textup{loc}}^1(M, V_m)$ for $p\in (0, \infty)$, or if for every $x$ and $\rho$
 \begin{equation*}
  \underset{z\in B_\rho(x)}{\textup{ess sup}}\frac{1}{h_\mu(z)}<\infty,
 \end{equation*}
 for $p=1$. Then $L^p(M, m, \mu_r)\subset L_{\textup{loc}}^1(M, V_m)$.
\end{proposition}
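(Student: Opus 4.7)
The plan is to prove the statement by a direct application of H\"older's inequality, treating the cases $p > 1$ and $p = 1$ separately. Fix $f \in L^p(M, m, \mu_r)$ and a metric ball $B_\rho(x) \subset M$; the goal is to bound $\int_{B_\rho(x)} |f| \cdot \omega_m^r$ by a finite quantity involving $\|f\|_{L^p(\mu_r)}$ and the local integrability of a negative power of $h_\mu$.

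For $p \in (1, \infty)$, I would write
\begin{equation*}
\int_{B_\rho(x)} |f| \cdot \omega_m^r = \int_{B_\rho(x)} \left(|f|\cdot h_\mu^{1/p}\right)\cdot h_\mu^{-1/p}\cdot \omega_m^r,
\end{equation*}
and apply H\"older's inequality with exponents $p$ and $q = p/(p-1)$ to obtain
\begin{equation*}
\int_{B_\rho(x)} |f|\cdot\omega_m^r \leq \left(\int_{B_\rho(x)} |f|^p\cdot h_\mu\cdot\omega_m^r\right)^{1/p}\left(\int_{B_\rho(x)} h_\mu^{-1/(p-1)}\cdot\omega_m^r\right)^{1/q}.
\end{equation*}
The first factor is bounded above by $\|f\|_{L^p(M,\mu_r)}$, which is finite by hypothesis. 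The second factor is finite by the assumption that $h_\mu^{-1/(p-1)} \in L^1_{\textup{loc}}(M, V_m)$. Hence the left-hand side is finite, i.e.\ $f \in L^1_{\textup{loc}}(M, V_m)$.

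For $p = 1$, the argument is even simpler: I would write $|f|\cdot\omega_m^r = |f|\cdot h_\mu \cdot h_\mu^{-1}\cdot\omega_m^r$ and estimate
\begin{equation*}
\int_{B_\rho(x)} |f|\cdot\omega_m^r \leq \left(\underset{z\in B_\rho(x)}{\textup{ess sup}}\frac{1}{h_\mu(z)}\right) \int_{B_\rho(x)} |f|\cdot h_\mu\cdot\omega_m^r \leq \left(\underset{z\in B_\rho(x)}{\textup{ess sup}}\frac{1}{h_\mu(z)}\right)\|f\|_{L^1(M,\mu_r)},
\end{equation*}
which is finite by the stated essential-supremum hypothesis. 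Since $x$ and $\rho$ were arbitrary (subject to $B_\rho(x)\subset M$), $f \in L^1_{\textup{loc}}(M, V_m)$ in either case, completing the inclusion $L^p(M, m, \mu_r) \subset L^1_{\textup{loc}}(M, V_m)$.

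There is no real obstacle here: the proof is a one-line H\"older argument, with the exponent $-1/(p-1)$ in the hypothesis chosen precisely so that $(h_\mu^{-1/p})^q = h_\mu^{-1/(p-1)}$, matching the H\"older conjugate exponent. The only mild care needed is to note that the measure $d\mu_r = h_\mu\cdot\omega_m^r$ absorbs the positive power $h_\mu^{1/p}$ in the first factor, converting an $L^p(V_m)$-type bound into an $L^p(\mu_r)$-type bound, and that the $p=1$ case must be handled separately since the exponent $-1/(p-1)$ is undefined there.
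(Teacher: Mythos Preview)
Your proof is correct and is exactly the standard H\"older argument one expects here. The paper does not actually prove this proposition: it simply cites Turesson's book for the $\mathbb{R}^r$ case and states that the argument carries over verbatim to the Riemannian setting, so your write-up is in fact more complete than what the paper provides.
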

\begin{proof}
This result appeared in \cite{turesson00} for the case $M=\mathbb{R}^r$; the arguments for the present version is identical, thus are omitted.


\end{proof}

\section{Additional notes on differential geometry}
\label{sec:dfg}
By a local chart on $M$, we meant a pair $(U, \varphi)$ such that $U$ is an open subset of $M$ and $\varphi:U\to \mathbb{R}^r$ a local $C^\infty$-diffeomorphism. The countable collection of local charts $(U_i, \varphi_i)_{i\in I}$ such that $\cup_{i\in I} U_i$ forms an open cover for $M$ is called an atlas. For a fixed $k \in I$,  one can define a set of local coordinates $(x_1, x_2, \ldots, x_r)$ on $U_k$ as the set of smooth projections of the image of $\varphi_k$ onto the $j^{th}$ coordinate, $1\leq j\leq r$; that is $x_j:U_k\to \mathbb{R}$ is a homeomorphism for each $1\leq j\leq r$. Moreover, the atlas on $M$ defines a local coordinate system for each point $x\in M$. In local coordinates, it is possible to carry out the operation of partial differentiation on a differentiable function $f$ at the point $x\in U_k$, $k\in I$ as
\begin{equation}\label{eq:pdiff}
 \left[\frac{\partial}{\partial x_j}\right]_x f:=\frac{\partial (f\circ \varphi_k^{-1})}{\partial x_j}(\varphi_k(x)),
\end{equation}
for each $1\leq j\leq r$. It is well known (see e.g p.7 in \cite{carmo92}) that the above operation is independent on the choice of $\varphi_k$, and therefore we use the abbreviation $\left[\partial/\partial x_j\right]_x f=\partial f/\partial x_j(x)=\partial_j f(x)$ whenever there is no confusion on whether the partial differentiation is carried out on $\mathbb{R}^r$ or $M$. It is straightforward to verify that the set $\{\partial_i\}_{i=1}^r$ forms a basis for the vector fields on $M$. Hence, one can write the metric tensor $m$ in coordinates as $m_{ij}(x)=m(\partial_i, \partial_j)(x)$.

Given a diffeomorphism $T:M\to N$, and local charts $(U, \varphi)$, $(TU, \vartheta)$ on $M$, $N$ respectively. Observe that $\vartheta\circ T:M\to \mathbb{R}^r$ is smooth. Therefore, it is possible to carry out the operation of partial differentiation on $T$ at the point $x\in U$ as
\begin{equation}\label{eq:pdiffT}
  \left[\frac{\partial}{\partial x_j}\right]_x T:=\frac{\partial (\vartheta\circ T\circ \varphi^{-1})}{\partial x_j}(\varphi(x)).
\end{equation}
One can construct the Jacobian matrix $J_T$ in local coordinates via \eqref{eq:pdiffT}, as a $r\times r$ matrix with entries $(J_T)_{ij}:=\partial_jT_i$, where $T_i$ is the smooth projection of the image of $\vartheta\circ T$ onto the $i^{th}$ coordinate, and we use the abbreviation $\partial T_i/\partial x_j(x)=\partial_j T_i(x)$.

\subsection{Differential forms}\label{sec:df}

Let $(x_1, x_2, \ldots, x_r)$ be local coordinates on $M$. Denote by $dx_i$ the differential $1$-forms dual to the tangent basis $\partial_i$, for each $1\leq i\leq r$. For $p\leq r$,  one can express a differentiable $p$-form $\eta$ in coordinates via the exterior product of $1$-forms
\begin{equation}\label{eq:etac}
 \eta=\sum_{j_1<j_2<\ldots<j_{p}} a_{j_1\ldots j_p}dx_{j_1}\wedge dx_{j_2}\wedge\ldots\wedge dx_{j_p},
\end{equation}
where $a_{j_1\ldots j_p}$ are real-valued functions on $M$.

The exterior derivative on a differentiable $f:M\to\mathbb{R}$ is a $1$-form given by $df=\sum_{i=1}^r \partial_i fdx_i$, and the exterior derivative on the $p$-form $\eta$ defined by \eqref{eq:etac} is a $(p+1)$-form satisfying
\begin{equation*}
  d\eta=\sum_{j_1<j_2<\ldots<j_{p}} d(a_{j_1\ldots j_p})\wedge dx_{j_1}\wedge dx_{j_2}\wedge\ldots\wedge dx_{j_p}.
\end{equation*}
The interior derivative $i(\mathcal{\mathcal{V}})$ on a $p$-form $\eta$ with respect to a vector field $\mathcal{V}$ on $M$, is a $(p-1)$-form satisfying
\begin{equation*}
 [i(\mathcal{\mathcal{V}})\eta](\mathcal{V}_1, \mathcal{V}_2, \ldots, \mathcal{V}_{p-1})=\eta(\mathcal{V}, \mathcal{V}_1, \mathcal{V}_2, \ldots, \mathcal{V}_{p-1}),
\end{equation*}
for all vector fields $\mathcal{V}_1, \mathcal{V}_2,\ldots, \mathcal{V}_{p-1}$ on $M$.

Recall the definition of the tangent and cotangent mappings $T_*$ and $T^*$ associated with $T$, given by \eqref{eq:T_*} and \eqref{eq:T^*} respectively. For the differential $p$-form $\eta$ given by \eqref{eq:etac}, one has
\begin{equation*}\label{eq:T^*eta}
 (T^*\eta)(\mathcal{V}_1, \mathcal{V}_2, \ldots, \mathcal{V}_p)(x)=\eta(T_*\mathcal{V}_1, T_*\mathcal{V}_2, \ldots, T_*\mathcal{V}_p)(Tx),
\end{equation*}
for all vector fields $\mathcal{V}_1, \mathcal{V}_2,\ldots \mathcal{V}_p$ on $M$. Therefore, in coordinates
\begin{align}\label{eq:T^*etac}
 T^*\eta
 &=T^*\left(\sum_{j_1<j_2<\ldots<j_{p}} a_{j_1\ldots j_p}dx_{j_1}\wedge dx_{j_2}\wedge\ldots\wedge dx_{j_p}\right)\notag\\
 &=\sum_{j_1<j_2<\ldots<j_{p}} a_{j_1\ldots j_p}\circ T \cdot T^*dx_{j_1}\wedge T^*dx_{j_1}\wedge T^*dx_{j_2}\wedge\ldots\wedge T^*dx_{j_p}\notag\\
 &=\sum_{j_1<j_2<\ldots<j_{p}} a_{j_1\ldots j_p}\circ T \cdot d(x_{j_1}\circ T)\wedge d(x_{j_1}\circ T)\wedge d(x_{j_2}\circ T)\wedge\ldots\wedge d(x_{j_p}\circ T)
\end{align}
where the last line is due to the fact that $[T^*(df)]\mathcal{V}=\mathcal{V}(f\circ T)=[d(f\circ T)]\mathcal{V}$, for all $f\in C^\infty(M, \mathbb{R})$ and vector fields $\mathcal{V}$ on $M$.

Let $G_m(x)$ be a $r\times r$ matrix with components $m_{ij}(x)$ at the point $x\in M$. The volume form $\omega_m^r$ in the local coordinates $\{x_i\}_{i=1}^r$ is defined by
\begin{equation}\label{eq:omega_m^r}
 \omega_m^r(x):=\sqrt{\det G_m}(x)\cdot dx_1\wedge dx_2\ldots\wedge dx_r,
\end{equation}
for each point $x\in M$. Let $\Gamma$ be a $C^\infty$ co-dimension 1 subset of $M$. Recall from Section \ref{sec:2} that the embedding $\Phi:\Gamma\to M$ induces a Riemannian metric on $\Gamma$ via the pullback metric $\Phi^*m$; that is $\omega_m^{r-1}=\Phi^*\omega_m^r$. The following is a classical result in geometric measure theory (see Theorem I.3.1 in \cite{chavel01}):
\begin{lemma}[co-area formula]\label{thm:ca}
Let $f\in C^1(M, \mathbb{R})$. For an open, connected $U\subset M$ with compact closure, and any function $h:M\to \mathbb{R}^+$ in $L^1(M, V_m)$, one has
\begin{equation}\label{eq:ca}
\int_U |\nabla_m f|_mh\cdot \omega_m^r=\int_\mathbb{R} \left( \int_{f^{-1}\{t\}} h\cdot\omega_m^{r-1}\right)\,dt,
\end{equation}
where $|\cdot|^2_m=m(\cdot,\cdot)$ and $\nabla_m f$ is the gradient of $f$ with respective to the metric $m$; defined by \eqref{eq:grad}.
\end{lemma}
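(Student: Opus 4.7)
The plan is to establish the co-area formula by the standard route: reduce to a computation in local coordinates in which the level sets of $f$ are straightened out, and then assemble the global statement via a partition of unity and Sard's theorem. Because the integrand on the left is linear in $h\geq 0$, I first reduce to the case $h=\mathbf{1}_E$ for measurable $E\subset U$ (via simple functions and monotone convergence), which in turn reduces to proving
\begin{equation*}
\int_U |\nabla_m f|_m\cdot \omega_m^r=\int_{\mathbb{R}} V_{\Phi^*m}\!\left(f^{-1}\{t\}\cap U\right)\,dt,
\end{equation*}
since the weight $h$ can then be inserted by taking $E\subset U$ arbitrary (and applying the one-dimensional Fubini-type reassembly in the final step).

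Next I would discard the critical set. Let $\mathrm{Crit}(f)=\{x\in M:(\nabla_m f)(x)=0\}$ and note that by Sard's theorem $f(\mathrm{Crit}(f))$ has one-dimensional Lebesgue measure zero, so the integral on the right is unaffected by restricting to $U\setminus\mathrm{Crit}(f)$. On the left, the integrand vanishes on $\mathrm{Crit}(f)$ and thus the identity need only be proved on $U\setminus\mathrm{Crit}(f)$. Using a countable atlas of $M$ together with a subordinate $C^\infty$ partition of unity $\{\chi_\alpha\}$, and then the linearity in $h$ again (absorbing $\chi_\alpha$ into $h$), it suffices to prove the identity when the integration is carried out inside a single chart $(U_\alpha,\varphi_\alpha)$ that is contained in $U\setminus\mathrm{Crit}(f)$.

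On such a chart, $df\neq 0$, so by the implicit function theorem I can choose local coordinates $(y_1,\ldots,y_r)$ with $y_1=f$ and the remaining $y_2,\ldots,y_r$ parametrising the level set $\{f=t\}\cap U_\alpha$. Writing $G_m(y)$ for the metric matrix in these coordinates, one has $\omega_m^r=\sqrt{\det G_m(y)}\,dy_1\wedge\cdots\wedge dy_r$, and the induced metric $\Phi^*m$ on the level set is represented by the $(r-1)\times(r-1)$ minor $\tilde G_m(y)$ obtained by deleting the first row and column, so that $\omega_m^{r-1}=\sqrt{\det \tilde G_m(y)}\,dy_2\wedge\cdots\wedge dy_r$. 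Since $dy_1=df$, the dual relation $m(\nabla_m f,\cdot)=df(\cdot)$ gives $|\nabla_m f|_m^2=g^{11}$, the $(1,1)$-entry of $G_m^{-1}$. I would then invoke the linear-algebra identity
\begin{equation*}
g^{11}\cdot\det G_m = \det \tilde G_m,
\end{equation*}
which follows from the cofactor formula for $G_m^{-1}$ applied to a symmetric positive-definite matrix. Combining,
\begin{equation*}
|\nabla_m f|_m\cdot \sqrt{\det G_m} = \sqrt{\det \tilde G_m}.
\end{equation*}

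The identity on a chart is then a direct application of Fubini's theorem in $(y_1,\ldots,y_r)$:
\begin{equation*}
\int_{U_\alpha}|\nabla_m f|_m\cdot\omega_m^r=\int\!\!\int \sqrt{\det \tilde G_m(y_1,y')}\,dy'\,dy_1=\int_{\mathbb{R}}\left(\int_{\{f=y_1\}\cap U_\alpha}\omega_m^{r-1}\right)dy_1.
\end{equation*}
Reassembling the charts via $\{\chi_\alpha\}$, restoring the weight $h$ (the argument is identical with $h\sqrt{\det G_m}$ in place of $\sqrt{\det G_m}$ throughout), and using Sard to cover $f(\mathrm{Crit}(f))$ yields the claimed identity. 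The only genuinely non-formal step is the linear-algebra identity $g^{11}\det G_m=\det\tilde G_m$, which is where the specific geometry of the problem enters; everything else is bookkeeping via partitions of unity, Fubini, and monotone convergence. I would regard this linear-algebra identity, together with its coordinate-invariant reformulation $|\nabla_m f|_m^{-1}\omega_m^r = df\wedge\omega_m^{r-1}$ on $\{f=t\}$, as the main conceptual content; everything downstream is a bookkeeping exercise, and since this is a classical result the appeal to Chavel's Theorem I.3.1 can be retained.
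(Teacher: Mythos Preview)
Your proof sketch is correct and follows the standard route to the co-area formula: localise via a partition of unity, straighten level sets using the implicit function theorem, invoke the cofactor identity $g^{11}\det G_m=\det\tilde G_m$ to match the volume densities, and apply Fubini; the handling of the critical set via Sard and the reduction to indicator weights by monotone convergence are both fine.

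By way of comparison, the paper does not actually prove this lemma at all: it simply states it as ``a classical result in geometric measure theory'' and cites Theorem~I.3.1 of Chavel. So your write-up goes well beyond what the paper provides. Your closing remark that one may simply retain the citation to Chavel is exactly what the authors do; your sketch is a welcome elaboration of what that citation contains, and the linear-algebra identity you single out is indeed the heart of the matter.
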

The co-area formula connects the spatial integral over the gradient of a function to the co-dimension one measure on the level sets generated by that function. If the density $h_\mu$ of the absolutely continuous  probability measure $\mu_r$ is a positive function in $L^1(M, V_m)$, then one can apply the co-area formula \eqref{eq:ca} with $h=h_\mu$ to obtain
\begin{equation*}
\int_U |\nabla_m f|_m \cdot h_\mu\omega_m^r= \int_U |\nabla_m f|_m h_\mu\cdot\omega_m^r=\int_\mathbb{R} \mu_{r-1}(f^{-1}\{t\})\,dt,
\end{equation*}
for all measurable $U\subset M$.

\subsection{Differential operators on weighted manifolds}\label{sec:B2}
Recall the definitions of the gradient $\nabla_m$, divergence $\divg_m$ and weighted divergence $\divg_\mu$ given by \eqref{eq:grad}, \eqref{eq:div} and \eqref{eq:wdiv} respectively. One can express $\nabla_mf$ in local coordinates $\{x_1, \ldots, x_r\}$ on $M$ as,
 \begin{equation}
 \label{eq:gradl}
  \nabla_m f=\sum_{i,j=1}^r m^{ij}\partial_i f\partial_j,
 \end{equation}
 for all $f\in C^k(M, \mathbb{R})$, and where $m^{ij}$ is the components $r\times r$ matrix $G^{-1}_m$ (see p.4, equation (22) \cite{chavel84}). As a consequence of Stokes' theorem (see p.124, \cite{spivak65}), the divergence given by \eqref{eq:div} can be written as,
 \begin{equation}\label{eq:div2}
\divg_m\mathcal{V}\cdot \omega_m^r=d[i(\mathcal{V})\omega_m^r],
 \end{equation}
 for all $\mathcal{V}\in \mathcal{F}^k(M)$. Since $\{\partial_i\}_{i=1}^r$ forms a basis for the vector fields on $M$, one can express the vector field $\mathcal{V}$ on $M$ as $\mathcal{V}=\sum_{i=1}^r \mathcal{V}^i \partial_i$. Then \eqref{eq:div2} in local coordinates is (see equation (32) on p.5 in \cite{chavel84}),
 \begin{equation}\label{eq:divl}
 \divg_m\mathcal{V}=\frac{1}{\sqrt{\det{G_m}}}\sum_{i=1}^r\partial_i\left( \sqrt{\det{G_m}}\mathcal{V}^i\right),
 \end{equation}
Hence, the Laplace-Beltrami operator is given in local coordinates by
\begin{equation}\label{eq:laplacel}
 \triangle_m f=\frac{1}{\sqrt{\det{G_m}}}\sum_{i, j=1}^r\partial_i\left(m^{ij}\sqrt{\det{G_m}}  \partial_j f\right).
\end{equation}

Let $f:M\to \mathbb{R}$ be differentiable, and  $\mathcal{V}_1$, $\mathcal{V}_2$ vector fields on $M$. The standard divergence properties (see equation (12) and (13) on p.3, \cite{chavel84}) holds analogously for the weighted divergence \eqref{eq:wdiv}; that is for $h_\mu\in C^1(M, \mathbb{R})$
\begin{align}\label{eq:pdiv1}
 \divg_\mu(\mathcal{V}_1+\mathcal{V}_2)
 &=\frac{1}{h_\mu}\divg(h_\mu\mathcal{V}_1+h_\mu \mathcal{V}_2 )\notag\\
 &=\frac{1}{h_\mu}\divg(h_\mu\mathcal{V}_1)+\frac{1}{h_\mu}\divg(h_\mu \mathcal{V}_2 )\notag\\
 &=\divg_\mu \mathcal{V}_1+\divg_\mu\mathcal{V}_2
\end{align}
and
\begin{align}\label{eq:pdiv2}
 \divg_\mu(f\mathcal{V}_1)
 &=\frac{1}{h_\mu}\divg (h_\mu f\mathcal{V}_1)\notag\\
 &=\frac{f}{h_\mu}\divg(h_\mu \mathcal{V}_1)+\frac{1}{h_\mu}m( \nabla_m f,h_\mu\mathcal{V}_1)\notag\\
 &=f\divg_\mu \mathcal{V}_1+m(\nabla_m f, \mathcal{V}_1).
\end{align}

\subsection{Properties of $T_*$ and $T^*$}\label{sec:B3}

Let $g:N\to \mathbb{R}$ be differentiable. One can express the tangent mapping $T_*$ given by \eqref{eq:T_*} in local coordinates $\{x_1, \ldots, x_r\}$ as
\begin{equation*}
(T_*\mathcal{V})g=\mathcal{V}(g\circ T)=\sum_{i=1}^r \mathcal{V}^i\frac{\partial(g\circ T)}{\partial x_i},
\end{equation*}
where $\mathcal{V}=\sum_{i=1}^r\mathcal{V}^i\partial_i\in \mathcal{T}M$.
The following result computes coordinate representations of the pullback metric $T^*n$.

 \begin{lemma}\label{thm:app3.1}
 Let $n_{ij}$ be the local coordinates representation of the metric tensor $n$. Denote by $G_n$ the $r\times r$ matrix with components $n_{ij}$ and $J_T$ the Jacobian matrix of $T$. We have at the each point $x\in M$
 \begin{equation}\label{eq:app3.1a}
   (G_{T^*n})_{ij}=\left(J_T^\top\cdot G_n\circ T \cdot J_T\right)_{ij}
 \end{equation}
 where $T^*n$ is the pullback metric of $n$ given by \eqref{eq:pbm}.
 \end{lemma}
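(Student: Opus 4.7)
The plan is to prove the coordinate formula by directly evaluating $T^*n$ on the coordinate basis tangent vectors and using the definition of the push-forward $T_*$, together with the bilinearity of the metric $n$.

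First, I would fix local coordinates $\{x_1,\ldots,x_r\}$ on an open chart around $x\in M$ and $\{y_1,\ldots,y_r\}$ on a chart around $Tx\in N$. By definition $(G_{T^*n})_{ij}(x)=(T^*n)(\partial_{x_i},\partial_{x_j})(x)$, which by the definition \eqref{eq:pbm} of the pullback metric equals $n(T_*\partial_{x_i},T_*\partial_{x_j})(Tx)$. The first substantive step is therefore to express $T_*\partial_{x_i}$ in the coordinate basis $\{\partial_{y_k}\}$ of $\mathcal{T}_{Tx}N$.

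Second, I would apply the definition \eqref{eq:T_*}: for any $g\in C^1(N,\mathbb{R})$,
\begin{equation*}
[(T_*\partial_{x_i})g](Tx)=\partial_{x_i}(g\circ T)(x)=\sum_{k=1}^r \frac{\partial g}{\partial y_k}(Tx)\cdot\frac{\partial T_k}{\partial x_i}(x),
\end{equation*}
using the chain rule and the coordinate expression \eqref{eq:pdiffT} for the partial derivatives of $T$. This identifies
\begin{equation*}
T_*\partial_{x_i}=\sum_{k=1}^r (J_T)_{ki}\,\partial_{y_k},
\end{equation*}
that is, the $i$-th column of $J_T$ furnishes the coefficients of $T_*\partial_{x_i}$.

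Third, I would substitute this expansion into $n(T_*\partial_{x_i},T_*\partial_{x_j})(Tx)$ and use the bilinearity of $n$ together with $n(\partial_{y_k},\partial_{y_l})(Tx)=(G_n\circ T)_{kl}(x)$ to obtain
\begin{equation*}
(G_{T^*n})_{ij}(x)=\sum_{k,l=1}^r (J_T)_{ki}(x)\,(G_n\circ T)_{kl}(x)\,(J_T)_{lj}(x),
\end{equation*}
which is precisely the $(i,j)$-entry of the matrix product $J_T^\top\cdot(G_n\circ T)\cdot J_T$, giving \eqref{eq:app3.1a}.

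There is essentially no obstacle here beyond bookkeeping; the only point requiring care is the index convention for the Jacobian (distinguishing $(J_T)_{ki}=\partial T_k/\partial x_i$ from its transpose), and ensuring that the evaluation point $Tx$ for $n$ and the evaluation point $x$ for $J_T$ are consistently tracked. Once these are settled, the identity follows immediately from the definitions of $T_*$, $T^*n$, and the coordinate representation of the metric tensor.
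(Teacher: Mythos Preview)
Your proposal is correct and follows essentially the same approach as the paper: fix local coordinates on $M$ and $N$, compute $T_*\partial_{x_i}=\sum_k (J_T)_{ki}\,\partial_{y_k}$ via the chain rule, and then expand $n(T_*\partial_{x_i},T_*\partial_{x_j})(Tx)$ by bilinearity to obtain the matrix product $J_T^\top\cdot(G_n\circ T)\cdot J_T$. The paper's proof is slightly more verbose in unpacking the chart maps $\varphi_m,\varphi_n$ explicitly, but the logical structure is identical.
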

 \begin{proof}
 Let $(U, \varphi_m)$ be a local chart on $M$, containing the point $x_0\in U$ with corresponding coordinates $\{x_i\}_{i=1}^r$. Then the local chart $(TU, \varphi_n)$ on $N$ contains the point $Tx_0\in N$. Let $\{y_i\}_{i=1}^r$ denote the local coordinates on $TU$. Due to \eqref{eq:T_*} and \eqref{eq:pdiff}, one has for all differentiable $g$ on $N$
 \begin{align*}
\left[T_*\frac{\partial}{\partial x_i}\right]_{Tx_0} g
=\left[\frac{\partial}{\partial x_i}\right]_{x_0} (g\circ T)
  &=\frac{\partial (g\circ T\circ \varphi_m^{-1})}{\partial x_i}(\varphi_m (x_0))\\
  &=\frac{\partial (g\circ \varphi_n^{-1}\circ \varphi_n \circ T\circ \varphi_m^{-1})}{\partial x_i}(\varphi_m (x_0))\\
  &=\sum_{k=1}^r \frac{\partial (\varphi_n\circ T_k\circ \varphi_m^{-1})}{\partial x_i}(\varphi_m (x_0))\frac{\partial (g\circ\varphi_n^{-1})}{\partial y_k}(\varphi_n(Tx_0))\\
  &=\sum_{k=1}^r \left[\frac{\partial }{\partial x_i}\right]_{x_0}T_k\cdot \left[\frac{\partial}{\partial y_k}\right]_{Tx_0}g,
 \end{align*}
 where the last equality is due to \eqref{eq:pdiffT}. Therefore,
 \begin{equation}\label{eq:app3.1a1}
 T_*\frac{\partial}{\partial x_i}=\sum_{k=1}^r (J_T)_{ki}\circ T^{-1} \cdot \frac{\partial}{\partial y_k}
 \end{equation}
 at the point $Tx_0$; that is $T_*(\partial/\partial x_i)$ is a tangent vector in $\mathcal{T}_{Tx_0}(N)$ with components
 $\left(T_*(\partial/\partial x_i)\right)^k=(J_T)_{ki}\circ T^{-1}$, $1\leq k\leq r$. To obtain \eqref{eq:app3.1a} at the point $x_0$, we compute
 \begin{align*}
 (G_{T^*n})_{ij}(x_0)= T^*n\left( \frac{\partial}{\partial x_i}, \frac{\partial}{\partial x_j}\right)(x_0)
  &=n\left( T_*\frac{\partial}{\partial x_i}, T_*\frac{\partial}{\partial x_j}\right)(Tx_0)\\
  &=n\bigg(\sum_{k=1}^r  \left(T_*\frac{\partial}{\partial x_i}\right)^k \frac{\partial}{\partial y_k} , \sum_{l=1}^r \left(T_*\frac{\partial}{\partial x_j}\right)^l\frac{\partial}{\partial y_l}\bigg)(Tx_0)\\
  &=\sum_{k,l=1}^r \restr{\bigg(n_{kl} \cdot \left(T_*\frac{\partial}{\partial x_i}\right)^k \cdot \left(T_*\frac{\partial}{\partial x_j}\right)^l\bigg)}{Tx_0}\\
  &=\sum_{k,l=1}^r(J_T(x_0))_{ki}\cdot n_{kl}(Tx_0)\cdot (J_T(x_0))_{lj}\quad\mbox{by \eqref{eq:app3.1a1}}\\
  &=\bigg(J_T^\top \cdot G_n\circ T \cdot J_T\bigg)_{ij}(x_0).
 \end{align*}
 Since $x_0\in U$ is arbitrary and $(U, \varphi_m)$ is a chart for $M$, we conclude that the above calculations hold for all points in $M$.
 \end{proof}

 \begin{corollary}\label{thm:app3.2}
 Let $n$ be the metric tensor of $N$, with volume form $\omega_n^r$ given by \eqref{eq:omega_m^r}.  Define the co-tangent mapping $T^*$ as in \eqref{eq:T^*}. One has $T^*\omega_n^r=\omega^r_{T^*{n}}$.
 \end{corollary}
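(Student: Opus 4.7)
The plan is to verify the identity in local coordinates by computing both sides and comparing. Choose a chart $(U,\varphi_m)$ on $M$ with local coordinates $\{x_i\}_{i=1}^r$, and a chart $(TU,\varphi_n)$ on $N$ with coordinates $\{y_i\}_{i=1}^r$. By definition \eqref{eq:omega_m^r}, the volume form on $N$ reads $\omega_n^r = \sqrt{\det G_n}\, dy_1\wedge\cdots\wedge dy_r$, while the pullback volume form is $\omega^r_{T^*n} = \sqrt{\det G_{T^*n}}\, dx_1\wedge\cdots\wedge dx_r$.

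First, I would compute $\sqrt{\det G_{T^*n}}$ using Lemma \ref{thm:app3.1}: since $G_{T^*n} = J_T^\top\cdot (G_n\circ T)\cdot J_T$, the multiplicativity of the determinant yields $\det G_{T^*n} = (\det J_T)^2 \cdot (\det G_n)\circ T$, and hence $\sqrt{\det G_{T^*n}} = |\det J_T|\cdot \sqrt{\det G_n}\circ T$. Thus
\begin{equation*}
\omega^r_{T^*n} = |\det J_T|\cdot \sqrt{\det G_n}\circ T \cdot dx_1\wedge\cdots\wedge dx_r.
\end{equation*}

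Next, I would compute $T^*\omega_n^r$ using the coordinate formula \eqref{eq:T^*etac} for the pullback of a differential form. This gives
\begin{equation*}
T^*\omega_n^r = \sqrt{\det G_n}\circ T \cdot d(y_1\circ T)\wedge\cdots\wedge d(y_r\circ T).
\end{equation*}
Applying $d(y_i\circ T) = \sum_{j=1}^r \partial_j(y_i\circ T)\, dx_j = \sum_{j=1}^r (J_T)_{ij}\, dx_j$ and using the standard determinant identity for the wedge product of $1$-forms, I obtain $d(y_1\circ T)\wedge\cdots\wedge d(y_r\circ T) = (\det J_T)\, dx_1\wedge\cdots\wedge dx_r$. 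Comparing the two expressions (and assuming $T$ is orientation-preserving so that $\det J_T > 0$, consistent with the convention that $\omega_m^r, \omega_n^r$ are positive volume forms compatible with the chosen orientation) yields $T^*\omega_n^r = \omega^r_{T^*n}$ on $U$. Since the chart was arbitrary, the identity holds globally on $M$.

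The only subtlety is the sign issue coming from $|\det J_T|$ versus $\det J_T$; this is handled by the orientation convention implicit in \eqref{eq:omega_m^r}, and in the non-orientable case one would interpret the statement in terms of densities. There are no further obstacles, as the argument reduces to the multiplicativity of the determinant together with the standard behaviour of top-degree forms under pullback.
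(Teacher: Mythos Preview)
Your proof is correct and follows essentially the same approach as the paper: both arguments work in local coordinates, invoke Lemma \ref{thm:app3.1} to get $\sqrt{\det G_{T^*n}}=|\det J_T|\cdot\sqrt{\det G_n}\circ T$, and compute $T^*(dy_1\wedge\cdots\wedge dy_r)$ via the Jacobian to match the two sides. You are in fact more careful than the paper about the $|\det J_T|$ versus $\det J_T$ orientation issue, which the paper simply writes with an absolute value in \eqref{eq:app3.2a} without comment.
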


 \begin{proof}
 Let $\{x_i\}_{i=1}^r$ and $\{y_i\}_{i=1}^r$ be local coordinates on $M$ and $N$ respectively. Then by \eqref{eq:T^*etac}, one has for each $1\leq i\leq r$,
  \begin{equation*}
  T^*(dy_i)=d(y_i\circ T)=\sum_{j=1}^r \frac{\partial (y_i\circ T)}{\partial x_j}dx_j=\sum_{j,k=1}^r  \frac{\partial T_k}{\partial x_j}\frac{\partial y_i}{\partial y_k}\circ T\cdot dx_j=\sum_{j,k=1}^r\frac{\partial T_k}{\partial x_j} \delta_{ik}\cdot dx_j,
  \end{equation*}
  where $\delta_{ik}$ is the kronecker delta. Therefore, $T^*(dy_i)=\sum_{j=1}^r \partial T_i/\partial x_j \cdot dx_j$. It follows that
  \begin{equation}\label{eq:app3.2a}
   T^*(dy_1\wedge dy_2\wedge\ldots\wedge dy_r)=T^*dy_1\wedge T^*dy_2\wedge\ldots\wedge T^*dy_r=|\det J_T|\cdot dx_1\wedge dx_2\wedge\ldots\wedge dx_r.
  \end{equation}
 Let $G_n$, $G_{T^*n}$ to be the $r\times r$ matrices with entries $n_{ij}$, $(T^*n)_{ij}$ in coordinates $\{y_i\}_{i=1}^r, \{x_i\}_{i=1}^r$ respectively. Then by Lemma \ref{thm:app3.1}, one has  $\det{(G_{T^*n})}=|\det(J_T)|^2\det{(G_n)\circ T}$, which implies
  \begin{align*}
   T^*(\omega_n^r)&=\sqrt{\det G_n}\circ T  \cdot T^*(dy_1\wedge\ldots\wedge dy_r)\quad\mbox{by \eqref{eq:T^*etac}}\\
   &=\sqrt{\det G_n}\circ T \cdot |\det J_T|\cdot dx_1\wedge\ldots\wedge dx_r\quad\mbox{by \eqref{eq:app3.2a}}\\
   &=\sqrt{\det G_{T^*n}} \cdot dx_1\wedge\ldots\wedge dx_r=\omega_{T^*n}^r.
  \end{align*}
 \end{proof}

Recall that $T$ is an isometry from $(M,T^*n)$ to $(N, n)$. Due to Corollary \ref{thm:app3.2} one has
\begin{equation}\label{eq:cov}
 \int_{T(U)}\omega_n^r=\int_{U}\omega^r_{T^*n}=\int_{U}T^*(\omega_n^r),
\end{equation}
for all measurable $U\subset M$. Hence, by the definition of $\mathcal{P}$ given by \eqref{eq:P-F}, one has for all $f\in L^1(M, V_m)$
\begin{align*}
 \int_U f\cdot \omega_m^r
 &=\int_{T(U)} \mathcal{P}f \cdot \omega_n^r\notag\\
 &=\int_{U}\mathcal{P}f\circ T\cdot \omega_{T^*n}^r\quad \mbox{by \eqref{eq:cov}}\notag\\
 &=\int_{U} \mathcal{P}f\circ T\cdot\sqrt{\det{G_{T^*n}}}\cdot dx_1\wedge\ldots\wedge dx_r\notag\\
 &=\int_{U} \mathcal{P}f\circ T\cdot |\det{J_T}| \cdot \sqrt{\det{G_{n}}}\circ T\cdot dx_1\wedge\ldots\wedge dx_r,
\end{align*}
 where the last line is due to Lemma \ref{thm:app3.1}. Hence, since $T$ is a diffeomorphism and $\omega_m^r=\sqrt{\det{G_m}}\cdot dx_1\wedge\ldots\wedge dx_r$ by \eqref{eq:omega_m^r}, one has
 \begin{align}\label{eq:P-F2}
   \mathcal{P}f
   &=\frac{f\circ T^{-1}}{|\det J_{T}\circ T^{-1}|}\cdot \frac{\sqrt{\det G_m}\circ T^{-1}}{\sqrt{\det G_n}}\notag\\
   &=f\circ T^{-1}\cdot |\det J_{T^{-1}}|\cdot \frac{\sqrt{\det G_m}\circ T^{-1}}{\sqrt{\det G_n}},
 \end{align}
where we have applied the inverse function theorem to obtain the last line. Moreover, setting $f=h_\mu$ in \eqref{eq:P-F2} and using the fact that $\mathcal{P}h_\mu=h_\nu$ (by \eqref{eq:CalP}) yields
 \begin{equation}\label{eq:finiteJT}
 h_\mu=h_\nu\circ T\cdot|\det{J_T}|\cdot \frac{\sqrt{\det{G_{n}}}\circ T}{\sqrt{\det{G_m}}}.
 \end{equation}
Now by assumption, $T$ is a diffeomorphism and the densities $h_\mu$ and $h_\nu$ are uniformly bounded away from zero. Therefore, by \eqref{eq:finiteJT} and the nondegeneracy of the metrics $m$ and $n$, the Jacobian $|\det J_T|$ is bounded above and uniformly away from zero.

Let $\mathbf{1}_V$ denote the characteristic function on a measurable subset $V\subset N$. One has for all $f\in L^1(M, V_m)$
\begin{equation}\label{eq:koopman}
\int_{N} \mathcal{P}f\cdot \mathbf{1}_{V}\cdot \omega_n^r=\int_{V} \mathcal{P}f\cdot \omega_n^r=\int_{T^{-1}V} f\cdot \omega_m^r=\int_M f\cdot \mathbf{1}_{V}\circ T\cdot\omega_m^r.
\end{equation}
Hence, the Koopman operator $\mathcal{K}$ adjoint to $\mathcal{P}$ is given by $\mathcal{K}f=f\circ T$.

Recall the definition of the push-forward operator $\mathcal{L}:L^2(M, m, \mu_r)\to L^2(N, n, \nu_r)$ given by \eqref{eq:pfw}, with $L^2(M, m, \mu_r)$ adjoint $\mathcal{L}^*$.
\begin{lemma}
\label{lem:welldefL}
The operator $\mathcal{L}:L^2(M, m, \mu_r)\to L^2(N, n, \nu_r)$ is well defined, may be expressed as $\mathcal{L}f=f\circ T^{-1}$, and has adjoint $\mathcal{L}^*g=g\circ T$.
\end{lemma}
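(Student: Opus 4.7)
The proof proceeds in three short steps, and the key driver is already in place: the explicit pointwise formula \eqref{eq:P-F2} for the Perron--Frobenius operator $\mathcal{P}$ together with the identity $\mathcal{P}h_\mu = h_\nu$ from \eqref{eq:CalP}.

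First, I would establish the closed-form expression $\mathcal{L}f = f\circ T^{-1}$. From the definition \eqref{eq:pfw}, $\mathcal{L}f = \mathcal{P}(f\cdot h_\mu)/h_\nu$. Applying \eqref{eq:P-F2} to the function $f\cdot h_\mu$ gives
\begin{equation*}
\mathcal{P}(f\cdot h_\mu) = (f\circ T^{-1})\cdot (h_\mu\circ T^{-1})\cdot |\det J_{T^{-1}}|\cdot \frac{\sqrt{\det G_m}\circ T^{-1}}{\sqrt{\det G_n}}.
\end{equation*}
The last three factors, by another application of \eqref{eq:P-F2} with $f\equiv 1$ (or equivalently by the identity $\mathcal{P}h_\mu = h_\nu$ from \eqref{eq:CalP}), are precisely $h_\nu$. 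Dividing by $h_\nu$ therefore leaves only $f\circ T^{-1}$.

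Second, I would verify well-definedness by a direct change-of-variables argument. Using $\nu_r = \mu_r\circ T^{-1}$, for any $f\in L^2(M,m,\mu_r)$,
\begin{equation*}
\int_N |\mathcal{L}f|^2\,d\nu_r = \int_N |f\circ T^{-1}|^2\,d\nu_r = \int_M |f|^2\,d\mu_r < \infty,
\end{equation*}
so $\mathcal{L}$ is in fact an isometry from $L^2(M,m,\mu_r)$ into $L^2(N,n,\nu_r)$. In particular it is bounded and well defined. Note also that $f\circ T^{-1}$ is defined $\nu_r$-a.e.\ whenever $f$ is defined $\mu_r$-a.e., since $T$ is a diffeomorphism and $\nu_r = \mu_r\circ T^{-1}$, so the a.e.\ equivalence class is preserved.

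Third, for the adjoint, take $f\in L^2(M,m,\mu_r)$ and $g\in L^2(N,n,\nu_r)$. Then
\begin{equation*}
\langle \mathcal{L}f, g\rangle_\nu = \int_N (f\circ T^{-1})\cdot g\,d\nu_r = \int_M f\cdot (g\circ T)\,d\mu_r = \langle f, g\circ T\rangle_\mu,
\end{equation*}
again by the change-of-variables identity $\nu_r = \mu_r\circ T^{-1}$. Since this holds for all admissible $f$, the adjoint is $\mathcal{L}^*g = g\circ T$; the same isometry estimate as above shows $g\circ T\in L^2(M,m,\mu_r)$. There is no real obstacle here beyond bookkeeping with the Jacobian/metric factors in step one, and that bookkeeping has already been done in Appendix \ref{sec:B3}; the argument is essentially a repackaging of \eqref{eq:P-F2} and \eqref{eq:CalP}.
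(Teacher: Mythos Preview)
Your proof is correct and relies on the same ingredients as the paper's argument, namely the pointwise formula \eqref{eq:P-F2} and the identity $\mathcal{P}h_\mu=h_\nu$ from \eqref{eq:CalP}. The organization differs slightly: the paper first establishes well-definedness by manipulating $|\mathcal{P}(fh_\mu)|^2=\mathcal{P}(f^2 h_\mu)\cdot h_\nu$ and only afterwards reads off the closed form $\mathcal{L}f=f\circ T^{-1}$, whereas you derive the closed form first and then obtain the isometry estimate by a direct change of variables using $\nu_r=\mu_r\circ T^{-1}$. For the adjoint, the paper routes through indicator functions (computing $\mathcal{L}\mathbf{1}_U$ and then $\mathcal{L}^*\mathbf{1}_V$), while your single inner-product calculation $\langle \mathcal{L}f,g\rangle_\nu=\langle f,g\circ T\rangle_\mu$ is more direct. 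Your ordering is the more economical one; the paper's ordering has the minor advantage that well-definedness is shown without first appealing to the pointwise formula, and the indicator-function identities \eqref{eq:cov3}--\eqref{eq:cov4} it develops along the way are reused later in the appendix.
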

\begin{proof}
Let $f\in L^2(M, m, \mu_r)$. Due to \eqref{eq:P-F2} and the fact that $h_\mu>0$, one has

\begin{eqnarray}
\label{eq:needed} |\mathcal{P}(f\cdot h_\mu)|^2
 &=&|(f\cdot h_\mu)\circ T^{-1}|^2\cdot \left|\det J_{T^{-1}}\right|^2 \cdot \left|\frac{\sqrt{\det G_m} \circ T^{-1}}{\sqrt{\det G_n}}\right|^2\\
 \nonumber&=&\left|(f^2\cdot h_\mu) \circ T^{-1} \cdot \left|\det J_{T^{-1}}\right| \cdot \frac{\sqrt{\det G_m} \circ T^{-1}}{\sqrt{\det G_n}}\right|\cdot
 \left|h_\mu \circ T^{-1}  \cdot \left|\det J_{T^{-1}}\right| \cdot \frac{\sqrt{\det G_m} \circ T^{-1}}{\sqrt{\det G_n}}\right|\\
\nonumber &=&|\mathcal{P}(f^2\cdot h_\mu)|\cdot |\mathcal{P}h_\mu|\\
\nonumber &=&\mathcal{P}(f^2\cdot h_\mu)\cdot h_\nu.
\end{eqnarray}

Therefore
\begin{align}\label{eq:calLdf}
\int_N |\mathcal{L}f|^2\,d\nu_r
&=\int_N \left|\frac{\mathcal{P}(f\cdot h_\mu)}{h_\nu}\right|^2\cdot h_\nu\omega_n^r\notag\\
&=\int_N \frac{\left|\mathcal{P}(f\cdot h_\mu)\right|^2}{h_\nu}\cdot \omega_n^r\quad \mbox{since $h_\nu>0$}\notag\\
&=\int_N \mathcal{P}(f^2\cdot h_\mu)\cdot  \omega_n^r\notag\\
&=\int_M f^2\cdot h_\mu \cdot \omega_m^r\\
&=\int_M f^2\,d\mu_r,
\end{align}
where the second last line is due to \eqref{eq:P-F}.
Thus,
since $f\in L^2(M, m, \mu_r)$ the RHS of \eqref{eq:calLdf} is bounded and $\mathcal{L}$ is well defined.

To show that $\mathcal{L}f=f\circ T^{-1}$, we use (\ref{eq:needed}) (without the squares) to compute $\mathcal{P}(f\cdot h_\mu)$, and (\ref{eq:P-F2}) to compute $h_\nu=\mathcal{P}h_\mu$, and note that all terms in the quotient $\mathcal{L}=\mathcal{P}(f\cdot h_\mu)/h_\nu$ not involving $f$ cancel to leave $\mathcal{L}=f\circ T^{-1}$.

For all measurable $U\subset M$,
\begin{equation}\label{eq:cov3}
\int_{T(U)} \mathcal{L}f\cdot h_\nu\omega_n^r=\int_{T(U)}\mathcal{P}(f\cdot h_\mu)\cdot \omega_n^r=\int_U f\cdot h_\mu\omega_m^r.
\end{equation}
Let $U\subset M$ be measurable. Since $\mathcal{P}h_\mu=h_\nu$, one has $\mathcal{P}(\mathbf{1}_U\cdot h_\mu)=\mathbf{1}_{T(U)}\cdot h_\nu$. Therefore,
\begin{align}\label{eq:cov4}
\int_{T(U)} g\, d\nu_r
&=\int_{N} \frac{\mathbf{1}_{T(U)}\cdot h_\nu}{h_\nu}\cdot g\, d\nu_r\notag\\
&=\int_{N} \frac{\mathcal{P}(\mathbf{1}_U\cdot h_\mu)}{h_\nu}\cdot g\,d\nu_r\notag\\
&=\int_{N} \mathcal{L}(\mathbf{1}_U)\cdot g\,d\nu_r\notag\\
&=\int_M \mathbf{1}_U\cdot \mathcal{L}^*g\,d\mu_r\quad\mbox{by definition of $\mathcal{L}^*$}\notag\\
&= \int_U \mathcal{L}^*g\,d\mu_r \notag\\
\end{align}
for all $g\in L^2(N, n, \nu_r)$. Therefore, using the fact that $\nu_r=\mu_r\circ T^{-1}$, one has for any measurable $V\subset N$
\begin{equation}\label{eq:L^*}
 \int_M \mathcal{L}^*\mathbf{1}_{V}\,d\mu_r=\int_N \mathbf{1}_{V}\,d\nu_r=\int_M \mathbf{1}_{T^{-1}V}\,d\mu_r=\int_M \mathbf{1}_V\circ T\,d\mu_r.
\end{equation}
Thus, $\mathcal{L}^*g=g\circ T$ for all $g\in L^2(N, n, \nu_r)$.
\end{proof}

The following proposition is immediate in view of Lemma \ref{lem:welldefL}.
\begin{proposition}\label{thm:Lf}
Let $\mathcal{L}:L^2(M, m, \mu_r)\to L^2(N, n, \nu_r)$ be as in \eqref{eq:pfw}, with adjoint $\mathcal{L}^*$. For any $f\in C^1(M, \mathbb{R})\cap L^2(M, m, \mu_r)$, one has
\begin{equation*}
T\{x\in M: f(x)=t\}=\{y\in N: \mathcal{L}f(y)=t\}.
\end{equation*}
\end{proposition}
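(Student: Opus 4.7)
The proof should be immediate once we invoke Lemma \ref{lem:welldefL}, which identifies $\mathcal{L}f = f\circ T^{-1}$. My plan is simply to prove both set inclusions by direct substitution, using that $T:M\to N$ is a bijection (in fact a diffeomorphism).

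For the forward inclusion, suppose $x\in M$ satisfies $f(x)=t$. Then by Lemma \ref{lem:welldefL}, $\mathcal{L}f(Tx) = (f\circ T^{-1})(Tx) = f(x) = t$, so $Tx \in \{y\in N:\mathcal{L}f(y)=t\}$. Hence $T\{x\in M: f(x)=t\}\subseteq \{y\in N:\mathcal{L}f(y)=t\}$.

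For the reverse inclusion, suppose $y\in N$ satisfies $\mathcal{L}f(y)=t$. Set $x:=T^{-1}y\in M$ (possible since $T$ is a bijection). Then $f(x) = f(T^{-1}y) = \mathcal{L}f(y) = t$ by Lemma \ref{lem:welldefL}, so $x\in \{x\in M:f(x)=t\}$ and $y=Tx\in T\{x\in M:f(x)=t\}$. This gives the opposite inclusion and completes the proof.

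There is no real obstacle: the content is entirely absorbed by Lemma \ref{lem:welldefL} (the identification $\mathcal{L}f=f\circ T^{-1}$) together with the fact that $T$ is a diffeomorphism and therefore a set-theoretic bijection of $M$ onto $N$. The hypothesis $f\in C^1(M,\mathbb{R})\cap L^2(M,m,\mu_r)$ is only used to ensure that $\mathcal{L}f$ has a well-defined pointwise meaning, so that the level set $\{y:\mathcal{L}f(y)=t\}$ makes sense as a subset of $N$ rather than merely an equivalence class.
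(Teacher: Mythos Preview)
Your proof is correct and follows exactly the paper's approach: the paper simply states that the proposition is immediate in view of Lemma \ref{lem:welldefL}, and you have written out the two-line set-theoretic verification that this entails.
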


\begin{lemma}\label{thm:app3.3}
Let $\mathcal{L}:L^2(M, m, \mu_r)\to L^2(N, n, \nu_r)$ be as in \eqref{eq:pfw}, with adjoint $\mathcal{L}^*$. One has
 \begin{enumerate}
  \item $\nabla_n =T_*\nabla_{T^*n}\mathcal{L}^*$,
  \item $\mathcal{L}^*\divg_n T_*=\divg_{T^*n}$,
  \item $\mathcal{L}^*\triangle_n \mathcal{L}f=\triangle_{T^*n}$.
  \end{enumerate}
 \end{lemma}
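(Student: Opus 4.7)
The plan is to prove the three identities in sequence, using that $T$ is an isometry from $(M, T^*n)$ to $(N, n)$ and that $\mathcal{L}^*\mathcal{L} = \textup{Id}$ (which follows from Lemma \ref{lem:welldefL} since $\mathcal{L}^*\mathcal{L}f = (f\circ T^{-1})\circ T = f$). I would handle (1) and (2) independently and then derive (3) as a consequence.

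For (1), I would work pointwise by testing against arbitrary vector fields $\mathcal{V}$ on $M$. Let $g \in C^1(N,\mathbb{R})$. From the defining property \eqref{eq:grad} of the gradient for the metric $T^*n$,
\begin{equation*}
T^*n(\nabla_{T^*n}\mathcal{L}^*g,\mathcal{V}) = \mathcal{V}(\mathcal{L}^*g) = \mathcal{V}(g\circ T) = (T_*\mathcal{V})g,
\end{equation*}
where the last equality uses \eqref{eq:T_*}. On the other hand, by the gradient identity on $(N,n)$, $(T_*\mathcal{V})g = n(\nabla_n g, T_*\mathcal{V})\circ T$ (evaluated on $M$ via composition), and by the definition \eqref{eq:pbm} of the pullback metric, $T^*n(\nabla_{T^*n}\mathcal{L}^*g,\mathcal{V}) = n(T_*\nabla_{T^*n}\mathcal{L}^*g, T_*\mathcal{V})\circ T$. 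Equating and using that $T_*$ is a linear isomorphism of tangent spaces pointwise (so $T_*\mathcal{V}$ ranges over all tangent vectors at $Tx$ as $\mathcal{V}$ varies), nondegeneracy of $n$ yields $T_*\nabla_{T^*n}\mathcal{L}^*g = \nabla_n g$.

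For (2), the cleanest approach is via the coordinate-free formula \eqref{eq:div2}, namely $\divg_m \mathcal{V}\cdot\omega_m^r = d[i(\mathcal{V})\omega_m^r]$. Applying this for the metric $T^*n$ and using Corollary \ref{thm:app3.2} ($T^*\omega_n^r = \omega_{T^*n}^r$) gives
\begin{equation*}
\divg_{T^*n}\mathcal{V}\cdot \omega_{T^*n}^r = d[i(\mathcal{V})T^*\omega_n^r] = d T^*[i(T_*\mathcal{V})\omega_n^r] = T^*[\divg_n(T_*\mathcal{V})\cdot\omega_n^r],
\end{equation*}
where I have used the standard naturality identities $T^*(i(T_*\mathcal{V})\eta) = i(\mathcal{V})T^*\eta$ and $d\circ T^* = T^*\circ d$. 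Expanding the right side gives $(\divg_n T_*\mathcal{V})\circ T\cdot T^*\omega_n^r = \mathcal{L}^*\divg_n T_*\mathcal{V}\cdot \omega_{T^*n}^r$, and cancelling the nonvanishing volume form yields (2).

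For (3), combine (1) and (2). Applying (1) to $g=\mathcal{L}f$ and using $\mathcal{L}^*\mathcal{L}f = f$, one gets $\nabla_n\mathcal{L}f = T_*\nabla_{T^*n}f$. Then
\begin{equation*}
\mathcal{L}^*\triangle_n \mathcal{L}f = \mathcal{L}^*\divg_n(\nabla_n\mathcal{L}f) = \mathcal{L}^*\divg_n T_*(\nabla_{T^*n}f) = \divg_{T^*n}\nabla_{T^*n}f = \triangle_{T^*n}f,
\end{equation*}
where the third equality is (2). The main technical obstacle is step (2): one must be careful about the naturality of the interior product under pullback and about invoking the intrinsic (Stokes-style) definition \eqref{eq:div2} rather than the boundary-integral form \eqref{eq:div}, since the latter requires test regions and is more cumbersome. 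Everything else is a direct calculation once the pullback-metric identity \eqref{eq:pbm} and the isometry property are in hand.
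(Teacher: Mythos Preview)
Your proposal is correct and follows essentially the same approach as the paper: part (1) via the defining property of the gradient and the pullback-metric identity, part (2) via the intrinsic divergence formula \eqref{eq:div2} combined with $T^*\omega_n^r=\omega_{T^*n}^r$ and the naturality of $i(\cdot)$ and $d$ under pullback, and part (3) by chaining (1), (2), and $\mathcal{L}^*\mathcal{L}=\mathrm{Id}$. The only cosmetic difference is that the paper verifies the interior-product naturality explicitly and then passes through an integral over arbitrary $U$, whereas you argue pointwise at the level of top-degree forms; both are equivalent.
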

\begin{proof}

\begin{enumerate}
\item
{Let $g\in C^1(N, \mathbb{R})\cap L^2(N, n, \mu_r)$ and $\mathcal{V}\in \mathcal{F}^1(M)$. One has by \eqref{eq:pbm}
 \begin{align*}
n(T_*\nabla_{T^*n}\mathcal{L}^*g, T_*\mathcal{V})(Tx)
&=T^*n(\nabla_{T^*n}\mathcal{L}^*g, \mathcal{V})(x)\\
&=\restr{\mathcal{V}(\mathcal{L}^*g)}{x}\quad\mbox{by \eqref{eq:grad} with respect to $T^*n$} \\
&=\restr{\mathcal{V}(g\circ T)}{x}\quad\mbox{by \eqref{eq:L^*}}\\
&=\restr{(T_*\mathcal{V})g}{Tx}\\
&=n(\nabla_n g, T_*\mathcal{V})(Tx),
 \end{align*}
 for all $x\in M$.
 }

\item{
Let $\mathcal{V}, \mathcal{V}_1, \mathcal{V}_2,\ldots, \mathcal{V}_{r-1}$ be $r$ vector fields in $\mathcal{F}^1(M)$. One has at each point $x\in M$
\begin{align*}
 [i(T_*\mathcal{V})\omega_n^r)](T_*\mathcal{V}_1, T_*\mathcal{V}_2,\ldots, T_*\mathcal{V}_{r-1})(Tx)
 &=\omega_n^r(T_*\mathcal{V}, T_*\mathcal{V}_1, T_*\mathcal{V}_2, \ldots, T^*\mathcal{V}_{r-1})(Tx)\\
 &=(T^*\omega_n^r)(\mathcal{V}, \mathcal{V}_1, \mathcal{V}_2,\ldots, \mathcal{V}_{r-1})(x)\\
 &=[i(\mathcal{V})\omega_{T^*n}^r](\mathcal{V}_1, \mathcal{V}_2,\ldots, \mathcal{V}_{r-1})(x),
\end{align*}
where we have applied the identity $T^*\omega_n^r=\omega_{T^*n}^r$ in Corollary \ref{thm:app3.2} on the last line. Hence, by the duality of $T_*$ and $T^*$, one has at each point $x\in M$
\begin{equation}\label{eq:app3.3.2}
T^*d[i(T_*\mathcal{V})\omega_n^r)]=d[i(\mathcal{V})\omega_{T^*n}^r].
\end{equation}
Therefore,
\begin{align*}
\int_U \mathcal{L}^*\divg_n(T_*\mathcal{V})\cdot\omega_{T^*n}^r
&=\int_U \divg_n(T_*\mathcal{V})\circ T\cdot\omega_{T^*n}^r\\
&=\int_{TU} \divg_n(T_*\mathcal{V})\cdot\omega_{n}^r\quad\mbox{by \eqref{eq:cov}}\\
&=\int_{TU} d[i(T_*\mathcal{V})\omega_n^r)]\quad\mbox{by \eqref{eq:div2} with respect to $n$}\\
&=\int_{U} T^*d[i(T_*\mathcal{V})\omega_n^r)]\\
&=\int_{U}d[i(\mathcal{V})\omega_{T^*n}^r]\quad\mbox{by \eqref{eq:app3.3.2}}\\
&=\int_{U} \divg_{T^*n}(\mathcal{V}) \cdot\omega_{T^*n}^r.
\end{align*}
}
\item{ Due to 1. and 2. and the fact that $\mathcal{L}^*\mathcal{L}$ is the identity by Lemma \ref{lem:welldefL}, one has
 $\mathcal{L}^*\triangle_n \mathcal{L}f=\mathcal{L}^*\divg_n(\nabla_n \mathcal{L}f)=\mathcal{L}^*\divg_{n}(T_*\nabla_{T^*n}\mathcal{L}^*\mathcal{L}f)=\divg_{T^*n}\nabla_{T^*n}f=\triangle_{T^*n}f$, for all $f\in C^2(M, \mathbb{R})\cap L^2(M, m, \mu_r)$.
 }
 \end{enumerate}

\end{proof}
%

\begin{corollary}\label{thm:dwlp2}
 Let $\mathcal{L}:L^2(M, m, \mu_r)\to L^2(N, n, \nu_r)$ be as in \eqref{eq:pfw}, with adjoint $\mathcal{L}^*$. One has
 \begin{equation}\label{eq:dwc2}
 \triangle^Df=\frac{1}{2}\left(\triangle_m +\mathcal{L}^*\triangle_n \mathcal{L}  \right)f+\frac{1}{2}\left(\frac{m(\nabla_m h_\mu, \nabla_m f)}{h_\mu}+\frac{n(\nabla_n h_\nu, \nabla_n \mathcal{L}f)\circ T}{h_\nu\circ T}\right),
\end{equation}
for all $f\in C^2(M, \mathbb{R})\cap L^2(M, m, \mu_r)$.
\end{corollary}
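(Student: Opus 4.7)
The plan is to expand the definition \eqref{eq:dwc} of $\triangle^D$ term by term using the identity \eqref{def:wlp}, which already decomposes a weighted Laplacian into an unweighted Laplacian plus a gradient correction term involving the density. The only work is to track how $\mathcal{L}^*$ interacts with the corresponding decomposition of $\triangle_\nu$ on $N$.

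First I would write
\begin{equation*}
\triangle^D f = \tfrac{1}{2}\bigl(\triangle_\mu f + \mathcal{L}^*\triangle_\nu \mathcal{L} f\bigr).
\end{equation*}
Applying \eqref{def:wlp} on $(M,m,\mu_r)$ gives
\begin{equation*}
\triangle_\mu f = \triangle_m f + \frac{m(\nabla_m h_\mu,\nabla_m f)}{h_\mu}.
\end{equation*}
Applying the analogous identity on $(N,n,\nu_r)$ to the function $\mathcal{L}f \in C^2(N,\mathbb{R})$ yields
\begin{equation*}
\triangle_\nu(\mathcal{L}f) = \triangle_n(\mathcal{L}f) + \frac{n(\nabla_n h_\nu,\nabla_n \mathcal{L}f)}{h_\nu}.
\end{equation*}

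Next I would pull back by $\mathcal{L}^*$. By Lemma \ref{lem:welldefL}, $\mathcal{L}^* g = g\circ T$ is simply composition, and composition distributes over the sum and the quotient, so
\begin{equation*}
\mathcal{L}^*\triangle_\nu \mathcal{L}f \;=\; \mathcal{L}^*\triangle_n \mathcal{L}f \;+\; \frac{n(\nabla_n h_\nu,\nabla_n \mathcal{L}f)\circ T}{h_\nu\circ T},
\end{equation*}
where I use that $h_\nu$ is bounded away from zero (so the pointwise quotient is well-defined) to justify $\mathcal{L}^*(A/B)=(A\circ T)/(B\circ T)$. Substituting the two expansions into the definition and combining the ``unweighted'' Laplace terms into $\tfrac{1}{2}(\triangle_m + \mathcal{L}^*\triangle_n \mathcal{L})f$ produces exactly \eqref{eq:dwc2}.

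This is essentially an algebraic manipulation, so there is no real obstacle; the only technicality worth flagging is the regularity requirement. We need $h_\mu \in C^1(M,\mathbb{R})$ (already assumed in Definition \ref{def:dwc}) together with $\mathcal{L}f = f\circ T^{-1} \in C^2(N,\mathbb{R})$, which is fine since $T$ is a $C^\infty$ diffeomorphism and $f \in C^2(M,\mathbb{R})$; likewise $h_\nu\in C^1(N,\mathbb{R})$ because $h_\nu = h_\mu\circ T^{-1}\cdot|\det J_{T^{-1}}|\cdot(\sqrt{\det G_m}\circ T^{-1}/\sqrt{\det G_n})$ by \eqref{eq:P-F2}, so all gradients and quotients appearing above are classically defined.
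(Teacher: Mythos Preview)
Your proposal is correct and follows essentially the same approach as the paper: expand both weighted Laplacians via \eqref{def:wlp}, then use $\mathcal{L}^*g=g\circ T$ from Lemma~\ref{lem:welldefL} to pull back the $\triangle_\nu$ decomposition pointwise. The paper's proof is slightly terser (it only writes out the $\mathcal{L}^*\triangle_\nu\mathcal{L}$ term explicitly and says the $\triangle_\mu$ term is handled analogously), while you add a useful remark on the regularity of $h_\nu$ and $\mathcal{L}f$, but the substance is identical.
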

\begin{proof}
By definition
\begin{equation}\label{eq:dwlp2a}
 \triangle^D=\triangle_\mu+\mathcal{L}^*\triangle_\nu\mathcal{L}.
\end{equation}

 Substituting a straightforward modification of \eqref{def:wlp} into the second term on the RHS of \eqref{eq:dwc}, one has for all $x\in M$ and $f\in C^2(M, \mathbb{R})\cap L^2(M, m, \mu_r)$,
\begin{align*}
 \mathcal{L}^*\triangle_\nu \mathcal{L} f(x)
 &=\triangle_\nu (\mathcal{L} f)(Tx)\\
 &=\triangle_n (\mathcal{L}f)(Tx)+\frac{n(\nabla_n h_\nu, \nabla_n \mathcal{L}f)(Tx)}{h_\nu(Tx)}\\
 &=\mathcal{L}^*\triangle_n \mathcal{L}f(x)+\frac{n(\nabla_n h_\nu, \nabla_n \mathcal{L}f)(Tx)}{h_\nu\circ T(x)}.
\end{align*}
Similarly, one can expand the first term of \eqref{eq:dwlp2a} using \eqref{def:wlp} to obtain the required result.
\end{proof}

\begin{corollary}\label{thm:dwlp3}
Let $\triangle^D$ and $\triangle_\mu$ be defined by \eqref{eq:dwc} and \eqref{def:wlp} respectively. One has
 \begin{equation}
  \triangle^D=\frac{1}{2}(\triangle_\mu+\triangle_{\tilde{\mu}})f,
 \end{equation}
 where $\triangle_{\tilde\mu}$ is given by \eqref{def:wlp} with respect to the metric $T^*n$ and density $h_\nu\circ T$.
\end{corollary}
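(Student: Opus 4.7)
The plan is to reduce the claim to Lemma \ref{thm:app3.3} by expanding both sides according to the definition \eqref{def:wlp} of the weighted Laplacian and matching the corresponding pieces. Since Definition \ref{def:dwc} gives $\triangle^D = \tfrac{1}{2}(\triangle_\mu + \mathcal{L}^*\triangle_\nu\mathcal{L})$, and the first summand already coincides with $\tfrac{1}{2}\triangle_\mu f$, the entire content of the corollary is the identity $\mathcal{L}^*\triangle_\nu \mathcal{L} f = \triangle_{\tilde\mu} f$ for $f\in C^2(M,\mathbb{R})\cap L^2(M,m,\mu_r)$.

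First I would use \eqref{def:wlp} to write
\begin{equation*}
\triangle_\nu (\mathcal{L} f) = \triangle_n(\mathcal{L} f) + \frac{n(\nabla_n h_\nu, \nabla_n \mathcal{L} f)}{h_\nu}, \qquad \triangle_{\tilde\mu} f = \triangle_{T^*n} f + \frac{T^*n\bigl(\nabla_{T^*n}(h_\nu\circ T),\, \nabla_{T^*n} f\bigr)}{h_\nu\circ T},
\end{equation*}
and then compare the two resulting expressions after applying $\mathcal{L}^*$ to the first. The $\triangle_n$-term is handled immediately by part 3 of Lemma \ref{thm:app3.3}, which gives $\mathcal{L}^*\triangle_n\mathcal{L} f = \triangle_{T^*n} f$.

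For the weight term, note that $\mathcal{L}^*$ is composition with $T$ (Lemma \ref{lem:welldefL}) and that $\mathcal{L}^*\mathcal{L} = \mathrm{Id}$, so $\mathcal{L}^*h_\nu = h_\nu\circ T$. By part 1 of Lemma \ref{thm:app3.3}, $\nabla_n = T_*\nabla_{T^*n}\mathcal{L}^*$, so
\begin{equation*}
\nabla_n h_\nu = T_* \nabla_{T^*n}(h_\nu\circ T), \qquad \nabla_n \mathcal{L} f = T_*\nabla_{T^*n}(\mathcal{L}^*\mathcal{L} f) = T_*\nabla_{T^*n} f.
\end{equation*}
Applying $\mathcal{L}^*$ (i.e.\ composing with $T$) to the ratio $n(\nabla_n h_\nu, \nabla_n\mathcal{L} f)/h_\nu$ and invoking the defining relation \eqref{eq:pbm} of the pullback metric, $T^*n(\mathcal{V}_1,\mathcal{V}_2)(x) = n(T_*\mathcal{V}_1, T_*\mathcal{V}_2)(Tx)$, yields exactly $T^*n\bigl(\nabla_{T^*n}(h_\nu\circ T),\nabla_{T^*n} f\bigr)/(h_\nu\circ T)$, matching the weight term of $\triangle_{\tilde\mu} f$.

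There is really no serious obstacle here: all of the heavy lifting—the compatibility $\mathcal{L}^*\triangle_n\mathcal{L} = \triangle_{T^*n}$ and the intertwining of gradients via $T_*$—is already done in Lemma \ref{thm:app3.3}. The only care required is bookkeeping the role of the non-constant weight $h_\nu$: one must verify that the ``extra'' weight-correction terms in $\triangle_\nu$ and $\triangle_{\tilde\mu}$ correspond under $\mathcal{L}^*$, which is the short calculation above. Alternatively, one can simply invoke Corollary \ref{thm:dwlp2} and then rewrite the $(\triangle_m, h_\mu)$-piece as $\triangle_\mu$ via \eqref{def:wlp} and the $(\mathcal{L}^*\triangle_n\mathcal{L}, h_\nu\circ T)$-piece as $\triangle_{\tilde\mu}$ via the same three ingredients, giving the claim.
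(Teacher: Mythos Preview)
Your proposal is correct and follows essentially the same approach as the paper: both use Lemma \ref{thm:app3.3}(3) for the unweighted piece $\mathcal{L}^*\triangle_n\mathcal{L}=\triangle_{T^*n}$, and Lemma \ref{thm:app3.3}(1) together with the pullback-metric relation \eqref{eq:pbm} to match the weight-correction term, then repackage via \eqref{def:wlp}. The paper routes the computation through Corollary \ref{thm:dwlp2} (your ``alternatively'' remark), writing the weight term as $(\nabla_{T^*n}f)(h_\nu\circ T)$ via the action of $T_*$ rather than invoking \eqref{eq:pbm} directly, but this is only a cosmetic difference.
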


\begin{proof}

Due to Lemma \ref{thm:app3.3}, one has  $\nabla_n=T_*\nabla_{T^*n} \mathcal{L}^*$. Therefore, by the definition of the gradient \eqref{eq:grad} with respect to the metric $n$, one has for all $x\in M$
\begin{align}\label{eq:dwc3a}
n( \nabla_n h_\nu, \nabla_n \mathcal{L}f)_{Tx}
&=\restr{(\nabla_n\mathcal{L}f)h_\nu}{Tx}\notag\\
&=\restr{(T_*\nabla_{T^*n}f)h_\nu}{Tx}\quad\mbox{by Lemma \ref{lem:welldefL}}\notag\\
&=\restr{(\nabla_{T^*n}f)(h_\nu\circ T)}{x}\notag\\
&=T^*n(\nabla_{T^*n}(h_\nu\circ T), \nabla_{T^*n}f)_x,
\end{align}
where the equality on the last line is due to \eqref{eq:grad} with respect to the metric $T^*n$. 
Moreover, by Lemma \ref{thm:app3.3}, one has the identity $\mathcal{L}^*\triangle_n \mathcal{L}=\triangle_{T^*n}$. Thus, by substituting \eqref{eq:dwc3a} into the fourth term on the RHS of \eqref{eq:dwc2}, one has
\begin{align}
\triangle^D f
&=\frac{1}{2}(\triangle_m+\mathcal{L}^*\triangle_{n}\mathcal{L})f+\frac{1}{2}\left(\frac{m(\nabla_m h_\mu, \nabla_m f)}{h_\mu}+\frac{T^*n(\nabla_{T^*n} (h_\nu\circ T), \nabla_{T^*n} f)}{h_\nu\circ T}\right)\notag\\
&=\frac{1}{2}(\triangle_\mu+\triangle_{\tilde{\mu}})f,
\end{align}
where the second equality is due to the definition of weighted Laplacians \eqref{def:wlp}.
\end{proof}


\subsection{Local properties of charts}

An important analytical tool for reducing a global calculation on $M$ to local calculations on each chart of an atlas on $M$ is the \emph{partition of unity}.
\begin{definition}\label{thm:partu}
 Let $(U_i, \varphi_i)_{i\in I}$ be an atlas on $M$. A \emph{partition of unity} subordinate to the covering $\{U_i\}_{i\in I}$, is the collection of smooth functions $\sigma_i\in C^\infty(M, \mathbb{R})$  such that:
 \begin{enumerate}
  \item $\textup{supp}(\sigma_i)\subset U_i$.
  \item Any point $x\in M$ has a neighbourhood $\mathcal{O}_x$ such that $\mathcal{O}_x\cap \textup{supp}(\sigma_i)=\emptyset$ except for a finite set of $\sigma_i$.
  \item $0\leq \sigma_i\leq 1$ and $\sum_{i\in I}\sigma_i=1$.
 \end{enumerate}
\end{definition}

It is well known that the partition of unity exist for paracompact manifolds (see e.g theorem 1.12 \cite{aubin01}). Since every compact manifold is paracompact, a partition of unity exist for $M$.

Furthermore, for each point $x$ in a compact Riemannian manifold $M$, there exist coordinates on a neighbourhood about $x$, and a constant $c>1$ (depending on the injective radius of $x$, and the dimension of the sectional curvature of $M$), such that
\begin{equation}\label{eq:metricb}
 \frac{1}{c}\delta_{ij}\leq m_{ij}\leq c\delta_{ij},\quad 1\leq i, j\leq r,
\end{equation}
where $\delta_{ij}$ is the Kronecker delta (see e.g p.507 in \cite{jost08}, or Chapter 1 of \cite{hebey96}). The following lemmas are consequences of \eqref{eq:metricb}.

\begin{lemma}\label{thm:volchart}
 Let $(U, \varphi)$ be a chart on $(M, m)$, set $\Omega=\varphi(U)$, and denote by $d\ell$ the density with respect the Lebesgue measure. One has
 \begin{equation}\label{eq:volchart}
  c^{-r/2}\int_{U} |f|^p\,d\mu_r\leq\int_\Omega |f\circ \varphi^{-1}|^p\cdot (h_\mu\circ \varphi^{-1})\,d\ell\leq c^{r/2}\int_U |f|^p\,d\mu_r
 \end{equation}
for some real number $c>1$ and all $f\in L^p(U, m, \mu_r)$, $p\in [1, \infty)$.
\end{lemma}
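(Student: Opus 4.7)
The plan is to compute $\int_U|f|^p\,d\mu_r$ in the local coordinates provided by the chart $(U,\varphi)$, and then use the metric comparison \eqref{eq:metricb} to control the resulting Jacobian factor.

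First, by the definition of $\mu_r$ and the coordinate expression \eqref{eq:omega_m^r} for the volume form, namely $\omega_m^r=\sqrt{\det G_m}\,dx_1\wedge\cdots\wedge dx_r$, the standard change-of-variables via the diffeomorphism $\varphi^{-1}:\Omega\to U$ yields
\begin{equation*}
 \int_U |f|^p\,d\mu_r=\int_U |f|^p\cdot h_\mu\cdot\omega_m^r=\int_\Omega |f\circ\varphi^{-1}|^p\cdot (h_\mu\circ\varphi^{-1})\cdot\sqrt{\det G_m\circ\varphi^{-1}}\,d\ell.
\end{equation*}

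Next I would invoke \eqref{eq:metricb}, which states that in the chosen coordinates $c^{-1}\delta_{ij}\leq m_{ij}\leq c\,\delta_{ij}$ as symmetric bilinear forms. This is exactly the statement $c^{-1}I\preceq G_m\preceq cI$ in the Loewner order, so every eigenvalue of $G_m$ lies in $[c^{-1},c]$. Taking products of eigenvalues gives the pointwise bound
\begin{equation*}
 c^{-r/2}\leq \sqrt{\det G_m}\leq c^{r/2}
\end{equation*}
uniformly on $U$. Combining this with the previous display and noting that $|f\circ\varphi^{-1}|^p\cdot(h_\mu\circ\varphi^{-1})\geq 0$ yields
\begin{equation*}
 c^{-r/2}\int_\Omega |f\circ\varphi^{-1}|^p\cdot(h_\mu\circ\varphi^{-1})\,d\ell\leq \int_U|f|^p\,d\mu_r\leq c^{r/2}\int_\Omega |f\circ\varphi^{-1}|^p\cdot(h_\mu\circ\varphi^{-1})\,d\ell,
\end{equation*}
which rearranges to the claimed inequality \eqref{eq:volchart}.

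There is essentially no obstacle here; the only small point worth being careful about is that \eqref{eq:metricb} is a comparison of quadratic forms pointwise (holding possibly only on a coordinate neighbourhood determined by the injectivity radius and curvature bounds), so the constant $c$ should be taken uniform on $\overline{U}$, which is available because $M$ is compact and $U$ is contained in a chart. Everything else is a routine coordinate computation using \eqref{eq:omega_m^r} and a standard linear-algebra determinant estimate.
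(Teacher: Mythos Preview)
Your proof is correct and follows essentially the same route as the paper: write $\int_U|f|^p\,d\mu_r$ in coordinates using $\omega_m^r=\sqrt{\det G_m}\,dx_1\wedge\cdots\wedge dx_r$, then use the metric comparison \eqref{eq:metricb} to bound $\sqrt{\det G_m}$ between $c^{-r/2}$ and $c^{r/2}$. Your explanation via the Loewner order and eigenvalue products is a slightly tidier justification of the determinant bound than the paper gives, but the argument is otherwise identical.
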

\begin{proof}
Let $\delta_{ij}$ denote the Kronecker delta, and pick local coordinates on $U$ such that the components of the metric tensor $m$ satisfy $\frac{1}{c}\delta_{ij}\leq m_{ij}(x)\leq c\delta_{ij}$ for all $x\in U$ and $1\leq i, j\leq r$. Due to the inequality $m_{ij}\leq\frac{1}{c}\delta_{ij}$, one has $\sqrt{\det{G_m(x)}}\leq c^{r/2}$ for all $x\in U$. Furthermore, the Riemannian volume form is given by $\omega_m^r=\sqrt{\det G_m}\cdot dx_1\wedge dx_2\wedge\ldots\wedge dx_r$ on $U$, and the Lebesgue density satisfies $d\ell=(\varphi^{-1})^*(dx_1\wedge dx_2\wedge\ldots\wedge dx_r)$ on $\Omega$. Hence by the change of variable formula \eqref{eq:cov}
  \begin{align*}
  c^{-r/2}\int_U |f|^p\,d\mu_r
	&=c^{-r/2}\int_{U} |f|^p\cdot h_\mu \sqrt{\det G_m}\cdot dx_1\wedge dx_2\wedge\ldots\wedge dx_r\\
	&\leq\int_{\varphi^{-1}(\Omega)} |f|^p\cdot h_\mu\cdot dx_1\wedge dx_2\wedge\ldots\wedge dx_r\\
 	&=\int_\Omega |f\circ \varphi^{-1}|^p\cdot (h_\mu\circ \varphi^{-1})\,d\ell,
 \end{align*}
where the final equality is due to \eqref{eq:cov}. The inequality $\int_\Omega |f\circ \varphi^{-1}|^p \cdot (h_\mu\circ \varphi^{-1}) \,d\ell\leq c^{r/2}\int_U |f|^p\,d\mu_r$ is obtained analogously using $\frac{1}{c}\delta_{ij}\leq m_{ij}$.
\end{proof}

\begin{lemma}\label{thm:gradchart}
Let $(U, \varphi)$ be a chart on $(M, m)$, set $\Omega=\varphi(U)$, and denote by $e$ the Euclidean metric on $\Omega$ with respect to the Lebesgue density $d\ell$. One has
\begin{equation*}
 c^{-(r/2+1)}\int_U|\nabla_m f|_m^p\,d\mu_r\leq \int_{\Omega}|\nabla_e(f\circ \varphi^{-1})|_e^p\cdot (h_\mu\circ \varphi^{-1})\, d\ell\leq c^{(r/2+1)}\int_U|\nabla_m f|_m^p\,d\mu_r\
\end{equation*}
for some real number $c>1$ and all $\nabla_m f\in L^p(U,m,  \mu_r)$, $p\in [1, \infty)$.
\end{lemma}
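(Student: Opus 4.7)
The plan is to follow the same strategy used in the proof of Lemma \ref{thm:volchart}, but now we must additionally compare $|\nabla_m f|_m$ with $|\nabla_e (f\circ\varphi^{-1})|_e$ in the chart $(U,\varphi)$. I would begin by choosing coordinates on $U$ in which the metric comparison \eqref{eq:metricb} holds, namely $\tfrac{1}{c}\delta_{ij}\le m_{ij}(x)\le c\,\delta_{ij}$ for all $x\in U$. Writing $\tilde f := f\circ\varphi^{-1}$, the chain rule gives $\partial_i f(x)=\partial_i\tilde f(\varphi(x))$, so that in the same coordinates $\sum_i(\partial_i f)^2 = |\nabla_e\tilde f|_e^2\circ\varphi$.

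Next I would invoke the coordinate expression \eqref{eq:gradl}, which yields
\begin{equation*}
|\nabla_m f|_m^2 \;=\; m(\nabla_m f,\nabla_m f) \;=\; \sum_{i,j=1}^{r} m^{ij}\,\partial_i f\,\partial_j f,
\end{equation*}
where $m^{ij}$ are the components of $G_m^{-1}$. Since $\tfrac{1}{c}I\le G_m\le cI$ as symmetric positive-definite matrices, the operator-inverse monotonicity gives $\tfrac{1}{c}I\le G_m^{-1}\le cI$ as well. Thus
\begin{equation*}
\tfrac{1}{c}\,|\nabla_e\tilde f|_e^{2}\circ\varphi \;\le\; |\nabla_m f|_m^{2} \;\le\; c\,|\nabla_e\tilde f|_e^{2}\circ\varphi,
\end{equation*}
and raising to the $p/2$-th power yields $c^{-p/2}\,|\nabla_e\tilde f|_e^{p}\circ\varphi \le |\nabla_m f|_m^{p} \le c^{p/2}\,|\nabla_e\tilde f|_e^{p}\circ\varphi$.

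Finally, I would apply Lemma \ref{thm:volchart} with the integrand $|\nabla_m f|_m^p$ (which is $\mu_r$-integrable by the assumption $\nabla_m f\in L^p(U,m,\mu_r)$) to obtain
\begin{equation*}
c^{-r/2}\!\int_U |\nabla_m f|_m^p\,d\mu_r \;\le\; \int_\Omega |\nabla_m f|_m^p\!\circ\varphi^{-1}\cdot(h_\mu\!\circ\varphi^{-1})\,d\ell \;\le\; c^{r/2}\!\int_U |\nabla_m f|_m^p\,d\mu_r,
\end{equation*}
and then use the pointwise pinching of the previous paragraph inside the middle integral. Combining the two comparisons produces the bound with overall constant $c^{r/2+p/2}$; since the statement only requires the existence of some $c>1$ for which the stated inequality holds, one simply enlarges $c$ (e.g., replace $c$ by $\max\{c,c^{p/2}\}$, or bound $p/2\le \lceil p/2\rceil$ and absorb into a slightly larger constant) so that the resulting exponent fits the form $c^{r/2+1}$ as written, or equivalently re-label the constant. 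The only mildly delicate point is the matrix-inverse monotonicity step, and keeping careful track that the same coordinate system simultaneously controls the volume form (via $\sqrt{\det G_m}$, as in Lemma \ref{thm:volchart}) and the gradient norm; both follow from the single two-sided estimate \eqref{eq:metricb}, so there is no real obstacle.
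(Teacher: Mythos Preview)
Your proof is correct and takes essentially the same route as the paper: pick coordinates satisfying \eqref{eq:metricb}, use the resulting two-sided control on $G_m^{-1}$ to compare $|\nabla_m f|_m^2$ with the Euclidean gradient norm, and combine with Lemma~\ref{thm:volchart}. The paper carries out the $p=2$ case via explicit index contraction (rather than citing operator-inverse monotonicity) and then declares general $p$ a ``straightforward modification''; note that your relabeling for general $p$ should simply read $\tilde c := c^{(r+p)/(r+2)}$, so that $\tilde c^{\,r/2+1}=c^{\,r/2+p/2}$, rather than the ad hoc substitutions you suggest.
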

\begin{proof}
We start with the case $p=2$. Let $\delta_{ij}$ denote the Kronecker delta, and pick local coordinates on $U$ such that the components of the metric tensor $m$ satisfy $\frac{1}{c}\delta_{ij}\leq m_{ij}(x)\leq c\delta_{ij}$ for all $x\in U$ and $1\leq i, j\leq r$. Denote by $m^{ij}$ the components of the inverse matrix $G_m^{-1}$. One has the contraction $\sum_k m^{ik}m_{kj}=\delta_{ij}$, so that $\frac{1}{c}\delta_{ij}\leq m^{ij}(x)\leq c\delta_{ij}$. Moreover, due to lemma \eqref{eq:volchart}, the inequality \eqref{eq:volchart} is valid with constant $c$. Hence, by writing $\nabla_m f$ in the given local coordinates via \eqref{eq:gradl}, one has
 \begin{align}\label{eq:gradchart1}
  &c^{-(r/2+1)}\int_{U} |\nabla_m f|^2_m\,d\mu_r\notag\\
	\leq& c^{-1}\int_{\Omega} (|\nabla_m f|_m^2\cdot h_\mu)\circ \varphi^{-1}\,d\ell\notag\\
	=& c^{-1}\int_{\Omega} m( \nabla_m f, \nabla_m f)_{\varphi^{-1}(x)}\cdot h_\mu\circ \varphi^{-1}(x)\,d\ell(x)\notag\\
	=&c^{-1}\int_{\Omega} \sum_{i,j=1}^r m_{ij}\left(\sum_{k=1}^r m^{ki} \frac{\partial (f\circ \varphi^{-1})}{\partial x_k}\right)\left(\sum_{l=1}^r m^{lj} \frac{\partial (f\circ \varphi^{-1})}{\partial x_l}\right)\cdot (h_\mu\circ \varphi^{-1})\,d\ell\notag\\
	=&c^{-1}\int_{\Omega}\sum_{j=1}^r \left( \frac{\partial (f\circ \varphi^{-1})}{\partial x_j}\right)\left(\sum_{l=1}^r m^{lj} \frac{\partial (f\circ \varphi^{-1})}{\partial x_l}\right)\cdot (h_\mu\circ \varphi^{-1})\,d\ell\quad\mbox{by contracting the index $i$}\notag\\
	\leq&\int_{\Omega}\sum_{j=1}^r \left( \frac{\partial (f\circ \varphi^{-1})}{\partial x_j}\right)^2\cdot (h_\mu\circ \varphi^{-1})\,d\ell\quad\mbox{since $m^{lj}\leq c\delta_{lj}$}\notag\\
 =&\int_{\Omega} |\nabla_e (f\circ \varphi^{-1})|_e^2\cdot (h_\mu\circ \varphi^{-1})\,d\ell.
 \end{align}
The inequality $ \int_\Omega|\nabla_e (f\circ \varphi^{-1})|_e^2\cdot (h_\mu\circ \varphi^{-1})\,d\ell\leq c^{(r/2+1)}\int_{U} |\nabla_m f|^2_m\, d\mu_r$ is obtained analogously using $\frac{1}{c}\delta_{ij}\leq m_{ij}$.

The general case $p\in [1, \infty)$ is a straightforward modification of the calculation done to obtain \eqref{eq:gradchart1}.
\end{proof}

\section{Weighted Sobolev spaces}\label{sec:ws}


Let $C^\infty_0(\Omega, \mathbb{R})$ be the space of smooth real-valued functions with compact support on $\Omega\subset \mathbb{R}^r$, and $\ell$ the Lebesgue measure on $\mathbb{R}^r$. For locally integrable functions $f, \tilde{f}\in L^1_{\textup{loc}}(\Omega, \ell)$, we say that $\tilde{f}$ is the first order weak derivative of $f$ if $\int_\Omega f\cdot \partial_i g\, d\ell=-\int_\Omega \tilde{f}\cdot g\, d\ell$ for all $g\in C^\infty_0(\Omega, \mathbb{R})$, and each $1\leq i\leq r$ (see p.21 in \cite{adams03}). We write $\tilde{f}=\partial_i f$, and note that $\partial_i f$ is uniquely determined up to sets of measure zero.
\begin{definition}\label{def:wgrad}
Let $(U, \varphi)$ be a chart on $M$ with corresponding local coordinates $(x_1, x_2, \ldots, x_r)$. Define the first order weak gradient of $f\in L^1_{\textup{loc}}(M, V_m)$ at the point $x\in M$ by
\begin{equation}\label{eq:wgrad}
 \tilde{\nabla}_m f(x)=\sum_{i, j=1}^r m^{ij}(x)\cdot \restr{\frac{\partial (f\circ \varphi^{-1})}{\partial x_i}}{\varphi (x)}\partial_j,
\end{equation}
where the partial derivatives appearing on the RHS exist in the weak sense.
\end{definition}

It is straightforward to extend the operation $T_*$ on weak gradients, and verify that Lemma \ref{thm:app3.3} and \ref{thm:gradchart} hold for weak gradients. In addition, if the density of $\mu_r$ is an $A_p$ weight, then by Proposition \ref{thm:loc}, any $f\in L^p(M, m, \mu_r)$ is also in $L^1_{\textup{loc}}(M, V_m)$. Thus, one can define weak gradients on $L^p(M, m, \mu_r)$ via the Definition \ref{def:wgrad}. The following proposition provides the key motivation behind the construction of the weak gradient given above.

\begin{proposition}\label{thm:wgrad}
 Let $f\in L^2(M, m, \mu_r)$, where the density of $\mu_r$ is an $A_2$ weight. Assume the first order weak gradient of $f$ defined by \eqref{eq:wgrad} exists. One has
 \begin{equation}\label{eq:wgrad1}
  \int_{U} f\cdot \triangle_\mu g\,d\mu_r=-\int_{U} m(\tilde{\nabla}_m f, \nabla_m g)\,d\mu_r,
 \end{equation}
for all measurable $U \subset M$ and $g\in C_0^\infty(M, \mathbb{R}^r)$.
\end{proposition}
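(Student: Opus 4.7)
The plan is to prove \eqref{eq:wgrad1} first for smooth $f$ via the weighted divergence theorem \eqref{eq:wdivth}, and then extend to general $f \in L^2(M,m,\mu_r)$ with weak gradient by approximation in a weighted Sobolev norm. The $A_2$ hypothesis on $h_\mu$ (which holds in our setting by Proposition \ref{thm:smoothap}, since $h_\mu$ is assumed smooth and bounded below) is precisely what will make the approximation step work.

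For smooth $f \in C^\infty(M,\mathbb{R}) \cap L^2(M,m,\mu_r)$, I would apply the product rule for the weighted divergence \eqref{eq:pdiv2} to the vector field $f \nabla_m g$ to obtain
\begin{equation*}
\divg_\mu(f \nabla_m g) \;=\; f \, \triangle_\mu g \;+\; m(\nabla_m f, \nabla_m g).
\end{equation*}
Integrating this identity against $d\mu_r = h_\mu \omega_m^r$ and invoking the weighted Stokes-type formula \eqref{eq:wdivth}, the left-hand side reduces to a boundary integral of $f \, m(\nabla_m g,\mathbf{n})\, h_\mu \omega_m^{r-1}$. Because $g \in C_0^\infty(M,\mathbb{R})$ has compact support in the interior of $M$, $\nabla_m g$ vanishes on $\partial M$ (or on $\partial U$ once $U$ is interpreted so that $g$ is supported away from $\partial U$), and this boundary contribution vanishes. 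This establishes \eqref{eq:wgrad1} in the smooth case with $\tilde\nabla_m f = \nabla_m f$.

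To pass from smooth $f$ to the general case, I would localize via a partition of unity $\{\sigma_i\}$ (Definition \ref{thm:partu}) subordinate to a finite atlas $\{(U_i,\varphi_i)\}$ covering $\mathrm{supp}(g)$, which is compact. Writing $g = \sum_i \sigma_i g$ reduces the problem to a single chart. In each chart I would pull back via $\varphi_i^{-1}$ to $\Omega_i = \varphi_i(U_i) \subset \mathbb{R}^r$: by Lemmas \ref{thm:volchart} and \ref{thm:gradchart} the intrinsic $L^2$ norm of $f$ and its weak gradient are uniformly equivalent to the Euclidean weighted norms with weight $h_\mu \circ \varphi_i^{-1}$, and by Proposition \ref{thm:smoothap} this pulled-back weight remains $A_2$ on $\Omega_i$. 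Classical density results for Muckenhoupt-weighted Sobolev spaces (see \cite{turesson00}) then produce smooth $f_n \to f \circ \varphi_i^{-1}$ in the weighted Sobolev norm, with weak gradients converging in the weighted $L^2$ sense. Applying the smooth case to each $f_n$ and passing to the limit---using that $\triangle_\mu g$ and $\nabla_m g$ are bounded on the compact set $\mathrm{supp}(g)$---yields \eqref{eq:wgrad1} for $f$.

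The principal obstacle is the density step: showing that $C^\infty$ functions are dense in the weighted Sobolev space on $\Omega_i$ with weight $h_\mu \circ \varphi_i^{-1}$. This is exactly where the $A_2$ Muckenhoupt condition is indispensable, since it yields the boundedness of the Hardy--Littlewood maximal operator on weighted $L^2$, which in turn drives the convergence of standard mollifications. A secondary technical point is verifying that the Euclidean weak gradient of $f \circ \varphi_i^{-1}$ pulls back correctly to $\tilde\nabla_m f$ as defined by \eqref{eq:wgrad}; this is essentially a change-of-variables check using \eqref{eq:gradl} and the uniform metric equivalence \eqref{eq:metricb}, and the noted extension of Lemma \ref{thm:app3.3} to weak gradients makes the passage between coordinate-free and chart representations routine.
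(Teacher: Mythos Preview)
Your approach is correct but takes a different route from the paper's. The paper works directly from the definition of the weak derivative: in each chart $(U_k,\varphi_k)$ it writes $\triangle_\mu g$ in local coordinates via \eqref{eq:laplacel} and \eqref{def:wlp}, so that the integrand $f\cdot\triangle_\mu g\cdot h_\mu\sqrt{\det G_m}$ becomes $f\circ\varphi_k^{-1}$ multiplied by a Euclidean total derivative $\partial_i\bigl[(m^{ij}h_\mu\sqrt{\det G_m})\circ\varphi_k^{-1}\cdot\partial_j(g\circ\varphi_k^{-1})\bigr]$. The very definition of the Euclidean weak derivative of $f\circ\varphi_k^{-1}$ then shifts $\partial_i$ onto $f$ in a single step, and the resulting expression is recognized as $m(\tilde\nabla_m f,\nabla_m g)$ via the coordinate formula \eqref{eq:wgrad}; a partition of unity (applied to $f$, not to $g$) assembles the chart-wise identities into the global one. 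No smooth-case lemma and no approximation are needed, because the definition of weak derivative \emph{is} the required integration by parts against $C_0^\infty$ test functions. Your two-step strategy---Green's identity for smooth $f$, then density of $C^\infty$ in the weighted Sobolev space---is valid but invokes heavier machinery: the density step leans on the full $A_2$/maximal-function theory from \cite{turesson00}, whereas the paper's direct argument uses the $A_2$ hypothesis only through Proposition~\ref{thm:loc}, to guarantee $f\in L^1_{\mathrm{loc}}$ so that weak partial derivatives are defined at all. Both arguments are sound; the paper's is more economical and stays closer to first principles.
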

\begin{proof}
 Let $(U_k, \varphi_k)_{k\in K}$ be an atlas on $M$, with corresponding local coordinates $(x_1, x_2, \ldots, x_r)$. Due to \eqref{eq:omega_m^r}, one has $d\mu_r=h_\mu\sqrt{\det{G_m}}dx_1\wedge dx_2\wedge\ldots\wedge dx_r$. Additionally $d\ell=(\varphi_k^{-1})^*(dx_1\wedge dx_2\wedge\ldots\wedge dx_r)$. Hence, for each $k\in K$ and any measurable $\Omega_k\subset \varphi_k (U_k)$, one has by the coordinate representation of $\triangle_m$ given by \eqref{eq:laplacel}, for each $k\in K$
 \begin{align}\label{eq:wgrad2}
  \int_{\varphi_k^{-1}(\Omega_k)} f\cdot \triangle_\mu g\,d\mu_r
  &=\int_{\varphi_k^{-1}(\Omega_k)} f\cdot \sum_{i, j=1}^r \partial_i(m^{ij}h_\mu\sqrt{\det{G_m}})\partial_j g\cdot dx_1\wedge dx_2\wedge\ldots\wedge dx_r\notag\\
  &=\sum_{i,j=1}^r\int_{\Omega_k} f\circ \varphi_k^{-1}\cdot  \partial_i[(m^{ij}h_\mu\sqrt{\det{G_m}})\circ \varphi_k^{-1}\cdot \partial_j (g\circ \varphi_k^{-1})]\,d\ell\notag\\
  &=\sum_{i,j=1}^r\int_{\Omega_k}  \partial_i (f\circ \varphi_k^{-1})\cdot [(m^{ij}h_\mu\sqrt{\det{G_m}})\circ \varphi_k^{-1}\cdot\partial_j(g\circ \varphi_k^{-1})]\,d\ell\notag\\
  &=\int_{\varphi_k^{-1}(\Omega_k)} m(\tilde{\nabla}_m f, \nabla_m g)\, d\mu_r,
 \end{align}
 where the last line is due to the fact that $m(\tilde{\nabla}_mf,\nabla_m g)=(\tilde{\nabla}_m f)g=\sum_{i,j=1}^r m^{ij}\partial_i f\partial_jg$.

 Now since $M$ is compact, there exists a smooth partition of unity $\sigma_k$ subordinate to the covering $\{U_k\}_{k\in K}$ (see Definition \ref{thm:partu}). Moreover, since $\varphi_k$ is a diffeomorphism, for any measurable $U\subset M$, there exist $K'\subseteq K$ and countable collection of measurable $\Omega_k\subset \varphi_k (U_k)$, such that $U=\cup_{k\in K'}(\varphi_k^{-1}(\Omega_k))$. Hence, applying \eqref{eq:wgrad2} to each $k\in K'$, one has by setting $\sum_{k\in K'}\sigma_k=1$
 \begin{align*}
  \int_U f\cdot \triangle_\mu g\,d\mu_r\
  &=\sum_{k\in K'} \int_{\varphi_k^{-1}(\Omega_k)} \sigma_k f \cdot \triangle_\mu g \, d\mu_r\\
  &=\sum_{k\in K'}\int_{\varphi_k^{-1}(\Omega_k)} m(\tilde{\nabla}_m (\sigma_kf), \nabla_m g)\, d\mu_r\\
  &=\int_{U}  m\left(\tilde{\nabla}_m \left(\sum_{k\in K'}\sigma_kf\right), \nabla_m g\right)\, d\mu_r\\
  &=\int_U m(\tilde{\nabla}_m f, \nabla_m g)\, d\mu_r,
 \end{align*}
 where we have used the linearity of $\tilde{\nabla}_m$ and the fact that $\textup{supp}(\sigma_k)\subset U_k$ to obtain the penultimate line.
\end{proof}

We introduce the weighted Sobolev space $W^{1,2}(M, m, \mu_r)$ of functions $f\in L^2(M, m, \mu_r)$, whose first order weak gradient exists in $L^2(M, m, \mu_r)$. We equip $W^{1,2}(M, m, \mu_r)$ with the inner-product $\langle f, g\rangle_{W^{1, 2}(M, m,  \mu_r)}=\int_M (m(\tilde{\nabla}_m f,\tilde{\nabla}_m g)+fg)\, d\mu_r$ for all $f, g\in W^{1, 2}(M, m, \mu_r)$, with the norm associated with $\langle \cdot,\cdot \rangle_{W^{1, 2}(M, m, \mu_r)}$ denoted by $\|\cdot\|_{W^{1, 2}(M, m, \mu_r)}$.

There exist embedding theorems and the completeness property for weighted Sobolev spaces on $\mathbb{R}^r$, and for the unweighted Sobolev spaces on Riemannian manifolds (see \cite{turesson00} and \cite{hebey96} respectively). We develop the corresponding results for the weighted Sobolev space $W^{1, 2}(M, m, \mu_r)$ defined as above. Let $(U, \varphi)$ be a chart on $M$. In the following, we first obtain the results of the desired properties in a local setting; i.e.\ the weighted Sobolev space $W^{1, 2}(U, m, \mu_r)$. One can then use the fact that $M$ is compact, and apply the standard partition of unity arguments to extend these local outcomes to global ones for $W^{1,2}(M, m, \mu_r)$.

Given a chart $(U, \varphi)$ on $M$. Set $\Omega=\varphi(U)$, and let $\ell_\mu$ be an absolutely continuous measure with density $h_\mu\circ \varphi^{-1}$ with respect to $\ell$, where $\ell$ is the Lebesgue measure on $\mathbb{R}^r$.  One has the weighted Sobolev space $W^{1, 2}(\Omega, \ell_\mu)$ for the open subset $\Omega\subset\mathbb{R}^r$; that is, the space $W^{1, 2}(\Omega, \ell_\mu)$ is equipped with the norm
\begin{equation}\label{eq:ecnorm}
\|f\circ \varphi^{-1}\|^2_{W^{1, 2}(\Omega, \ell_\mu)}=\int_\Omega \left(|f\circ \varphi^{-1}|^2+|\tilde{\nabla}_e(f\circ \varphi^{-1})|_e^2\right)\cdot (h_\mu\circ \varphi^{-1}) d\ell,
\end{equation}
for all $f\in L^2(U, \mu_r)$, and where $\tilde{\nabla}_e$ is the first order weak gradient with respect to the Euclidean metric $e$. Suppose the density $h_\mu$ of $\mu_r$ is an $A_2$ weight (i.e\ $h_\mu$ satisfies \eqref{eq:apw} when $p=2$). Clearly, $h_\mu$ is an $A_2$ weight restricted to the sub-domain $U$. Moreover, since $U$ has compact closure and $\varphi$ is a diffeomorphism, it is easy to verify that the density $h_\mu\circ \varphi^{-1}$ of $\ell_\mu$ is also an $A_2$ weight. Since the density of $\ell_\mu$ is an $A_2$ weight, the weighted Sobolev space $W^{1, 2}(\Omega, \ell_\mu)$ is a Hilbert space and $C^\infty(\Omega, \mathbb{R})$ is dense in $W^{1, 2}(\Omega, \ell_\mu)$ (see theorem 1 in \cite{goldshtein07}). We show by the following lemma that if $f\in W^{1, 2}(U, m, \mu_r)$, then $f\circ\varphi^{-1}\in W^{1,2}(\Omega,  \ell_\mu)$.

\begin{lemma}\label{thm:ecnorm}
Let $(U, \varphi)$ be a chart on $M$, and set $\Omega=\varphi(U)$. Denote by $\|\cdot \|_{W^{1, 2}(\Omega, \ell_\mu)}$ and $\|\cdot\|_{W^{1, 2}(U, m, \mu_r)}$ the norms on the weighted Sobolev spaces $W^{1, 2}(\Omega, \ell_\mu)$ and $W^{1, 2}(U, m, \mu_r)$ respectively. Then $\|f\circ \varphi^{-1} \|_{W^{1, 2}(\Omega, \ell_\mu)}$ and $\|f\|_{W^{1, 2}(U, m, \mu_r)}$ are equivalent for all $f\in W^{1, 2}(U, m, \mu_r)$
\end{lemma}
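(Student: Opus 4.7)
The plan is to reduce the equivalence of norms directly to the two pointwise/integral comparison lemmas already proved, namely Lemma \ref{thm:volchart} (for $L^p$ comparison under the chart) and Lemma \ref{thm:gradchart} (for the comparison of gradient norms under the chart), both applied with $p=2$. These lemmas were stated for the classical gradient $\nabla_m$, but the paper has already noted (immediately after Definition \ref{def:wgrad}) that the same inequalities extend to the weak gradient $\tilde\nabla_m$, since the weak gradient is defined precisely by the same coordinate formula \eqref{eq:wgrad} as $\nabla_m$ and Lemma \ref{thm:gradchart} is proved by plain manipulation of that formula together with the bounds $\frac{1}{c}\delta_{ij}\le m_{ij}\le c\delta_{ij}$. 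Hence the hypothesis $f\in W^{1,2}(U,m,\mu_r)$ gives us the analogue of Lemma \ref{thm:gradchart} for $\tilde\nabla_m f$ and $\tilde\nabla_e(f\circ\varphi^{-1})$.

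First, I would write both norms explicitly:
\begin{equation*}
\|f\|_{W^{1,2}(U,m,\mu_r)}^{2}=\int_{U}\bigl(|f|^{2}+|\tilde\nabla_m f|_m^{2}\bigr)\,d\mu_r,
\end{equation*}
\begin{equation*}
\|f\circ\varphi^{-1}\|_{W^{1,2}(\Omega,\ell_\mu)}^{2}=\int_{\Omega}\bigl(|f\circ\varphi^{-1}|^{2}+|\tilde\nabla_e(f\circ\varphi^{-1})|_e^{2}\bigr)\cdot(h_\mu\circ\varphi^{-1})\,d\ell.
\end{equation*}
Then I apply Lemma \ref{thm:volchart} with $p=2$ to the $|f|^{2}$ pieces, obtaining two-sided bounds with constants $c^{\pm r/2}$, and Lemma \ref{thm:gradchart} (extended to weak gradients, as per the remark following Definition \ref{def:wgrad}) with $p=2$ to the gradient pieces, obtaining two-sided bounds with constants $c^{\pm(r/2+1)}$.

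Adding the two pairs of inequalities and taking the worse constant on each side yields
\begin{equation*}
c^{-(r/2+1)}\|f\|_{W^{1,2}(U,m,\mu_r)}^{2}\le\|f\circ\varphi^{-1}\|_{W^{1,2}(\Omega,\ell_\mu)}^{2}\le c^{(r/2+1)}\|f\|_{W^{1,2}(U,m,\mu_r)}^{2},
\end{equation*}
which is the desired equivalence. The constant $c>1$ here is the one furnished by \eqref{eq:metricb} on the chart $(U,\varphi)$, and because $U$ has compact closure it may be chosen uniformly, so the equivalence constants are independent of $f$.

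The only mild subtlety I foresee — and it is the ``main obstacle'' in the loose sense that it is the only thing that is not a one-line citation — is confirming that Lemma \ref{thm:gradchart} genuinely transports from strong to weak gradients. This follows because the proof of Lemma \ref{thm:gradchart} uses only (i) the coordinate formula $\nabla_m f=\sum_{i,j} m^{ij}\partial_i f\,\partial_j$, which is exactly \eqref{eq:wgrad} in the weak case, and (ii) the algebraic bounds $\tfrac{1}{c}\delta_{ij}\le m^{ij}\le c\,\delta_{ij}$ derived from \eqref{eq:metricb}; neither step requires any classical differentiability beyond the existence of the $\partial_i(f\circ\varphi^{-1})$ as elements of $L^2(\Omega,\ell_\mu)$, which is precisely the assumption $f\in W^{1,2}(U,m,\mu_r)$. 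Once this point is recorded, the two lemmas assemble into the required equivalence essentially by inspection.
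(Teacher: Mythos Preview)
Your proposal is correct and is exactly the paper's approach: the paper's proof is the single line ``This follows immediately from Lemma \ref{thm:volchart} and \ref{thm:gradchart}.'' Your write-up simply unpacks this citation, including the (valid) observation that Lemma \ref{thm:gradchart} carries over verbatim to weak gradients via the coordinate formula \eqref{eq:wgrad}.
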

\begin{proof}
 This follows immediately from Lemma \ref{thm:volchart} and \ref{thm:gradchart}.
\end{proof}


 Due to Lemma \ref{thm:ecnorm}, one now has global completeness for $W^{1, 2}(M, m, \mu_r)$.

\begin{proposition}\label{thm:comW}
 Assume the density of $\mu_r$ is an $A_2$ weight. The weighted Sobolev space $W^{1, 2}(M, m, \mu_r)$ is complete.
\end{proposition}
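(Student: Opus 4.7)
The plan is to reduce the global problem on $M$ to local problems on finitely many charts, where completeness of the Euclidean weighted Sobolev space $W^{1,2}(\Omega,\ell_\mu)$ (already invoked in the paragraph preceding Lemma \ref{thm:ecnorm}) can be applied. Since $M$ is compact, I would begin by selecting a finite atlas $\{(U_k,\varphi_k)\}_{k=1}^K$ together with a smooth partition of unity $\{\sigma_k\}_{k=1}^K$ subordinate to $\{U_k\}$ (Definition \ref{thm:partu}). Setting $\Omega_k := \varphi_k(U_k)$, the restriction of $h_\mu$ to $U_k$ pushes forward to an $A_2$ weight on $\Omega_k$ (as observed just before Lemma \ref{thm:ecnorm}), so each local Hilbert space $W^{1,2}(\Omega_k,\ell_{\mu,k})$ is complete.

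Given a Cauchy sequence $\{f_n\}$ in $W^{1,2}(M,m,\mu_r)$, I would first argue that for each $k$ the sequence $\{\sigma_k f_n\}$ is Cauchy in $W^{1,2}(M,m,\mu_r)$: since $\sigma_k$ and $|\nabla_m\sigma_k|_m$ are uniformly bounded (by smoothness and compactness of $\textup{supp}(\sigma_k)$), the Leibniz-type identity $\tilde{\nabla}_m(\sigma_k f)=\sigma_k\tilde{\nabla}_m f+f\nabla_m\sigma_k$ (valid by Proposition \ref{thm:wgrad} applied locally) shows that multiplication by $\sigma_k$ is a bounded linear operator on $W^{1,2}(M,m,\mu_r)$. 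Because $\sigma_k f_n$ is supported in $U_k$, its chart representative $(\sigma_k f_n)\circ\varphi_k^{-1}$ lies in $W^{1,2}(\Omega_k,\ell_{\mu,k})$, and Lemma \ref{thm:ecnorm} transfers the Cauchy property from $W^{1,2}(U_k,m,\mu_r)$ to $W^{1,2}(\Omega_k,\ell_{\mu,k})$. Local completeness then yields a limit $h_k\in W^{1,2}(\Omega_k,\ell_{\mu,k})$; because $(\sigma_k f_n)\circ\varphi_k^{-1}$ is supported in the compact set $\varphi_k(\textup{supp}(\sigma_k))\subset\Omega_k$, the limit $h_k$ is supported there as well, and the pullback $g_k:=h_k\circ\varphi_k$ extends by zero to a well-defined element of $W^{1,2}(M,m,\mu_r)$ with $\sigma_k f_n\to g_k$ in the global norm (Lemma \ref{thm:ecnorm} applied in the reverse direction).

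Finally, setting $f:=\sum_{k=1}^K g_k$, a finite sum in $W^{1,2}(M,m,\mu_r)$, the partition-of-unity identity $\sum_k\sigma_k=1$ together with the triangle inequality gives
\[
\|f_n-f\|_{W^{1,2}(M,m,\mu_r)}=\Bigl\|\sum_{k=1}^K(\sigma_k f_n-g_k)\Bigr\|_{W^{1,2}(M,m,\mu_r)}\leq\sum_{k=1}^K\|\sigma_k f_n-g_k\|_{W^{1,2}(M,m,\mu_r)}\longrightarrow 0,
\]
completing the argument. The main technical hurdle will be justifying the extension-by-zero step: one must verify that, since $\textup{supp}(h_k)$ is compactly contained in $\Omega_k$ (with a positive buffer to $\partial\Omega_k$ guaranteed by $\textup{supp}(\sigma_k)$ being compactly contained in $U_k$), testing the zero extension against $C^\infty_0(M,\mathbb{R})$ functions (transported via the atlas) produces no spurious boundary contribution, so the weak gradient of the extension coincides with the zero extension of $\tilde{\nabla}_m g_k$.
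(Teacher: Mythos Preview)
Your argument is correct and follows the standard partition-of-unity localization: multiply the Cauchy sequence by each $\sigma_k$, transfer to the Euclidean chart via Lemma~\ref{thm:ecnorm}, invoke completeness of $W^{1,2}(\Omega_k,\ell_{\mu,k})$, pull the limits back, extend by zero, and sum. The extension-by-zero step you flag is handled exactly as you say, by the buffer between $\textup{supp}(\sigma_k)$ and $\partial U_k$.

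The paper takes a different route. It first establishes the same chart-level completeness you use, but for the global step it does \emph{not} localize with a partition of unity. Instead it argues by pigeonhole: since the atlas is finite, some single chart $U_s$ must ``contain infinitely many terms'' of the Cauchy sequence $g_j$; a subsequence lying entirely in $W^{1,2}(U_s,m,\mu_r)$ then converges there by local completeness, and the full Cauchy sequence is forced to converge to the same limit in $W^{1,2}(M,m,\mu_r)$. This second step is terse to the point of being opaque---a generic $g_j\in W^{1,2}(M,m,\mu_r)$ is not supported in any single $U_s$, so the sense in which $g_j$ ``belongs to'' $W^{1,2}(U_s,m,\mu_r)$, and why convergence on one chart yields convergence on all of $M$, is not spelled out. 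Your approach sidesteps this entirely: by forcing compact support via $\sigma_k$ you make the chart transfer unambiguous, and the reassembly $f=\sum_k g_k$ is transparent. What you lose is brevity; what you gain is a self-contained argument that does not rely on an unexplained pigeonhole step.
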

\begin{proof}
First we show that the Sobolev spaces on any charts on $M$ are complete. Let $(U, \varphi)$ be a chart on $M$, and $f_j$ a Cauchy sequence
in $W^{1, 2}(U, m, \mu_r)$. Then $f_j\circ \varphi^{-1}$ is Cauchy in $W^{1, 2}(\Omega, \ell_{\mu})$ due to Lemma \ref{thm:ecnorm}, so  by the completeness of $W^{1, 2}(\Omega, \ell_\mu)$, the Cauchy sequence $f_j\circ \varphi^{-1}$ convergences to an element $f\circ \varphi^{-1}\in W^{1,2}(\Omega, \ell_\mu)$. Hence, the Cauchy sequence $f_j$ converges to $f$ in $W^{1, 2}(U, m, \mu_r)$.

Now, let $g_j$ be a Cauchy sequence in $W^{1, 2}(M, m, \mu_r)$, and $(U_i, \varphi_i)_{i\in I}$ an atlas on $M$. Since $M$ is compact, $\{U_i\}_{i\in I}$ is a finite cover for $M$. Hence, there exist a fixed $s\in I$ such that $W^{1, 2}(U_s, m, \mu_r)$ contains infinitely many terms of the sequence $g_j$. Let $g_{j_k}$ be a subsequence of $g_j$ contained entirely in $W^{1, 2}(U_s, m, \mu_r)$, then $g_{j_k}$ is Cauchy in $W^{1, 2}(U_s, m, \mu_r)$, so that $g_{j_k}$ converges to an element $g\in W^{1, 2}(U_s, m, \mu_r)$ by completeness. In particular, the Cauchy sequence $g_j$ converges to $g$ in $W^{1, 2}(M, m, \mu_r)$.
\end{proof}

We proceed to demonstrate that the space $W^{1, 2}(M, m, \mu_r)$ is approximated by smooth functions in $C^\infty(M, \mathbb{R})\cap W^{1, 2}(M, m, \mu_r)$. The key idea is to locally subject the functions in $W^{1, 2}(M, m, \mu_r)$ to \emph{mollification}.

\begin{definition}\label{def:conv}
Let $\Omega$ be an open subset of $\mathbb{R}^r$, and $q\in C_0^\infty(\mathbb{R}^r, \mathbb{R})$ be nonnegative such that $\textup{supp } (q) \subset E_1(0)$ and $\int_{\Omega}q \,d\ell=1$, where $E_1(0)$ is the open unit ball  centered at the origin in $\mathbb{R}^r$. We define a \emph{mollifier} by the function $q_\epsilon:=\epsilon^{-r}q(x/\epsilon)$. For all $f\in L^p(\Omega, \ell)$, $p\in [1, \infty)$, we call the convolution
\begin{equation}\label{eq:conv}
 q_\epsilon \star f(x):=\int_{\Omega}q_\epsilon(x-z) f(z)\,d\ell(z),
\end{equation}
the \emph{mollification} of $f$ by $q_\epsilon$.
\end{definition}

One has the following weighted version of the well known result $\tilde{\nabla}_e(q_\epsilon\star f)=q_\epsilon\star \tilde{\nabla}_e f$, and density theorem (Lemma 7.3 and Theorem 7.9 in \cite{gilbarg77} respectively).

\begin{theorem}[Theorem 2.1.4. \cite{turesson00}]\label{thm:do}
Let $\Omega$ be an open subset of $\mathbb{R}^r$, and $f\in L^p(\Omega, \ell_w)$, where $\ell_w$ is an absolutely continuous measure with respect to Lebesgue. Define $f_\epsilon:=q_\epsilon\star f$, where the mollifier $q_\epsilon$ and $\star$ are as in Definition \ref{def:conv}. For $p\in [1, \infty)$, if the density of $\ell_w$ is an $A_p$ weight, then $f_\epsilon\in C^\infty(\Omega, \mathbb{R})\cap L^p(\Omega, \ell_w)$, $\tilde{\nabla}_e f_\epsilon= q_\epsilon \star \tilde{\nabla}_e f$, and as $\epsilon\to 0$, $f_\epsilon \to f$ in $L^p(\Omega, \ell_w)$.
\end{theorem}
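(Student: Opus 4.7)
The plan is to carry over the classical unweighted proof of mollifier convergence and, at every step where Lebesgue measure was used, substitute the corresponding weighted estimate available under the $A_p$ hypothesis; verifying that these substitutions still close is the content of the theorem. Three items need to be established: smoothness of $f_\epsilon$, commutation of mollification with the weak gradient, and boundedness and convergence of $f_\epsilon$ in $L^p(\Omega,\ell_w)$. The first is almost automatic. By Proposition \ref{thm:loc}, the $A_p$ hypothesis on the density of $\ell_w$ gives $L^p(\Omega,\ell_w)\subset L^1_{\mathrm{loc}}(\Omega,\ell)$, so the convolution $f_\epsilon(x)=\int q_\epsilon(x-z)f(z)\,d\ell(z)$ is absolutely convergent whenever $\mathrm{dist}(x,\partial\Omega)>\epsilon$; since $q_\epsilon\in C_0^\infty$, differentiation under the integral sign then yields $f_\epsilon\in C^\infty$ with $\partial^\alpha f_\epsilon=(\partial^\alpha q_\epsilon)\star f$ for all multi-indices $\alpha$.

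For the commutation $\tilde\nabla_e f_\epsilon = q_\epsilon\star\tilde\nabla_e f$, I would pick any $g\in C_0^\infty(\Omega)$ supported strictly inside $\Omega$ (so that $q_\epsilon\star g$ is still compactly supported in $\Omega$ for small $\epsilon$), apply Fubini, and then use the definition of the weak partial derivative to transfer a derivative from $f_\epsilon$ onto $g$:
\begin{align*}
\int f_\epsilon\,\partial_i g\,d\ell
&=\int f(z)\,(\check q_\epsilon\star\partial_i g)(z)\,d\ell(z)\\
&=-\int \tilde\partial_i f(z)\,(\check q_\epsilon\star g)(z)\,d\ell(z)
=-\int (q_\epsilon\star\tilde\partial_i f)(x)\,g(x)\,d\ell(x),
\end{align*}
where $\check q_\epsilon(y)=q_\epsilon(-y)$ and the second equality uses the defining property of $\tilde\partial_i f$ applied to the test function $\check q_\epsilon\star g$. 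Since $\partial_i f_\epsilon$ already exists classically by the previous step, comparing with the formula for $\partial_i f_\epsilon$ obtained there gives $\partial_i f_\epsilon = q_\epsilon\star\tilde\partial_i f$ pointwise, which is the desired identity componentwise.

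The main obstacle is the weighted $L^p$ bound and the convergence. Since $q$ is bounded with support in the unit ball, $|f_\epsilon(x)|\le C_q\,Mf(x)$ pointwise, where $M$ is the Hardy--Littlewood maximal operator. The crucial tool here is Muckenhoupt's theorem, which asserts that $M$ is bounded on $L^p(\ell_w)$ precisely when $w\in A_p$ for $1<p<\infty$; this immediately gives $\|f_\epsilon\|_{L^p(\ell_w)}\le C\|f\|_{L^p(\ell_w)}$ uniformly in $\epsilon$, so $f_\epsilon\in L^p(\Omega,\ell_w)$. For $p=1$ the strong $(1,1)$ bound is unavailable, but the $A_1$ condition \eqref{eq:apw2} directly yields $\int q_\epsilon(x-z)w(x)\,d\ell(x)\le Cw(z)$ after recognising the left hand side as comparable to the average of $w$ over a ball of radius $\epsilon$ around $z$, and a single application of Fubini then produces the analogous uniform $L^1(\ell_w)$ bound. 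With these uniform estimates in hand, the convergence $f_\epsilon\to f$ follows by the standard three-$\delta$ argument: approximate $f$ in $L^p(\ell_w)$ by $g\in C_c(\Omega)$ (such functions are dense because the $A_p$ condition supplies enough local integrability), note that $g_\epsilon\to g$ uniformly on a common compact set and hence in $L^p(\ell_w)$, and use the uniform operator bound to control $\|f_\epsilon-g_\epsilon\|_{L^p(\ell_w)}\le C\|f-g\|_{L^p(\ell_w)}$. Every step outside the Muckenhoupt maximal inequality is essentially bookkeeping.
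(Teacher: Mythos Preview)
The paper does not supply its own proof of this statement: it is quoted verbatim as Theorem~2.1.4 of Turesson~\cite{turesson00} and used as a black box. So there is nothing in the paper to compare your argument against line by line.

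That said, your sketch is the standard argument and is essentially correct. The three ingredients you identify---$L^p(\ell_w)\subset L^1_{\mathrm{loc}}$ via Proposition~\ref{thm:loc} for smoothness, Fubini plus the definition of weak derivative for the commutation, and the pointwise maximal bound $|f_\epsilon|\le C_q\,\mathcal{H}f$ combined with Muckenhoupt's theorem for the uniform $L^p(\ell_w)$ estimate---are exactly what Turesson uses. Your treatment of the $p=1$ endpoint via the $A_1$ condition $\mathcal{H}w\le C_w w$ and a single Fubini is also the standard route. The only point you pass over quickly is the density of $C_c(\Omega)$ in $L^p(\Omega,\ell_w)$; this does hold for $A_p$ weights (indeed for any locally finite Borel measure), but strictly speaking it is a separate lemma rather than an immediate consequence of ``enough local integrability.'' With that caveat, the argument closes.
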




\begin{corollary}\label{thm:lcom}
Let $W^{1, 2}(M, m, \mu_r)$ be a weighted Sobolev space. Assume the density of $\mu_r$ is an $A_2$ weight.  The space $C^\infty(M, \mathbb{R})\cap W^{1, 2}(M, m, \mu_r)$ is dense in $W^{1, 2}(M, m, \mu_r)$.
\end{corollary}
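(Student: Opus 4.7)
My plan is to reduce the global statement to the local Euclidean weighted statement (Theorem \ref{thm:do}) via a finite partition of unity, using the norm equivalence already established in Lemma \ref{thm:ecnorm}.

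Let $f\in W^{1,2}(M,m,\mu_r)$ and $\delta>0$ be given. Since $M$ is compact, I would select a finite atlas $(U_i,\varphi_i)_{i=1}^k$ such that each $\varphi_i(U_i)=\Omega_i\subset\mathbb{R}^r$ is open and bounded, and each coordinate domain satisfies the metric comparability \eqref{eq:metricb}. Choose a smooth partition of unity $\{\sigma_i\}_{i=1}^k$ subordinate to $\{U_i\}$ (Definition \ref{thm:partu}). Writing $f=\sum_i \sigma_i f$, each $\sigma_i f\in W^{1,2}(M,m,\mu_r)$ has compact support contained in $U_i$, so it suffices to approximate each $\sigma_i f$ in the $W^{1,2}(M,m,\mu_r)$ norm by a smooth function with support in $U_i$, and then sum.

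For a single term, write $g_i:=(\sigma_i f)\circ\varphi_i^{-1}$, which lies in $W^{1,2}(\Omega_i,\ell_\mu)$ with compact support inside $\Omega_i$ by Lemma \ref{thm:ecnorm}. As noted in the discussion preceding Lemma \ref{thm:ecnorm}, the weight $h_\mu\circ\varphi_i^{-1}$ on $\Omega_i$ is an $A_2$ weight because $h_\mu$ is and $\varphi_i$ is a diffeomorphism onto a bounded set. For $\epsilon>0$ smaller than the distance from $\mathrm{supp}\,g_i$ to $\partial\Omega_i$, the mollification $g_{i,\epsilon}:=q_\epsilon\star g_i$ lies in $C_0^\infty(\Omega_i,\mathbb{R})$, and by Theorem \ref{thm:do} one has both $g_{i,\epsilon}\to g_i$ and $\tilde{\nabla}_e g_{i,\epsilon}=q_\epsilon\star\tilde{\nabla}_e g_i\to\tilde{\nabla}_e g_i$ in $L^2(\Omega_i,\ell_\mu)$. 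Hence $g_{i,\epsilon}\to g_i$ in $W^{1,2}(\Omega_i,\ell_\mu)$ as $\epsilon\to 0$. Transferring back, define
\[
f_{i,\epsilon}(x):=\begin{cases} g_{i,\epsilon}\circ\varphi_i(x), & x\in U_i,\\ 0, & x\notin U_i.\end{cases}
\]
For sufficiently small $\epsilon$, $f_{i,\epsilon}$ has support strictly inside $U_i$ and extends by zero to a function in $C^\infty(M,\mathbb{R})$. By the norm equivalence of Lemma \ref{thm:ecnorm} applied on $U_i$,
\[
\|f_{i,\epsilon}-\sigma_i f\|_{W^{1,2}(M,m,\mu_r)}=\|f_{i,\epsilon}-\sigma_i f\|_{W^{1,2}(U_i,m,\mu_r)}\leq C_i\|g_{i,\epsilon}-g_i\|_{W^{1,2}(\Omega_i,\ell_\mu)},
\]
which tends to $0$ as $\epsilon\to 0$.

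Choosing $\epsilon$ small enough that each summand is within $\delta/k$, the function $f_\delta:=\sum_{i=1}^k f_{i,\epsilon}\in C^\infty(M,\mathbb{R})\cap W^{1,2}(M,m,\mu_r)$ satisfies $\|f_\delta-f\|_{W^{1,2}(M,m,\mu_r)}<\delta$ by the triangle inequality. Since $\delta$ is arbitrary, density follows. The main technical point to verify carefully is that the Euclidean mollifications, pulled back and extended by zero, genuinely yield globally $C^\infty$ functions on $M$ — this is handled by choosing $\epsilon$ small enough that $\mathrm{supp}\,g_{i,\epsilon}$ lies in a compact subset of $\Omega_i$, so that the pullback has support in a compact subset of $U_i$ where the chart is smooth, and zero extension is smooth across $\partial U_i$. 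The other place to be careful is that the $A_2$ property transfers to the local Euclidean setting, which is where the assumption that $h_\mu$ is an $A_2$ weight is used in invoking Theorem \ref{thm:do}.
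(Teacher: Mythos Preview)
Your proposal is correct and follows essentially the same approach as the paper: a finite partition of unity subordinate to coordinate charts with the metric comparability \eqref{eq:metricb}, local mollification in each chart via Theorem \ref{thm:do}, and transfer back using the norm equivalence of Lemma \ref{thm:ecnorm} (the paper invokes Lemmas \ref{thm:volchart} and \ref{thm:gradchart} directly, but this amounts to the same thing). Your explicit attention to choosing $\epsilon$ small enough that the mollified supports stay compactly inside $\Omega_i$ matches the paper's condition \eqref{eq:lcom1}, and your remark on the $A_2$ property transferring to charts mirrors the paper's setup.
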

\begin{proof}
 Let $f\in W^{1, 2}(M, m, \mu_r)$ and choose some $\gamma>0$. We will show that there is a $g\in C^\infty(M, \mathbb{R})\cap W^{1, 2}(M, m, \mu_r)$, such that $\|f-g\|^2_{W^{1, 2}(M, m, \mu_r)}<\gamma$. Let $(U_i, \varphi_i)_{i\in I}$ be an atlas on $M$, then pick local coordinates on $M$, such that the components of the metric tensor satisfy $\frac{1}{c}\delta_{jk}\leq m_{jk}(x)\leq c\delta_{jk}$ for some $1<c<\infty$, and each $x\in U_i$, $1\leq j,k\leq r$ (such coordinate exist due to the compactness of $M$, see \eqref{eq:metricb}). For each $1\leq i\leq r$, let $\ell_{\mu, i}$ be an absolutely continuous measure with density $h_\mu\circ \varphi_i^{-1}$ with respect to $\ell$. Since $M$ is compact, there exists a smooth partition of unity $\{\sigma_i\}_{i\in I}$ subordinate to the finite covering $\{U_i\}_{i\in I}$; i.e\ $\sigma_i$ is given by definition \ref{thm:partu}. Therefore, $\sigma_i f$ and its first-order weak gradient vanishes outside of $U_i$, hence $\sigma_if\in W^{1, 2}(U_i, m, \mu_r)$. Set $\Omega_i=\varphi_i(U_i)$ for each $i\in I$, then due to Lemma \ref{thm:ecnorm}, the fact that $\sigma_i f\in W^{1, 2}(U_i, m, \mu_r)$ implies $(\sigma_i f)\circ \varphi_i^{-1}\in W^{1, 2}(\Omega_i, \ell_{\mu,i})$. Consequently, both $(\sigma_if)\circ \varphi_i^{-1}$ and $\tilde{\nabla}_e\left((\sigma_i f)\circ \varphi_i^{-1}\right)$ are in $L^2(\Omega_i, \ell_{\mu,i})$ for each $i\in I$. Let $q_\epsilon$ and $\star$ be as in definition \ref{def:conv}, then by Theorem \ref{thm:do} applied to $(\sigma_i f)\circ\varphi_i^{-1}\in L^2(\Omega_i, \ell_{\mu, i})$ with $p=2$ and $\ell_w=\ell_{\mu,i}$ for each $i\in I$,
\begin{equation}\label{eq:lcom3}
\tilde{\nabla}_e\left(q_\epsilon\star\left((\sigma_if)\circ \varphi_i^{-1}\right)\right)=q_\epsilon\star\left(\tilde{\nabla}_e\left((\sigma_if)\circ \varphi_i^{-1}\right)\right),
\end{equation}
 and there exist $\epsilon_1>0$ such that
\begin{equation}\label{eq:lcom2}
\left\|q_\epsilon\star\left((\sigma_if)\circ \varphi_i^{-1}\right)-(\sigma_if)\circ \varphi_i^{-1}\right\|_{L^2(\Omega_i, \ell_{\mu,i})}< \frac{\gamma}{2c^{r/4}\cdot |I|^2}.
\end{equation}
for all $i\in I$. In addition, applying Theorem \ref{thm:do} to $\tilde{\nabla}_e\left((\sigma_i f)\circ\varphi_i^{-1}\right)\in L^2(\Omega_i, \ell_{\mu, i})$ with $p=2$ and $\ell_\mu=\ell_{\mu, i}$, one has $\epsilon_2>0$ such that
\begin{align}\label{eq:lcom4}
&\left\|\tilde{\nabla}_e\left(q_\epsilon\star\left((\sigma_if)\circ \varphi_i^{-1}\right)\right)-\tilde{\nabla}_e\left((\sigma_if)\circ \varphi_i^{-1}\right)\right\|_{L^2(\Omega_i, \ell_{\mu,i})}\notag\\
=&\left\|q_\epsilon\star\left(\tilde{\nabla}_e\left((\sigma_if)\circ \varphi_i^{-1}\right)\right)-\tilde{\nabla}_e\left((\sigma_if)\circ \varphi_i^{-1}\right)\right\|_{L^2(\Omega_i, \ell_{\mu, i})}\quad\mbox{by \eqref{eq:lcom3}}\notag\\
<& \frac{\gamma}{2c^{r/4+2}\cdot |I|^2},
\end{align}
for all $i\in I$.

Set
\begin{equation}\label{eq:lcom1}
\epsilon=\min\left\{\epsilon_1, \epsilon_2, \textup{dist}\left(\textup{supp}(q_\epsilon\star((\sigma_if)\circ \varphi_i^{-1}), \Omega_i\right)\right\},
\end{equation}
and let $f_{\epsilon, i}:=q_\epsilon\star\left((\sigma_if)\circ\varphi_i^{-1}\right)\circ \varphi_i$. Since $\epsilon$ satisfies \eqref{eq:lcom1}, the function $f_{\epsilon, i}$ and its first-order weak gradient $\tilde{\nabla}_m f_{\epsilon, i}$ vanish outside of $U_i$ for all $i\in I$. Moreover, since $|I|$ is finite, $f_\epsilon:=\sum_{i\in I} f_{\epsilon,i}\in C^\infty(M, \mathbb{R})\cap W^{1,2}(M, m, \mu_r)$.

Set $g=f_\epsilon$, then by Lemma \ref{thm:volchart} and the inequality \eqref{eq:lcom2}
\begin{align}\label{eq:lcom5}
\|g-f\|_{2, m, \mu}
&=\left\|\sum_{i\in I} f_{\epsilon,i}-\sigma_i f\right\|_{2, m, \mu}\notag\\
&\leq \sum_{i\in I}\left(\int_M \left|f_{\epsilon, i}-\sigma_i f \right|^2\, d\mu_r\right)^\frac{1}{2}\quad\mbox{by triangle inequality}\notag\\
&=\sum_{i\in I} \left(\int_{U_i} |f_{\epsilon, i}-\sigma_if|^2\, d\mu_r\right)^\frac{1}{2}\notag\\
&\leq \sum_{i\in I}c^{r/4}\left(\int_{\Omega_i} \left|q_\epsilon\star\left((\sigma_if)\circ \varphi_i^{-1}\right)-(\sigma_if)\circ \varphi_i^{-1}\right|^2\cdot (h_\mu\circ \varphi_i^{-1})\,d\ell\right)^\frac{1}{2}\notag\\
&< \sqrt{\gamma/2}.
\end{align}
Similarly, by Lemma \ref{thm:gradchart}, and the inequality \eqref{eq:lcom4}
\begin{align}\label{eq:lcom6}
\|\tilde{\nabla}_m g-\tilde{\nabla}_m f\|_{2, m, \mu}
&=\left\|\sum_{i\in I}\tilde{\nabla}_m f_\epsilon-\sigma_i f\right\|_{2, m, \mu}\notag\\
&\leq \sum_{i\in I} \left(\int_{U_i} |\tilde{\nabla}_m f_{\epsilon, i}-\tilde{\nabla}_m(\sigma_i f)|_m^2\, d\mu_r\right)^\frac{1}{2}\notag\\
&\leq \sum_{i\in I}c^{r/4+1/2}\left(\int_{\Omega_i}\left|\tilde{\nabla}_e\left(q_\epsilon\star\left((\sigma_if)\circ \varphi_i^{-1}\right)\right)-\tilde{\nabla}_e\left((\sigma_if)\circ \varphi_i^{-1}\right)\right|_e^2\cdot (h_\mu\circ \varphi_i^{-1})\,d\ell\right)^\frac{1}{2}\notag\\
&< \sqrt{\gamma/2}.
\end{align}
Thus, $\|g-f\|^2_{W^{1, 2}(M, m, \mu_r)}<\gamma$.
\end{proof}

As before, let $\ell_\mu$ be an absolutely continuous measure with respect to $\ell$. The \emph{Hardy-Littlewood maximal operator} $\mathcal{H}$ is a non-linear operator on locally integrable functions $f\in L^1_{\textup{loc}}(\mathbb{R}^r, \ell)$ defined by
\begin{equation}\label{eq:lhwf}
\mathcal{H}f(x)=\sup_{\rho>0}\frac{1}{\ell(E_\rho(x))}\int_{E_\rho(x)} |f(y)|\,d\ell(y),
\end{equation}
where $E_\rho(x)$ is the Euclidean ball centered at $x$ with radius $\rho$. If the density of the measure $\ell_\mu$ is an  $A_p$ weight, then $\mathcal{H}$ is bounded as an operator from $L^p(\mathbb{R}^r, \ell_w)$ to  $L^p(\mathbb{R}^r, \ell_w)$ for $1<p<\infty$ (see Theorem 1, p.201 in \cite{stein93}, or Theorem 1.2.3 in \cite{turesson00}). This property of $A_p$ weights forms an essential argument for Theorem \ref{thm:do}.

\begin{lemma}\label{thm:wcom}
Let $K\subset \Omega\subset \mathbb{R}^r$, where $K$ is compact and $\Omega$ is open and bounded. Let $W^{1, 2}(\Omega, \ell_w)$ be a weighted Sobolev space. Assume the density $\ell_w$ is an $A_2$ weight. Suppose $f_i$ is a sequence in $W^{1,2}(\Omega, \ell_w)$ with support $K$. Then there exists a subsequence $f_{i_j}$, and some $f\in L^2(\Omega, \ell_w)$, such that
\begin{equation}\label{eq:wcom}
\int_{K} (f_{i_j}-f)^2\, d\ell_w\to 0
\end{equation}
as $j\to \infty$.
\end{lemma}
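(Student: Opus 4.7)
The plan is to prove a weighted version of the classical Rellich--Kondrachov compactness theorem via mollification combined with an Arzelà--Ascoli argument, exploiting crucially the $A_2$ property of the weight. We assume (as is standard, and as the lemma presumably implicitly requires for compactness to hold) that the sequence $\{f_i\}$ is bounded in $\|\cdot\|_{W^{1,2}(\Omega, \ell_w)}$.

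\textbf{Step 1 (Uniform mollification estimate).} Let $q_\epsilon$ be the mollifier from Definition \ref{def:conv} and set $f_i^\epsilon = q_\epsilon\star f_i$. Writing $f_i(x)-f_i^\epsilon(x) = \int_{E_1(0)} q(z)[f_i(x)-f_i(x-\epsilon z)]\,d\ell(z)$ and using the weak-gradient integral representation $f_i(x)-f_i(x-\epsilon z)=\epsilon\int_0^1 \tilde\nabla_e f_i(x-t\epsilon z)\cdot z\,dt$, one obtains a pointwise bound of the form
\[
|f_i(x)-f_i^\epsilon(x)| \le C\epsilon\cdot \mathcal{H}(|\tilde\nabla_e f_i|)(x),
\]
where $\mathcal{H}$ is the Hardy--Littlewood maximal operator \eqref{eq:lhwf}. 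Because the density of $\ell_w$ is an $A_2$ weight, $\mathcal{H}$ is bounded on $L^2(\mathbb{R}^r,\ell_w)$ (the weighted maximal inequality recalled just before this lemma), so we obtain the uniform estimate
\[
\|f_i-f_i^\epsilon\|_{L^2(\Omega,\ell_w)} \le C'\epsilon\,\|\tilde\nabla_e f_i\|_{L^2(\Omega,\ell_w)} \le C''\epsilon,
\]
with $C''$ independent of $i$.

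\textbf{Step 2 (Compactness at fixed $\epsilon$).} For $\epsilon$ small enough, the $f_i^\epsilon$ are smooth with supports contained in a common compact set $K_\epsilon \subset \Omega$. Pointwise,
\[
|f_i^\epsilon(x)|\le \|q_\epsilon\|_\infty\int_K|f_i|\,d\ell,\qquad |\nabla f_i^\epsilon(x)|\le \|\nabla q_\epsilon\|_\infty\int_K|f_i|\,d\ell.
\]
By Cauchy--Schwarz,
\[
\int_K|f_i|\,d\ell \le \Bigl(\int_K |f_i|^2\,d\ell_w\Bigr)^{1/2}\Bigl(\int_K w^{-1}\,d\ell\Bigr)^{1/2},
\]
and the $A_2$ condition \eqref{eq:apw} guarantees $\int_K w^{-1}\,d\ell<\infty$. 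Hence $\{f_i^\epsilon\}_i$ is uniformly bounded and equicontinuous on $K_\epsilon$, so Arzelà--Ascoli yields a subsequence converging uniformly on $K_\epsilon$.

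\textbf{Step 3 (Diagonal argument and conclusion).} Pick $\epsilon_k\downarrow 0$ and extract nested subsequences so that $f_{i_j}^{\epsilon_k}$ converges uniformly on $K_{\epsilon_k}$ for every $k$; take the diagonal. Uniform convergence on the compact set $K_{\epsilon_k}$, together with local finiteness of $\ell_w$ on compacts (since $w\in L^1_{\mathrm{loc}}$), upgrades to convergence in $L^2(\Omega,\ell_w)$ for each fixed $k$. Given $\delta>0$, choose $k$ with $C''\epsilon_k<\delta/3$ via Step 1, then $J$ such that $\|f_{i_j}^{\epsilon_k}-f_{i_l}^{\epsilon_k}\|_{L^2(\Omega,\ell_w)}<\delta/3$ for $j,l\ge J$; the triangle inequality
\[
\|f_{i_j}-f_{i_l}\|_{L^2(\Omega,\ell_w)} \le \|f_{i_j}-f_{i_j}^{\epsilon_k}\| + \|f_{i_j}^{\epsilon_k}-f_{i_l}^{\epsilon_k}\| + \|f_{i_l}^{\epsilon_k}-f_{i_l}\| < \delta
\]
shows $\{f_{i_j}\}$ is Cauchy in $L^2(\Omega,\ell_w)$, which is complete, so it converges to some $f\in L^2(\Omega,\ell_w)$, giving \eqref{eq:wcom}.

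The main obstacle is Step 1: it is precisely here that the classical unweighted argument must be replaced by the $A_2$ maximal-function technology, since the naive pointwise bound $|f_i-f_i^\epsilon|\le\epsilon\sup|\tilde\nabla_e f_i|$ fails and one must pass through $\mathcal{H}$ to get a usable weighted $L^2$ estimate. Everything afterwards is a standard Arzelà--Ascoli plus diagonalisation argument, with the only weighted subtlety being the Cauchy--Schwarz bound $\int_K|f_i|\,d\ell<\infty$ secured by $w^{-1}\in L^1_{\mathrm{loc}}$.
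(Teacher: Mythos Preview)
Your proof is correct and follows the same overall architecture as the paper's: mollify, use the $A_2$ weighted maximal inequality to control $\|f_i-f_i^\epsilon\|_{L^2(\ell_w)}$ uniformly in $i$, apply Arzel\`a--Ascoli to the mollifications (using $w^{-1}\in L^1_{\mathrm{loc}}$ to bound $\int_K|f_i|\,d\ell$), then conclude via a Cauchy argument.

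The one substantive difference is in Step~1. You bound $|f_i-f_i^\epsilon|$ pointwise by $C\epsilon\,\mathcal{H}(|\tilde\nabla_e f_i|)$ via the line-integral representation of the increment, and then invoke the $A_2$ maximal inequality on the \emph{gradient}; this gives an explicit $O(\epsilon)$ rate directly tied to the assumed $W^{1,2}$ bound. The paper instead bounds $\|f_i-f_i^\epsilon\|_{L^2(\ell_w)}$ by (a constant times) $\|\mathcal{H}(f_i(\cdot)-f_i(\cdot+\epsilon y))\|_{L^2(\ell_w)}$, applies the $A_2$ maximal inequality to the translated difference, and then appeals to continuity of the (smooth approximants to) $f_i$ to conclude uniform smallness. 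Your route is cleaner and makes the role of the $W^{1,2}$ bound transparent; the paper's route avoids writing the gradient representation but relies on a translation-continuity step whose uniformity in $i$ is left somewhat implicit. Either way the key weighted ingredient is the same: boundedness of $\mathcal{H}$ on $L^2(\ell_w)$ for $A_2$ weights.
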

\begin{proof}
Let $f_{\epsilon, i}=q_\epsilon\star f_i$, where $q_\epsilon$ and $\star$ are as in Definition \ref{def:conv}. Denote by $\|\cdot\|_{2, \ell_w}$ the $L^2$-norm associated with $L^2(K, \ell_w)$. First we shall show that
\begin{equation}\label{eq:wcom1}
\|f_{\epsilon, i}-f_i\|_{2, \ell_w}\to 0,
\end{equation}
uniformly with respect to $i$. Due to Theorem \ref{thm:do}, it is sufficient to proof \eqref{eq:wcom1} for $f_i\in C^\infty(\Omega, \mathbb{R})\cap W^{1, 2}(\Omega, \ell_w)$. By a change of variable $y=(x-z)/\epsilon$ and using the facts $\int_{\mathbb{R}^r} q\,d\ell=1$, $\textup{supp}(q)\subset E_1(0)$, one has
\begin{align}\label{eq:wcom4}
f_{\epsilon, i}(x)-f_i(x)
&=\int_{\mathbb{R}^r} q_\epsilon(x-z)f_i(z)\,d\ell(z)-f_i(x)\notag\\
&=\epsilon^{-r}\int_{E_\epsilon(x)} q\left(\frac{x-z}{\epsilon}\right)f_i(z)\,d\ell(z)-\int_{|y|< 1}q(y)f_i(x)\,d\ell(y)\notag\\
&= \int_{|y|< 1} q(y)[f_i(x-\epsilon y)-f_i(x)]\,d\ell(y)\notag\\
&\leq \|q\|_{\infty}\cdot\int_{|y|< 1}[f_i(x-\epsilon y)-f_i(x)]\,d\ell(y)\quad\mbox{by H\"older's inequality}\notag\\
&= \|q\|_{\infty}\cdot\epsilon^r\int_{E_\epsilon(x)}[f_i(z)-f_i(z+\epsilon y)]\,d\ell(z).
\end{align}
As a consequence of \eqref{eq:wcom4}, one has
\begin{align}\label{eq:wcom0}
 \|f_{\epsilon, i}-f_i\|^2_{2, \ell_w}
 &\leq \|q\|^2_{\infty}\cdot\int_K\bigg|\epsilon^r\int_{E_\epsilon(x)}\left|f_i(z)-f_i(z+\epsilon y)\right|\,d\ell(z)\bigg|^2\, d\ell_w(x)\notag\\
 &\leq \|q\|^2_\infty\cdot \int_{\mathbb{R}^r}\bigg|\epsilon^r\left[\ell(E_\epsilon(x))\right]\cdot \mathcal{H}(f_i(x)-f_i(x+\epsilon y))\bigg|^2\,d\ell_w(x)\notag\\
 &=\|q\|^2_\infty\cdot \pi^2 \cdot \int_{\mathbb{R}^r}\bigg|\mathcal{H}(f_i(x)-f_i(x+\epsilon y))\bigg|^2\,d\ell_w(x),
 \end{align}
where $\mathcal{H}$ is defined as in \eqref{eq:lhwf}. Furthermore, by using the fact that the density of $\ell_w$ is an $A_p$ weight, the operator $\mathcal{H}: L^2(\mathbb{R}^r, \ell_w)\to L^2(\mathbb{R}^r, \ell_w)$ is bounded. This implies
 \begin{align}\label{eq:wcom0b}
 \textup{RHS of \eqref{eq:wcom0}}
 &\leq \|q\|^2_\infty\cdot \pi^2 \cdot \int_{\mathbb{R}^r}\left|\mathcal{H}(f_i(x)-f_i(x+\epsilon y)\right|^2\,d\ell_w(x)\notag\\
 &\leq \|q\|^2_\infty\cdot \pi^2 \cdot C\int_{\mathbb{R}^r}\left|f_i(x)-f_i(x+\epsilon y)\right|^2\,d\ell_w(x),
\end{align}
where the constant $C$ depends only on $r$, $p$ and the $A_p$ constant of $w$; the constant $C$ is uniform with respect to $i$. Since $f_i$ is continuous independent of $i$, by \eqref{eq:wcom0}-\eqref{eq:wcom0b} one has the convergence $f_{\epsilon, i}\to f_i$ in $L^2(\Omega, \ell_w)$ uniformly with respect to $i$.

Due to \eqref{eq:wcom1}, we can now pick a subsequence of $f_i$, so that for any fixed $\gamma>0$, there exist an $\epsilon$ sufficiently small such that
\begin{equation}\label{eq:wcom2}
\|f_{\epsilon, i_j}-f_{i_j}\|_{2, \ell_w}\leq \gamma/2
\end{equation}
for all $i_j\geq 1$. Furthermore, since the density of $\ell_w$ is an $A_2$ weight, by a straightforward modification of Lemma \ref{thm:loc}, one has $L^2(\Omega, \ell_w)\subset L^1_{\text{loc}}(\Omega, \ell)$. Hence the sequence $f_i$ belongs to $L^1_{\text{loc}}(\Omega
, \ell)$, therefore
\begin{equation*}
\int_\Omega|f_i|\,d\ell=\int_{K} |f_i|\,d\ell< \infty,
\end{equation*}
which implies
\begin{align*}
\sup_{x\in \Omega}|f_{\epsilon, i}(x)|
 &=\sup_{x\in \Omega} \left|q_\epsilon\star f_{i}(x)\right|\\
 &= \sup_{x\in \Omega}\left|\int_\Omega q_\epsilon(x-z)\cdot f_i(z)\,d\ell(z)\right|\\
 &\leq \|q_\epsilon\|_\infty\cdot \int_{\Omega} |f_i|\,d\ell<\infty,
\end{align*}
so that $f_{\epsilon,i}$ is uniformly bounded on $\Omega$. Similarly, by using Leibniz's rule for differentiating under the integral sign, one has
\begin{align*}
 \sup_{x\in \Omega} |\nabla_e f_{\epsilon,i}(x)|_e
 &=\sup_{x\in \Omega}|\nabla_e q_\epsilon\star f_{i}(x)|_e\\
 &=\sup_{x\in \Omega} \left|\nabla_e \left(\int_\Omega q_\epsilon(x-z)\cdot f_i(z)\,d\ell(z)\right)\right|_e\\
 &\leq \sup_{x\in \Omega}\int_{\Omega} |\nabla_e q_\epsilon(x-z)|_e\cdot |f_i(z)|\,d\ell(z)\\
 &\leq \|\nabla_e q_\epsilon\|_\infty\cdot\int_{\Omega} |f_i|\,d\ell<\infty,
\end{align*}
which implies $f_{\epsilon, i}$ is equicontinuous on $\Omega$. Therefore, by the Arzela-Ascoli theorem (Theorem 11.28 in \cite{rudin74}), there exist a subsequence $f_{\epsilon, i_j}$ that convergences uniformly on every compact subset of $\Omega$. In particular,  there is an $f_{\epsilon}$ such that
\begin{align}\label{eq:wcom3}
 \lim_{j, k\to \infty}\|f_{i_j}-f_{i_k}\|^2_{2, \ell_w}
 &\leq  \lim_{i, k\to \infty}\left\{ \|f_{i_j}-f_{\epsilon, i_j}\|_{2, \ell_w}^2+\|f_{\epsilon, i_j}-f_\epsilon\|_{2, \ell_w}^2+\|f_\epsilon-f_{\epsilon, i_k}\|_{2, \ell_w}^2+\|f_{\epsilon, i_k}-f_{i_k}\|_{2, \ell_w}^2\right\}\notag\\
 &\leq \gamma/2+0+0+\gamma/2=\gamma,
\end{align}
where the convergence of the first and last term on the RHS was handled by \eqref{eq:wcom2}.
Hence $f_{i_j}$ is a Cauchy sequence in $L^2(K, \ell_\mu)$. Therefore, by the completeness of $L^2$ spaces, the Cauchy sequence $f_{i_j}$ convergences to some $f$ in $L^2(K, \ell_w)$.
\end{proof}

One now has the weighted version of the well known Sobolev compactness embedding theorem for $\mathbb{R}^r$, which applies to $W^{1, 2}(M, m, \mu_r)$.

\begin{theorem}[Rellich Compactness]\label{thm:rcom}
Let $W^{1, 2}(M, m, \mu_r)$ be a weighted Sobolev space. Assume the density of $\mu_r$ is an $A_2$ weight. Then the embedding $W^{1, 2}(M, m, \mu_r)\hookrightarrow  L^2(M, m, \mu_r)$ is compact.
\end{theorem}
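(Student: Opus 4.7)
The plan is to reduce the claim to the local compactness result of Lemma \ref{thm:wcom} via a finite partition of unity, using the norm equivalences of Lemmas \ref{thm:volchart} and \ref{thm:gradchart}.

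Let $\{f_n\}_{n=1}^\infty$ be a bounded sequence in $W^{1,2}(M, m, \mu_r)$; say $\|f_n\|_{W^{1,2}(M, m, \mu_r)} \leq C$ for all $n$. Since $M$ is compact, fix a finite atlas $\{(U_i, \varphi_i)\}_{i=1}^N$ whose charts may be taken so that the metric comparison \eqref{eq:metricb} holds, and let $\{\sigma_i\}_{i=1}^N$ be a smooth partition of unity subordinate to $\{U_i\}_{i=1}^N$ as in Definition \ref{thm:partu}. For each $i$, $\sigma_i f_n$ and its weak gradient vanish outside $U_i$, and since $\sigma_i \in C^\infty(M, \mathbb{R})$ is bounded with bounded gradient, the sequence $\{\sigma_i f_n\}_n$ is bounded in $W^{1,2}(U_i, m, \mu_r)$. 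Set $\Omega_i = \varphi_i(U_i)$ and let $\ell_{\mu, i}$ be the absolutely continuous measure on $\Omega_i$ with density $h_\mu \circ \varphi_i^{-1}$, which is an $A_2$ weight by the discussion preceding Lemma \ref{thm:ecnorm}. By Lemma \ref{thm:ecnorm}, the pulled-back sequence $g_n^{(i)} := (\sigma_i f_n) \circ \varphi_i^{-1}$ is bounded in $W^{1,2}(\Omega_i, \ell_{\mu, i})$.

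Each $g_n^{(i)}$ is supported in the compact set $K_i := \varphi_i(\mathrm{supp}\,\sigma_i) \subset \Omega_i$. Thus Lemma \ref{thm:wcom} applies and yields a subsequence of $\{g_n^{(i)}\}_n$ converging in $L^2(K_i, \ell_{\mu, i})$ to some limit $g^{(i)}$. Since there are only finitely many charts, a standard diagonal extraction yields a single subsequence (still denoted $\{f_n\}$) along which $g_n^{(i)} \to g^{(i)}$ in $L^2(\Omega_i, \ell_{\mu, i})$ for every $i = 1, \ldots, N$. Applying the lower inequality of Lemma \ref{thm:volchart} with $p = 2$ to each chart gives, for $n, k$ large,
\begin{equation*}
\int_{U_i} |\sigma_i f_n - \sigma_i f_k|^2 \, d\mu_r \leq c^{r/2} \int_{\Omega_i} |g_n^{(i)} - g_k^{(i)}|^2 \, d\ell_{\mu, i} \longrightarrow 0.
\end{equation*}
Summing over $i$ and using $\sum_i \sigma_i = 1$ together with the triangle inequality in $L^2(M, m, \mu_r)$, we conclude that $\{f_n\}$ is Cauchy in $L^2(M, m, \mu_r)$, and hence convergent by completeness of $L^2$.

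The main obstacle is ensuring that the hypotheses of Lemma \ref{thm:wcom} are genuinely satisfied in each chart. This requires (i) the $A_2$ property of $h_\mu$ to transfer to the Euclidean density $h_\mu \circ \varphi_i^{-1}$ on $\Omega_i$, which is available because $\overline{U_i}$ is compact and $\varphi_i$ is a diffeomorphism; and (ii) the \emph{compact} support of $g_n^{(i)}$ inside $\Omega_i$, which is supplied by the partition of unity since $\mathrm{supp}\,\sigma_i \Subset U_i$. Once these are in place, the rest is a routine patching argument, and the finite cardinality of the atlas avoids any issue with the diagonal extraction.
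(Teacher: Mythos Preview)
Your proof is correct and follows essentially the same route as the paper: localize via a finite partition of unity, transfer to Euclidean charts using the norm equivalence of Lemma~\ref{thm:ecnorm}, apply Lemma~\ref{thm:wcom} in each chart, and patch the subsequences together. The only cosmetic difference is that the paper explicitly constructs the limit $\sum_i \tilde{g}_i$ with $\tilde{g}_i = g^{(i)}\circ\varphi_i$ extended by zero, whereas you verify the Cauchy property directly; both conclude by completeness of $L^2$.
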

\begin{proof}
Let $f_j$ be a sequence in $W^{1, 2}(M, m, \mu_r)$,  and $(U_i, \varphi_i)_{i\in I}$ an atlas on $M$.  As in the proof of Corollary \ref{thm:lcom}, pick local coordinates on $M$ such that the components of the metric tensor satisfy $\frac{1}{c}\delta_{sp}\leq m_{sp}(x)\leq c\delta_{sp}$ for some $1<c<\infty$, and all $x\in U_i$, $i\in I$, $1\leq s, p\leq r$. Furthermore, let $\{\sigma_i\}_{i\in I}$ be a partition of unity subordinate to the finite covering $\{U_i\}_{i\in I}$. For each $i\in I$, set $\Omega_i=\varphi_i(U_i)$. One has $\sigma_if_j\in W^{1, 2}(U_i, m, \mu_r)$, so by Lemma \ref{thm:ecnorm} the sequence $(\sigma_if_j)\circ\varphi_i^{-1}$ belongs to $W^{1, 2}(\Omega_i, \ell_{\mu,i})$, where $\ell_{\mu, i}$ is an absolutely continuous measure with density $h_\mu\circ \varphi_i^{-1}$ with respect to $\ell$. Moreover, the compactness of the closure of $M$ implies $(\sigma_if_j)\circ\varphi_i^{-1}$ has compact support $C_i\subset \Omega_i$. Therefore, for each $i\in I$ one can apply Lemma \ref{thm:wcom} with $\ell_w=\ell_{\mu, i}$ to obtain a subsequence $(\sigma_i f_{j_k})\circ \varphi^{-1}_i$, and a function $g_i$ in  $L^2(\Omega_i, \ell_{\mu, i})$, such that for any $\gamma>0$, there exist a $K(\gamma)\in \mathbb{N}$ with
\begin{equation}\label{eq:rcom1}
\left(\int_{C_i} \left|(\sigma_if_{j_k})\circ \varphi_i^{-1}-g_i\right|^2\, d\ell_{\mu, i}\right)^\frac{1}{2}<\frac{\gamma}{c^{r/4}\cdot |I|^2}
\end{equation}
for all $k\geq K(\gamma)$.

Since each $\varphi_i$ is a diffeomorphism and $g_i\in L^2(\Omega_i, \ell_{\mu, i})$, each $g_i\circ \varphi_i$ belongs to $L^2(U_i, m, \mu_r)$. Extend $g_i\circ \varphi_i$ to $\tilde{g}_i\in L^2(M, m, \mu_r)$, by setting
\begin{equation*}
\tilde{g}_i(x) :=
\left\{
	\begin{array}{ll}
		g_i\circ \varphi_i(x) &\mbox{} x\in U_i \\
		0 &\mbox{} x\in  M\setminus U_i
	\end{array},
\right.
\end{equation*}
for each $i\in I$. Then by a similar argument as in \eqref{eq:lcom5}
\begin{align*}
\left\|f_{j_k}-\sum_{i\in I}\tilde{g}_i\right\|_{2, m, \mu}
&\leq \sum_{i\in I}\left(c^{r/2}\int_{\Omega_i}\left|\sigma_i f_{j_k}-\tilde{g}_i\right|^2\circ\varphi_i^{-1}\, d\ell_{\mu, i}\right)^\frac{1}{2}\\
&=\sum_{i\in I}c^{r/4}\left(\int_{C_i}\left|(\sigma_if_{j_k})\circ \varphi_i^{-1}-g_i\right|^2\, d\ell_{\mu, i}\right)^\frac{1}{2}\\
&<\gamma,
\end{align*}
where the inequality on the last line is due to \eqref{eq:rcom1}. Since $\tilde{g}_i\in L^2(U_i, m, \mu_r)$ and $|I|$ is finite, we have $\sum_{i\in I} \tilde{g}_i\in L^2(M, m, \mu_r)$; this completes the proof of the theorem.
\end{proof}

\begin{lemma}[Poincar\'e inequality]\label{thm:gpc}
Let $W^{1, 2}(M, m, \mu_r)$ be a weighted Sobolev space, and denote by $\alpha(f)$ the weighted mean of $f$; i.e $\alpha(f)=\int_M f\cdot h_\mu\omega_m^r$. Assume the density of $\mu_r$ is an $A_2$ weight. There is a constant $K$ depending on $r$ and $M$ such that
\begin{equation}\label{eq:gpc}
 \|f-\alpha(f)\|_{2, m, \mu}\leq K\|\tilde{\nabla}_m f\|_{2, m, \mu},
\end{equation}
for all $f\in W^{1,2}(M, m, \mu_r)$.
\end{lemma}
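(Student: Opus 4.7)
The plan is to prove \eqref{eq:gpc} by the standard contradiction/compactness argument, now leveraging the weighted Rellich embedding (Theorem \ref{thm:rcom}) proved for $W^{1,2}(M,m,\mu_r)$ above. Suppose the inequality fails. Then there is a sequence $\{f_n\}\subset W^{1,2}(M,m,\mu_r)$ with $\alpha(f_n)=0$ (replacing $f_n$ by $f_n-\alpha(f_n)$, which does not change $\tilde\nabla_m f_n$), $\|f_n\|_{2,m,\mu}=1$ and $\|\tilde\nabla_m f_n\|_{2,m,\mu}\to 0$. In particular $\{f_n\}$ is bounded in the Hilbert space $W^{1,2}(M,m,\mu_r)$ (which is complete by Proposition \ref{thm:comW}).

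Next, I would extract a weakly convergent subsequence $f_{n_k}\rightharpoonup f$ in $W^{1,2}(M,m,\mu_r)$; by Theorem \ref{thm:rcom} we may assume the same subsequence converges strongly in $L^2(M,m,\mu_r)$ to the same limit $f$. Weak lower semicontinuity of the norm gives
\begin{equation*}
\|\tilde\nabla_m f\|_{2,m,\mu}\le\liminf_{k\to\infty}\|\tilde\nabla_m f_{n_k}\|_{2,m,\mu}=0,
\end{equation*}
so $\tilde\nabla_m f=0$ $\mu_r$-a.e., while strong $L^2$ convergence yields $\|f\|_{2,m,\mu}=1$ and $\alpha(f)=\lim_k\alpha(f_{n_k})=0$.

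The remaining step, which is the main obstacle, is to conclude that $\tilde\nabla_m f=0$ forces $f$ to be (a.e.) constant on $M$. This is a purely local-plus-connectedness issue: on each chart $(U_i,\varphi_i)$ with $\Omega_i=\varphi_i(U_i)$ connected, Lemma \ref{thm:gradchart} (extended to weak gradients as remarked after Definition \ref{def:wgrad}) shows that $\tilde\nabla_e(f\circ\varphi_i^{-1})=0$ in $L^2(\Omega_i,\ell_{\mu,i})$. Because $h_\mu$ is bounded away from zero (so $\ell_{\mu,i}$ is comparable to Lebesgue on $\Omega_i$) and $f\circ\varphi_i^{-1}\in L^1_{\mathrm{loc}}(\Omega_i,\ell)$ by Proposition \ref{thm:loc}, the classical weak-derivative characterisation of constants on connected open sets in $\mathbb{R}^r$ gives $f\circ\varphi_i^{-1}$ constant on $\Omega_i$. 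A finite chain of overlapping charts, together with connectedness of $M$, then propagates a single constant value across $M$.

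Combining these: $f$ is a constant, $\alpha(f)=0$ forces that constant to be $0$, contradicting $\|f\|_{2,m,\mu}=1$. Hence \eqref{eq:gpc} holds for some finite constant $K=K(r,M)$ (which depends on $M$ through its Riemannian structure and, implicitly, the $A_2$-weight class, though the compactness argument is not quantitative). I expect no further technical obstruction, since the $A_2$ hypothesis was already absorbed into the weighted Rellich theorem; the only point where one must argue carefully is the passage from $\tilde\nabla_m f=0$ to $f\equiv\mathrm{const}$ in the weighted setting, which is why the uniform lower bound on $h_\mu$ is essential.
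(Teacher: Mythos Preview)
Your proposal is correct and follows essentially the same contradiction/compactness argument as the paper: normalise a violating sequence, use boundedness in $W^{1,2}$ together with the weighted Rellich theorem to extract a subsequence converging strongly in $L^2$, conclude the limit has vanishing weak gradient and zero mean yet unit norm, and derive a contradiction from connectedness of $M$. The only cosmetic difference is that you obtain $\tilde\nabla_m f=0$ via weak lower semicontinuity of the $W^{1,2}$-seminorm under weak convergence, whereas the paper instead tests the $L^2$-limit against $\triangle_\mu\psi$ for $\psi\in C_0^\infty(M,\mathbb{R})$ (using Proposition~\ref{thm:wgrad}) and passes the limit through the strong $L^2$ convergence; both routes are standard and equivalent here, and your chart-by-chart justification of ``$\tilde\nabla_m f=0\Rightarrow f$ constant'' is in fact more explicit than the paper's one-line appeal to connectedness.
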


\begin{proof}
We follow a standard argument as in corollary of Theorem 5 on p.194, \cite{mcowen96}. Suppose the inequality \eqref{eq:gpc} is false, then due to Corollary \ref{thm:lcom}, there exists a sequence in $f_k\in C^\infty(M, \mathbb{R})\cap W^{1, 2}(M, m, \mu_r)$, such that $\|f_k-\alpha(f_k)\|_{2, m, \mu}> k\|\nabla_m f_k\|_{2, m, \mu}$ for $k=1, 2, \ldots$. Define
\begin{equation*}
 g_k=\frac{f_k-\alpha(f_k)}{\|f_k-\alpha(f_k)\|_{2,m, \mu}},
\end{equation*}
then $\|g_k\|_{2, m, \mu}=1, \alpha(g_k)=0$ and $\|\nabla_mg_k\|_{2, m,\mu}\leq 1/k$. In particular, $g_k$ is a bounded sequence in $W^{1, 2}(M, m, \mu_r)$. Hence, by Theorem \ref{thm:rcom} there exists a subsequence $g_{k_j}\in W^{1, 2}(M, m, \mu_r)$, which converges to some $g$ in $L^2(M, m, \mu_r)$. One has
\begin{equation}\label{eq:gpc1}
\|g\|_{2, m, \mu}=1,
\end{equation}
and
\begin{equation}\label{eq:gpc2}
\alpha(g)=\int_M g\cdot h_\mu\omega_m^r=\lim_{j\to \infty}\int_M g_{k_j}\cdot h_\mu\omega_m^r=\lim_{j\to\infty}\alpha(g_{k_j})=0
\end{equation}
and $\lim_{j\to \infty}\|\nabla_m g_{k_j}\|_{2, m, \mu}=0$.

Now, for any $\psi\in C^\infty_0(M, \mathbb{R})$, the weak gradient of $g$ satisfies
\begin{align*}
 \int_M m(\tilde{\nabla}_m g, \nabla_m \psi)\, d\mu_r
 &=-\int_M g\triangle_\mu\psi\, d\mu_r\\
 &=-\lim_{j\to \infty}\int_M g_{k_j}\triangle_\mu \psi\, d\mu_r\\
 &=\lim_{j\to \infty}\int_M m(\nabla_m g_{k_j}, \nabla_m \psi)\, d\mu_r\\
 &\leq \lim_{j\to \infty} \|\nabla_m g_{k_j}\|_{2, m, \mu}\cdot \|\nabla_m \psi\|_{2, m, \mu}\\
 &=\left(\lim_{j\to \infty} \frac{1}{k_j}\right)\|\nabla_m \psi\|_{2, m, \mu}=0.
\end{align*}
Therefore
\begin{equation}\label{eq:gpc3}
\tilde{\nabla}_m g=0.
\end{equation}
But since $M$ is connected, \eqref{eq:gpc2} and \eqref{eq:gpc3} implies $g$ is the zero function, which contradicts \eqref{eq:gpc1}.
\end{proof}

\section{The proof of Theorem \ref{thm:dff}}

To obtain the inequality $\mathbf{s}^D\leq\mathbf{h}^D$, let $\Gamma$ be a compact, connected $C^\infty$ hypersurface in $M$ that disconnects $M$ into two open disjoint subsets $M_1$ and $M_2$.  
Let $\textup{dist}_m(x_1,x_2)$ denote the Riemannian distance function with respect to the metric tensor $m$ between the points $x_1$ and $x_2$ in $M$, then define $U_\epsilon:=\{x\in M:\textup{dist}_m(x, \Gamma)<\epsilon\}$ for $\epsilon>0$.

Consider the set of functions
\begin{equation}\label{eq:fe}
f_\epsilon(x) :=
\left\{
	\begin{array}{ll}
		1,  & \mbox{} x \in M_1 \setminus U_\epsilon \\
		-1, & \mbox{} x \in M_2 \setminus U_\epsilon\\
		(1/\epsilon)\textup{dist}_m(x,\Gamma), &\mbox{} x\in M_1\cap U_\epsilon\\
		-(1/\epsilon)\textup{dist}_m(x,\Gamma), &\mbox{} x\in M_2\cap U_\epsilon\\
	\end{array}.
\right.
\end{equation}
In the following, we obtain an upper bound for $\mathbf{s}^D$ by locally approximating functions in $C^\infty(M, \mathbb{R})$ by $f_\epsilon$. 

\begin{lemma}\label{thm:molli}
Let $\mathcal{L}$, $\mathbf{s}^D$ and $f_\epsilon$ be defined by \eqref{eq:pfw}, \eqref{eq:sd} and \eqref{eq:fe} respectively. If the density of $\mu_r$ is $C^1$, then for $\epsilon>0$ sufficiently small, one has
\begin{equation}\label{eq:dff1}
\mathbf{s}^D\leq\frac{\|\nabla_m f_\epsilon\|_{1, m, \mu}+\|\nabla_n \mathcal{L}f_\epsilon \|_{1, n, \nu}}{2\inf_\beta\|f_\epsilon-\beta\|_{1, m, \mu}}.
\end{equation}
\end{lemma}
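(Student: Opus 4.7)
The approach is to approximate the Lipschitz function $f_\epsilon$ by a family of $C^\infty$ functions that are admissible in the infimum defining $\mathbf{s}^D$, and then pass to the limit. Note that $f_\epsilon$ is globally Lipschitz (its gradient jumps across $\Gamma$ and across $\partial U_\epsilon$), so $\tilde{\nabla}_m f_\epsilon$ lies in $L^\infty(M,m,\mu_r) \subset L^1(M,m,\mu_r)$, yet $f_\epsilon \notin C^\infty(M,\mathbb{R})$ and hence is not directly feasible in \eqref{eq:sd}. Choose $\epsilon$ small enough that $\textup{dist}_m(\cdot,\Gamma)$ is smooth on $U_\epsilon \setminus \Gamma$ (possible for $\epsilon$ below the normal injectivity radius of the compact $C^\infty$ hypersurface $\Gamma$), which ensures the pointwise gradient of $f_\epsilon$ exists almost everywhere.

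The plan is to fix an atlas $(U_i,\varphi_i)_{i\in I}$ on $M$ with a subordinate smooth partition of unity $\{\sigma_i\}_{i\in I}$ as in Corollary \ref{thm:lcom}, and for each small $\delta>0$ define
$$f_{\epsilon,\delta} := \sum_{i\in I}\left(q_\delta \star ((\sigma_i f_\epsilon)\circ\varphi_i^{-1})\right)\circ\varphi_i,$$
where $q_\delta$ is the mollifier of Definition \ref{def:conv}. For $\delta$ small enough each summand is $C^\infty$ with support inside $U_i$, so $f_{\epsilon,\delta}\in C^\infty(M,\mathbb{R})$ is a feasible test function in \eqref{eq:sd}, yielding
$$\mathbf{s}^D \le \frac{\|\nabla_m f_{\epsilon,\delta}\|_{1,m,\mu}+\|\nabla_n \mathcal{L}f_{\epsilon,\delta}\|_{1,n,\nu}}{2\inf_\beta\|f_{\epsilon,\delta}-\beta\|_{1,m,\mu}}$$
for every admissible $\delta$.

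Next I would establish three $L^1$ convergences as $\delta\to 0$: (i) $\|f_{\epsilon,\delta}-f_\epsilon\|_{1,m,\mu}\to 0$, (ii) $\|\nabla_m f_{\epsilon,\delta}-\tilde{\nabla}_m f_\epsilon\|_{1,m,\mu}\to 0$, and (iii) $\|\nabla_n \mathcal{L}f_{\epsilon,\delta}-\tilde{\nabla}_n \mathcal{L}f_\epsilon\|_{1,n,\nu}\to 0$. Since $h_\mu$ is $C^1$ and uniformly bounded away from zero, it is an $A_p$ weight for every $p\in[1,\infty)$ by Proposition \ref{thm:smoothap}. Convergences (i) and (ii) then follow from Theorem \ref{thm:do} with $p=1$, applied chart-by-chart and assembled via the partition-of-unity argument in the proof of Corollary \ref{thm:lcom}. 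Convergence (i) also implies $\inf_\beta\|f_{\epsilon,\delta}-\beta\|_{1,m,\mu}\to \inf_\beta\|f_\epsilon-\beta\|_{1,m,\mu}$ uniformly in $\beta$ via the reverse triangle inequality.

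The main obstacle is convergence (iii), because mollification of $f_\epsilon$ on $M$ does not commute with the composition $\mathcal{L}g = g\circ T^{-1}$. To handle it I would reformulate the numerator term using the isometry $T:(M,T^*n)\to(N,n)$: Corollary \ref{thm:app3.2} and Lemma \ref{thm:app3.3}(1), together with the change of variables $\nu_r = \mu_r\circ T^{-1}$, give
$$\int_N |\nabla_n \mathcal{L}g|_n\, d\nu_r = \int_M |\nabla_{T^*n} g|_{T^*n}\, d\mu_r$$
for all sufficiently regular $g$. This reduces (iii) to showing $\|\nabla_{T^*n} f_{\epsilon,\delta}-\tilde{\nabla}_{T^*n} f_\epsilon\|_{1,T^*n,\mu}\to 0$, which is again a direct application of Theorem \ref{thm:do} with $p=1$, now for the metric $T^*n$ on $M$ with the same weight $h_\mu$; the $A_p$ property is preserved because $T$ is a $C^\infty$-diffeomorphism with Jacobian bounded above and uniformly away from zero (Appendix \ref{sec:B3}), so the metric $T^*n$ is smooth and nondegenerate. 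Combining the three convergences with the displayed inequality and letting $\delta\to 0$ produces \eqref{eq:dff1}; positivity of the limiting denominator is automatic because $f_\epsilon$ is non-constant.
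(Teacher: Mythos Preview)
Your proposal is correct and follows the same mollification-via-partition-of-unity strategy as the paper. Your handling of convergence (iii) via the pullback identity $\int_N|\nabla_n\mathcal{L}g|_n\,d\nu_r=\int_M|\nabla_{T^*n}g|_{T^*n}\,d\mu_r$ is more explicit than the paper's, which simply observes that $\mathcal{L}f_\epsilon$ is Lipschitz on $N$ (since $T$ is a diffeomorphism and $h_\mu\in C^1$) and asserts that the convergence $\nabla_n\mathcal{L}f_{\delta,\epsilon}\to\nabla_n\mathcal{L}f_\epsilon$ in $\|\cdot\|_{1,n,\nu}$ follows ``analogously'' to the $M$-side estimate; the underlying idea is the same.
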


\begin{proof}
We claim the existence of $g\in C^\infty(M, \mathbb{R})$, such that the terms $\nabla_m f_\epsilon$ and $f_\epsilon-\beta$ are approximated by $\nabla_m g$ and $g-\beta$ respectively in the norm $\|\cdot \|_{1, m, \mu}$, and the term $\nabla_m \mathcal{L}f$ is approximated by $\nabla_n \mathcal{L}g$ in the norm $\|\cdot\|_{1, n, \nu}$. In particular, due to these smooth approximations and the definition of $\mathbf{s}^D$, one immediately obtains the required inequality \eqref{eq:dff1}.

Let $(U_i, \varphi_i)_{i\in I}$ be an atlas of $M$, and set $\Omega_i=\varphi_i(U_i)$ for each $i\in I$. For each $i\in I$, let $\ell_{\mu, i}$ be an absolutely continuous measure with density $h_\mu\circ \varphi_i^{-1}$ with respect to Lebesgue measure $\ell$. Since $M$ is compact, there exist a smooth partition of unity $\{\sigma_i\}_{i\in I}$ subordinate to the finite covering $\{U_i\}_{i\in I}$. Moreover, one can verify that $f_\epsilon$ is a Lipschitz function in $L^1(M, m, \mu_r)$. Therefore $(\sigma_if_\epsilon)\circ \varphi_i^{-1}$ is Lipschitz in $L^1(\Omega_i, \ell_{\mu, i})$ for each $i\in I$. It follows that the restriction of $(\sigma_if_\epsilon)\circ \varphi_i^{-1}$ to any line in $\Omega_i$ is absolutely continuous, which implies all partial derivatives of $(\sigma_if_\epsilon)\circ \varphi_i^{-1}$ exist almost everywhere on $\Omega_i$ (see Theorem 7.20 in \cite{rudin74}). Therefore, the Euclidean gradient $\nabla_e ((\sigma_if_\epsilon)\circ \varphi_i^{-1})\in L^1(\Omega_i, \ell_{\mu, i})$ for each $i\in I$.

Set $f_{\delta, \epsilon}:=q_\delta\star f_\epsilon$, where $q_\delta$ and $\star$ are as in Definition \ref{def:conv}. Then by straightforward modifications to the arguments used in Corollary \ref{thm:lcom} to obtain \eqref{eq:lcom5} and \eqref{eq:lcom6},
one can obtain for any $\gamma>0$, a $\delta>0$ chosen analogously to \eqref{eq:lcom1} such that $f_{\delta, \epsilon}\in C^\infty(M, \mathbb{R})$,
\begin{equation}\label{eq:molli5}
\|\nabla_m f_{\delta, \epsilon}-\nabla_m f_\epsilon\|_{1, m, \mu}<\gamma
\end{equation}
and
\begin{equation*}\label{eq:molli7}
\|(f_{\delta, \epsilon}-\beta)-(f_\epsilon-\beta)\|_{1, m, \mu}
=\|f_{\delta, \epsilon}-f_\epsilon\|_{1, m, \mu}
<\gamma.
\end{equation*}
 Finally, since $T$ is a diffeomorphism and $h_\mu$ is $C^1$, $\mathcal{L}f_\epsilon$ is Lipschitz in $L^1(N, n, \nu_r)$. Thus, the approximation of $\nabla_n \mathcal{L}f_\epsilon$ by $\nabla_n \mathcal{L}f_{\delta, \epsilon}$ in the norm $\|\cdot\|_{1, n, \nu}$ can be obtained analogously to \eqref{eq:molli5}. Thus, setting $g=f_{\delta, \epsilon}$ we are done.
\end{proof}

To complete the proof of Theorem \ref{thm:dff}, we show that the RHS of \eqref{eq:dff1} is bounded above by $\mathbf{h}^D$ as $\epsilon\to 0$. In order to show such convergence holds, we require additional results concerning the connection between $\mu_r(U_\epsilon)$ and $\mu_{r-1}(\Gamma)$.

Suppose $\epsilon$ is smaller than the injectivity radius of each point $x\in \Gamma$, and recall that $U_\epsilon:=\{x\in M:\textup{dist}(x, \Gamma)< \epsilon\}$ are open subsets of $M$. Since $M$ is compact, the closure of $U_\epsilon$ is a compact subset of $M$. Due to the compactness of $\overline{U_\epsilon}$ and the size of $\epsilon$, by the Hopf-Rinow theorem $\overline{U_\epsilon}$ is geodesically complete \cite{carmo76}. 
This implies that the signed distance function  $f:U_\epsilon\to \mathbb{R}$ defined by
\begin{equation}\label{eq:signdist}
f(x) :=
\left\{
	\begin{array}{ll}
		\textup{dist}_m(x, \Gamma) &\mbox{} x\in M_1 \\
		-\textup{dist}_m(x, \Gamma) &\mbox{} x\in M_2 \\
		0						     &\mbox{} x\in \Gamma
	\end{array},
\right.
\end{equation}
is smooth on $U_\epsilon\setminus \Gamma$, and $|\nabla_m f|_m=1$ (Proposition 2.1 \cite{sakai96}).

The following concerns the regularity of the co-dimensional one measure $\mu_{r-1}$ on the level surfaces of $U_\epsilon$.

\begin{lemma}\label{thm:contmu}
Let $\Gamma$ be a $C^\infty$ hypersurface in $M$ that disconnects $M$ into two disjoint open subsets $M_1$ and $M_2$. Define $\Gamma_t:=\{x\in M:\textup{dist}_m(x, \Gamma)=t\}$, and fix $\epsilon$ to be smaller than the injectivity radius of each point $x\in \Gamma$. If the density of $\mu_r$ is continuous, then the real valued function $A$ given by
\begin{equation*}
A(t):=\mu_{r-1}(\Gamma_t),
\end{equation*}
is continuous on the interval $[0, \epsilon]$.
\end{lemma}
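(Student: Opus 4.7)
The plan is to prove continuity by reducing the computation of $A(t)$ to an integral over the fixed manifold $\Gamma$, using the normal exponential map as a diffeomorphism onto a tubular neighbourhood, and then invoking dominated convergence. Since $\Gamma$ disconnects $M$ into $M_1, M_2$, it is two-sided, so a $C^\infty$ unit normal field $N$ along $\Gamma$ (pointing into $M_1$, say) exists. Because $\epsilon$ is smaller than the injectivity radius at each $x\in\Gamma$, the map
\begin{equation*}
E:\Gamma\times(-\epsilon,\epsilon)\to U_\epsilon,\qquad E(x,s):=\exp_x(sN(x))
\end{equation*}
is a $C^\infty$-diffeomorphism with $\textup{dist}_m(E(x,s),\Gamma)=|s|$. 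Consequently, for every $t\in(0,\epsilon]$,
\begin{equation*}
\Gamma_t = \Gamma_t^+\cup\Gamma_t^-,\qquad \Gamma_t^{\pm}:=E(\Gamma\times\{\pm t\}),
\end{equation*}
and the two pieces are disjoint.

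Next, I would appeal to the standard Fermi-coordinate decomposition of the pullback metric: $E^*m=ds^{\otimes 2}+g_s$, where $\{g_s\}_{s\in(-\epsilon,\epsilon)}$ is a smooth one-parameter family of Riemannian metrics on $\Gamma$ with $g_0$ equal to the metric induced by $\Phi:\Gamma\hookrightarrow M$. This is exactly the context in which the signed distance function satisfies $|\nabla_m f|_m=1$ used above \eqref{eq:signdist}. Writing $\omega_{g_s}^{r-1}$ for the corresponding volume form on $\Gamma$, there exist smooth positive functions $\theta_\pm\in C^\infty(\Gamma\times[0,\epsilon])$ with $\theta_\pm(\cdot,0)\equiv 1$ and $\omega_{g_{\pm t}}^{r-1}=\theta_\pm(\cdot,t)\,\omega_m^{r-1}|_\Gamma$. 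Because $E(\cdot,\pm t):\Gamma\to\Gamma_t^{\pm}$ is an isometry between $(\Gamma,g_{\pm t})$ and $(\Gamma_t^\pm,\Phi_t^*m)$, changing variables gives, for $t\in(0,\epsilon]$,
\begin{equation*}
A(t)=\int_\Gamma (h_\mu\circ E)(x,t)\,\theta_+(x,t)\,\omega_m^{r-1}(x)+\int_\Gamma (h_\mu\circ E)(x,-t)\,\theta_-(x,t)\,\omega_m^{r-1}(x).
\end{equation*}

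Now the integrands $(x,t)\mapsto (h_\mu\circ E)(x,\pm t)\,\theta_\pm(x,t)$ are continuous on the compact set $\Gamma\times[0,\epsilon]$ (using continuity of $h_\mu$ and smoothness of $E$ and $\theta_\pm$), hence uniformly bounded. For any sequence $t_n\to t_0\in(0,\epsilon]$ the integrands converge pointwise, so the dominated convergence theorem yields $A(t_n)\to A(t_0)$, establishing continuity on $(0,\epsilon]$. Taking $t_0\to 0^+$ gives $\lim_{t\to 0^+}A(t)=2\int_\Gamma h_\mu\,\omega_m^{r-1}$.

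The one subtlety — and really the only place the statement requires interpretation — is the behaviour at $t=0$: the literal definition gives $A(0)=\mu_{r-1}(\Gamma)=\int_\Gamma h_\mu\,\omega_m^{r-1}$, which is half the one-sided limit, reflecting that $\Gamma_t$ for $t>0$ is a \emph{pair} of nearby surfaces collapsing onto $\Gamma$. Continuity on the closed interval $[0,\epsilon]$ must therefore be read either as continuity after identifying $A(0)$ with this $0^+$-limit (i.e.\ treating the two sides of $\Gamma$ separately, consistent with how $\Gamma$ is visited by the level sets of the signed distance function appearing in $f_\epsilon$), or as continuity on $(0,\epsilon]$ together with existence of the one-sided limit. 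Either interpretation is sufficient for the subsequent use in Lemma \ref{thm:molli}, where only the $t\to 0^+$ behaviour is needed to estimate $\mu_r(U_\epsilon)/\epsilon$ via $\int_0^\epsilon A(t)\,dt/\epsilon$.
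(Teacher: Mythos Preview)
Your argument is correct and takes a somewhat different route from the paper. The paper works locally: near a fixed $\Gamma_{t_0}$ it invokes the implicit function theorem to complete local coordinates on $\Gamma_{t_0}$ by the signed distance $f$, then computes $\omega_m^{r-1}=i(\mathbf{n})\omega_m^r$ explicitly in these coordinates to see that the density $h_\mu\sqrt{\det G_m}\,|\nabla_m f|_m$ varies continuously in $t$. Your approach is the global tubular-neighbourhood version of the same idea: you use the normal exponential map and Fermi coordinates to pull everything back to the \emph{fixed} manifold $\Gamma$, obtaining $A(t)$ as an integral over $\Gamma$ with a jointly continuous integrand, after which dominated convergence does the work. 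Your route is cleaner and more self-contained, since it avoids patching local charts and makes the passage from ``continuous density'' to ``continuous integral'' explicit.

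Your observation about the endpoint $t=0$ is well taken and is something the paper glosses over: with the literal definition $\Gamma_0=\Gamma$ one has $A(0)=\mu_{r-1}(\Gamma)$, whereas $\lim_{t\to 0^+}A(t)=2\mu_{r-1}(\Gamma)$ because $\Gamma_t$ for $t>0$ is two-sided. The paper's subsequent use in Lemma~\ref{thm:alw} in fact writes $\frac{2}{\epsilon}\int_0^\epsilon \mu_{r-1}(\Gamma_t)\,dt\to 2A(0)=2\mu_{r-1}(\Gamma)$, which is only consistent if one reads $\Gamma_t$ there as a single side (i.e.\ $f^{-1}\{t\}$ for the signed distance $f$). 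Your explicit flagging of this convention, and your remark that only the existence of the one-sided limit is needed downstream, is the right way to resolve it.
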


\begin{proof}
Let $f:M\to \mathbb{R}$ be the signed distance function as in \eqref{eq:signdist}, and let $U_t:=\{x\in M:\textup{dist}_m(x, \Gamma)< t\}$. Fix $t_0\in (0, \epsilon)$, then $\Gamma_{t_0}$ is in $U_\epsilon\setminus \Gamma$. Hence $f$ is $C^\infty$ restricted to $\Gamma_{t_0}$, and $df(x)\neq 0$ for each $x\in \Gamma_{t_0}$. Therefore, by the implicit function theorem there exist open neighborhoods $\mathcal{O}_x$ about each point $x\in\Gamma_{t_0}$, and local coordinates $(x_1, x_2, \ldots, x_{r-1})$ for $\Gamma_{t_0}$, such that $(x_1, x_2,\ldots, x_{r-1}, f)$ are local coordinates on $\mathcal{O}_x$. Let $G_m$ be the $r\times r$ matrix with entries $m_{ij}$ in the coordinates $(x_1, x_2, \ldots, x_{r-1}, f)$. Then the volume form on $\mathcal{O}_x$ is given by
\begin{equation*}
 \omega_m^r=\sqrt{\det(G_m)}\cdot dx_1\wedge dx_2\ldots\wedge dx_{r-1}\wedge df.
\end{equation*}
Moreover, 
by a combination of the Stokes\rq{} and divergence theorem (see p.122, \cite{spivak65} and p.7, equation (38) \cite{chavel84} respectively), one has
\begin{equation*}
\int_{\Gamma_{t_0}}\omega_m^{r-1}=\int_{\Gamma_{t_0}}m(\mathbf{n}, \mathbf{n})\cdot\omega_m^{r-1}=\int_{U_{t_0}}\divg_m \mathbf{n}\cdot\omega_m^r=\int_{U_{t_0}}d(i(\mathbf{n})\omega_m^r)=\int_{\Gamma_{t_0}}i(\mathbf{n})\omega_m^r,
\end{equation*}
where $\mathbf{n}$ is the unit normal bundle along $\Gamma_{t_0}$. Hence $\omega_m^{r-1}=i(\mathbf{n})\omega_m^r$ for all $x\in \Gamma_{t_0}$.

Now, since $f=t_0$ along $\Gamma_{t_0}$, the vector $\nabla_m f$ is normal to the hypersurface $\Gamma_{t_0}$; which implies $\mathbf{n}=\nabla_m f/|\nabla_m f|_m$, and $dx_i(\nabla_m f)=0$ for $i=1, \ldots r-1$. Therefore
\begin{align}\label{eq:contmu1}
 \restr{\omega_m^{r-1}}{\Gamma_{t_0}}=\restr{i(\mathbf{n})\omega_m^r}{\Gamma_{t_0}}
 &=\restr{\sqrt{\det G_m}\cdot i(\mathbf{n})(dx_1\wedge dx_2\ldots\wedge dx_{r-1}\wedge df)}{{\Gamma_{t_0}}}\notag\\
 &=\restr{(-1)^{r}\sqrt{\det{G_m}}\cdot\frac{df(\nabla_m f)}{|\nabla_m f|_m}\cdot dx_1\wedge dx_2\ldots\wedge dx_{r-1}}{{\Gamma_{t_0}}}\notag\\
 &=\restr{(-1)^{r}\sqrt{\det{G_m}}\cdot |\nabla_m f|_m\cdot dx_1\wedge dx_2\ldots\wedge dx_{r-1}}{{\Gamma_{t_0}}},
\end{align}
where the penultimate equality is due to the Leibniz rule applied to the interior product, and the fact that $dx_i(\nabla_m f)=0$ for $i=1, \ldots, r-1$.

To complete the proof, we note that $|\nabla_m f|=1$ because $f$ is the signed distance function, $h_\mu$ is continuous by assumption, and $G_m$ is smooth since $m$ is smooth. Hence $h_\mu\restr{\omega_m^{r-1}}{\Gamma_t}$ is a continuous density for all $0< t< \epsilon$. Therefore $A(t)=\mu_{r-1}(\Gamma_t)=\int_{\Gamma_t} h_\mu\omega_m^{r-1}$ is continuous on $[0, \epsilon]$.
\end{proof}

\begin{lemma}\label{thm:alw}
Let $\Gamma$ be a compact, connected $C^\infty$ hypersurface in $M$. Define $U_\epsilon:=\{x\in M: \textup{dist}_m(x, \Gamma)<\epsilon\}$ for some $\epsilon>0$. Assume the density of $\mu_r$ is continuous. One has
 \begin{equation}\label{eq:alw1}
  \lim_{\epsilon\to 0} \frac{1}{\epsilon} \mu_{r}(U_\epsilon)=2\mu_{r-1}(\Gamma).
 \end{equation}
\end{lemma}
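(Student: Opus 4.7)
The approach is to apply the co-area formula (Lemma \ref{thm:ca}) to the signed distance function from \eqref{eq:signdist} and then extract the limit by the fundamental theorem of calculus.

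First, fix $\epsilon$ smaller than the injectivity radius along $\Gamma$, so that the signed distance $f:U_\epsilon\to\mathbb{R}$ of \eqref{eq:signdist} is smooth on $U_\epsilon\setminus\Gamma$ with $|\nabla_m f|_m\equiv 1$. Decompose $U_\epsilon=(U_\epsilon\cap M_1)\sqcup\Gamma\sqcup(U_\epsilon\cap M_2)$, note $\mu_r(\Gamma)=0$, and apply Lemma \ref{thm:ca} with $h=h_\mu$ separately on each of $U_\epsilon\cap M_1$ and $U_\epsilon\cap M_2$, where $f$ is smooth. Writing $\Gamma_t^+:=\{x\in M_1\cap U_\epsilon:\textup{dist}_m(x,\Gamma)=t\}$ and similarly $\Gamma_t^-\subset M_2$ for the two sheets of $\Gamma_t$, and using $|\nabla_m f|_m\equiv 1$, I obtain
\begin{equation*}
\mu_r(U_\epsilon)=\int_0^\epsilon\bigl[\mu_{r-1}(\Gamma_t^+)+\mu_{r-1}(\Gamma_t^-)\bigr]\,dt.
\end{equation*}

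Second, I would establish the one-sided limit $\lim_{t\to 0^+}\mu_{r-1}(\Gamma_t^\pm)=\mu_{r-1}(\Gamma)$ using the normal exponential map $\Phi:(-\epsilon,\epsilon)\times\Gamma\to U_\epsilon$ defined by $\Phi(t,p)=\exp_p(t\mathbf{n}(p))$, where $\mathbf{n}$ is a unit normal field along $\Gamma$ pointing into $M_1$. By the choice of $\epsilon$, this $\Phi$ is a diffeomorphism with $\Phi(\{\pm t\}\times\Gamma)=\Gamma_t^\pm$. The metric induced from $m$ on $\Gamma_t^+$ pulls back via $\Phi(t,\cdot)$ to a smooth family of metrics on $\Gamma$ that converges (in $C^\infty$) to the induced metric on $\Gamma$ as $t\to 0$, while $h_\mu|_{\Gamma_t^+}$ pulls back to $h_\mu\circ\Phi(t,\cdot)$, which converges uniformly to $h_\mu|_\Gamma$ by continuity of $h_\mu$. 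Hence the weighted $(r-1)$-forms $h_\mu\cdot\omega^{r-1}_m$ on $\Gamma_t^+$ pull back to forms on $\Gamma$ that converge uniformly to $h_\mu\cdot\omega^{r-1}_m|_\Gamma$, and therefore $\mu_{r-1}(\Gamma_t^+)\to\mu_{r-1}(\Gamma)$; symmetrically for $\Gamma_t^-$.

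Finally, combining Lemma \ref{thm:contmu} (applied on each side to give continuity of $t\mapsto\mu_{r-1}(\Gamma_t^\pm)$ on $(0,\epsilon)$) with the one-sided limits above shows that the integrand $t\mapsto\mu_{r-1}(\Gamma_t^+)+\mu_{r-1}(\Gamma_t^-)$ is bounded and continuous on $(0,\epsilon)$ with limit $2\mu_{r-1}(\Gamma)$ at $0^+$. The elementary identity $\lim_{\epsilon\to 0^+}\frac{1}{\epsilon}\int_0^\epsilon g(t)\,dt=\lim_{t\to 0^+}g(t)$ for such $g$ then yields
\begin{equation*}
\lim_{\epsilon\to 0^+}\frac{1}{\epsilon}\mu_r(U_\epsilon)=\lim_{t\to 0^+}\bigl[\mu_{r-1}(\Gamma_t^+)+\mu_{r-1}(\Gamma_t^-)\bigr]=2\mu_{r-1}(\Gamma),
\end{equation*}
completing the proof. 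The main obstacle will be the tubular neighborhood / normal-exponential chart argument used to establish the convergence $\mu_{r-1}(\Gamma_t^\pm)\to\mu_{r-1}(\Gamma)$; the splitting around $\Gamma$ is needed because the unsigned distance $\textup{dist}_m(\cdot,\Gamma)$ fails to be $C^1$ on $\Gamma$ itself, so Lemma \ref{thm:ca} cannot be invoked globally on $U_\epsilon$ without this preparatory step.
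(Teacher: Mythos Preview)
Your proof is correct and follows essentially the same route as the paper: apply the co-area formula to the signed distance function (using $|\nabla_m f|_m\equiv 1$) to rewrite $\mu_r(U_\epsilon)$ as an integral of $\mu_{r-1}$ over parallel hypersurfaces, then invoke continuity of the level-set measure at $t=0$ and the fundamental theorem of calculus. The only real difference is cosmetic: the paper applies co-area directly on $U_\epsilon$ and appeals to Lemma~\ref{thm:contmu} for continuity on the closed interval $[0,\epsilon]$, whereas you split into the two sheets $\Gamma_t^\pm$ and supply your own normal-exponential-map argument for the limit at $t=0^+$; your version is slightly more careful about the fact that the unsigned distance is not $C^1$ across $\Gamma$ and that $\Gamma_t$ for $t>0$ consists of two components while $\Gamma_0=\Gamma$ is a single one.
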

\begin{proof}

 Let $f$ be the signed distance function as in \eqref{eq:signdist}, and $\Gamma_t=\{x\in M:\textup{dist}_m(x, \Gamma)=t\}$. Then $|\nabla_m f|_m=1$, and $f$ is $C^\infty$ on $U_\epsilon\setminus \Gamma$. Hence, by the co-area formula \eqref{eq:ca}
 \begin{align}\label{eq:alw2}
  \notag \lim_{\epsilon\to 0} \frac{1}{\epsilon}\mu_r(U_\epsilon)
 \notag &=\lim_{\epsilon\to 0} \frac{1}{\epsilon}\int_{U_\epsilon}  |\nabla_m f|_m\cdot h_\mu\omega_m^r\\
 \notag  &=\lim_{\epsilon\to 0} \frac{1}{\epsilon}\int_{-\epsilon}^\epsilon \left(\int_{f^{-1}\{t\}} h_\mu\cdot\omega_m^{r-1}\right)dt\\
 \notag  &=\lim_{\epsilon\to 0} \frac{2}{\epsilon}\int_{0}^\epsilon \left(\int_{\Gamma_t} h_\mu\cdot\omega_m^{r-1}\right)dt\\
  &=\lim_{\epsilon\to 0} \frac{2}{\epsilon}\int_0^\epsilon \mu_{r-1}(\Gamma_t)dt.
 \end{align}
Take $\epsilon$ to be smaller than the injectivity radius of each $x\in \Gamma$. Since $h_\mu$ is continuous, by Lemma \ref{thm:contmu} the function $A(t):=\mu_{r-1}(\Gamma_t)$ is continuous on the interval $[0, \epsilon]$. Thus, one can apply the fundamental theorem of calculus to the last line of \eqref{eq:alw2} to obtain
\begin{equation*}
 \mbox{RHS of \eqref{eq:alw2}}=2\lim_{\epsilon\to 0} \frac{a(\epsilon)-a(0)}{\epsilon}=2A(0)=2\mu_{r-1}(\Gamma),
\end{equation*}
where $a(t)$ is the anti-derivative of $A(t)$.
\end{proof}

Now, to obtain the inequality $\mathbf{s}^D\leq \mathbf{h}^D$ via Lemma \ref{thm:molli}, we start with the term $\|\nabla_m f_\epsilon\|_{1, m, \mu}$ on the numerator of \eqref{eq:dff1}. Note that $f_\epsilon$ is constant on $M\setminus U_\epsilon$, which implies $\nabla_m f_\epsilon(x) = 0$ for all $x\in M\setminus U_\epsilon$. But if $x\in U_\epsilon$, then $|\nabla_m f_\epsilon|_m=\frac{1}{\epsilon}|\nabla_m (\textup{dist}(x, \Gamma))|_m=\frac{1}{\epsilon}$ for $\epsilon$ smaller than as in Lemma \ref{thm:contmu}. Therefore, by Lemma \ref{thm:alw} one has
\begin{equation}\label{eq:dff2}
\lim_{\epsilon\to 0}\|\nabla_m f_\epsilon\|_{1, m, \mu}=\lim_{\epsilon\to 0}\frac{1}{\epsilon}\int_{U_\epsilon} d\mu_r= \lim_{\epsilon\to 0}\frac{\mu_r(U_\epsilon)}{\epsilon}=2\mu_{r-1}(\Gamma).
\end{equation}
Next, we consider the term $\|\nabla_n \mathcal{L}f_\epsilon \|_{1, n, \nu}$ on the numerator of \eqref{eq:dff1}. Observe that at each point $x\in M\setminus U_\epsilon$,
\begin{align*}
 |\nabla_n \mathcal{L}f(Tx)|^2_n
 &=n(\nabla_n\mathcal{L}f, \nabla_n\mathcal{L}f)_{Tx}\\
 &=n(T_*\nabla_{T^*n}f, T_*\nabla_{T^*n}f)_{Tx}\quad\mbox{by Lemma \ref{thm:app3.3}}\\
 &=T^*n(\nabla_{T^*n}f,\nabla_{T^*n}f)_x\quad\mbox{by \eqref{eq:pbm}}\\
 &=m(\nabla_mf, \nabla_{T^*n}f)_x=0,
\end{align*}
 where we have used \eqref{eq:grad} and the fact that $\nabla_mf(x)=0$ for all $x\in M\setminus U_\epsilon$ to obtain the last line. Hence, the integral $\int_{N\setminus T\Omega_\epsilon}|\nabla_n \mathcal{L}f_\epsilon|_n\,d\nu_r$ vanishes. Set $f$ to be the signed distance function defined by \eqref{eq:signdist}, then $f(x)=\epsilon \cdot f_\epsilon(x)$ for all $x\in U_\epsilon$. Thus $\mathcal{L}f=\epsilon\cdot \mathcal{L}f_\epsilon$ on $TU_\epsilon$. 
Let $\Gamma_t$ be the level surfaces of the signed distance function $f$; that is $\Gamma_t=\{x\in M:f(x)=t\}$. Then $T\Gamma_t$ are generated by the level surfaces of $\mathcal{L}f$; that is $T\Gamma_t=\{y\in N: \mathcal{L}f(y)=t\}$. Therefore, by the co-area formula \eqref{eq:ca} one has,
\begin{align}\label{eq:dff5}
 \|\nabla_n \mathcal{L}f_\epsilon\|_{1, n, \nu}
 &=\int_{TU_\epsilon}  \left|\nabla_n\mathcal{L}f_\epsilon \right|_{n} \, d\nu_r\notag\\
 &=\frac{1}{\epsilon}\int_{TU_\epsilon}  \left|\nabla_{n} \mathcal{L}f\right|_{n} \cdot h_\nu\omega_n^r\notag\\
 &=\frac{1}{\epsilon}\int_{-\epsilon}^\epsilon \left(\int_{(\mathcal{L}f)^{-1}\{t\}}h_\nu\cdot\omega_{n}^{r-1}\right)dt\notag\\
 &=\frac{2}{\epsilon}\int_0^\epsilon \left(\int_{T\Gamma_t}h_\nu\cdot\omega_n^{r-1}\right)dt\notag\\
 &=\frac{2}{\epsilon}\int_0^\epsilon \nu_{r-1}(T\Gamma_t)dt.
 \end{align}
By a straightforward modification of Lemma \ref{thm:contmu}, the expression $\nu_{r-1}(T\Gamma_t)$ appearing on the RHS of \eqref{eq:dff5} is continuous as a function of $t$ on the interval $[0, \epsilon]$. Thus by taking the limit of $\epsilon\to 0$ on both sides of \eqref{eq:dff5} and using the fundamental theorem of calculus (see a similar argument used in Lemma \ref{thm:alw}), one arrives at
\begin{equation}\label{eq:dff3}
\lim_{\epsilon\to 0}  \|\nabla_n \mathcal{L}f_\epsilon\|_{1, n, \nu}=2\nu_{r-1}(T\Gamma).
\end{equation}
Finally, for the denominator term of \eqref{eq:dff1}, we may without loss of generality assume that $\mu_{r}(M_1)\leq\mu_{r}(M_2)$. Then
\begin{align}\label{eq:dff4}
\|f_\epsilon-\alpha\|_{1, m, \mu}
\geq&\int_{M\setminus{U_\epsilon}}|f_\epsilon-\alpha|\, d\mu_r\notag\\
=&|1-\alpha|\cdot(\mu_r(M_1)-\mu_r(U_\epsilon))+|1+\alpha|\cdot(\mu_r(M_2)-\mu_r(U_\epsilon))\notag\\
 \geq&2(\mu_r(M_1)-\mu_r(U_\epsilon)),
\end{align}
for each $\epsilon>0$. Hence, by taking the limit of $\epsilon\to 0$ on \eqref{eq:dff4} one has $\inf_\alpha\|f_\epsilon-\alpha\|_{1, m, \mu}\geq 2\mu_r(M_1)= 2\min\{(\mu_r(M_1),\mu_r(M_2)\}$. Combining this inequality with \eqref{eq:dff1}, \eqref{eq:dff2} and \eqref{eq:dff3}, we conclude that $\mathbf{s}^D\leq \mathbf{h}^D$.

\section{The proof of Theorem \ref{thm:spec}}\label{sec:appD}

In this proof we follow the work of \cite{froyland14} and \cite{mcowen96}, and consider a weak formulation of the eigenvalue problem for the weighted dynamic Laplacian $\triangle^D$. One can find a set of weak solution pairs $(\phi_i, \lambda_i)\in L^2(M, m, \mu_r)\times \mathbb{R}$ to the weak formulation of $\triangle^D$ that satisfies the conclusions of Theorem \ref{thm:spec}. Moreover, we show that the operator $\triangle^D$ has the smooth uniformly elliptic property. That is, $\triangle^D$ can be expressed in local coordinates as $\triangle^D=\sum_{i,j=1}^r a_{ij}\partial_i\partial_j+b_{i}\partial_i+c$, where $a_{ij}$, $b_i$ and $c$ are bounded and smooth functions on $M$, and there exists a constant $\gamma>0$ such that $\sum_{i,j=1}^r a_{ij}\varepsilon_i\varepsilon_j\geq \gamma|\varepsilon|^2$ for all $x\in M$ and $\varepsilon \in\mathbb{R}^r$. The elliptic regularity theorem (see Theorem 8.14 in \cite{gilbarg77}) gives the additional regularity of the eigenfunctions $\phi_i$ on $M$ . Thus, the weak solution pairs $(\phi_i, \lambda_i)$ solve the eigenproblem
\begin{equation}\label{eq:eignp}
\triangle^D\phi_i=\lambda_i \phi_i,
\end{equation}
 for each $i$.

\subsection{Weak formulation of the $\triangle^D$ eigenproblem} \label{sec:wf}
Let $f, g\in C^\infty(M, \mathbb{R})$, and note that the smoothness assumption on the density $h_\mu$ implies $f, g\in L^2(M, m, \mu_r)$. Consider the integral $\int_M  g\cdot \triangle^Df \cdot h_\mu\omega_m^r$. Recall from Section \ref{sec:dfop} that the weighted divergence $\divg_\mu$ satisfies
\begin{equation}\label{eq:wdivth2}
\int_{\partial U} m(\mathcal{V}, \mathbf{n})\cdot h_\mu\omega_m^{r-1}=\int_U \divg_\mu\mathcal{V}\cdot h_\mu\omega_m^r,
\end{equation}
for all open $U\subset M$ and continuously differentiable vector fields $\mathcal{V}\in \mathcal{F}^1(M)$, and where $\mathbf{n}$ is the unit normal bundle along $\partial U$. Since $f, g\in C^\infty(M ,\mathbb{R})$, the vector $g\cdot \nabla_m f\in \mathcal{F}^\infty(M)$. Consequently, by taking $U=M$ and $\mathcal{V}=g\cdot\nabla_m f$ in \eqref{eq:wdivth2}, follow by applying the expansion rule \eqref{eq:pdiv2} for the weighted divergence $\divg_\mu$, one has the following weighted Green's identity:
\begin{align}\label{eq:app3.5}
 \int_{\partial M} g\cdot m( \nabla_m f, \mathbf{n}) \cdot h_\mu \omega_m^{r-1}
 &=\int_M \divg_\mu (g\cdot \nabla_m f)\cdot h_\mu\omega_m^r\notag\\
 &=\int_M g\cdot \triangle_\mu f \cdot h_\mu\omega_m^r+\int_M m( \nabla_m g, \nabla_m f)\cdot h_\mu\omega_m^r.
\end{align}
Rearranging \eqref{eq:app3.5} gives
\begin{equation}\label{eq:wf1}
 \int_M g\cdot\triangle_\mu f \cdot h_\mu\omega_m^r
 =-\int_M m(\nabla_m g,\nabla_m f)\cdot h_\mu \omega_m^r+\int_{\partial M} g\cdot m( \nabla_m f, \mathbf{n}) \cdot h_\mu\omega_m^{r-1}.
\end{equation}
Since $\mathcal{L}$ is the adjoint of $\mathcal{L}^*$
\begin{equation*}
\int_M g\cdot \mathcal{L}^*\triangle_\nu \mathcal{L} f\, d\mu_r
 =\int_N \mathcal{L}g \cdot \triangle_{\nu} \mathcal{L}f \, d\nu_r.
\end{equation*}
Therefore, one has analogous to \eqref{eq:wf1}
\begin{equation}\label{eq:wf2}
 \int_M g\cdot \mathcal{L}^*\triangle_\nu \mathcal{L} f\cdot h_\mu\omega_m^r
=-\int_N n( \nabla_n \mathcal{L}g, \nabla_n \mathcal{L}f) \cdot h_\nu\omega_n^r+\int_{\partial N} \mathcal{L}g\cdot n(\nabla_n \mathcal{L}f,\hat{\mathbf{n}})\cdot h_\nu\omega_n^{r-1},
\end{equation}
where $\hat{\mathbf{n}}$ is the unit normal bundle along $\partial N$. Combining \eqref{eq:wf1} and \eqref{eq:wf2}, we arrive at
\begin{align}\label{eq:wf3}
&2\int_M g\cdot\triangle^D f\cdot h_\mu\omega_m^r\notag\\
=&\int_M g\cdot\left(\triangle_\mu f+\mathcal{L}^*\triangle_\nu \mathcal{L} f \right)\cdot h_\mu\omega_m^r\notag\\
=&-\int_M m(\nabla_m f,\nabla_m g) \cdot h_\mu \omega_m^r-\int_N n( \nabla_n \mathcal{L}g, \nabla_n \mathcal{L}f) \cdot h_\nu\omega_n^r+P_1(f, g, \partial M)+P_2(f, g, \partial N),
\end{align}
where
\begin{equation}\label{eq:wf4}
 P_1(f, g, \partial M)=\int_{\partial M} g\cdot m( \nabla_m f, \mathbf{n}) \cdot h_\mu\omega_m^{r-1}
 \quad\mbox{and}\quad
 P_2(f, g, \partial N)=\int_{\partial N} \mathcal{L}g\cdot n(\nabla_n \mathcal{L}f,\hat{\mathbf{n}})\cdot h_\nu\omega_n^{r-1}.
\end{equation}
Next, we demonstrate that if the boundary condition \eqref{eq:bc} in the hypothesis of Theorem \ref{thm:spec} is satisfied for $f$, then the boundary term $P_1(f, g, \partial M)+P_2(f, g, \partial N)$ of \eqref{eq:wf3} vanishes for all $g\in C^\infty(M, \mathbb{R})$.

\begin{proposition}\label{thm:wbc}
Let $f, g\in C^\infty(M, \mathbb{R})$, and define $P_1(f, g, \partial M)$ and $P_2(f, g, \partial N)$ by \eqref{eq:wf4}, where $\partial M$ and $\partial N$ are the boundary of $M$ and $N$ respectively. If the boundary condition
\begin{equation*}
m([\nabla_m +\nabla_{T^*n}]f, \mathbf{n})(x)=0,
\end{equation*}
holds for all $x\in \partial M$, then
\begin{equation}\label{eq:wbc}
 P_1(\partial M)+P_2(\partial N)=0.
\end{equation}
\end{proposition}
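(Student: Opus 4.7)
The plan is to pull the boundary integral $P_2$ back from $\partial N$ to $\partial M$ via the diffeomorphism $T$, then combine with $P_1$ and invoke the boundary hypothesis.

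First, since $T:\partial M\to\partial N$ is a diffeomorphism and, by the embedded-hypersurface analogue of Corollary \ref{thm:app3.2}, the pullback of $\omega_n^{r-1}$ under $T$ is the $(r-1)$-volume form $\omega_{T^*n}^{r-1}$ induced on $\partial M$ by $T^*n$, a change of variables gives
$$P_2 = \int_{\partial M} g\cdot n(\nabla_n\mathcal{L}f,\hat{\mathbf{n}})(Tx)\cdot (h_\nu\circ T)(x)\cdot \omega_{T^*n}^{r-1}(x).$$
Combining Lemma \ref{thm:app3.3}(1) with the identity $\mathcal{L}^*\mathcal{L}=\mathrm{Id}$ from Lemma \ref{lem:welldefL} yields $\nabla_n\mathcal{L}f = T_*\nabla_{T^*n}f$. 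Moreover, because $T$ is an isometry from $(M,T^*n)$ to $(N,n)$ by the very construction of the pullback metric, it carries the $T^*n$-unit normal $\tilde{\mathbf{n}}$ of $\partial M$ to the $n$-unit normal $\hat{\mathbf{n}}$ of $\partial N$. Hence by \eqref{eq:pbm},
$$n(\nabla_n\mathcal{L}f,\hat{\mathbf{n}})(Tx) = n(T_*\nabla_{T^*n}f,T_*\tilde{\mathbf{n}})(Tx) = T^*n(\nabla_{T^*n}f,\tilde{\mathbf{n}})(x).$$

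The heart of the argument is the pointwise identity on $\partial M$
$$(h_\nu\circ T)\cdot T^*n(V,\tilde{\mathbf{n}})\cdot\omega_{T^*n}^{r-1}\big|_{\partial M} = h_\mu\cdot m(V,\mathbf{n})\cdot \omega_m^{r-1}\big|_{\partial M},$$
valid for every smooth vector field $V$ on $M$. I would derive this from the classical fact that for any Riemannian metric $\tilde g$ on $M$ with $\tilde g$-unit normal $\mathbf{n}_{\tilde g}$ on $\partial M$, one has $\tilde g(V,\mathbf{n}_{\tilde g})\,\omega_{\tilde g}^{r-1}|_{\partial M} = i(V)\omega_{\tilde g}^r|_{\partial M}$ (the tangential component of $V$ drops out because $\omega_{\tilde g}^r$ vanishes when evaluated on $r$ vectors tangent to $\partial M$). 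Applying this to both $\tilde g=m$ and $\tilde g=T^*n$ and multiplying through by the respective densities, the identity reduces to the global equality of $r$-forms on $M$
$$h_\mu\,\omega_m^r = (h_\nu\circ T)\,\omega_{T^*n}^r,$$
which follows immediately from \eqref{eq:finiteJT}, Lemma \ref{thm:app3.1}, and Corollary \ref{thm:app3.2} (both sides equal $(h_\nu\circ T)\cdot|\det J_T|\cdot(\sqrt{\det G_n}\circ T)\,dx_1\wedge\cdots\wedge dx_r$ in any local chart).

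Applying the pointwise identity with $V=\nabla_{T^*n}f$ converts $P_2$ into
$$P_2 = \int_{\partial M} g\cdot h_\mu\cdot m(\nabla_{T^*n}f,\mathbf{n})\,\omega_m^{r-1},$$
so that
$$P_1+P_2 = \int_{\partial M} g\cdot h_\mu\cdot m\bigl([\nabla_m+\nabla_{T^*n}]f,\mathbf{n}\bigr)\,\omega_m^{r-1},$$
which vanishes by the boundary hypothesis \eqref{eq:bc}. The main obstacle will be the weighted boundary-measure identity: it requires careful tracking of how induced hypersurface volume forms transform under a change of ambient metric and how this interacts with the densities $h_\mu$ and $h_\nu\circ T$. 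Once the $r$-form equality $h_\mu\omega_m^r=(h_\nu\circ T)\omega_{T^*n}^r$ is established, applying $i(V)$ and restricting to $\partial M$ handles the metric change and the weight change simultaneously.
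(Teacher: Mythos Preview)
Your proof is correct and reaches the same conclusion as the paper, namely
\[
P_2 = \int_{\partial M} g\cdot m(\nabla_{T^*n}f,\mathbf{n})\cdot h_\mu\,\omega_m^{r-1},
\]
but the route is genuinely different. The paper does not pull the boundary integral back directly; instead it introduces a defining function $\psi$ for $\partial M$, writes $\mathbf{n}=\nabla_m\psi/|\nabla_m\psi|_m$ and $\hat{\mathbf{n}}=\nabla_n\mathcal{L}\psi/|\nabla_n\mathcal{L}\psi|_n$, and then applies the co-area formula twice (once on $N$, once on $M$) to obtain an identity between integrals over level sets $\{\mathcal{L}\psi=t\}$ and $\{\psi=t\}$ for all $t$, finally differentiating in $t$ at $t=0$ to extract the boundary relation. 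In effect the paper trades the geometric bookkeeping of how hypersurface volume forms and normals transform under a change of ambient metric for an analytic argument via the co-area formula and the bulk change-of-variables \eqref{eq:cov3}. Your approach is more direct and transparent: the single $r$-form identity $h_\mu\,\omega_m^r=(h_\nu\circ T)\,\omega_{T^*n}^r$ (equivalent to \eqref{eq:finiteJT}) together with the contraction formula $\tilde g(V,\mathbf{n}_{\tilde g})\,\omega_{\tilde g}^{r-1}\big|_{\partial M}=i(V)\omega_{\tilde g}^r\big|_{\partial M}$ handles both the metric change and the density change in one stroke, and avoids the somewhat artificial differentiation-in-$t$ step. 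The paper's approach has the mild advantage that it never needs to discuss the $(r-1)$-dimensional pullback $T^*\omega_n^{r-1}=\omega_{T^*n}^{r-1}$ or the transformation of unit normals under an isometry, relying only on tools already established for the top-degree form.
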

\begin{proof}
Let the hypersurface $\partial M$ be generated by the zero level set of $\psi\in C^\infty(M, \mathbb{R})$; i.e\ $\partial M=\{x\in M: \psi(x)=0\}$. Due to Proposition \ref{thm:Lf}, the surface $\partial N$ is generated by the zero level set of $\mathcal{L}\psi$. Now by Lemma \ref{thm:app3.3} and the fact that $\mathcal{L}^*\mathcal{L}$ is the identity,
\begin{align}\label{eq:wf6}
n(\nabla_n \mathcal{L}f, \nabla_n \mathcal{L}\psi)_{Tx}
&=n(T_*\nabla_{T^*n} f, T_*\nabla_{T^*n} \psi)_{Tx}\notag\\
&=T^*n(\nabla_{T^*n} f, \nabla_{T^*n} \psi)_{x}\quad\mbox{by \eqref{eq:pbm}}\notag\\
&=\restr{(\nabla_{T^*n}f)\psi}{x}\quad\mbox{by \eqref{eq:grad}}\notag\\
&=m(\nabla_{T^*n}f, \nabla_m\psi)_x.
\end{align}
Hence,
\begin{align}\label{eq:wf7}
 &\int_{-\infty}^\infty\int_{\mathcal{L}\psi=t} \mathcal{L}g\cdot \frac{n( \nabla_n \mathcal{L}f, \nabla_n\mathcal{L}\psi)}{|\nabla_n \mathcal{L}\psi|_n}\cdot h_\nu\omega_n^{r-1}\,dt\notag\\
 &=\int_{N} \mathcal{L}g \cdot n( \nabla_n \mathcal{L}f, \nabla_n\mathcal{L}\psi)\cdot h_\nu\omega_n^{r}\notag\quad\mbox{by the co-area formula \eqref{eq:ca}}\\
 &=\int_{M} g\cdot n( \nabla_n \mathcal{L}f, \nabla_n \mathcal{L}\psi )\circ T\cdot h_\mu \omega_m^{r}\notag\quad\mbox{by \eqref{eq:cov3}}\\
 &=\int_{M} g\cdot m(\nabla_{T^*n}f,\nabla_m \psi )\cdot h_\mu\omega_m^{r}\quad\mbox{by \eqref{eq:wf6}}\notag\\
 &=\int_{-\infty}^\infty\int_{\psi=t} g\cdot \frac{m(\nabla_{T^*n}f,\nabla_m \psi )}{|\nabla_m \psi|_m}\cdot h_\mu\omega_m^{r-1}\,dt,
 \end{align}
where the last line is due to the application of the co-area formula \eqref{eq:ca}. Differentiating both sides of \eqref{eq:wf7} with respect to $t$, then at $t=0$
\begin{equation}\label{eq:wf8}
 \int_{\mathcal{L}\psi=t} \mathcal{L}g\cdot \frac{n( \nabla_n \mathcal{L}f, \nabla_n\mathcal{L}\psi)}{|\nabla_n \mathcal{L}\psi|_n}\cdot h_\nu\omega_n^{r-1}
 =\int_{\psi=t} g\cdot \frac{m(\nabla_{T^*n}f,\nabla_m \psi )}{|\nabla_m \psi|_m}\cdot h_\mu\omega_m^{r-1}.
\end{equation}
Additionally, the vector $\nabla_m \psi$ is normal to the level surfaces of $\psi$, thus $\mathbf{n}=\nabla_m \psi/|\nabla_m \psi|_m$,  and  similarly $\hat{\mathbf{n}}=\nabla_n\mathcal{L}\psi/|\nabla_n \mathcal{L}\psi|_n$. Hence,
\begin{align*}
 P_2(f,g,\partial N)\
 &=\int_{\partial N} \mathcal{L}g\cdot n( \nabla_n \mathcal{L}f, \hat{\mathbf{n}})\cdot h_\nu\omega_n^{r-1}\\
 &=\int_{\mathcal{L}\psi=0} \mathcal{L}g\cdot \frac{n( \nabla_n \mathcal{L}f, \nabla_n\mathcal{L}\psi)}{|\nabla_n \mathcal{L}\psi|_n}\cdot h_\nu\omega_n^{r-1}\\
 &=\int_{\psi=0} g\cdot \frac{m(\nabla_{T^*n}f,\nabla_m \psi )}{|\nabla_m \psi|_m}\cdot h_\mu\omega_m^{r-1}\quad\mbox{by \eqref{eq:wf8}}\\
 &=\int_{\partial M} g\cdot m(\nabla_{T^*n}f, \mathbf{n})\cdot h_\mu\omega_m^{r-1}.
\end{align*}
We conclude that
\begin{equation*}
P_1(f,g,\partial M)+P_2(f,g,\partial N)=\int_{\partial M} g\cdot m([\nabla_m +\nabla_{T^*n}]f, \mathbf{n})\cdot h_\mu\omega_m^{r-1},
\end{equation*}
which vanishes due to the theorem hypothesis of $m([\nabla_m+\nabla_{T^*n}]f, \mathbf{n})(x)=0$ for all $x\in \partial M$.
\end{proof}

Due to Proposition \ref{thm:wbc} and \eqref{eq:wf3}, one has
\begin{equation}\label{eq:wf3b}
 \int_M g\cdot\triangle^D f\,d\mu_r=-\int_M m(\nabla_m f, \nabla_m g)\, d\mu_r-\int_N n(\nabla_n\mathcal{L}g, \nabla_n \mathcal{L}f)\,d\mu_r,
\end{equation}
for all $f, g\in C^\infty(M, \mathbb{R})$. Note that \eqref{eq:wf3b} is symmetric in $f$ and $g$, hence the operator $\triangle^D$ is self-adjoint in $L^2(M, m, \mu_r)$.

Suppose the solution $(\phi,\lambda)\in C^\infty (M,\mathbb{R})\times \mathbb{R}$ exists for the eigenvalue problem \eqref{eq:eignp}. Then under the boundary condition \eqref{eq:bc}, one has by \eqref{eq:wf3} and Proposition \ref{thm:wbc} the following formulation for the eigenvalue problem $\triangle^D\phi=\lambda \phi$:
\begin{equation}\label{eq:ws1}
\int_M m(\nabla_m g, \nabla_m \phi) \,d\mu_r+\int_{N} n( \nabla_n \mathcal{L}g,\nabla_n \mathcal{L}\phi) \,d\nu_r=-2\lambda\int_M g\phi \,d\mu_r.
\end{equation}
for all $g\in C^\infty(M, \mathbb{R})$.
Equivalently,
\begin{equation}\label{eq:ws2a}
\int_M \left[m(\nabla_m g, \nabla_m \phi)+T^*n( \nabla_{T^*n} g,\nabla_{T^*n} \phi)\right] \,d\mu_r=-2\lambda\int_M g\phi \,d\mu_r.
\end{equation}
for all $g\in C^\infty(M, \mathbb{R})$.


\subsection{Existence of weak solution and variational characterisation of eigenvalues}\label{sec: ws}
Let $S$ be a weighted Sobolev space $W^{1, 2}(M, m, \mu_r)$ with weights $h_\mu$. Recall from Section \ref{sec:ws} that the weak gradient with respect to the metric $m$ is denoted by $\tilde{\nabla}_m$. Due to \eqref{eq:ws1}, the the weak formulation for the eigenproblem \eqref{eq:eignp} is given by
\begin{equation}\label{eq:ws1b}
 \int_M m(\tilde{\nabla}_m g, \tilde{\nabla}_m \phi) \,d\mu_r+\int_{N} n( \tilde{\nabla}_n \mathcal{L}g,\tilde{\nabla}_n \mathcal{L}\phi) \,d\nu_r=-2\lambda\int_M g\phi \,d\mu_r.
\end{equation}
We show existence of solutions $(\phi_i, \lambda_i)\in S\times\mathbb{R}$ for the above weak formulation \eqref{eq:ws1b}, for all $g\in S$. We call such pairs $(\phi_i, \lambda_i)$ weak solutions\footnotemark\ for the eigenvalue problem \eqref{eq:eignp}.

\footnotetext{The weak solution pairs $(\phi_i, \lambda_i)$ does not necessarily solve the eigenvalue problem $\triangle^D\phi_i=\lambda_i\phi_i$, because $\phi_i$ may lack sufficient regularity on $M$, see p.210 in \cite{mcowen96} for a discussion.}

Our approach to finding the weak solutions for $\triangle^D$ is based on the construction of functionals $F$ and $G$, and using the method of Lagrange multipliers. 
For $f\in S$, we define $G(f)=1-\int_M f^2 \,d\mu_r$ and $F(f)=(1/2)(F_1(f)+F_2(f))$, where $F_1(f)=\int_M|\tilde{\nabla}_m f|_m^2\,d\mu_r$ and $F_2(f)=\int_{N}|\tilde{\nabla}_n \mathcal{L}f|_n^2\, d\nu_r$.
 First we list some useful properties of the functionals $F_1$, $F_2$ and $G$.

\begin{lemma}\label{thm:Fnl}
Let $f\in S$, and denote the linear dual of $S$ as $S^\star$. Define the functional $F_2$ as above.
\begin{enumerate}[(i)]
 \item {The functional $F_2:S\to \mathbb{R}$ is well-defined,}
 \item{The derivative $F'_2(f)$ is linear and bounded (hence $F_2'(f)\in S^\star$),}
 \item{$F_2$ is Fr\'{e}chet-differentiable,}
 \item{$f\to F'_2(f)$ is continuous as a map from $S$ to $S^\star$.}
\end{enumerate}
\end{lemma}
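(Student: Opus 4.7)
The plan is to reduce all four claims to standard Hilbert-space estimates by first rewriting $F_2(f)$ as an integral over $M$. Using the identity $\tilde\nabla_n\mathcal{L} = T_*\tilde\nabla_{T^*n}$ from Lemma \ref{thm:app3.3} (extended to weak gradients as noted after Definition \ref{def:wgrad}), the definition \eqref{eq:pbm} of the pullback metric, and the change of variables $\nu_r = \mu_r\circ T^{-1}$, I will establish
\begin{equation*}
F_2(f) = \int_N n(T_*\tilde\nabla_{T^*n}f, T_*\tilde\nabla_{T^*n}f)\,d\nu_r = \int_M T^*n(\tilde\nabla_{T^*n}f, \tilde\nabla_{T^*n}f)\,d\mu_r.
\end{equation*}
Next, because $T$ is a $C^\infty$ diffeomorphism between compact manifolds, Lemma \ref{thm:app3.1} together with the boundedness of $|\det J_T|$ above and away from zero (Appendix \ref{sec:B3}) gives uniform comparability of the metrics on tangent vectors: there exists a constant $C>0$ such that $C^{-1}|v|_m \le |v|_{T^*n} \le C|v|_m$ at every $x\in M$ and $v\in \mathcal{T}_xM$. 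A partition-of-unity argument (analogous to Lemmas \ref{thm:volchart} and \ref{thm:gradchart}) makes this global.

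Claim (i) is then immediate: $F_2(f) \le C^2 \int_M |\tilde\nabla_m f|_m^2\,d\mu_r \le C^2\|f\|_{W^{1,2}(M,m,\mu_r)}^2 < \infty$ for every $f\in S$. For claims (ii) and (iii), a direct expansion yields
\begin{equation*}
F_2(f+g) - F_2(f) = 2\int_M T^*n(\tilde\nabla_{T^*n}f, \tilde\nabla_{T^*n}g)\,d\mu_r + \int_M |\tilde\nabla_{T^*n}g|_{T^*n}^2\,d\mu_r.
\end{equation*}
I will identify
\begin{equation*}
F_2'(f)(g) := 2\int_M T^*n(\tilde\nabla_{T^*n}f, \tilde\nabla_{T^*n}g)\,d\mu_r,
\end{equation*}
which is linear in $g$. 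By Cauchy--Schwarz with respect to the inner product $T^*n(\cdot,\cdot)$ on tangent spaces, followed by the metric comparability above,
\begin{equation*}
|F_2'(f)(g)| \le 2C^2\,\|\tilde\nabla_m f\|_{L^2(M,m,\mu_r)}\,\|\tilde\nabla_m g\|_{L^2(M,m,\mu_r)} \le 2C^2\|f\|_S\|g\|_S,
\end{equation*}
proving (ii). The remainder term satisfies $\int_M |\tilde\nabla_{T^*n}g|_{T^*n}^2\,d\mu_r \le C^2\|g\|_S^2 = o(\|g\|_S)$, establishing Fr\'echet differentiability (iii). For (iv), the same Cauchy--Schwarz estimate applied to the \emph{bilinear} form $F_2'$ gives
\begin{equation*}
\|F_2'(f_1) - F_2'(f_2)\|_{S^\star} = \sup_{\|g\|_S\le 1}\left|2\int_M T^*n(\tilde\nabla_{T^*n}(f_1-f_2), \tilde\nabla_{T^*n}g)\,d\mu_r\right| \le 2C^2\|f_1 - f_2\|_S,
\end{equation*}
which is in fact the stronger statement that $f\mapsto F_2'(f)$ is Lipschitz.

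The main obstacle is essentially bookkeeping: verifying that the coordinate-free identities of Lemma \ref{thm:app3.3} carry over to weak gradients and that the measurability of integrands is preserved under the change of variables with the $\nu_r = \mu_r\circ T^{-1}$ identification. Both follow routinely from the coordinate formula in Definition \ref{def:wgrad}, the smoothness of $T$, and the diffeomorphism property, so no genuinely new analytic input is required beyond what is already in Appendix \ref{sec:dfg} and Section \ref{sec:ws}.
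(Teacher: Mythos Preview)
Your proposal is correct and follows essentially the same approach as the paper: rewrite $F_2$ as an integral over $M$ via the pullback metric $T^*n$ (using Lemma \ref{thm:app3.3} and the change of variables $\nu_r=\mu_r\circ T^{-1}$), establish uniform comparability of $|\cdot|_{T^*n}$ and $|\cdot|_m$ by compactness and smoothness of $T$, and then apply Cauchy--Schwarz. The only notable difference is that you handle parts (iii) and (iv) explicitly (via the direct expansion $F_2(f+g)-F_2(f)$ and a Lipschitz estimate), whereas the paper computes the G\^ateaux derivative for (ii) and defers (iii) and (iv) to the analogous argument in \cite{froyland14}; your version is slightly more self-contained but the underlying estimates are identical.
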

\begin{proof}
\begin{enumerate}[(i)]\item{
Let $(U_k, \varphi_k)_{k\in K}$ be an atlas of $M$. Then due to the fact that $T$ is a $C^\infty$-diffeomorphism, there exists a set of finite constants $C_{ij}^k$ such that $(T^*n)^{ij}=C_{ij}^k m^{ij}$ on $U_k$ for each $1\leq i, j\leq r$ and $k\in K$. Hence, by writing $\tilde{\nabla}_{T^*n}$ in coordinates form via \eqref{eq:gradl} (with respect to weak partial derivatives $\tilde\partial$),  one has on all points in $U_k$
\begin{equation}\label{eq:Fnl2}
\tilde{\nabla}_{T^*n}f=\sum_{ij}(T^*n)^{ij}(\tilde\partial_i f)\partial_j =\sum_{ij}C^k_{ij} m^{ij} (\tilde\partial_i f)\partial_j\leq C^k\cdot \tilde{\nabla}_m f,
\end{equation}
for all $k\in K$, where $C_k=\max_{ij} C^k_{ij}$.
Furthermore, since $M$ is compact, there exists a partition of unity $\sigma_k$ subordinate to the covering $\cup_{k\in K}U_k$ (Lemma \ref{thm:partu}). Therefore,
\begin{align}\label{eq:Fnl5}
 F_2(f)&=\int_N |\tilde{\nabla}_n \mathcal{L}f|_n^2\, d\nu_r\notag\\
 &=\int_N n(\tilde{\nabla}_n \mathcal{L}f, \tilde{\nabla}_n \mathcal{L}f) \,d\nu_r\notag\\
 &=\int_N n(T_*\tilde{\nabla}_{T^*n} f, T_*\tilde{\nabla}_{T^*n} f)\,d\nu_r\quad\mbox{by Lemma \ref{thm:app3.3}}\notag\\
 &=\int_N T^*n(\tilde{\nabla}_{T^*n} f, \tilde{\nabla}_{T^*n} f)\circ T^{-1}\,d\nu_r\quad\mbox{by \eqref{eq:pbm}}\notag\\
 &=\int_M T^*n(\tilde{\nabla}_{T^*n} f, \tilde{\nabla}_{T^*n} f)\, d\mu_r\quad\mbox{by \eqref{eq:cov4}}\notag\\
 &=\int_M m(\tilde{\nabla}_{T^*n} f, \tilde{\nabla}_m f)\,d\mu_r\quad\mbox{by \eqref{eq:grad}}\notag\\
 &=\sum_{k\in K}\int_{U_k} \sigma_k\cdot m(\tilde{\nabla}_{T^*n} f, \tilde{\nabla}_m f)\,d\mu_r\notag\\
 &\leq \sum_{k\in K} C_k \int_{U_k} \sigma_k\cdot m( \tilde{\nabla}_m f, \tilde{\nabla}_m f) \,d\mu_r\quad\mbox{by \eqref{eq:Fnl2}}\notag\\
 &\leq C\cdot\int_M |\tilde{\nabla}_m f|_m^2\,d\mu_r=C\cdot F_1(f),
\end{align}
where $C=\max_{k\in K} C_k$. Since $f\in S$, $\tilde{\nabla}_m f\in L^2(M, m, \mu_r)$. It follows that $F_2:S\to\mathbb{R}$ is well defined.
}

\item{For all $f, g\in S$
\begin{align}\label{eq:Fnl4}
 F'_2(f)g
 &=\lim_{\epsilon \to 0}\frac{F_2(f+\epsilon g)-F_2(f)}{\epsilon}\notag\\
 &=\lim_{\epsilon \to 0}\frac{\int_N |\tilde\nabla_n \mathcal{L}(f+\epsilon g)|^2_n\,d\nu_r-\int_N|\tilde\nabla_n \mathcal{L}f|^2_n\,d\nu_r}{\epsilon}\notag\\
 &=\lim_{\epsilon \to 0}\frac{\int_N \left(|\tilde\nabla_n \mathcal{L}f|^2_n+2\epsilon\cdot n(\tilde\nabla_n \mathcal{L}f, \tilde\nabla_n\mathcal{L}g)+\epsilon^2\cdot |\tilde\nabla_n \mathcal{L}g|^2_n-\int_N|\tilde\nabla_n \mathcal{L}f|^2_n\right)\,d\nu_r}{\epsilon}\notag\\
 &=2\int_N n(\tilde\nabla_n \mathcal{L}g,\tilde\nabla_n \mathcal{L}f )\,d\nu_r,
\end{align}
where to obtain the last line, we have used the fact that the coefficient of the $\epsilon^2$ term on the penultimate line is finite from part $(i)$. Clearly $F'_2(f)$ is linear. Furthermore, by the Cauchy-Schwarz inequality, one has
\begin{align*}
\mbox{RHS of \eqref{eq:Fnl4}}
 &\leq 2\|\nabla_n \mathcal{L}f\|_{2, n, \nu} \cdot \|\tilde\nabla_n \mathcal{L}g\|_{2, n, \nu}\\
 &\leq 2C\cdot\left(\int_M |\tilde\nabla_m f|_m^2\,d\mu_r\right)^{\frac{1}{2}}\cdot\left(\int_M |\tilde\nabla_m g|_m^2\,d\mu_r\right)^{\frac{1}{2}}\quad\mbox{by \eqref{eq:Fnl5}}\\
 &\leq 2C\cdot\left(\int_M |\tilde\nabla_m f|_m^2\,d\mu_r\right)^{\frac{1}{2}}\cdot\|g\|_{S},
\end{align*}
where $C$ is the same constant that appeared in part $(i)$. Therefore, $F'_2(f)$ is bounded.
}
\end{enumerate}
\paragraph{}
By using the results of part $(i)$ and $(ii)$, the proof of $(iii)$ and $(iv)$ is similar to the corresponding results of Lemma C.1 in \cite{froyland14}.
\end{proof}

\begin{remark}\label{thm:rm1}
One may obtain analogous results of Lemma \ref{thm:Fnl} for $F_1$ by setting $T$ as the identity map in $F_2$, while the corresponding results for $G$ is a straightforward modification with
\begin{equation}\label{eq:rm1.1}
 G'(f)g=-2\int_M fg\,d\mu_r.
\end{equation}
\end{remark}

An important concept associated with linear functionals is the weak convergence. Let $f_i$ be a sequence in $S$. We say that $f_i\rightharpoonup f$ weakly in $S$, if $H(f_i)\to H(f)$ for all $H\in S^\star$ (where $S^\star$ is the linear dual of $S$). Moreover, since $S$ is a Hilbert space (Proposition \ref{thm:comW}), by the Riez representation theorem, if $f_i\rightharpoonup f$ weakly in $S$ then
$
 \langle g, f_i\rangle_S=\langle g, f\rangle_S,
$
for all $g\in S$. One has the following standard result (see p.174, \cite{mcowen96})
 \begin{lemma}\label{thm:wsc}
  Every bounded sequence in a Hilbert space contains a weakly convergent subsequence.
 \end{lemma}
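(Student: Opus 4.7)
The plan is to reduce to the separable situation via the closed span of the sequence, construct the weak limit as a bounded linear functional using a Cantor diagonal argument, and then invoke the Riesz representation theorem to identify that functional with an element of the Hilbert space.

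First, I would let $\{f_i\}$ be a bounded sequence in $S$, so that $\|f_i\|_S \le K$ for some constant $K$, and set $V := \overline{\mathrm{span}\{f_i : i \ge 1\}}$. Then $V$ is a closed (hence Hilbert) subspace of $S$ which is automatically separable, since finite rational linear combinations of the countable family $\{f_i\}$ form a countable dense subset. Pick such a countable dense subset $\{g_k\}_{k \ge 1} \subset V$. For each fixed $k$, the numerical sequence $\langle g_k, f_i\rangle_S$ is bounded by $\|g_k\|_S \cdot K$ via Cauchy-Schwarz, hence admits a convergent subsequence by Bolzano-Weierstrass. A standard Cantor diagonal extraction then yields a single subsequence $f_{i_j}$ along which $\langle g_k, f_{i_j}\rangle_S$ converges for every $k$.

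Next, I would upgrade convergence on the dense set $\{g_k\}$ to convergence against every $g \in V$. For arbitrary $g \in V$ and any $\gamma > 0$, choose $g_k$ with $\|g - g_k\|_S < \gamma/(3K)$; then for $j, j'$ sufficiently large, a three-term triangle inequality $|\langle g, f_{i_j}\rangle - \langle g, f_{i_{j'}}\rangle| \le |\langle g - g_k, f_{i_j}\rangle| + |\langle g_k, f_{i_j} - f_{i_{j'}}\rangle| + |\langle g_k - g, f_{i_{j'}}\rangle|$ produces a Cauchy sequence, so $L(g) := \lim_j \langle g, f_{i_j}\rangle_S$ exists. To extend $L$ to all of $S$, I would use the orthogonal decomposition $S = V \oplus V^\perp$: for $g \in V^\perp$ we have $\langle g, f_i\rangle_S = 0$ for all $i$ (as $f_i \in V$), so we may declare $L(g) = 0$ there, and extend by linearity. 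Linearity of $L$ is immediate, and boundedness follows from $|L(g)| \le \limsup_j \|g\|_S \|f_{i_j}\|_S \le K\|g\|_S$.

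Finally, by the Riesz representation theorem on the Hilbert space $S$, the bounded linear functional $L$ is represented by a unique $f \in S$ with $L(g) = \langle g, f\rangle_S$ for all $g \in S$. Hence $\langle g, f_{i_j}\rangle_S \to \langle g, f\rangle_S$ for every $g \in S$, which is exactly $f_{i_j} \rightharpoonup f$ weakly in $S$ (since every continuous linear functional on the Hilbert space is of this form). The only potentially delicate point is the diagonal extraction combined with the approximation argument used to pass from the dense countable set to all of $V$; everything else is essentially routine Hilbert space functional analysis. No additional structure of the weighted Sobolev space $W^{1,2}(M,m,\mu_r)$ is required beyond its being a Hilbert space, which was established in Proposition \ref{thm:comW}.
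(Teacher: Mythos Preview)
Your proof is correct and follows the standard route: reduce to the separable closed span, run a Cantor diagonal argument on a countable dense set, upgrade via a $3\varepsilon$ estimate, and invoke Riesz representation. There is nothing to object to.

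The paper, however, does not prove this lemma at all; it simply records it as a standard result with a citation to McOwen (p.~174). So there is no ``paper's own proof'' to compare against. Your write-up is strictly more than what the paper provides, and is entirely self-contained. The only remark is that your final sentence about Proposition~\ref{thm:comW} is not needed for the lemma as stated, since the lemma is formulated for an arbitrary Hilbert space and does not refer specifically to $W^{1,2}(M,m,\mu_r)$.
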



%
Recall from Section \ref{sec:ws} that the $A_p$ condition on the density $h_\mu$ has important consequences for the weighted Sobolev space $W^{1, 2}(M, m, \mu_r)$.  By assumption, the density $h_\mu$ is smooth and uniformly bounded away from zero. Hence, by Proposition \ref{thm:smoothap}, the density $h_\mu$ is an $A_2$ weight on the the space $S$.

\begin{lemma}\label{thm:Fmin}
$F$ attains its minimum on the constraint set $\mathcal{C}=\{f\in S:G(f)=0\}$.
\end{lemma}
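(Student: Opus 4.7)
The plan is to apply the standard direct method of the calculus of variations: take a minimizing sequence, extract a weakly convergent subsequence, pass to the limit using weak lower semicontinuity of $F$ combined with strong $L^2$-continuity on the constraint.

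First, I would verify $\inf_{\mathcal{C}} F$ is finite. Since $F \geq 0$ and the constant function $f \equiv 1/\sqrt{\mu_r(M)} = 1$ (recall $\mu_r(M) = 1$) lies in $\mathcal{C}$ with $F(1) = 0$, we have $0 \leq \inf_{\mathcal{C}} F \leq 0$, so $\inf_{\mathcal{C}} F = 0$ and the minimum is trivially attained at $f \equiv 1$. However, the useful application later will require a general argument that minimizing subsequences on constraint sets yield minimizers, so I will carry out the proof as though $\inf_{\mathcal{C}} F$ were not obviously attained, since the same strategy works when one restricts to the orthogonal complements $S_k$ used in Theorem \ref{thm:spec}. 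Let $\{f_k\} \subset \mathcal{C}$ be a minimizing sequence with $F(f_k) \to \inf_{\mathcal{C}} F$.

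Next, I would establish boundedness of $\{f_k\}$ in $S = W^{1,2}(M, m, \mu_r)$. Since $f_k \in \mathcal{C}$ gives $\|f_k\|_{L^2(M, m, \mu_r)}^2 = 1$, and since $F_2 \geq 0$ implies $F_1(f_k) \leq 2F(f_k)$, which is bounded along a minimizing sequence, we conclude $\|\tilde{\nabla}_m f_k\|_{L^2(M, m, \mu_r)}^2 = F_1(f_k)$ is bounded. Hence $\|f_k\|_S$ is bounded. By Lemma \ref{thm:wsc}, a subsequence (not relabelled) satisfies $f_k \rightharpoonup f$ weakly in $S$ for some $f \in S$.

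Then I would verify that $f \in \mathcal{C}$. Since the density of $\mu_r$ is smooth and uniformly bounded away from zero, Proposition \ref{thm:smoothap} shows $h_\mu$ is an $A_2$ weight, so Theorem \ref{thm:rcom} (Rellich compactness) applies: the embedding $S \hookrightarrow L^2(M, m, \mu_r)$ is compact, so (along a further subsequence) $f_k \to f$ strongly in $L^2(M, m, \mu_r)$. Consequently $\int_M f_k^2\, d\mu_r \to \int_M f^2\, d\mu_r$, which gives $G(f) = 1 - \|f\|_{L^2(M, m, \mu_r)}^2 = 0$, so $f \in \mathcal{C}$.

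Finally, I would establish $F(f) \leq \liminf_k F(f_k)$ by weak lower semicontinuity. The identity derived in the proof of Lemma \ref{thm:Fnl}(i),
\begin{equation*}
F(f) = \frac{1}{2}\int_M \Bigl(|\tilde{\nabla}_m f|_m^2 + |\tilde{\nabla}_{T^*n} f|_{T^*n}^2\Bigr)\, d\mu_r,
\end{equation*}
expresses $F$ as one half the squared norm of $f$ in a Hilbert seminorm associated to the equivalent metric $m + T^*n$ on the tangent bundle, weighted by $h_\mu$. This seminorm is convex and continuous on $S$ (by Lemma \ref{thm:Fnl} and Remark \ref{thm:rm1}), hence weakly lower semicontinuous. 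Thus
\begin{equation*}
F(f) \leq \liminf_{k \to \infty} F(f_k) = \inf_{\mathcal{C}} F,
\end{equation*}
and since $f \in \mathcal{C}$ we have equality, so $f$ realizes the minimum. The main (mild) obstacle is ensuring the $F_2$ contribution is weakly lower semicontinuous; this is handled by rewriting $F_2$ via the pullback metric $T^*n$ (using Lemma \ref{thm:app3.3}) as an integral over $M$, which exhibits $F$ as a convex, continuous quadratic form on $S$.
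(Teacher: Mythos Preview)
Your proposal is correct and follows the same direct-method strategy as the paper: take a minimizing sequence, extract a weakly convergent subsequence in the Hilbert space $S$, use Rellich compactness (Theorem \ref{thm:rcom}) to pass to the limit in the constraint, and conclude via weak lower semicontinuity of $F$. Your execution is in fact somewhat cleaner than the paper's: you bound $\|f_k\|_S$ directly from the constraint $\|f_k\|_{L^2}=1$ and $F_1(f_k)\leq 2F(f_k)$, whereas the paper invokes the Poincar\'e inequality (Lemma \ref{thm:gpc}) unnecessarily; and you obtain weak lower semicontinuity from the abstract principle ``convex $+$ norm-continuous $\Rightarrow$ weakly l.s.c.'' applied to the quadratic form $F$ (whose continuity follows from Lemma \ref{thm:Fnl}), whereas the paper carries out an explicit Cauchy--Schwarz computation on the auxiliary norm $\|\cdot\|_{S'}$. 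Your observation that on $\mathcal{C}$ itself the infimum is trivially $0$, attained at constants, is correct; the paper writes the general argument because it is reused verbatim on the orthogonal subspaces $S_k$ in the inductive construction of the spectrum.
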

\begin{proof}
Define the inner-product  $\langle h, g\rangle_{S'}=\int_N (n(\tilde\nabla_n \mathcal{L}g, \tilde\nabla_n \mathcal{L} h)+\mathcal{L}(gh))\,d\nu_r$ for all $g, h\in S$, and denote the norm associated with $\langle\cdot ,\cdot\rangle_{S'}$  by $\|.\|_{S'}$. Set $I=\inf\{F(g):g\in \mathcal{C}\}\geq0$, and select a sequence $f_i\in \mathcal{C}$ such that $F(f_i)\to I$ and $F(f_i)\leq I+1$.

First, we show that the sequence $f_i$ is bounded in both $S$ and $S'$. Due to Lemma \ref{thm:gpc}, there exists a constant $K$ (independent of $f_i$) such that $\|f_i-\alpha(f_i)\|_{2, \mu}\leq K\|\tilde\nabla_m f_i\|_{2, \mu}$ for each $i$. Hence,
\begin{align*}
 \|f_i\|^2_S
 &=\|f_i\|^2_{2, m, \mu}+\|\tilde\nabla_m f_i\|^2_{2, m, \mu}\\
 &\leq\|f_i-\alpha(f_i)\|_{2, m, \mu}^2+|\alpha(f_i)|+\|\tilde\nabla_m f_i\|^2_{2, m, \mu}\quad\mbox{by triangle's inequality}\\
 &\leq (1+K^2)\|\tilde\nabla_m f_i\|^2_{2, m, \mu}+|\alpha(f_i)|.
\end{align*}
Moreover, by Cauchy-Schwarz
\begin{align*}
 |\alpha(f_i)|^2=\left|\int_M f_i\,d\mu_r\right|^2\leq\left(\int_M f_i^2\,d\mu_r\right)=1-G(f_i)=1.
\end{align*}
Hence
\begin{equation*}
 \|f_i\|_S\leq (1+K^2)\|\nabla_m f_i\|_{2, m, \mu}+1=(1+K^2)F_1(f_i)+1\leq (1+K^2)(I+1)+1,
\end{equation*}
so that $f_i$ is a bounded sequence in $S$. By applying similar arguments as \eqref{eq:Fnl5} in the proof of Lemma \ref{thm:Fnl}(i), one can verify that $f_i$ is also a bounded sequence in $S'$.

Since $f_i$ is a bounded sequence in $S$, and $S$ a Hilbert space (due to Proposition \ref{thm:comW}), by Lemma \ref{thm:wsc}, there exists a subsequence $f_{i_l}$ such that $f_{i_l}\rightharpoonup f$ weakly in $S$. Moreover, due to Lemma  \ref{thm:rcom}, the embedding $S\hookrightarrow  L^2(M, m, \mu_r)$ is compact, which implies the existence of a subsequence $f_{i_k}$ of $f_{i_l}$, such that $f_{i_k}\rightarrow f$ in $L^2(M, m, \mu_r)$. The strong convergence $f_{i_k}\to f$ in $L^2(M, m, \mu_r)$ implies $\mathcal{L}f_{i_k}\to \mathcal{L}f$ in $L^2(N, n, \nu_r)$, because by the change of variable \eqref{eq:cov3}
\begin{equation*}
 \|f_{i_k}-f\|^2_{2, m, \mu}=\int_M |f_{i_k}-f|^2\, d\mu_r=\int_N |\mathcal{L}f_{i_k}-\mathcal{L}f|^2\,d\nu_r=\|\mathcal{L}f_{i_k}-\mathcal{L}f\|^2_{2, n, \nu}.
\end{equation*}

Next, we use the fact that the subsequence $f_{i_k}$ is bounded in $S'$ together with the weak convergence of $f_{i_k}$ in $S$, to show that $f_{i_k}$ convergences weakly in $S'$. Due to lemma \eqref{thm:Fnl} and Remark \ref{thm:rm1}, one has $F'_2(g)\in S^\star$ and $G'(g)\in S^\star$ for all $g\in S$. Therefore
\begin{align*}
\lim_{i_k\to \infty}\langle f_{i_k}, g\rangle_{S'}
&=\lim_{i_k\to \infty} \int_N n(\tilde\nabla_n \mathcal{L}f_{i_k},\tilde \nabla_n \mathcal{L}g)\,d\nu_r+\lim_{i_k\to \infty}\int_N  \mathcal{L}(f_{i_k} g) \,d\nu_r\\
&=\frac{1}{2}\lim_{i_k\to \infty} F'_2(g)f_{i_k}+\lim_{i_k\to \infty}\int_M  f_{i_k} g \,d\mu_r\quad\mbox{by \eqref{eq:Fnl4} and \eqref{eq:cov3}}\\
&=\frac{1}{2}\lim_{i_k\to \infty} F'_2(g)f_{i_k}+\frac{1}{2}\lim_{i_k\to \infty}G'(g)f_{i_k}\quad\mbox{by \eqref{eq:rm1.1}}\\
&=\frac{1}{2}\left(F_2'(g)f+G'(g)f\right)\\
&=\langle f, g\rangle_{S'},
\end{align*}
where the penultimate line is due to the weak convergence of $f_{ik} \rightharpoonup f$ in $S$.

Now, the weak convergence of $f_{i_k}$ in $S'$ implies
\begin{align}\label{eq:Fmin1a}
\|f\|_{S'}^2=&\langle f, f\rangle_{S'}=\lim_{i_k\to \infty}\langle f_{i_k}, f\rangle_{S'}\notag\\
= &\lim_{i_k\to \infty}\bigg\{\langle \tilde\nabla_n \mathcal{L}f_{i_k}, \tilde\nabla_n \mathcal{L}f\rangle_\nu+\langle \mathcal{L}f_{i_k}, \mathcal{L}f\rangle_\nu\bigg\}\notag\\
\leq &\lim_{i_k\to \infty}
 \bigg\{\|\tilde\nabla_n \mathcal{L}f_{i_k}\|_{2, n, \nu}\cdot\|\tilde\nabla_n \mathcal{L}f\|_{2, n, \nu}+\|\mathcal{L}f_{i_k}\|_{2, \nu}\cdot\|\mathcal{L}f\|_{2, n, \nu}\bigg\},
\end{align}
 where the inequality on the last line is due to Cauchy-Schwarz. Set $a_1=\|\tilde\nabla_n \mathcal{L}f_{i_k}\|_{2, n, \nu}$, $b_1=\|\tilde\nabla_n \mathcal{L}f\|_{2, n, \nu}$, $a_2=\|\mathcal{L}f_{i_k}\|_{2, n, \nu}$ and $b_2=\|\mathcal{L}f\|_{2, n, \nu}$, and consider the inequality
 \begin{align}\label{eq:abab}
  a_1b_1+a_2b_2
  &=\sqrt{a_1^2b_1^2+a_2^2b_2^2+2a_1b_2a_2b_1}\notag\\
  &\leq \sqrt{a_1^2b_1^2+a_2^2b_2^2+a_1^2b_2^2+a_2^2b_1^2}\quad\mbox{since $2cd\leq c^2+d^2, \forall c, d\in \mathbb{R}$}\notag\\
  &=\sqrt{(a_1^2+a_2^2)(b_1^2+b_2^2)}.
 \end{align}
As a consequence of \eqref{eq:abab}, one has
\begin{align*}
\mbox{RHS of \eqref{eq:Fmin1a}}
\leq &\lim_{i_k\to \infty} \bigg\{\sqrt{\left(\|\tilde\nabla_n \mathcal{L}f_{i_k}\|_{2, n, \nu}^2+\|\mathcal{L}f_{i_k}\|^2_{2, n, \nu}\right)\cdot\left(\|\tilde\nabla_n \mathcal{L}f\|^2_{2, n, \nu}+\|\mathcal{L}f\|^2_{2, n, \nu}\right)}\bigg\}
\notag\\
=&\|f\|_{S'}\times \lim_{i_k\to \infty} \|f_{i_k}\|_{S'}.
\end{align*}
Thus, $\| f \|_{S'}\leq\lim_{i_k\to \infty} \|f_{i_k}\|_{S'}$. Furthermore, the subsequence $f_{i_k}$ is bounded in $S'$, and $\liminf_{i_k\to \infty} \|f_{i_k}\|_{S'}$ is the largest number smaller than $\lim_{i_k\to \infty} \|f_{i_k}\|_{S'}$. Thus,
\begin{equation}\label{eq:Fmin1}
\| f \|_{S'}\leq\lim_{i_k\to \infty} \|f_{i_k}\|_{S'}\implies \|f\|_{S'}\leq \liminf_{i_k\to \infty} \|f_{i_k}\|_{S'}.
\end{equation}
Similarly, the weak convergence of the bounded subsequence $f_{i_k}$ in $S$ gives
\begin{equation}\label{eq:Fmin2}
 \|f\|_{S}\leq\lim_{i_k\to \infty} \|f_{i_k}\|_{S} \implies \|f\|_S\leq \liminf_{i_k\to \infty} \|f_{i_k}\|_{S}.
\end{equation}

Finally, due to \eqref{eq:Fmin1} and \eqref{eq:Fmin2}
\begin{align}\label{eq:fmin3}
2F(f)&=\int_M |\tilde\nabla_m f|_m^2\,d\mu_r+\int_{N} |\tilde\nabla_n \mathcal{L}f|_n^2\,d\nu_r\notag\\
&=\|f\|^2_S-\|f\|^2_{2, m, \mu}+\|f\|^2_{S'}-\|\mathcal{L}f\|^2_{2, n, \nu}\notag\\
&\leq\liminf_{i_k\to \infty} \|f_{i_k}\|^2_S-\|f\|^2_{2, m, \mu}+\liminf_{i_k\to \infty}\|f_{i_k}\|^2_{S'}-\|\mathcal{L}f\|^2_{2, n, \nu}\quad\mbox{by \eqref{eq:Fmin2} and \eqref{eq:Fmin1}}\notag\\
&=\liminf_{i_k\to \infty} \{\|f_{i_k}\|^2_S-\|f\|^2_{2, m, \mu}\}+\liminf_{i_k\to \infty}\{\|f_{i_k}\|^2_{S'}-\|\mathcal{L}f\|^2_{2, n, \nu}\}.
\end{align}
By the strong convergence of $f_{i_k}\to f$ in $L^2(M, m, \mu_r)$ and $\mathcal{L}f_{i_k}\to \mathcal{L}f$ in $L^2(N, n, \nu_r)$, one has
\begin{align}\label{eq:fmin4}
\mbox{RHS of \eqref{eq:fmin3}}
&=\liminf_{i_k\to \infty}\{ \|f_{i_k}\|^2_S-\|f_{i_k}\|^2_{2, m, \mu}\}+\liminf_{i_k\to \infty}\{\|f_{i_k}\|^2_{S'}-\|\mathcal{L}f_{i_k}\|^2_{2, n, \nu}\}\notag\\
&\leq \liminf_{i_k\to \infty} \left\{\|f_{i_k}\|^2_S-\|f_{i_k}\|^2_{2, m, \mu}+\|f_{i_k}\|^2_{S'}-\|\mathcal{L}f_{i_k}\|^2_{2, n, \nu}\right\}\notag\\
&=\liminf_{i_k\to \infty}\{\|\nabla_m f_{i_k}\|^2_{2, m, \mu}+\|\nabla_n \mathcal{L}f_{i_k}\|^2_{2, n, \nu}\}=2\liminf_{i_k\to \infty} F(f_{i_k})=2I.
\end{align}
From \eqref{eq:fmin3} and \eqref{eq:fmin4}, we conclude that $F(f)\leq I= \inf\{F(g):g\in \mathcal{C}\}$; thus the minimum of $F$ is attained by $f$. To complete the proof the theorem, it remains to show that $f\in \mathcal{C}$; that is $G(f)=0$. One has
\begin{align*}
G(f)
=&1-\int_M f^2\, d\mu_r\\
=&1-\|f\|^2_{2, m, \mu}\\
=&1-\lim_{i_k\to \infty}\|f_{i_k}\|^2_{2, m, \mu}\\
=&\lim_{i_k\to \infty} G(f_{i_k})=0,
\end{align*}
since $f_{i_k}\in \mathcal{C}$.
\end{proof}

Due to Lemma \ref{thm:Fnl}, the functionals $F$ and $G$ are continuously differentiable. In addition, by Lemma \ref{thm:Fmin} there exists a function $\bar{f}\in S$ which minimises $F$ over the constraint set $\mathcal{C}$. Therefore, using the method of Lagrange multipliers, one has the equation $F'(\bar{f})g=\lambda G'(\bar{f})g$ for some $\lambda\in\mathbb{R}$ and all $g\in S$. Expanding this equation with \eqref{eq:Fnl4} and \eqref{eq:rm1.1} yields
\begin{equation}\label{eq:ws2}
\int_M m(\tilde\nabla_m g,\tilde\nabla_m \bar{f}) \,d\mu_r+\int_{N} n(\tilde\nabla_n \mathcal{L}g ,\tilde\nabla_n \mathcal{L}\bar{f}) \,d\nu_r=-2\lambda\int_M g\bar{f}\,d\mu_r,
\end{equation}
for all $g\in S$, $\bar{f}\in \{f\in S:G(f)=0\}$ and some $\lambda\in \mathbb{R}$. By comparing \eqref{eq:ws1b} and \eqref{eq:ws2}, one sees immediately that $(\bar{f}, \lambda)\in \{f\in S:G(f)=0\}\times \mathbb{R}$ is a solution pair for the weak formulation \eqref{eq:ws1b}.

If we fix $g$ to be $\bar{f}$ in \eqref{eq:ws2}, then
\begin{align}\label{eq:ws3}
2F(\bar{f})&=\int_M |\tilde\nabla_m \bar{f}|_m^2\,d\mu_r+\int_N |\tilde\nabla_n \mathcal{L}\bar{f}|_n^2\,d\nu_r\notag\\
&=-2\lambda\int_M \bar{f}^2\,d\mu_r\notag\\
&=-2\lambda (G(\bar{f})+1).
\end{align}
Moreover, as a consequence of Lemma \ref{thm:Fmin}, $\bar{f}$ is minimising for $F$. Thus rearranging \eqref{eq:ws3} yields
\begin{align}\label{eq:ws4}
  \lambda&=-\inf_{f\in S}\frac{F(f)}{G(f)+1}\notag\\
      &=-\inf_{f\in S}\frac{\int_M |\tilde\nabla_m f|_m^2 \,d\mu_r+\int_{N} |\tilde\nabla_n \mathcal{L}f|_n^2 \,d\nu_r}{2\int_M f^2 \,d\mu_r}.
\end{align}
Let the solution $(\bar{f}, \lambda)$ to \eqref{eq:ws1b} be denoted by $(\phi_2, \lambda_2)$. To find other solution pairs to \eqref{eq:ws1b} of the form $(\phi_i, \lambda_i)$, one follows the standard induction arguments presented in \cite{froyland14} and p.212 in \cite{mcowen96}: One constructs a sequence of decreasing, closed and $L^2(M, m, \mu_r)$-orthogonal subspaces of $S$; that is for $k\geq 1$, a sequence of subspaces of $S$ of the form $S_k=\{f\in S:\int_M f\phi_{i}\,d\mu_r=0, \mbox{ for }i=1, 2,\ldots, k\}$, where $\phi_1$ is constant. One then uses the fact that the solutions $\phi_i$ and $\phi_j$ are $L^2(M, m, \mu_r)$-orthogonal for $\lambda_i\neq \lambda_j$ (this follows immediately from Lemma C.3. in \cite{froyland14}), and the fact that each $S_k$ is complete (closed subspace of a Hilbert space), to apply the variational method on $S_{k-1}$ to obtain
\begin{equation}\label{eq:ws5}
     \lambda_k
      =-\inf_{f\in S_{k-1}}\frac{\int_M |\tilde\nabla_m f|_m^2 \,d\mu_r+\int_{N} |\tilde\nabla_n \mathcal{L} f|_n^2\,d\nu_r}{2\int_M f^2 \,d\mu_r},
\end{equation}
for $k=2, 3,\ldots$. Note that $(\phi_1,0)$ is a solution pair to \eqref{eq:ws1b}, thus $\lambda_1=0$. Additionally, the sequence $\lambda_i$ is monotone decreasing and tends to $-\infty$, with the solution space finite for each $i$ (Lemma C.4. in \cite{froyland14}).

Furthermore, using the identity $\tilde\nabla_n=T_*\tilde\nabla_{T^*n}\mathcal{L}^*$ from Lemma \ref{thm:app3.3},
\begin{align*}
 \int_{N} |\tilde\nabla_n \mathcal{L}f|^2_n \,d\nu_r
 &=\int_{N} n( T_*\tilde\nabla_{T^*n} f ,T_*\tilde\nabla_{T^*n} f) \, d\nu_r\\
 &=\int_M T^*n( \tilde\nabla_{T^*n} f, \tilde\nabla_{T^*n} f) \, d\mu_r=\int_M |\tilde\nabla_{T^*n}f|_{T^*n}^2 \, d\mu_r,
\end{align*}
where the second equality is due to \eqref{eq:pbm} and \eqref{eq:cov4}. Hence, one can write \eqref{eq:ws5} as an integral of $M$ as
\begin{equation}\label{eq:ws6}
 \lambda_k=-\inf_{f\in S_{k-1}}\frac{\int_M \left(|\tilde\nabla_m f|_m^2 + |\tilde\nabla_{T^*n} f|_{T^*n}^2\right) \, d\mu_r}{2\int_M f^2\,d\mu_r}.
\end{equation}

\subsection{Ellipticity and global regularity of weak solutions}\label{sec:sue}

To complete the proof of Theorem \ref{thm:spec}, it remains to verify that the eigenfunctions $\phi_i$ of $\triangle^D$ are smooth and unique for each $i$. For then, the smoothness of $\phi_i$ on $M$ implies that the weak solution pairs $(\phi_i,\lambda_i)$ which solves \eqref{eq:ws1b} are also solution to \eqref{eq:ws1}. Moreover, the uniqueness of $(\phi_i,\lambda_i)$ implies that the solutions of the eigenvalue problem \eqref{eq:eignp} are given by \eqref{eq:ws5} or \eqref{eq:ws6} (with the weak gradients replaced with standard version due to the additional smoothness of $\phi_i$). To determine the regularity and uniqueness of $\phi_i$ on $M$, we utilise the elliptical regularity theorem (see Theorem 8.14 in \cite{gilbarg77}).

We say that an operator $L$ of the form
\begin{equation}\label{eq:sue1}
 L=\sum_{i,j=1}^r a_{ij}\frac{\partial^2}{\partial x^i\partial x^j}+b_{i}\frac{\partial}{\partial x^i}+c,
\end{equation}
is strictly uniformly elliptic if $a_{ij}$, $b_i$ and $c$ are bounded, real-valued functions on $M$, and there exists a constant $\gamma>0$ such that
\begin{equation}\label{eq:sue2}
 \sum_{i,j=1}^r a_{ij}\varepsilon_i\varepsilon_j\geq \gamma|\varepsilon|^2,
\end{equation}
where $\varepsilon \in\mathbb{R}^r$ is non-zero.

As a consequence of the Elliptical Regularity theorem, if $\partial M$ is smooth, and $\triangle^D$ is a strictly uniformly elliptic operator with $a_{ij}, b_i, c\in C^\infty(M, \mathbb{R})$ and $c\leq 0$ in $M$, then there exist unique solutions in $C^\infty(M, \mathbb{R})$ for the eigenproblem \eqref{eq:eignp}.

\begin{lemma}\label{thm:ul}
Let $T:M\to N$ be a $C^\infty$-diffeomorphism, and assume $h_\mu$ is smooth and uniformly bounded away from zero. The weighted Laplacian $\triangle^D$ is a strictly uniformly elliptic operator of the form \eqref{eq:sue1}, with $a_{ij}, b_i, c$ in $C^\infty(M, \mathbb{R})$ and $c\leq 0$ on $M$.
\end{lemma}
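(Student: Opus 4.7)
The plan is to exploit the alternative representation of $\triangle^D$ furnished by Proposition \ref{PropLB} (Corollary \ref{thm:dwlp3}), namely
\begin{equation*}
\triangle^D = \tfrac{1}{2}\bigl(\triangle_\mu + \triangle_{\tilde\mu}\bigr),
\end{equation*}
where $\triangle_{\tilde\mu}$ is the weighted Laplace-Beltrami operator on $M$ with respect to the pullback metric $T^*n$ and density $h_\nu \circ T$. The advantage is that this expresses $\triangle^D$ as a sum of two operators of the same type, both living on $M$, so one can work in a single coordinate chart on $M$ and avoid pushing-forward/pulling-back by $T$ repeatedly.

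First, I would fix a local chart $(U,\varphi)$ on $M$ with coordinates $(x_1,\ldots,x_r)$. Using the coordinate formulas \eqref{eq:laplacel} and \eqref{def:wlp}, I can write
\begin{equation*}
\triangle_\mu f \;=\; \frac{1}{h_\mu\sqrt{\det G_m}}\sum_{i,j=1}^r \partial_i\!\left(h_\mu\, m^{ij}\sqrt{\det G_m}\,\partial_j f\right),
\end{equation*}
and an analogous expression for $\triangle_{\tilde\mu}$ with $m$ replaced by $T^*n$ and $h_\mu$ by $h_\nu\circ T$. Expanding the outer $\partial_i$ by the product rule puts each of $\triangle_\mu$ and $\triangle_{\tilde\mu}$ into the form \eqref{eq:sue1} with no zero-order term. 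Summing, $\triangle^D$ takes the form \eqref{eq:sue1} with
\begin{equation*}
a_{ij} = \tfrac{1}{2}\bigl(m^{ij}+(T^*n)^{ij}\bigr),\qquad c=0,
\end{equation*}
and $b_i$ a smooth combination of $m^{ij},(T^*n)^{ij}$ and first derivatives of $h_\mu$, $h_\nu\circ T$, $\sqrt{\det G_m}$, $\sqrt{\det G_{T^*n}}$.

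Next, I would verify the regularity of the coefficients. Smoothness of $m$, $n$, and $T$ gives smoothness of $m^{ij}$ and (via Lemma \ref{thm:app3.1}) of $(T^*n)^{ij}$. Smoothness of $h_\mu$ is hypothesised; smoothness of $h_\nu\circ T$ follows from the identity \eqref{eq:finiteJT}, which expresses $h_\nu\circ T$ as a smooth combination of $h_\mu$, $|\det J_T|$, and the metric determinants. The assumption that $h_\mu$ (and hence $h_\nu$) is uniformly bounded away from zero, together with the fact that $T$ being a $C^\infty$-diffeomorphism bounds $|\det J_T|$ uniformly away from zero (see Appendix \ref{sec:B3}), ensures the reciprocals appearing in the product-rule expansion are smooth on $U$. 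Since $M$ is compact, a finite atlas covers $M$ and all the coefficients are bounded on each chart. Clearly $c=0\le 0$.

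Finally, for the ellipticity bound \eqref{eq:sue2}: the metric tensors $m$ and $T^*n$ are both positive-definite at every point, so their inverse matrices $[m^{ij}]$ and $[(T^*n)^{ij}]$ are positive-definite on the compact set $\overline{U}$. Consequently there exist constants $\gamma_1,\gamma_2>0$ such that $\sum m^{ij}\varepsilon_i\varepsilon_j\ge\gamma_1|\varepsilon|^2$ and $\sum (T^*n)^{ij}\varepsilon_i\varepsilon_j\ge\gamma_2|\varepsilon|^2$ uniformly on $\overline{U}$ for all $\varepsilon\in\mathbb{R}^r$; taking $\gamma=(\gamma_1+\gamma_2)/2>0$ yields \eqref{eq:sue2} for $a_{ij}$. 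The main (mild) obstacle is precisely this uniformity: one must appeal to compactness of $\overline{U}$ (or of $M$ covered by finitely many charts) to upgrade pointwise positive-definiteness to a uniform lower bound, but no substantial difficulty arises since $M$ is compact and all relevant quantities are continuous and non-vanishing.
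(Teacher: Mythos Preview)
Your proposal is correct and follows essentially the same strategy as the paper, but from a slightly different starting point. The paper begins with the expansion of Corollary~\ref{thm:dwlp2},
\[
2\triangle^D f = \triangle_m f + \mathcal{L}^*\triangle_n\mathcal{L}f + \frac{m(\nabla_m h_\mu,\nabla_m f)}{h_\mu} + \frac{n(\nabla_n h_\nu,\nabla_n\mathcal{L}f)\circ T}{h_\nu\circ T},
\]
verifies $\mathcal{L}^*\triangle_n\mathcal{L}=\triangle_{T^*n}$ via Lemma~\ref{thm:app3.3}, expands $\triangle_{T^*n}$ explicitly in coordinates (including a Jacobi-formula computation of $\partial_j\sqrt{\det G_{T^*n}}$), and then checks the positivity of $(T^*n)^{ij}$ by writing it out as $(J_T^{-1}G_n^{-1}\circ T(J_T^\top)^{-1})_{ij}$ and invoking positive-definiteness of $G_n^{-1}$. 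The two weight terms are handled separately and shown to be purely first order.

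Your route via Corollary~\ref{thm:dwlp3} is more economical: by writing $\triangle^D=\tfrac12(\triangle_\mu+\triangle_{\tilde\mu})$ you treat both summands symmetrically as weighted Laplace--Beltrami operators in divergence form, so $c=0$ and $a_{ij}=\tfrac12\bigl(m^{ij}+(T^*n)^{ij}\bigr)$ drop out immediately, with no need to isolate the weight contributions. Your ellipticity argument (positive-definiteness of both inverse metrics plus compactness for uniformity) is also shorter than the paper's explicit matrix computation, though the latter has the minor advantage of making the dependence on $J_T$ and $G_n$ visible. Either way the content is the same; your packaging is just tidier.
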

\begin{proof}
 For this proof, we say that an operator has property $E$, if it is a strictly uniformly elliptic, with coefficients $a_{ij}, b_i, c$ in $C^\infty(M, \mathbb{R})$ and $c\leq 0$ on $M$. By Lemma \ref{thm:dwlp2}
\begin{equation}\label{eq:ul}
2\triangle^Df
 =\triangle_m f+\mathcal{L}^*\triangle_n \mathcal{L} f+\frac{m( \nabla_m h_\mu, \nabla_m f)}{h_\mu} + \frac{n( \nabla_n h_\nu, \nabla_n \mathcal{L}f)\circ T}{h_\nu\circ T}.
\end{equation}
 Clearly the sum of operators with property $E$ is an operator with property $E$. Additionally, if the second and fourth terms of \eqref{eq:ul} has property $E$, then by setting $T$ as the identity, one immediately see that the first and third terms of \eqref{eq:ul1} also has property $E$. Thus, it is sufficient to show that the second and fourth terms of \eqref{eq:ul} has property $E$. To show that second term $\mathcal{L}^*\triangle_{n} \mathcal{L}$ of \eqref{eq:ul} has property $E$, we note by Corollary \ref{thm:app3.3} that $\mathcal{L}^*\triangle_{n} \mathcal{L}=\triangle_{T^*n}$. Therefore in local coordinates at any point in $M$,
 \begin{equation}\label{eq:ul1}
  \mathcal{L}^*\triangle_n \mathcal{L} f=\triangle_{T^*n} f=\frac{1}{\sqrt{\det{G_{T^*n}}}}\sum_{i,j=1}^r\partial_j\left( \sqrt{\det{G_{T^*n}}}(T^*n)^{ij}\partial_i f\right),
 \end{equation}
 for all $f\in C^\infty(M, \mathbb{R})$. Using Jacobi's formula for differentiating the determinant of a matrix $A$; that is $\partial_k(\det{A})(x)=(\det A)(x)\sum_{ij}(A^{-1})_{ij}(x)\partial_kA_{ij}(x)$ for all $x\in M$, one has
\begin{align}\label{eq:ul1.1}
 \partial_j(\sqrt{\det G_{T^*n}})
 &=\frac{1}{2}\frac{1}{\sqrt{\det G_{T^*n}}}\partial_j(\det G_{T^*n})\notag\\
 &=\frac{1}{2}\frac{\det G_{T^*n}}{\sqrt{\det G_{T^*n}}}\sum_{k,l=1}^r (G_{T^*n}^{-1})_{kl}\partial_j (G_{T^*n})_{kl}\notag\\
 &=\frac{1}{2}\sqrt{\det G_{T^*n}}\sum_{k,l=1}^r (T^*n)^{kl}\partial_j (T^*n)_{kl}.
\end{align}
Therefore, by using the product rule to expand the partial derivative in the summation on the RHS of \eqref{eq:ul1}, and then applying \eqref{eq:ul1.1} to the first term one has
 \begin{align}\label{eq:ul2} \mbox{RHS of \eqref{eq:ul1}}=&
 \begin{aligned}
  \frac{1}{\sqrt{\det G_{T^*n}}}\bigg(\sum_{i, j=1}^r (T^*n)^{ij}\partial_j(\sqrt{\det G_{T^*n}})\partial_i f+\sqrt{\det G_{T^*n}}\partial_j(T^*n)^{ij}\partial_i f\\
  +\sqrt{\det G_{T^*n}}(T^*n)^{ij}\partial_j\partial_i f\bigg)
  \end{aligned}\notag\\
  =&\sum_{i,j=1}^r \frac{1}{2}\left(\sum_{k,l=1}^r (T^*n)^{kl}\partial_j (T^*n)_{kl}\right)(T^*n)^{ij}\partial_if+\partial_j (T^*n)^{ij}\partial_i f +(T^*n)^{ij}\partial_j\partial_if\notag\\
  =&\sum_{i,j=1}^r\left[\frac{1}{2}\left(\sum_{k,l=1}^r(T^*n)^{kl}\partial_j (T^*n)_{kl}\right)(T^*n)^{ij}+\partial_j(T^*n)^{ij}\right]\partial_i f+(T^*n)^{ij}\partial_j\partial_i f.
 \end{align}
 Now the Riemannian metric $n$ is a $C^\infty$ bilinear symmetric form and positive-definite for every $y\in N$. Moreover, the mapping $T$ is a $C^\infty$-diffeomorphism. Hence, the components $(T^{*}n)^{ij}$ and $\partial_i (T^{*}n)^{ij}$ are both bounded and smooth for each $1\leq i, j\leq r$. Therefore, the coefficients  $b_i=\sum_j \frac{1}{2}(T^*n)^{ij}\partial_j(T^*n)_{kl}+\partial_j(T^*n)^{ij}$ and $a_{ij}=(T^*n)^{ij}$ in \eqref{eq:ul2} are both bounded and smooth. Additionally, due to Lemma \ref{thm:app3.1} we have at the point $x\in M$,
 \begin{align}\label{eq:ul2.2}
    \sum_{i,j=1}^r a_{ij}\varepsilon_i\varepsilon_j
    &=\sum_{i, j=1}^r (T^*n)^{ij}\varepsilon_i\varepsilon_j\notag\\
    &=\sum_{i,j=1}^r (J_T^\top\cdot G_{n}\circ T\cdot J_T)^{ij}\varepsilon_i\varepsilon_j\notag\\
    &=\sum_{i,j=1}^r (J_T^{-1}\cdot G_{n}^{-1}\circ T\cdot (J_T^\top)^{-1})_{ij}\varepsilon_i\varepsilon_j\notag\\
    &=\sum_{i,j, k, l=1}^r (J_T^{-1})_{ik}\cdot (G^{-1}_{n}\circ T)_{kl}\cdot (J_T^{-1})_{jl} \varepsilon_i\varepsilon_j\notag\\
   &=\sum_{i,j,k,l=1}^r (J_{T^{-1}}\circ T)_{ik}\cdot (G^{-1}_{n}\circ T)_{kl} \cdot (J_{T^{-1}}\circ T)_{jl}\varepsilon_i\varepsilon_j\quad\mbox{by the inverse function theorem}\notag\\
   &=\sum_{k,l=1}^r\left(J_{T^{-1}}\circ T\cdot \varepsilon\right)_k\cdot (G^{-1}_{n}\circ T)_{kl}\cdot \left(J_{T^{-1}}\circ T\cdot\varepsilon\right)_l\notag\\
   &>0,
 \end{align}
 where we have used the fact that the matrix $G_n^{-1}$ is positive definitive at every $Tx\in N$ to obtain the last inequality. Hence, there is a $\gamma>0$ such that $\sum_{i,j=1}^r a_{ij}(x)\varepsilon_i\varepsilon_j\geq \gamma|\varepsilon|^2$ for all $x\in M$. Thus $a_{ij}$ satisfies the condition \eqref{eq:sue2}, so by \eqref{eq:ul1}-\eqref{eq:ul2} the term $\mathcal{L}^*\triangle_n \mathcal{L}$ has property $E$.

 To show that the fourth term $n( \nabla_n h_\nu, \nabla_n\mathcal{L}f)/\mathcal{L}^*h_\nu$ of \eqref{eq:ul} has property $E$, we consider the numerator term. One has at each point $Tx \in N$,
 \begin{align}\label{eq:ul3}
  n(\nabla_n h_\nu, \nabla_n \mathcal{L} f)
  &=n(\nabla_n h_\nu, T_*\nabla_{T^*n} f)\quad\mbox{by Lemma \ref{thm:app3.3}}\notag\\
  &=(T_*\nabla_{T^*n} f) h_\nu\notag\\
  &=\nabla_{T^*n} f (h_\nu\circ T)\circ T^{-1}\quad\mbox{by \eqref{eq:T_*}}\notag\\
  &=m(\nabla_m(h_\nu\circ T), \nabla_{T^*n} f)\circ T^{-1}.
 \end{align}
 Writing the RHS of \eqref{eq:ul3} in local coordinates, one has at any point $x\in M$
 \begin{align*}
 \mbox{RHS of \eqref{eq:ul3}}
  &=\sum_{i,j=1}^r m_{ij}\left(\sum_{k=1}^r m^{ki}\partial_k (h_\nu\circ T)\right)\left(\sum_{l=1}^r (T^*n)^{jl}\partial_lf \right)\notag\\
  &=\sum_{j=1}^r \partial_j(h_\nu\circ T) \left(\sum_{l=1}^r (T^*n)^{jl} \partial_lf\right)\quad\mbox{on contracting the index $i$}\notag\\
  &=\sum_{j,l=1}^r  \partial_j(h_\nu\circ T) (T^*n)^{jl} \partial_l f.
 \end{align*}
Therefore, at each $x\in M$
\begin{align*}
 \frac{n( \nabla_n h_\nu, \nabla_n \mathcal{L} f)\circ T}{h_\nu\circ T}
 &=\frac{\sum_{j,l=1}^r  \partial_j(h_\nu\circ T) (T^*n)^{jl} \partial_l f}{h_\nu\circ T}\\
 &=\sum_{j,l=1}^r  \partial_j(\ln{(h_\nu\circ T)}) (T^*n)^{jl} \partial_l f
\end{align*}
As before, due to the properties of the metric $m$, the smoothness of $h_\mu$, and the fact that $T$ is a diffeomorphism, the coefficient $b_l=\sum_{j} \partial_j(\ln{(h_\nu\circ T)}) (T^*n)^{jl}$ is bounded and smooth, and so the fourth term of \eqref{eq:ul} has property $E$.
\end{proof}

\section{The proof of Theorem \ref{thm:wci}}

This proof is a straightforward modification of Theorem 3.2 in \cite{froyland14}. Let $g:M\to \mathbb{R}^+$ be nonnegative and smooth. Since
\begin{equation*}
 \int_N \mathcal{L}g\, d\nu_r=\int_M g \,d\mu_r,
\end{equation*}
by \eqref{eq:cov3}, and densities $h_\mu, h_\nu$ are both positive and smooth, the function $\mathcal{L}g$ is also nonnegative and smooth. Denote by $\Gamma_t$ the level surfaces generated by $g$; that is $\{x\in M:g(x)=t\}$. Then the level surfaces of $T\Gamma_t$ are generated by $\mathcal{L}g$. Now, due to the co-area formula given by Lemma \ref{thm:ca}, one has
\begin{align}\label{eq:wci1}
&\int_M |\nabla_m g|_m \cdot h_\mu\omega_m^r+\int_N |\nabla_n \mathcal{L}g|_n \cdot h_\nu \omega_n^r\notag\\
=&\int_0^\infty \left( \int_{\Gamma_t} h_\mu\omega_m^{r-1}+\int_{T\Gamma_t} h_\nu\omega_n^{r-1}\right)dt\notag\\
=&\int_0^\infty \left(\mu_{r-1}(\Gamma_t)+\nu_{r-1}(T\Gamma_t)\right) dt\notag\\
\geq& 2 \inf_{t\in (0, \infty)}\mathbf{H}^D(\{g=t\})\int_0^\infty \min\{\mu_r(g>t), \mu_r(g<t)\}dt.
\end{align}
Let $f:M\to \mathbb{R}$ be smooth, and $\sigma$ the median of $f$ with respect to $\mu_r$; i.e $\mu_r(f\geq \sigma)\geq 1/2$ and $\mu_r(f\leq\sigma)\geq 1/2 $. Set $f_+=\max\{f-\sigma, 0\}$ and $f_{-}=-\min\{f-\sigma, 0\}$, so that $f-\sigma=f_+-f_-$. Observe that for each point $x\in M$, either $|f(x)-\sigma|=f_+(x)$, $|f(x)-\sigma|=f_-(x)$ or $|f(x)-\sigma|=f_+(x)=f_-(x)=0$. Therefore
 \begin{equation}\label{eq:wci9b}
 \inf_{t\in (-\infty, \infty)}\mathbf{H}^D(\{f=t\})= \min\bigg\{ \inf_{t\in (0, \infty)}\mathbf{H}^D(\{f_-^2=t\}), \inf_{t\in (0, \infty)}\mathbf{H}^D(\{f_+^2=t\})\bigg\}.
 \end{equation}
In addition, if $f_+$ is positive then $f> \sigma$, and if $f_-$ is positive then $f<\sigma$. Hence, by using the fact that $\sigma$ is the median of $f$, one has
\begin{equation}\label{eq:wci7}
\mu_r(f^2_+>t)\leq \frac{1}{2}\quad \quad\mbox{and}\quad\quad \mu_r(f^2_->t)\leq \frac{1}{2},
\end{equation}
for all $t\geq 0$. Moreover, if $f_+\neq 0$ then $f_-=0$, and if $f_-\neq 0$ then $f_+=0$. Hence,
\begin{equation}\label{eq:wci2a}
 (f-\sigma)^2=f_+^2+f_-^2,
\end{equation}
and
\begin{align}\label{eq:wci2b}
|\nabla_m (f_+^2+f_-^2)|_m^2
&=m(\nabla_m (f_+^2+f_-^2), \nabla_m (f_+^2+f_-^2))\notag\\
&=|\nabla_m (f_+^2)|_m^2+2m(\nabla_m(f_+^2), \nabla_m (f_-^2))+|\nabla_m (f_-^2)|_m^2\notag\\
&=|\nabla_m(f_+^2)|_m^2+|\nabla_m (f_-^2)|_m^2\notag\\
&=|\nabla_m(f_+^2)|_m^2 +2\cdot |\nabla_m (f_+^2)|_m\cdot |\nabla_m(f_-^2)|_m+|\nabla_m (f_-^2)|_m^2\notag\\
&=\left(|\nabla_m(f_+^2)|_m+|\nabla_m (f_-^2)|_m\right)^2.
\end{align}
Finally,  
by definition $\mathcal{L}f_+=\max\{\mathcal{L}f-\sigma, 0\}$ and $\mathcal{L}f_-=-\min\{\mathcal{L}f-\sigma, 0\}$. Hence analogous to \eqref{eq:wci2a}
\begin{equation}\label{eq:wci2c}
(\mathcal{L}f-\sigma)^2=\mathcal{L}f_+^2+\mathcal{L}f_-^2
\end{equation}
and analogous to \eqref{eq:wci2b}
\begin{equation}\label{eq:wci2d}
|\nabla_n(\mathcal{L}f_+^2+\mathcal{L}f_-^2)|_n^2=(|\nabla_n (\mathcal{L}f_+^2)+\nabla_n(\mathcal{L}f_-^2)|_n)^2.
\end{equation}

Due to \eqref{eq:wci2a}-\eqref{eq:wci2d}, one has
\begin{align}\label{eq:wci3}
&\int_M \big|\nabla_m[(f-\sigma)^2]\big|_m\,d\mu_r+\int_M \big|\nabla_n [(\mathcal{L}f-\sigma)^2]\big|_n\,d\nu_r\notag\\
=&\int_M \big|\nabla_m(f_+^2+f_-^2)\big|_m\,d\mu_r+\int_N \big|\nabla_n(\mathcal{L}f_+^2+\mathcal{L}f_-^2)\big|_n\,d\nu_r\quad\mbox{by \eqref{eq:wci2a} and \eqref{eq:wci2b}}\notag\\
=&\int_M\left( |\nabla_m (f_+^2)|_m+ |\nabla_m (f_-^2)|_m \right)\,d\mu_r+ \int_N \left(|\nabla_n (\mathcal{L}f_+^2)|_n+|\nabla_n (\mathcal{L}f_-^2)|_n\right)\,d\nu_r,
\end{align}
where the last line is due to \eqref{eq:wci2b} and \eqref{eq:wci2d}.

Now, consider the RHS of \eqref{eq:wci3}. Since $f_+^2$ and $f_-^2$ are nonnegative and smooth almost everywhere, one can set $g=f_+^2$ and $g=f_-^2$ independently in \eqref{eq:wci1}, and then apply \eqref{eq:wci9b} to the result to obtain
\begin{align}\label{eq:wci9}
&\mbox{RHS of \eqref{eq:wci3}}\notag\\
\geq&
 2 \inf_{t\in (-\infty, \infty)}\mathbf{H}^D(\{f=t\})\int_0^\infty \min\{\mu_r(f^2_+>t), \mu_r(f^2_+<t)\}+\min\{\mu_r(f^2_->t), \mu_r(f^2_-<t)\}\,dt\notag\\
=&2 \inf_{t\in (-\infty, \infty)}\mathbf{H}^D(\{f=t\})   \int_0^\infty \mu_r(f_+^2> t)+\mu_r(f_-^2> t)\,dt,
\end{align}
where the equality on the last line is due to \eqref{eq:wci7}. Applying the Cavalieri\rq{}s principle (Proposition I.3.3 in \cite{chavel01}) to the RHS of \eqref{eq:wci9} yields
\begin{align}\label{eq:wci10}
\mbox{RHS of \eqref{eq:wci9}}
&=2\inf_{t\in (-\infty, \infty)}\mathbf{H}^D(\{f=t\})  \int_M (f_+^2+ f_-^2) \,d\mu_r\notag\\
&=2\inf_{t\in (-\infty, \infty)}\mathbf{H}^D(\{f=t\})\int_M (f-\sigma)^2 \,d\mu_r.
\end{align}
Next, we consider the LHS of \eqref{eq:wci3}. In local coordinates, one has by \eqref{eq:gradl}
\begin{align*}
\nabla_m[(f-\sigma)^2]
&=\sum_{i, j=1}^r m^{ij}\partial_i (f-\sigma)^2 \partial_j\\
&=2\sum_{i, j=1}^r m^{ij} (f-\sigma)\partial_i f\partial_j\\
&=2(f-\sigma)\nabla_m f.
\end{align*}
Therefore, by Cauchy-Schwarz
\begin{align} \label{eq:wci4}
\int_M \big|\nabla_m[(f-\sigma)^2] \big|_m\, d\mu_r
&= 2\int_M |f-\sigma|\cdot\big|\nabla_m f\big|_m\,d\mu_r\notag\\
&\leq 2\|f-\sigma\|_{2, m, \mu}\cdot \|\nabla_mf\|_{2, m, \mu}.
\end{align}
Also, analogous to \eqref{eq:wci4}
\begin{align}\label{eq:wci5}
\int_N \big|\nabla_n[(\mathcal{L}f-\sigma)^2] \big|_n\,d\nu_r
&\leq 2\|\mathcal{L}f-\sigma\|_{2, n, \nu}\cdot \|\nabla_n \mathcal{L}f\|_{2, n, \nu}\notag\\
&=2\left(\int_N (\mathcal{L}f-\sigma)^2 \,d\nu_r  \right)\cdot \|\nabla_n \mathcal{L}f\|_{2, n, \nu}\notag\\
&=2\left(\int_M (f-\sigma)^2 \,d\mu_r \right) \cdot \|\nabla_n \mathcal{L}f\|_{2, n, \nu}\quad\mbox{by \eqref{eq:cov3}}\notag\\
&= 2\|f-\sigma\|_{2, m, \mu}\cdot \|\nabla_n \mathcal{L}f\|_{2, n, \nu}.
\end{align}
Therefore, by \eqref{eq:wci3}-\eqref{eq:wci5}, one has
\begin{align}\label{eq:wci6}
\inf_{t\in (-\infty, \infty)}\mathbf{H}^D(\{f=t\}) \int_M (f-\sigma)^2\,d\mu_r
&\leq  \|f-\sigma\|_{2, m, \mu}\cdot \left(\|\nabla_mf\|_{2, m, \mu}+ \|\nabla_n \mathcal{L}f\|_{2, n, \nu}\right)\notag\\
\implies \inf_{t\in (-\infty, \infty)}\mathbf{H}^D(\{f=t\})
&\leq \frac{\|\nabla_m f\|_{2, m, \mu}+\|\nabla_n\mathcal{L}f\|_{2, n, \nu}}{\left(\int_M(f-\sigma)^2\, d\mu_r\right)^{1/2}}.
\end{align}

Let $\alpha(f)$ be the mean of $f$ with respect to $\mu_r$; that is $\alpha(f)=\int_M f\, d\mu_r$. Then $\int_M (f-c)^2 \,d\mu_r$ as a function of $c\in \mathbb{R}$ is minimum when $c=\alpha(f)$. Hence, by squaring both sides of \eqref{eq:wci6}, one has
\begin{align}\label{eq:wci8}
\left(\inf_{t\in (-\infty, \infty)}\mathbf{H}^D(\{f=t\})\right)^2
&\leq \frac{\left(\|\nabla_m f\|_{2, m, \mu}+\|\nabla_n\mathcal{L}f\|_{2, n, \nu}\right)^2}{\int_M(f-\alpha(f))^2\,d\mu_r}\notag\\
&\leq 2\frac{\int_M |\nabla_m f|_m^2\,d\mu_r+\int_N |\nabla_n \mathcal{L}f|_n^2\,d\nu_r}{\int_M(f-\alpha(f))^2\,d\mu_r},
\end{align}
for all $f\in C^\infty(M, \mathbb{R})$, where we have used the fact that $(a+b)^2\leq 2(a^2+b^2)$ for $a, b\in \mathbb{R}$ to obtain the inequality on the last line. Furthermore, if $\lambda_2$ is the smallest magnitude nonzero eigenvalue of $\triangle^D$ with corresponding eigenfunction $\phi_2$, then by Theorem \ref{thm:spec}, one has $\phi_2\in C^\infty(M, \mathbb{R})$, $\alpha(\phi_2)=\int_M \phi_2\, d\mu_r=0$, and for $k=2$ the infimum of \eqref{eq:3.7} is attained by $\phi_2$. Thus, by setting $f=\phi_2$ in \eqref{eq:wci8},
\begin{align*}
\left(\inf_{t\in (-\infty, \infty)}\mathbf{H}^D(\{\phi_2=t\})\right)^2
&\leq 2\frac{\int_M |\nabla_m \phi_2|_m^2\,d\mu_r+\int_N |\nabla_n \mathcal{L}\phi_2|_n^2\,d\nu_r}{\int_M |\phi_2-\alpha(\phi_2)|^2\, d\mu_r}\\
&= -4\lambda_2.
\end{align*}
This concludes the proof of the theorem.

\subsection{Time-discrete and time-continuous case}\label{sec:multip}
To generalise Theorem \ref{thm:wci} to the time-continuous dynamic Cheeger inequality, we note that apart from \eqref{eq:wci8} all arguments are applied linearly with respect to time. Hence, the results up to \eqref{eq:wci8} are immediate via the constructions outlined in Sections \ref{sec:mts} and \ref{sec:lts}. To modify the argument $(a+b)^2\leq 2(a^2+b^2)$ used to obtain \eqref{eq:wci8}, we apply Cauchy-Schwarz to obtain
\begin{align*}
\left(\int_0^\tau a_t \,dt\right)^2=\left(\int_0^\tau a_t\cdot 1 \,dt\right)^2 \leq \left(\int_0^\tau a_t^2 \,dt\right)\cdot \left(\int_0^\tau 1^2\,dt\right)=\tau\cdot \int_0^\tau a_t^2 \,dt.
\end{align*}
For the time-discrete case, one applies Cauchy-Schwarz analogously.

\section{The proof of Theorem \ref{thm:3.1}}\label{sec:p3.1}



Recall the definition of the diffusion operator $\mathcal{D}_{X, \epsilon}$ given by \eqref{eq:diffusion1}.
For $f\in C^3(M, \mathbb{R})$, we wish to evaluate the $\epsilon\to 0$ limit of the image of $f$ under the operator $\mathcal{L}_\epsilon^*\mathcal{L}_\epsilon$, where by \eqref{eq:3.2} and \eqref{eq:3.3},
\begin{equation}\label{eq:LeLe}
 \mathcal{L}_\epsilon^*\mathcal{L}_\epsilon f=\mathcal{D}^*_{X_{\epsilon},\epsilon}\circ\mathcal{L}^*\circ\mathcal{D}^*_{Y'_\epsilon,\epsilon}
 \left(\frac{\mathcal{P}_\epsilon(fh_\mu)}{\mathcal{P}_\epsilon h_\mu}\right),
\end{equation}
with
$
 \mathcal{P}_\epsilon =\mathcal{D}_{Y'_\epsilon,\epsilon}\circ\mathcal{P}\circ\mathcal{D}_{X,\epsilon}.
$
Let $(U, \varphi)$ be a chart on $M$ containing the point $x\in M$. Recall normal coordinates at the point $x$, are the local coordinates on $(U, \varphi)$ such that the metric tensor satisfies $m_{ij}(x)=\delta_{ij}$ and $\partial_i m_{jk}(x)=0$ for all $1\leq i, j, k\leq r$.

Introducing standard multi-index notation for $\alpha$; i.e\ $\alpha=(\alpha_1, \alpha_2, \ldots, \alpha_r)$ such that
\begin{equation}\label{eq:stnotation}
\begin{aligned}
|\alpha|&=\alpha_1+\ldots+\alpha_r\\
\alpha!&=\alpha_1!\alpha_2!\ldots \alpha_r!\\
D^\alpha&=\partial_1^{\alpha_1}\partial_2^{\alpha_2}\ldots\partial_r^{\alpha_r}\\
v^\alpha&=v_1^{\alpha_1} v_2^{\alpha_2}\ldots v_r^{\alpha_r},
\end{aligned}
\end{equation}
for a vector $v=(v_1, \ldots, v_r)$.
The following lemmas are well known results regarding normal coordinates
\begin{lemma}\label{thm:laplaciannormal}
Let $(U, \varphi)$ be a chart of $M$ containing the point $x_0\in M$, with corresponding normal coordinates $\{x_1, x_2, \ldots, x_r\}$. The Laplace-Beltrami operator satisfies
\begin{equation*}
\triangle_m f(x_0)=\sum_{i=1}^r\frac{\partial^2 (f\circ \varphi^{-1})}{\partial x_i^2}(\varphi (x_0)).
\end{equation*}
\end{lemma}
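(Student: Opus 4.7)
The plan is to start from the coordinate expression \eqref{eq:laplacel} for the Laplace-Beltrami operator,
\begin{equation*}
\triangle_m f=\frac{1}{\sqrt{\det G_m}}\sum_{i,j=1}^r\partial_i\!\left(m^{ij}\sqrt{\det G_m}\,\partial_j f\right),
\end{equation*}
expand the outer derivative by the product rule, and then evaluate the resulting expression at $x_0$ using the two defining properties of normal coordinates: $m_{ij}(x_0)=\delta_{ij}$ and $\partial_k m_{ij}(x_0)=0$ for all $1\le i,j,k\le r$. Writing $(x_i)$ for the normal coordinates at $x_0$, the product rule gives
\begin{equation*}
\triangle_m f(x_0)=\sum_{i,j=1}^r m^{ij}(x_0)\,\partial_i\partial_j f(x_0)+\sum_{i,j=1}^r \partial_i m^{ij}(x_0)\,\partial_j f(x_0)+\sum_{i,j=1}^r \frac{\partial_i\sqrt{\det G_m}(x_0)}{\sqrt{\det G_m}(x_0)}\,m^{ij}(x_0)\,\partial_j f(x_0),
\end{equation*}
so it remains to show that at $x_0$ the first sum collapses to $\sum_i \partial_i^2 f(x_0)$ and the other two sums vanish.

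The first reduction is immediate: since $G_m(x_0)$ is the identity matrix, $m^{ij}(x_0)=\delta_{ij}$, whence the leading term equals $\sum_i \partial_i^2 f(x_0)$, which is the right-hand side we want. The vanishing of the second sum follows by differentiating the identity $\sum_k m^{ik}m_{kj}=\delta^i_j$ to obtain $\partial_\ell m^{ij}=-\sum_{k,p} m^{ik}m^{jp}\,\partial_\ell m_{kp}$, and then evaluating at $x_0$, where $\partial_\ell m_{kp}(x_0)=0$ by hypothesis; hence $\partial_\ell m^{ij}(x_0)=0$ for all indices. For the third sum, Jacobi's formula $\partial_\ell\det G_m=\det G_m\cdot\sum_{i,j}m^{ij}\partial_\ell m_{ij}$ (as used in \eqref{eq:ul1.1}) together with $\partial_\ell m_{ij}(x_0)=0$ gives $\partial_\ell\det G_m(x_0)=0$, so $\partial_\ell\sqrt{\det G_m}(x_0)=0$. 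Putting these three observations together yields the claimed identity.

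No step here is truly an obstacle; the only mild subtlety is making sure that the vanishing of $\partial_\ell m_{ij}$ at $x_0$ transfers cleanly to the vanishing of $\partial_\ell m^{ij}$ and of $\partial_\ell\sqrt{\det G_m}$ at the same point, which is handled by the two elementary algebraic manipulations above (differentiation of the inverse-matrix relation and Jacobi's formula, respectively). The argument is purely pointwise at $x_0$, so no convergence or regularity issues arise.
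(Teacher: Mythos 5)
Your proof is correct and complete. The paper itself does not prove this lemma but simply cites p.~90 of Rosenberg's book; your argument --- expanding \eqref{eq:laplacel} by the product rule and then using $m_{ij}(x_0)=\delta_{ij}$ and $\partial_k m_{ij}(x_0)=0$ to kill the first-order terms via the inverse-matrix identity and Jacobi's formula --- is exactly the standard computation behind that citation, and every step checks out. The only cosmetic point is that when you write $\partial_i\partial_j f(x_0)$ you are implicitly using the paper's abbreviation from \eqref{eq:pdiff} identifying $\partial_j f$ with $\partial_j(f\circ\varphi^{-1})\circ\varphi$, which is consistent with the statement of the lemma, so nothing needs to change.
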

\begin{proof}
 See p.90 in \cite{rosenberg97}.
\end{proof}

\begin{lemma}\label{thm:texpdet}
 Let $(U, \varphi)$ be a chart of $M$ containing the point $x_0\in M$ with corresponding coordinates $\{x_1, x_2, \ldots, x_r\}$. The asymptotic expansion of $\sqrt{\det G_m}$ about  $B_\epsilon(x_0)\subseteq U$, centered at $x_0$ is given by
\begin{equation*}\label{eq:texpdet}
\sqrt{\det{G_m}}(x)=1+\sum_{|\alpha|=2}^\infty C_{\mathcal{R}, |\alpha|}(x_0)\cdot (\varphi(x))^\alpha,
\end{equation*}
where $C_\mathcal{R, |\alpha|}(x_0)$ depend only on the  Riemannian curvature tensor $\mathcal{R}$ and covariant derivatives of $\mathcal{R}$ at the point $x_0$. Moreover, if $\mathcal{R}$ is bounded on $B_\epsilon(x_0)$, then
\begin{equation}\label{eq:texpdet2}
\sum_{|\alpha|=2}^\infty |C_{\mathcal{R}, |\alpha|}(x_0)|<\infty
\end{equation}
\end{lemma}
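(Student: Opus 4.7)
The plan is to derive the expansion from the classical Taylor expansion of the metric tensor in normal coordinates at $x_0$, propagated through the determinant and square-root operations. In normal coordinates we have $m_{ij}(x_0) = \delta_{ij}$ and $\partial_k m_{ij}(x_0) = 0$, so each component admits a Taylor expansion
$$m_{ij}(x) = \delta_{ij} + \sum_{|\alpha|\geq 2} \frac{D^\alpha m_{ij}(x_0)}{\alpha!}\, (\varphi(x))^\alpha,$$
whose coefficients $D^\alpha m_{ij}(x_0)$ are, by a classical computation in Riemannian geometry, polynomial expressions in the Riemann curvature tensor $\mathcal{R}$ and its covariant derivatives evaluated at $x_0$ (for instance, the second-order coefficient is a multiple of $R_{ikjl}(x_0)+R_{iljk}(x_0)$).

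Next, I would substitute this expansion into the Leibniz formula for $\det G_m$, which is a polynomial of degree $r$ in the entries $m_{ij}$, and collect terms by total degree in $\varphi(x)$. The constant term is $\det(I_r)=1$, and all degree-one contributions vanish because $\partial_k m_{ij}(x_0)=0$, yielding $\det G_m(x) = 1 + \sum_{|\alpha|\geq 2} \tilde{C}_{\mathcal{R},|\alpha|}(x_0)\,(\varphi(x))^\alpha$ with coefficients polynomial in the curvature data at $x_0$. Applying the binomial series $\sqrt{1+u} = \sum_{k\geq 0}\binom{1/2}{k} u^k$ with $u = \det G_m - 1$, which has zero constant term and hence gives a valid formal composition, and regrouping by total degree produces the desired expansion; each $C_{\mathcal{R},|\alpha|}(x_0)$ is a finite polynomial in the Taylor coefficients $D^\beta m_{ij}(x_0)$ with $|\beta|\leq |\alpha|$, and therefore polynomial in $\mathcal{R}$ and its covariant derivatives at $x_0$.

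For the summability claim (\ref{eq:texpdet2}), the key observation is that boundedness of $\mathcal{R}$ and its covariant derivatives on $B_\epsilon(x_0)$ imposes uniform bounds on the Taylor coefficients of the $m_{ij}$ through their curvature-polynomial representation. These bounds then propagate through the polynomial dependence of $C_{\mathcal{R},|\alpha|}$ on curvature, giving $\sum_{|\alpha|\geq 2}|C_{\mathcal{R},|\alpha|}(x_0)|<\infty$ for $\epsilon$ lying within the radius of convergence of the resulting power series. The main obstacle will be the combinatorial bookkeeping: each $C_{\mathcal{R},|\alpha|}$ is a finite combinatorial sum arising from both the determinant expansion (with up to $r!$ terms per order) and the powers in the square-root series, so one must estimate these sums carefully against the growth of the Taylor coefficients in order to secure absolute summability. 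Since in the downstream application to Theorem \ref{thm:3.1} only finitely many low-order terms of this expansion are actually needed, a practical alternative is to prove a truncated version of the lemma with an explicit $O(|\varphi(x)|^{N+1})$ remainder, which bypasses the convergence question entirely and is all that the subsequent analysis requires.
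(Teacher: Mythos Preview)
The paper does not give its own proof: it simply cites Corollary~2.10 of Gray, \emph{The volume of a small geodesic ball of a Riemannian manifold}, Michigan Math.\ J.\ (1974). Your proposal---Taylor-expand $m_{ij}$ in normal coordinates, use the classical fact that the coefficients are universal polynomials in $\mathcal{R}$ and its covariant derivatives at $x_0$, then push through the determinant and the square-root series---is exactly the computation underlying Gray's result, so in substance you are reconstructing the cited reference rather than taking a different route.

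One remark on the summability claim~\eqref{eq:texpdet2}: you correctly flag this as the delicate point, and your instinct that the downstream application (Lemma~\ref{thm:app2.1} and Theorem~\ref{thm:3.1}) only needs a truncated expansion with a controlled $\mathcal{O}(\epsilon^3)$ remainder is spot-on. In fact, inspecting how the lemma is used---see \eqref{eq:diff4b} and \eqref{eq:diff5b}---the argument only requires that the tail $\sum_{|\beta|\ge 2} |C_{\mathcal{R},|\beta|}(x_0)|$ be finite so that it can be absorbed into a single constant multiplying $\epsilon^3$ or $\epsilon^4$. A finite Taylor expansion of $\sqrt{\det G_m}$ to second order with an $\mathcal{O}(|\varphi(x)|^3)$ remainder (uniform on the compact $M$) would serve equally well and sidesteps any convergence subtleties; this is the cleaner route if you want a fully self-contained argument rather than invoking Gray.
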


\begin{proof}
See Corollary 2.10 in \cite{gray74}.
\end{proof}

The following lemma generalises Lemma D.1 \cite{froyland14} for flat manifolds to the case of general Riemannian manifolds.
\begin{lemma}\label{thm:app2.1}
 Let $\mathbf{1}$ denote the characteristic function, and $\mathcal{D}_{X, \epsilon}$ be defined as in \eqref{eq:diffusion1}. There exist a constant $c$, such that
 \begin{equation*}
  \lim_{\epsilon\to 0}\sup_{\|f\|_{C^3(M, \mathbb{R})}\leq K}\left \|\frac{\mathcal{D}_{X,\epsilon}f- f}{\epsilon^2}-(c/2)\triangle_m f \right\|_{C^0(M, \mathbb{R})}=0,
 \end{equation*}
for each $K<\infty$.
\end{lemma}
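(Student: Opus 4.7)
The plan is to fix an arbitrary $x \in M$, introduce normal coordinates centered at $x$, and expand $\mathcal{D}_{X,\epsilon}f(x)$ as a power series in $\epsilon$ using Taylor's theorem on $f$ and the curvature expansion of the volume form. In normal coordinates at $x$ (with coordinate vector $v \in \mathbb{R}^r$ identifying a point $y$ near $x$), the key feature is that $\mathrm{dist}_m(x,y) = |v|$ on a neighborhood of $x$, so that $Q_{m,\epsilon}(x,y) = \epsilon^{-r} Q(|v|/\epsilon)$ takes the familiar Euclidean form. Thus
\begin{equation*}
\mathcal{D}_{X,\epsilon}f(x) = \int_{B_\epsilon(0)} \epsilon^{-r} Q(|v|/\epsilon) \cdot f(y(v)) \cdot \sqrt{\det G_m}(v) \, dv,
\end{equation*}
where I assume $\epsilon$ is smaller than the injectivity radius of $M$ (which is positive by compactness), so that the normal chart covers $B_\epsilon(x)$.

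First I would Taylor expand $f$ to third order around $x$ in these coordinates: $f(y(v)) = f(x) + \sum_i v_i \partial_i f(x) + \tfrac12 \sum_{ij} v_i v_j \partial_i \partial_j f(x) + R(v)$ with $|R(v)| \leq C_M \|f\|_{C^3(M,\mathbb{R})} |v|^3$, where $C_M$ depends only on $M$ (via the transition between normal and any fixed atlas). Next I would apply Lemma~\ref{thm:texpdet} to write $\sqrt{\det G_m}(v) = 1 + \sum_{|\alpha|\geq 2} C_{\mathcal{R},|\alpha|}(x) v^\alpha$, where by compactness of $M$ and smoothness of the curvature tensor the coefficients $C_{\mathcal{R},|\alpha|}(x)$ and the tail estimate \eqref{eq:texpdet2} are uniformly bounded in $x$.

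After the rescaling $v = \epsilon w$, the $\epsilon^{-r}$ cancels against the Jacobian $\epsilon^r$ and the domain becomes the unit ball $B_1(0)$, so
\begin{equation*}
\mathcal{D}_{X,\epsilon}f(x) = \int_{B_1(0)} Q(|w|) \Bigl[ f(x) + \epsilon \sum_i w_i \partial_i f(x) + \tfrac{\epsilon^2}{2}\sum_{ij} w_i w_j \partial_i \partial_j f(x) \Bigr](1 + O(\epsilon^2 |w|^2))\, dw + O(\epsilon^3),
\end{equation*}
where the $O(\epsilon^3)$ remainder is uniform in $x \in M$ and in $f$ with $\|f\|_{C^3} \leq K$. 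Using $\int_{B_1(0)} Q(|w|)\, dw = 1$ (the rescaling convention just below \eqref{eq:diffusion1}) handles the zeroth-order term, which produces $f(x)$; the first-order term vanishes because $Q$ is radial, so $\int w_i Q(|w|)\, dw = 0$ for each $i$; the second-order term uses hypothesis \eqref{eq:diffusion4} to yield $\int w_i w_j Q(|w|)\, dw = c\, \delta_{ij}$, giving $\tfrac{\epsilon^2 c}{2} \sum_i \partial_i^2 f(x)$, which by Lemma~\ref{thm:laplaciannormal} equals $\tfrac{\epsilon^2 c}{2}\, \triangle_m f(x)$ in the normal coordinates at $x$. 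The curvature correction to $\sqrt{\det G_m}$ contributes only terms of order $\epsilon^4$ or higher when combined with the even-order parts of the Taylor expansion (odd-order pairings vanish by radial symmetry of $Q$), and can be absorbed into the remainder. Rearranging and dividing by $\epsilon^2$ yields the claim pointwise, and because every constant used (injectivity radius, curvature bounds, Taylor remainder constant, kernel moments) is uniform in $x$ and depends on $f$ only through $K$, the convergence is uniform in the $C^0(M,\mathbb{R})$ norm.

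The main obstacle is the uniformity of all error bounds as $x$ varies over $M$, since the normal chart changes with $x$ and the coefficients $C_{\mathcal{R},|\alpha|}(x)$ likewise vary. This is resolved by compactness of $M$: the injectivity radius is bounded below, and smoothness of $\mathcal{R}$ yields uniform bounds on every $C_{\mathcal{R},|\alpha|}(x)$ and on the tail \eqref{eq:texpdet2}. A minor technical point is handling $x$ close to $\partial M$ (if nonempty): in that case $B_\epsilon(x) \cap X$ may not be a full geodesic ball. However, the hypothesis $X \subset X_\epsilon$ in the setup of $\mathcal{D}_{X,\epsilon}$ and a collar-neighborhood argument (using smoothness of $\partial M$) shows that any boundary truncation introduces corrections of order $O(\epsilon)$ times a set of measure $O(\epsilon)$ in the $v$-variable, which after dividing by $\epsilon^2$ still converges to zero, so the boundary case does not spoil the uniform convergence.
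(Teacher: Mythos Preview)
Your plan matches the paper's proof almost step for step: normal coordinates via the exponential map, Taylor expansion of $f$ to third order, the curvature expansion of $\sqrt{\det G_m}$ from Lemma~\ref{thm:texpdet}, radial symmetry of $Q$ to kill odd moments, property~\eqref{eq:diffusion4} to extract the second moment, Lemma~\ref{thm:laplaciannormal} to identify $\triangle_m f$, and compactness for uniformity.

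There is one slip worth flagging. You write that $\int_{B_1(0)} Q(|w|)\,dw = 1$ handles the zeroth-order term, and separately that the curvature correction to $\sqrt{\det G_m}$ paired with \emph{even}-order Taylor pieces contributes only at order $\epsilon^4$. The second claim fails for the zeroth-order piece: $f(x)\cdot\bigl(\sum_{|\alpha|=2}C_{\mathcal{R},2}(x)\epsilon^2 w^\alpha\bigr)$ produces an $O(\epsilon^2)$ term (a scalar-curvature contribution) that survives division by $\epsilon^2$. The rescaling convention you cite is not the Euclidean normalization $\int Q(|w|)\,dw=1$ but the Riemannian one $\mathcal{D}_{X,\epsilon}\mathbf{1}=\mathbf{1}$, i.e.\ $\int_{E_1(0)} Q(|w|)\sqrt{\det G_m}\circ\exp_x(\epsilon w)\,dw = 1$; this is exactly how the paper disposes of the zeroth-order term in \eqref{eq:diff3}, and it absorbs the curvature correction automatically. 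Once you use the normalization in this form, your argument goes through and coincides with the paper's.
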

\begin{proof}
Let $f\in C^3(M, \mathbb{R})$ with $\|f\|_{C^3(M, \mathbb{R})}\leq K$, fix $x_0\in X$ and set $\epsilon>0$ to be smaller than the injectivity radius of the point $x_0\in M$. It is well known that the exponential map $\textup{exp}_{x_0}$ at the point $x_0$ is a diffeomorphism of a neighbourhood of $0\in \mathbb{R}^r$ onto $B_\epsilon(x_0)$ (see Theorem 5.11, \cite{aubin01}). Moreover, there exist normal coordinates on the chart $(B_\epsilon(x_0), \exp^{-1}_{x_0})$; that is the components of the metric tensor $m$ satisfy $m_{ij}=\delta_{ij}$, and $\partial_k m_{ij}=0$ at the point $x_0$ for all $1\leq i, j,k\leq r$ (see Corollary 5.12, \cite{aubin01}).

Recall the definition of $Q$ from Section \ref{sec:3}.
By the Gauss lemma for Riemannian manifolds, the exponential map $\exp_{x_0}$ is a radial isometry from $E_\epsilon(0)$ to $B_\epsilon(x_0)$ (see Lemma 3.5, p.69 in \cite{carmo92}). Thus,
\begin{align}\label{eq:diff0}
Q_\epsilon(x_0, z)
&:=\epsilon^{-r}Q\left(\frac{\textup{dist}_m(x_0,z)}{\epsilon}\right)\notag\\
&=\epsilon^{-r}Q\left(\frac{|\exp_{x_0}^{-1} x_0-\exp_{x_0}^{-1} z|}{\epsilon}\right)=\epsilon^{-r}Q\left(\frac{|\exp_{x_0}^{-1} z|}{\epsilon}\right),
\end{align}
for all $z\in B_\epsilon(x_0)\subset M$. Moreover, due to the fact that $\textup{supp } Q\subset E_1(0)$, the function $Q_\epsilon$ vanishes for all $z\in M\setminus B_\epsilon(x_0)$.

Let $\{x_1, \ldots, x_r\}$ denote normal coordinates on $(B_\epsilon(x_0), \exp_{x_0}^{-1})$. Recall that the volume form on $M$ is given by $\omega_m^r=\sqrt{\det{G_m}}\cdot dx_1\wedge dx_2\wedge\ldots \wedge dx_r$, where $G_m$ is a $r\times r$ matrix with entries $m_{ij}$. Hence $(\exp_{x_0})^*\omega_m^r= \sqrt{\det G_m}\circ \exp_{x_0}\,d\ell$, where $\ell$ is the Lebesgue measure on $\mathbb{R}^r$. Moreover, since $\textup{supp }Q_\epsilon (x_0, \cdot)\subset B_\epsilon(x_0)$, one has
\begin{align}\label{eq:diff1}
\mathcal{D}_{X, \epsilon}f(x_0)
&=\int_{B_\epsilon(x_0)} Q_{\epsilon}(x_0, z)f(z)\cdot \omega_m^r(z)\notag\\
 &=\epsilon^{-r}\int_{E_\epsilon(0)} Q\left(\frac{|u|}{\epsilon}\right)\left(f\cdot \sqrt{\det G_m}\right)\circ \exp_{x_0} (u)\cdot  d\ell(u),
\end{align}
where the last line is due to \eqref{eq:diff0}. An application of the change of variable $v=u/\epsilon$ to the RHS of \eqref{eq:diff1} yields
\begin{equation}\label{eq:diff1b}
  \mbox{RHS of \eqref{eq:diff1}}=\int_{E_1(0)} Q\left(v\right)\left(f\cdot \sqrt{\det G_m}\right)\circ \exp_{x_0}(\epsilon v)\,d\ell(v).
\end{equation}
To complete the proof of the lemma from \eqref{eq:diff1}, we follow the proof of Lemma D.1 \cite{froyland14}.  We apply Taylor's theorem to the real-valued function $\bar{f}:=f \circ \exp_x$ on $E_1(0)$, centered at $0$ to obtain
\begin{equation*}
\bar{f}(\epsilon v)=\sum_{|\alpha|=0}^2 (\epsilon v)^\alpha\frac{D^\alpha \bar{f}(0)}{\alpha!}+\sum_{|\alpha|=3} (\epsilon v)^\alpha R_\alpha (\epsilon v)
\end{equation*}
where the remainder term $R_\alpha(\epsilon v)$ is given by
\begin{equation}\label{eq:diffremain}
R_\alpha(\epsilon v)=\frac{3}{\alpha!}\int_0^1(1-t)^{2}D^\alpha \overline{f}(t\epsilon v)\,dt.
\end{equation}
Due to the above Taylor expansion of $\bar{f}$, the RHS of \eqref{eq:diff1b} becomes
\begin{align}\label{eq:diff2}
 \mbox{RHS of \eqref{eq:diff1b}}=\int_{E_1(0)}Q(|v|)\left[\sum_{|\alpha|=0}^2 \epsilon^{|\alpha|} v^\alpha\frac{D^\alpha \bar{f}(0)}{\alpha!}+\sum_{|\alpha|=3}\epsilon^3 v^\alpha R_3(\epsilon v)\right]\cdot\left(\sqrt{\det G_m}\right)\circ \exp_{x_0}(\epsilon v)\,d\ell(v).
\end{align}
We evaluate the above integral term by term.

For the $|\alpha|=0$ term, one has
\begin{align}\label{eq:diff3}
 &\int_{E_1(0)} Q(|v|)\cdot \frac{\bar{f}(0)}{0!}\cdot\left(\sqrt{\det G_m}\right)\circ \exp_{x_0}(\epsilon v)\,d\ell(v)\notag\\
 =&\int_{E_1(0)} Q(|v|)\cdot f(x_0)\cdot\left(\sqrt{\det G_m}\right)\circ \exp_{x_0}(\epsilon v)\,d\ell(v)\notag\\
 =& f(x_0)\int_{B_\epsilon(x_0)}Q_{m, \epsilon}(x_0, z)\cdot\omega_m^r(z)=f(x_0).
\end{align}

For the $|\alpha|=1$ term, 
we note that the real-valued function $Q(|v|)$ is symmetric. Hence, $v_i Q(|v|)$ are odd functions of $v$ for $1\leq i \leq r$. Therefore,
\begin{align}\label{eq:diff4}
  &\int_{E_1(0)} Q(|v|)\cdot \epsilon^1 \sum_{i=1}^r v_i^1\frac{\partial_i\bar{f}(0)}{1!}\cdot \sqrt{\det{G_m}}\circ \exp_{x_0}(\epsilon v)\,d\ell(v)\notag\\
  =&\sum_{i=1}^r \partial_i\bar{f}(0)\left(\int_{E_1(0)} v_iQ(|v|)\cdot \epsilon\left(1+\sum_{|\beta|=2}^\infty C_{\mathcal{R}, |\beta|}(x_0)\epsilon^{|\beta|}v^\beta\right)\,d\ell(v)\right)\notag\\
 = &\sum_{i=1}^r \partial_if(x_0)\cdot  \left(0+\int_{E_1(0)}  Q(|v|)\cdot \sum_{|\beta|=2}^\infty C_{\mathcal{R}, |\beta|}(x_0)v_i v^\beta\epsilon^{|\beta|+1}\,d\ell(v)\right),
\end{align}
where we have applied Lemma \ref{thm:texpdet} to obtain the second equality, with constants $C_{\mathcal{R}, |\beta|}(x_0)<\infty$ depend only on the Riemannian curvature tensor $\mathcal{R}$, and the covariant derivatives of $\mathcal{R}$ at the point $x_0$.
We return to this term later in the proof, but for now we proceed to the $|\alpha|=2$ term.



For the $|\alpha|=2$ term, due to the property \eqref{eq:diffusion4} for $Q$ and the approximation of $\sqrt{G_m}$ by Lemma \ref{thm:texpdet}, one has
\begin{align}\label{eq:diff5}
 &\int_{E_1(0)} Q(|v|)\cdot \left( \epsilon^2 \sum_{i, j=1}^r v_iv_j \frac{\partial_{i}\partial_j\bar{f}(0)}{2!}\right)\cdot \left(1+\sum_{|\beta|=2}^\infty C_{\mathcal{R}, |\beta|}(x_0)\epsilon^{|\beta|}v^\beta \right)\,d\ell(v)\notag\\
=&\sum_{i,j=1}^r\frac{\partial_i\partial_j \bar{f}(0)}{2!}\cdot \left(\int_{E_1(0)} Q(|v|)\cdot \left(v_iv_j\epsilon^2+\sum_{|\beta|=2}^\infty C_{\mathcal{R}, |\beta|}(x_0)v_iv_j v^\beta\epsilon^{|\beta|+2}\right)\,d\ell(v)\right)\notag\\
 =&\frac{c\epsilon^2}{2}\sum_{i=1}^r\partial_i^2 \bar{f}(0)+\sum_{i,j=1}^r\frac{\partial_i\partial_j \bar{f}(0)}{2}\cdot \left(\int_{E_1(0)} Q(|v|)\cdot \sum_{|\beta|=2}^\infty C_{\mathcal{R}, |\beta|}(x_0)v_iv_j v^\beta\epsilon^{|\beta|+2}\,d\ell(v)\right)\notag\\
 =&\frac{c\epsilon^2}{2}\triangle_m f(x_0)+\sum_{i, j=1}^r\frac{\partial_i\partial_j {f}(x_0)}{2}\cdot \left(\int_{E_1(0)}  Q(|v|)\cdot \sum_{|\beta|=2}^\infty C_{\mathcal{R}, |\beta|}(x_0)v_iv_j v^\beta\epsilon^{|\beta|+2}\,d\ell(v)\right),
 \end{align}
 where we have applied Lemma \ref{thm:laplaciannormal} to obtain the last line.

Now set $\epsilon\leq \min\{\rho, 1\}$, where $\rho$ is smaller than the injectivity radius for every $x\in M$, then the approximations \eqref{eq:diff1}-\eqref{eq:diff5} are valid for every point $x\in M$. Moreover, since $M$ is compact $\mathcal{R}$ is bounded on $M$. Therefore, by \eqref{eq:texpdet2}, if $v\in E_1(0)$ then there exists a constant $C_1$ such that
\begin{equation}\label{eq:diff4b}
\sum_{|\beta|=2}^\infty \left|\mathcal{C}_{\mathcal{R}, |\beta|}(x)v_i v^{|\beta|}\epsilon^{|\beta|+1}\right|
\leq \epsilon^3 \left(\sum_{|\beta|=2}^\infty \left| \mathcal{C}_{\mathcal{R}, |\beta|}(x)\right|\right)
=\frac{C_1}{\int_{E_1(0)}Q(|v|)\,d\ell(v)}\epsilon^3,
\end{equation}
for each $i\geq 1$ and all $x\in M$. Similarly, if $v\in E_1(0)$ then there exists a constant $C_2$ such that
\begin{equation}\label{eq:diff5b}
\sum_{|\beta|=2}^\infty \left|\mathcal{C}_{\mathcal{R}, |\beta|}(x)v_iv_j v^{|\beta|}\epsilon^{|\beta|+2}\right|
\leq \epsilon^4 \left(\sum_{|\beta|=2}^\infty \left| \mathcal{C}_{\mathcal{R}, |\beta|}(x)\right|\right)
=\frac{C_2}{\int_{E_1(0)}Q(|v|)\,d\ell(v)}\epsilon^4,
\end{equation}
for each $i, j\geq 1$ and all $x\in M$. Due to \eqref{eq:diff1}-\eqref{eq:diff5b}, one has
  \begin{align}\label{eq:codiff1}
 \left|\mathcal{D}_{X, \epsilon}f(x)-f(x)-\frac{c\epsilon^2}{2}\triangle_m f(x)\right|
 \leq
 \begin{aligned}
 &\left| \left(\sum_{i=1}^r \partial_i f(x)\right)\cdot C_1\epsilon^3\right|+\left|\left(\sum_{i,j=1}^r\partial_i\partial_jf(x)\right)\cdot C_2\epsilon^4\right|\\
& \hfill+\left|\int_{E_1(0)}Q(|v|)\cdot \sum_{|\alpha|=3}\epsilon^3 v^\alpha R_\alpha(\epsilon v)\cdot \sqrt{\det{G_m}}\circ \exp_x (v)\,d\ell(v)\right|.
 \end{aligned}
 \end{align}
for all $x\in M$. Consider the term on the second line of \eqref{eq:codiff1}, one has
\begin{align*}
&\sup_{\|f\|_{C^3(M, \mathbb{R})}\leq K} \left\|\int_{E_1(0)}Q(|v|)\cdot \sum_{|\alpha|=3}\epsilon^3 v^\alpha R_\alpha(\epsilon v)\cdot \sqrt{\det{G_m}}\circ \exp_x (v)\,d\ell(v)\right\|_{C^0(M, \mathbb{R})}\\
\leq &   \sup_{\substack{\|f\|_{C^3(M, \mathbb{R})}\leq K\\ |\alpha|=3, u\in E_\epsilon(0)}}\|R_\alpha(u)\|_{C^0(M, \mathbb{R})} \cdot \epsilon^3 \int_{E_{1}(0)} Q(|v|)\cdot \sum_{|\alpha|=3} v^\alpha \sqrt{\det{G_m}}\circ \exp_x (v)\,d\ell(v)\\
=&\sup_{\substack{\|f\|_{C^3(M, \mathbb{R})}\leq K\\ |\alpha|=3, u\in E_\epsilon(0)}}\|R_\alpha(u)\|_{C^0(M, \mathbb{R})} \cdot C_3 \epsilon^3,
\end{align*}
for some constant $C_3$. Therefore, rearranging \eqref{eq:codiff1} yields
\begin{align}\label{eq:codiff2}
 &\sup_{\|f\|_{C^3(M, \mathbb{R})}\leq K}\left\|\frac{(\mathcal{D}_{X, \epsilon}-I)f}{\epsilon^2}-(c/2)\triangle_m f\right\|_{C^0(M, \mathbb{R})}\notag\\
 \leq&
 \begin{aligned}
  \sup_{\|f\|_{ C^3(M, \mathbb{R})}\leq K} \left\| \left(\sum_{i=1}^r \partial_i f\right)\right\|_{C^{0}(M, \mathbb{R})}\cdot C_1\epsilon^3+\left\|\left(\sum_{i,j =1}^r\partial_i\partial_jf\right)\right\|_{C^{0}(M, \mathbb{R})}\cdot C_2\epsilon^4 &\\
 \hfill+C_3 \epsilon^1 \cdot \sup_{\substack{\|f\|_{C^3(M, \mathbb{R})}\leq K\\ |\alpha|=3, u\in E_\epsilon(0)}}\|R_\alpha(u)\|_{C^0(M, \mathbb{R})}&.
 \end{aligned}
\end{align}

Since the first and second order derivatives of $f$ are bounded for by $K$, the first two terms on the RHS of \eqref{eq:codiff2} converge to $0$ as $\epsilon\to \infty$. Hence, to complete the proof of the theorem it suffices to show that
\begin{equation*}
 R_\alpha(u)=\frac{3}{\alpha!}\int_0^1(1-t)^{2}D^\alpha (f\circ \exp_x)(tu)\,dt,
\end{equation*}
is uniformly bounded on $E_\epsilon(0)$, for $|\alpha|=3$ and every $f\in C^3(M, \mathbb{R})$ with $\|f\|_{C^3(M, \mathbb{R})}\leq K$.

Let $u\in E_\epsilon(0)$ and $|\alpha|=3$. Since $\epsilon$ is less than the injectivity radius of $x$, the exponential map $\exp_{x}^{-1}$ is a $C^\infty$-diffeomorphism from $B_\epsilon(x)$ onto $E_\epsilon(0)$. Thus, if $\|f\|_{C^3(M, \mathbb{R})}\leq K$, then all derivatives of $f\circ \exp_x$ up to order $3$ are bounded above by $K'$ for some $K'<\infty$ on $E_\epsilon(0)$.
Now since $u\in E_\epsilon(0)$, one has $tu\in E_\epsilon(0)$ for all $0\leq t \leq 1$. Hence, the term $D^\alpha (f\circ \exp_x)(tu)$ is uniformly bounded in $u$ for $0\leq t\leq 1$, and all $\|f\|_{C^3(M, \mathbb{R})}\leq K$. It follows that the remainder $R_\alpha$ is uniformly bounded on $E_\epsilon(0)$, for $|\alpha|=3$ and every $\|f\|_{C^3(M, \mathbb{R})}\leq K$.

\end{proof}

\begin{proof}[\textbf{Proof of Theorem \ref{thm:3.1}}]

Let $\|f\|_{C^3(M, \mathbb{R})}\leq 1$, and set $\epsilon>0$ to be smaller than the injectivity radius of each point in $M$. We start with the asymptotic expansions of $\mathcal{P}_\epsilon(fh_\mu)$. Since $\|f\|_{C^3(M, \mathbb{R})}\leq 1$ and $h_\mu$ is bounded in the $C^3$-norm, one has $\|fh_\mu\|_{C^3(M, \mathbb{R})}\leq K$, for some constant $K$. Consider $fh_\mu$ such that $\|fh_\mu\|_{C^3(M, \mathbb{R})}\leq K$.
Lemma \ref{thm:app2.1}  yields $\mathcal{D}_{X,\epsilon}(fh_\mu)=fh_\mu+\frac{c\epsilon^2}{2}\triangle_m(fh_\mu)+\mathcal{O}(\epsilon^3)$, where $\mathcal{O}(\epsilon^3)$ denotes the class of polynomials $a_3\epsilon^3+a_4\epsilon^4+\ldots$, with all coefficients $a_3, a_4, \ldots$ bounded on $M$ and independent of $f$. Combining the expansion of $\mathcal{D}_{X, \epsilon}(fh_\mu)$ with the linearity of $\mathcal{P}$, then $\mathcal{P}\mathcal{D}_{X,\epsilon} (fh_\mu)=\mathcal{P}(fh_\mu) +\frac{c\epsilon^2}{2}\mathcal{P}\triangle_m (fh_\mu)+\mathcal{O}(\epsilon^3)$. Now, since $\mathcal{P}$ is given by \eqref{eq:P-F2} and $T$ is a $C^\infty$-diffeomorphism, one has $\mathcal{P} \mathcal{D}_{X, \epsilon} (fh_\mu)\in F^3(N, \mathbb{R})$. Therefore, by a straightforward modification of Lemma \ref{thm:app2.1}, we have uniformly on $N$
\begin{align}\label{eq:app2.1}
 \mathcal{P}_\epsilon(fh_\mu)\notag
=&\mathcal{D}_{Y'_\epsilon,\epsilon}\mathcal{P}\mathcal{D}_{X,\epsilon} (fh_\mu)\\
=&\mathcal{P}(fh_\mu) +\frac{c\epsilon^2}{2}\mathcal{P}\triangle_m (fh_\mu)+\frac{c\epsilon^2}{2}\left[\triangle_n \mathcal{P}(fh_\mu)+\mathcal{O}(\epsilon^2)\right]+\mathcal{O}(\epsilon^3)\notag\\
=&\mathcal{P}(fh_\mu) +\frac{c\epsilon^2}{2}\left[\mathcal{P}\triangle_m (fh_\mu)+\triangle_n \mathcal{P}(fh_\mu)\right]+\mathcal{O}(\epsilon^3),
\end{align}
where $c$ is the same constant as in Lemma \ref{thm:app2.1} (since the constant $c$ comes from the property \eqref{eq:diffusion4} of $Q$, independent of $f$). Therefore, using the fact that $\mathcal{P}h_\mu=h_\nu$
\begin{equation}\label{eq:app2.2}
 \mathcal{L}_\epsilon f
 =\frac{\mathcal{P}_\epsilon(fh_\mu)}{\mathcal{P}_\epsilon h_\mu}
 =\frac{\mathcal{P}(fh_\mu) +\frac{c\epsilon^2}{2}\left[\mathcal{P}\triangle_m (fh_\mu)+\triangle_n \mathcal{P}(fh_\mu)\right]+\mathcal{O}(\epsilon^3)}
 {h_\nu +\frac{c\epsilon^2}{2}\left[\mathcal{P}\triangle_m h_\mu+\triangle_n h_\nu \right]+\mathcal{O}(\epsilon^3)},
\end{equation}
uniformly on $N$. Next we apply $\mathcal{L}_\epsilon^*$ to $\mathcal{L}_\epsilon f$. According to $\eqref{eq:LeLe}$, the first step is the application of the dual diffusion operator $\mathcal{D}^*_{Y'_\epsilon,\epsilon}$ to \eqref{eq:app2.2}. In preparation for this, we consider a general polynomial quotient of the form
\begin{equation*}
\frac{a+b\epsilon^2+c\epsilon^3}{d+e\epsilon^2+f\epsilon^3}
\end{equation*}
where $a, b,\ldots, f$ are a set of known coefficients. By polynomial long division and truncating at $\epsilon^3$, one has
\begin{equation}\label{eq:app2.3}
\frac{a+b\epsilon^2+c\epsilon^3}{d+e\epsilon^2+f\epsilon^3}= \frac{a}{d}+\frac{bd-ae}{d^2}\epsilon^2 + \mathcal{O}(\epsilon^3).
\end{equation}
Applying \eqref{eq:app2.3} to \eqref{eq:app2.2}, and noting that $\mathcal{L}f=\mathcal{P}(f\cdot h_\mu)/h_\nu$ (see \eqref{eq:pfw}) yields
\begin{align*}
 \mathcal{L}_\epsilon f
 &=\frac{\mathcal{P}(fh_\mu)}{h_\nu}+\frac{c\epsilon^2}{2}\left[\frac{\mathcal{P}\triangle_m(fh_\mu)}{h_\nu}+\frac{\triangle_n\mathcal{P}(fh_\mu)}{h_\nu}
 -\frac{\mathcal{P}(fh_\mu)\cdot \mathcal{P}\triangle_m h_\mu}{h_\nu^2}-\frac{\mathcal{P}(fh_\mu)\cdot\triangle_n h_\nu}{h_\nu^2}\right] +\mathcal{O}(\epsilon^3)\\
  &=\mathcal{L}f+\frac{c\epsilon^2}{2}\left[\frac{\mathcal{P}\triangle_m(fh_\mu)}{h_\nu}+\frac{\triangle_n\mathcal{P}(fh_\mu)}{h_\nu}
 -\frac{\mathcal{L}f\cdot \mathcal{P}\triangle_m h_\mu}{h_\nu}-\frac{\mathcal{L}f\cdot\triangle_n h_\nu}{h_\nu}\right] +\mathcal{O}(\epsilon^3)
\end{align*}
uniformly on $N$. Since $h_\nu$ is uniformly bounded away from zero, one can check that $\mathcal{L}_\epsilon f\in F^3(N, \mathbb{R})$. Hence, it is now straightforward to compute $\mathcal{D}^*_{Y'_\epsilon,\epsilon} \mathcal{L}_\epsilon f$ via Lemma \ref{thm:app2.1} to obtain
\begin{align}\label{eq:app2.4}
  \mathcal{D}^*_{Y'_\epsilon,\epsilon} \mathcal{L}_\epsilon f=\mathcal{L}f+\frac{c\epsilon^2}{2}\left[\frac{\mathcal{P}\triangle_m(fh_\mu)}{h_\nu}+\frac{\triangle_n\mathcal{P}(fh_\mu)}{h_\nu}
 -\frac{\mathcal{L}f\cdot \mathcal{P}\triangle_m h_\mu}{h_\nu}-\frac{\mathcal{L}f\cdot\triangle_n h_\nu}{h_\nu}\right]\notag\\
 +\frac{c\epsilon^2}{2}\triangle_n\mathcal{L}f +\mathcal{O}(\epsilon^3),
\end{align}
uniformly on $N$.
We write
\begin{align*}
 \frac{\mathcal{P}(\triangle_m(fh_\mu))}{h_\nu}
 &=\mathcal{L}\left(\frac{\triangle_m(fh_\mu)}{h_\mu}\right)\\
 &=\mathcal{L}\left(\frac{f\cdot \triangle_m h_\mu+h_\mu\cdot \triangle_m f+2m(\nabla_m h_\mu, \nabla_m f)}{h_\mu}\right)\\
 &=\mathcal{L}\left(\frac{f\cdot \triangle_m h_\mu}{h_\mu}\right)+\mathcal{L}\left(\triangle_m f+\frac{2m(\nabla_m h_\mu, \nabla_m f)}{h_\mu}\right)\quad\mbox{by linearity of $\mathcal{L}$}\\
 &=\frac{\mathcal{L}f\cdot\mathcal{P}\triangle_m h_\mu}{h_\nu}+\mathcal{L}\left(\triangle_m f+\frac{2m(\nabla_m h_\mu, \nabla_m f)}{h_\mu}\right),
\end{align*}
using the fact that $\mathcal{L}f=f\circ T^{-1}$ for the last line.
Thus, the $2^{nd}$ and $4^{th}$ terms of \eqref{eq:app2.4} can be combined to form
\begin{equation}\label{eq:app2.4b}
\frac{c\epsilon^2}{2}\left(\frac{\mathcal{P}\triangle_m(f h_\mu)}{h_\nu}-\frac{\mathcal{L}f\cdot \mathcal{P}\triangle_m h_\mu}{h_\nu}\right)=c\epsilon^2\mathcal{L}\left(\frac{\triangle_m f}{2}+\frac{m(\nabla_m h_\mu, \nabla_m f)}{h_\mu}\right).
\end{equation}
Also,
\begin{align*}
\frac{\triangle_n \mathcal{P}(fh_\mu)}{h_\nu}
=\frac{\triangle_n (\mathcal{L} f \cdot h_\nu)}{h_\nu}
&=\frac{h_\nu\cdot\triangle_n \mathcal{L} f+\mathcal{L}f\cdot\triangle_n h_\nu+2n(\nabla_n h_\nu, \nabla_n \mathcal{L}f)}{h_\nu}\notag\\
&=\triangle_n \mathcal{L}f+\frac{\mathcal{L}f\cdot \triangle_n h_\nu}{h_\nu}+\frac{2n(\nabla_n h_\nu, \nabla_n \mathcal{L}f)}{h_\nu}.
\end{align*}
Thus, the $3^{rd}, 5^{th}$ and $6^{th}$ terms of \eqref{eq:app2.4} can be combined to form
\begin{align}\label{eq:app2.4a}
 &\frac{c\epsilon^2}{2}\left[\frac{\triangle_n \mathcal{P}(fh_\mu)}{h_\nu}-\frac{\mathcal{L}f\cdot\triangle_n h_\nu}{h_\nu}\right]+\frac{c\epsilon^2}{2}\triangle_n\mathcal{L}f\notag\\
 =&\frac{c\epsilon^2}{2}\triangle_n \mathcal{L} f+c\epsilon^2\frac{n(\nabla_n h_\nu, \nabla_n \mathcal{L}f)}{h_\nu}+\frac{c\epsilon^2}{2}\triangle_n\mathcal{L}f\notag\\
 =&c\epsilon^2\left(\triangle_nf + \frac{n(\nabla_n h_\nu, \nabla_n \mathcal{L}f)}{h_\nu}\right)\notag\\
 =&c\epsilon^2(\triangle_\nu \mathcal{L}f),
 \end{align}
where the last line is due to \eqref{def:wlp}. Substituting \eqref{eq:app2.4b} and \eqref{eq:app2.4a} into the RHS of \eqref{eq:app2.4} yields,
\begin{equation}\label{eq:app2.5}
 \mathcal{D}^*_{Y'_\epsilon, \epsilon}\mathcal{L}_\epsilon f=\mathcal{L}f+c\epsilon^2\left(\mathcal{L}\left(\frac{\triangle_m f}{2}+\frac{m(\nabla_m h_\mu, \nabla_m f)}{h_\mu}\right)+\triangle_\nu\mathcal{L}f\right)+\mathcal{O}(\epsilon^3),
\end{equation}
uniformly on $N$. It is straightforward to apply $\mathcal{D}^*_{X_\epsilon, \epsilon}\mathcal{L}^*$ to the RHS of  \eqref{eq:app2.5}via Lemma \ref{thm:app2.1}, which yields
\begin{align}\label{eq:app2.6}
 \mathcal{L}_\epsilon^*\mathcal{L}_\epsilon f
 &=f+\frac{c\epsilon^2}{2}\triangle_m f+c\epsilon^2\left(\left(\frac{\triangle_m f}{2}+\frac{m(\nabla_m h_\mu, \nabla_m f)}{h_\mu}\right)+\mathcal{L}^*\triangle_\nu\mathcal{L}f\right)+\mathcal{O}(\epsilon^3)\notag\\
 &=f+\frac{c\epsilon^2}{2}\left(\triangle_\mu f+\mathcal{L}^*\triangle_\nu\mathcal{L}f\right)+\mathcal{O}(\epsilon^3),
\end{align}
uniformly on $M$, where we have used \eqref{eq:dwc2} to obtain the last line. Since the coefficients of the $\mathcal{O}(\epsilon^3)$ are uniform on $M$ and independent of $f$, rearranging \eqref{eq:app2.6} gives
\begin{equation*}
\lim_{\epsilon\to 0}\sup_{\|f\|_{C^3(M, \mathbb{R})}}\left\|\frac{(\mathcal{L}^*_\epsilon\mathcal{L}-I)f}{\epsilon^2}-c\cdot\triangle^Df\right\|_{C^0(M, \mathbb{R})}=0.
\end{equation*}
\end{proof}

\end{document}